\renewcommand{\contentsname}{Table of contents}
\renewcommand{\bibname}{References}
\theoremstyle{plain}
\newtheorem{theorem}{Theorem}[chapter] 
\newtheorem{corollary}[theorem]{Corollary}
\newtheorem{lemma}[theorem]{Lemma}
\newcommand{\ffield}{\mathbb{F}_q(t)}
\renewcommand{\printchaptername}{}
\renewcommand{\chapternamenum}{}
\renewcommand{\printchapternum}{\begin{center}{\HUGE  \textbf{\thechapter}}\end{center}}
\renewcommand{\afterchapternum}{}
\renewcommand{\printchaptertitle}[1]{\hrule \smallskip \begin{center} \HUGE{#1}\end{center}}
\renewcommand{\afterchaptertitle}{\bigskip\bigskip\bigskip\bigskip\bigskip}
\renewcommand{\printchapternonum}{\vphantom{\HUGE  \textbf{1}}\mbox{}\newline}
\renewcommand{\listfigurename}{List of figures}
\author{Timothy Jones}
\begin{document}

\newpage
%\setcounter{page}{1}
%\pagenumbering{roman}
%\pagestyle{plain}
\pagestyle{myruled}

\pagestyle{empty}

\bigskip\mbox{}\bigskip
\bigskip\mbox{}\bigskip

\begin{center}
\Huge
\textbf{New quantitative estimates on the incidence geometry and growth of finite sets}

\bigskip\mbox{}\bigskip\mbox{}\bigskip\mbox{}\bigskip\mbox{}\smallskip

{\Huge Timothy Gareth Fellgett Jones}
\end{center}

\bigskip\mbox{}\bigskip\mbox{}\bigskip
\bigskip\mbox{}\bigskip\mbox{}\bigskip
\bigskip\mbox{}\bigskip\mbox{}\bigskip

\noindent
\emph{\large
A dissertation submitted to the University of Bristol in accordance with the requirements for award of the degree of Doctor of Philosophy in the Faculty of Science}

\begin{center}
\bigskip\mbox{}\bigskip

{\Large School of Mathematics \\
\smallskip
January 2013 \\ }

\end{center}

\mbox{}\bigskip\mbox{}\bigskip

\begin{flushright}
$20,000$ words \end{flushright}

\cleardoublepage

\pagestyle{headings}

%\addcontentsline{toc}{section}{\protect\hspace{-1.5 em} \emph{Abstract}}
\markboth{Abstract}{Abstract}
\cleardoublepage
\chapter*{Abstract}
\begin{SingleSpace}
This thesis establishes new quantitative records in several problems of incidence geometry and growth. After the necessary background in Chapters 1, 2 and 3, the following results are proven.

Chapter 4 gives new results in the incidence geometry of a plane determined by a finite field of prime order. These comprise a new upper bound on the total number of incidences determined by finitely many points and lines, and a new estimate for the number of distinct lines determined by a finite set of non-collinear points.

Chapter 5 gives new results on expander functions. First, a new bound is established for the two-variable expander $a+ab$ over a finite field of prime order. Second, new expanders in three and four variables are demonstrated over the real and complex numbers with stronger growth properties than any functions previously considered. 

Finally, Chapter 6 gives the first bespoke sum-product estimate over function fields, a setting that has so far been largely unexplored for these kinds of problems. This last chapter is joint work with Thomas Bloom.
\end{SingleSpace}

%\addcontentsline{toc}{section}{\protect\hspace{-1.5 em} \emph{Acknowledgments}}
\chapter*{Acknowledgements}
\begin{SingleSpace}
This thesis would not exist, nor would the years leading up to its creation have been as fun, without the support of a great many people.

My supervisor is Misha Rudnev, and without his guidance, support, encouragement and patience nothing here would have been possible. Whilst none of the work here is specifically joint with Misha, everything herein has benefitted from his scrutiny and suggestions. My studentship is funded by the EPSRC, and I have been looked after very well by both the University of Bristol and the School of Mathematics.  

The postgraduate community of the School of Mathematics, and my long-running housemates Amanda and Nic, who do proper science, have been an invaluable source of support and distraction. To list everyone from maths would necessistate several pages, so I will limit myself to mentioning my combinatorial colleagues Tom Bloom and Olly Roche-Newton, with whom I have had many fruitful conversations. One of the results in this thesis is joint work with Tom.  

I am also grateful to Jo Bryant and everyone in At-Bristol, and to Alastair Iles and the leaders and cubs of the 7th Bristol cub pack, for reminders of the importance of the world outside sumset calculus and covering methods, and for giving me fun, challenging and most especially non-mathematical things to do.

Finally, Helen has made me more cups of tea than I care to recall, and put up with me at the lows (not being able to prove things, and so fed up) and highs (being able to prove something, and so unable to sit still) of research.
  
I have, overall, been very lucky indeed.

\end{SingleSpace}
\cleardoublepage

%\addcontentsline{toc}{section}{\protect\hspace{-1.5 em} \emph{Author's declaration}}
\chapter*{Author's declaration}
\begin{SingleSpace}I declare that the work in this dissertation was carried out in accordance with the requirements of the University's Regulations and Code of Practice for Research Degree Programmes and that it has not been submitted for any other academic award. Except where indicated by specific reference in the text, the work is the candidate's own work. Work done in collaboration with, or with the assistance of, others is indicated as such. Any views expressed in the dissertation are those of the author.
\end{SingleSpace}
\bigskip

\begin{quote}
Signed: 

\medskip

Date:
\end{quote}

\cleardoublepage

%\addcontentsline{toc}{section}{\protect\hspace{-1.5 em} \emph{Table of contents}}
\tableofcontents*
\cleardoublepage
\listoffigures*

%\clearpage
%\addcontentsline{toc}{section}{\protect\hspace{-1.5 em} \emph{Notation}}

\settocdepth{section}

\OnehalfSpacing 
%\DoubleSpacing

%\setcounter{page}{1}
%\pagenumbering{arabic}

\pagestyle{myruled}

\chapter*{Standard notation}
\addcontentsline{toc}{section}{\protect\hspace{-1.5 em} \emph{Standard notation}}
\markboth{Standard notation}{Standard notation}

The following standard conventions are used.

\begin{itemize}
\item Write $Y=O(X)$, $Y \ll X$, or $X=\Omega(Y)$ if there is a fixed constant $C$ such that $Y \leq CX$. The constant $C$ is referred to as the `implicit constant'.
\item Using this notation, we will often use the fact that $X \ll Y + Z$ if and only if $X \ll \max \left\{Y,Z\right\}$.
\item If $X \ll Y$ and $Y \ll X$ then write $X \approx Y$ or $Y= \Theta (X)$.
\item	If the implicit constant depends on some parameter $\lambda$ then this is reflected with a subscript, e.g. $Y=O_{\lambda}(X)$ or $Y \ll_{\lambda}X$.
\item Write $Y \ll X^{1+o(1)}$, $Y \lesssim X$, $Y=\tilde{O}(X)$ or $X=\tilde{\Omega}(Y)$ if $Y \ll_{\epsilon}X^{1+\epsilon}$ for all $\epsilon>0$. In particular, this notation is used when $Y \ll X\left(\log X\right)^{\alpha}$ for some $\alpha>0$.
\item Write $\mathds{1}$ for an indicator function, that is if $\mathcal{P}$ is a proposition then $\mathds{1}(\mathcal{P})$ is equal to $1$ if $\mathcal{P}$ is true and is zero otherwise.
\item The letters $\mathbb{R},\mathbb{C},\mathbb{N},\mathbb{Z}$ have their usual meaning. By $\mathbb{F}_p$ we mean a finite field of prime order $p$, and by $\mathbb{F}_q$ a finite field of order $q$, where $q=p^{\alpha}$ is a prime power. The letters $p$ and $q$ will also be used to denote points in a plane or higher-dimensional vector space, but the context will ensure that no confusion arises.
\item For subsets $A,B$ of a field, write $A+B=\left\{a+b:a \in A, b \in B\right\}$ and $AB=\left\{ab:a \in A, b \in B\right\}$ for the sumset and product set of $A$ and $B$. The difference set $A-B$ and ratio set $A/B$ are defined analogously.
\item If $x$ is an element and $A$ a subset of a field then write $A+x$ for the translation $A+\left\{x\right\}$ and $xA$ for the dilation $\left\{x\right\}A$.
\item If $k$ is a natural number and $A$ is a subset of a field then write $kA$ for the iterated sumset $kA = \underbrace{A+\ldots+A}_k.$ Context will ensure that no confusion between a dilation and an iterated sumset of $A$ arises.
\end{itemize}

\chapter*{Prologue}
\addcontentsline{toc}{section}{\protect\hspace{-1.5 em} \emph{Prologue}}
%\begin{SingleSpace}
\markboth{Prologue}{Prologue}

\begin{footnotesize}
\begin{quote}
\emph{`And if you take one from three hundred and sixty-five what remains?'}

\emph{`Three hundred and sixty-four, of course.'}

\emph{Humpty Dumpty looked doubtful, `I'd rather see that done on paper,' he said.
}

\sourceatright{Lewis Carroll, Through the Looking-Glass}
\end{quote}
\end{footnotesize}
%\end{SingleSpace}
\vspace{1cm}

This thesis sets new records at the interface of two areas of mathematics: incidence geometry and growth. The first of these, incidence geometry, is about points and lines in a plane and incidences between them; a point is `incident' to a line if it lies on that line. The following sorts of problem are typical:
	\label{problems}
	\begin{itemize}[ ]
	\item \textbf{Incidence bounds.} \textit{Incidences are counted with multiplicity, in the sense that if several lines cross at a single point then we count several incidences. In total, how many incidences could there be between a finite set of points and a finite set of lines?}
	\item \textbf{Line counting.} \textit{A pair of distinct points determines a line. In total, how many distinct lines might be determined by pairs from a finite set of points?}
	\end{itemize}
There are many natural generalisations, for example to higher dimensional vector spaces instead of planes, to curves instead of lines, and so forth.

The second area, growth, is about how much bigger a finite subset of a field becomes after passing to its image under some function of two or more variables. Examples are:

\begin{itemize}[ ]
\item \textbf{Sum-product estimates.} \textit{For a set $A$ of given cardinality, how much bigger must the quantity $\max\left\{|A+A|,|AA|\right\}$ be? Here, $A+A$ and $AA$ are respectively the sets of pairwise sums and products of elements of $A$.}
\item \textbf{Expander functions.} \textit{Let $F$ be the ambient field and $n \geq 2$ be an integer. What functions $f:F^n \to F$ are there for which the set $f(A)=\left\{f(a_1,\ldots,a_n):a_i\in A\right\}$ is always much bigger than a finite subset $A$ of $F$?}
\end{itemize}

There is a common theme to all of this: estimating how much regimentation can be forced onto a finite set. For example, on the incidence geometry side the existence of too many incidences corresponds to the existence of extremely structured sets of points and lines. Understanding incidence bounds means placing a cap on such structure. As an example on the growth side, the only way in which the sumset $A+A$ or the product set $AA$ can be small is if the elements of $A$ are arranged respectively in some kind of additive or multiplicative `conspiracy', such as an arithmetic or geometric progression. Understanding sum-product estimates therefore means working out how far such conspiracies could coincide.   

There are applications of this theme, and of the specific problems above, to theoretical computer science and cryptography, where they can be used to obtain rigorous estimates of pseudorandom behaviour. However these are not pursued here, the motivation instead being for progress on the problems in their own right. The only exception to this approach of not worrying about applications is the interaction between the problems themselves. Incidence results have applications to growth, and growth results have applications to incidences, and this is one of the things that will contribute to progress on both.

When it comes to results the philosophy is one of `hard analysis', placing the emphasis on finding explicit bounds and making them as strong as possible. Quite how strong this might be depends very much on the choice of underlying field. Historically the attention was on real numbers. There are powerful results in this setting, which is still extremely active. With work, many incidence and growth results extend verbatim from the real to complex settings. More recently a lot of research has focused on finite fields\footnote{Recall that the finite field $\mathbb{F}_p$ of prime order $p$ is simply the set of residues modulo $p$ under addition and multiplication, and that the finite field $\mathbb{F}_q$ of prime power order $q=p^{\alpha}$ is a degree $\alpha$ algebraic extension of $\mathbb{F}_p$.}. This thesis presents work in all of these areas, and also breaks new ground in the relatively unexplored setting of function fields\footnote{The function field $\ffield$ is the field of rational functions over the finite field $\mathbb{F}_q$}.

\section*{Some remarks on finite fields}
Much, but by no means all, of the work in this thesis concerns finite fields, and it is worth pausing to explain a couple of considerations versus the real and complex settings. There is also a slightly different set of considerations in the study of function fields, but these are deferred to Chapter \ref{chapter:functionfield} since that is the only place they arise.

There are two main complications in finite fields. The first is the existence of finite subfields, which must be ruled out before anything nontrivial can be said. In the case of the field $\mathbb{F}_p$ of prime order $p$, this collapses into a cardinality condition since there are no proper subfields. For this reason much finite field research focuses on the prime order case; the hoops that one must jump through are similar to the general case, but the mathematics is usually cleaner. 

The second complication is an issue of tools and difficulty. Many methods in the real and complex settings depend critically on their topologies and so do not extend to finite fields. This makes it a lot harder to prove things and forces a more combinatorial approach, which is a mixed blessing. The bad news is that quantitative results are usually not as strong. But the good news is that methods and results achieved in finite fields usually extend elsewhere without difficulty. Thus finite field results carry a certain amount of moral authority as they correspond to the worst possible cases.

Although not considered here in detail, it is worth remarking that there is an additional `large set' paradigm for finite field results, with extensive literature. This features strong results for sets satisfying an additional minimum density condition, typically at least a square-root barrier. However the methods have a very different flavour, drawing for example on estimates for exponential sums. On top of this, they cannot be so easily exported to other settings; the interest here is in finite sets, but a minimum density condition in finite fields is usually analogous to a requirement for infinite sets elsewhere.

\section*{Structure of the thesis}

The first three chapters build up background and preliminaries.

\begin{itemize}
\item \textbf{Chapters \ref{chapter:introincidences} and \ref{chapter:introgrowth}} introduce incidence geometry and growth respectively in more detail. They also record particular incidence and growth results for use in later chapters.
\item
 \textbf{Chapter \ref{chapter:introapproxgroup}} is a handbook of results in sumset calculus, which is a workhorse of much of the later mathematics. 
\end{itemize}

The subsequent chapters are concerned with original results, developing state of the art incidence geometry and growth in several directions. Between them, there are new results on all four problems from page \pageref{problems}: incidence bounds, line counting, sum-product estimates and expander functions. Moreover they encompass real, complex and finite field settings, and also break new ground in the relatively unexplored function field setting.

\begin{itemize}
\item \textbf{Chapter \ref{chapter:incidences}} considers incidence geometry over finite fields and sets two new records. 
\begin{itemize}[\textasteriskcentered]
\item The first is a new incidence bound. If $P$ and $L$ are a set of points and lines respectively in $\mathbb{F}_p^2$ with $|P|,|L| \leq N$ then, writing $I(P,L)$ for the number of incidences between $P$ and $L$, non-trivial bounds are of the form  $I(P,L)\ll N^{\frac{3}{2}-\epsilon}$
for $\epsilon>0$. 

A new bound of $\epsilon \geq \frac{1}{662}-o(1)$ is established, holding whenever $N<p$. This improves by an order of magnitude on the previous bound of $\epsilon \geq \frac{1}{10,678}.$  

\item The second is a new line counting result. If $P$ is a set of points in $\mathbb{F}_p^2$ with $|P|<p$ then either $\Omega(|P|^{1-o(1)})$ of the points are collinear, or $P$ determines at least $\Omega\left(|P|^{1+\frac{1}{133}-o(1)}\right)$ 
distinct lines. 

This improves on previous results in two ways. Quantitatively, the exponent is stronger than the previously best-known  $1+\frac{1}{267}$. And qualitatively, the result applies to all subsets of $\mathbb{F}_p^2$ satisfying the cardinality condition; the previously best-known result applies only for $P$ of the form $P=A \times A$ for $A \subseteq \mathbb{F}_p$. 
\end{itemize}

\item \textbf{Chapter \ref{chapter:expanders}} sets three new records for expander functions. 
\begin{itemize}[\textasteriskcentered]
\item First, there is a result on two-variable expanders in finite fields. If $f(a,b)=a(b+1)$ then $|f(A)|\gg |A|^{1+\frac{1}{53}-o(1)}$ whenever $A$ is a subset of $\mathbb{F}_p$ with $|A|<p^{1/2}$. This improves on the previous best-known exponent of $1+\frac{1}{106}-o(1)$.

\item Second, there is a result on three-variable expanders for complex numbers. The function $g(a,b,c)=\frac{a-b}{a-c}$, which has not been previously considered in this context, is shown to satisfy $|g(A)|\gg |A|^{2-o(1)}$ for any finite $A\subseteq \mathbb{C}$. Previously-known functions with this property were all of four variables and applied only to sets of real numbers.   

\item Third, there is a result on four-variable expanders for real numbers. The function $h(a,b,c,d)=\frac{(a-b)(c-d)}{(b-c)(a-d)},$ again not previously considered in this context, is shown to satisfy $|h(A)|\gg |A|^2$ whenever $A$ is a finite set of real numbers. The previously best-obtained bound for a function of four variables was $|A|^{2-o(1)}$.
\end{itemize}

\item \textbf{Chapter \ref{chapter:functionfield}} is joint work with Thomas Bloom and breaks new ground by establishing a sum-product estimate in the function field $\ffield$, a relatively unexplored setting for this kind of work. Function fields form an interesting intermediate case between the finite field and real and complex settings since they have an unusually rigid `non-archimedean' topology. 

A sum-product estimate of
	$$\max\left\{|A+A|,|AA|\right\}\gg_q |A|^{1+\frac{1}{5}-o(1)}$$
is established for any finite subset $A$ of $\ffield$. The exponent of $1+\frac{1}{5}-o(1)$ lies between the $1+\frac{1}{11}-o(1)$ known for finite fields and the $1+\frac{1}{3}-o(1)$ known for real and complex numbers. 
\end{itemize}

There are two appendices, which summarise standard background information. 
\begin{itemize}
\item \textbf{Appendix \ref{chapter:pigeon}} covers standard pigeonholing results used throughout the thesis. The phrases `by averaging', `by Cauchy-Schwarz' and `by dyadic pigeonholing' are deployed frequently and implicitly refer to results from here.
\item \textbf{Appendix \ref{chapter:proj}} covers material on projective geometry necessary for Chapter \ref{chapter:incidences} and Chapter \ref{chapter:expanders}. Additional standard material on the projective theory of cross ratios is covered at an appropriate point in Chapter \ref{chapter:expanders}.
\end{itemize}
\chapter{Incidence geometry}
\label{chapter:introincidences}

The prologue mentioned two areas of incidence geometry: incidence bounds and line counting. This chapter introduces them properly, with an emphasis on the real and complex settings, the case of finite fields being deferred to Chapter \ref{chapter:incidences}. In so doing it also records results which will be of use in Chapter \ref{chapter:expanders}.

Incidence bounds are covered first, establishing a trivial estimate which holds regardless of the underlying field. Non-trivial incidence bounds depend on the setting; the best-understood case is the plane $\mathbb{R}^2$, where the classical Szemer\'edi-Trotter theorem holds. Line counting is then tackled in $\mathbb{R}^2$, establishing Beck's theorem as a consequence of Szemer\'edi-Trotter.

Three ways of generalising to higher-dimensions are also considered, as is a generalisation from the real to complex setting.

Apart from Theorem \ref{theorem:circlebeck}, which is a straightforward variation of existing results, all of the mathematics in this chapter is drawn from the literature.

\section{Incidence bounds in a plane}\label{section:incidencesintrointro}

A \textbf{plane} is simply $F^2$ where $F$ is a field. A \textbf{point} is an element of $F^2$ and a \textbf{line} is the set of points $(x,y) \in F^2$ satisfying an equation 
	$$ax+by+c=0$$
for fixed $a,b,c \in F$ that are not all zero. A point $p$ is \textbf{incident} to a line $l$ if $p \in l$. If $P$ is a finite set of points in a plane, and $L$ is a finite set of lines, then write $I(P,L)$ for the number of incidences between points in $p$ and lines in $l$, that is
\begin{equation}\label{eq:incidencecount}
I(P,L)=\sum_{p\in P}\sum_{l \in L}\delta_{pl}.
\end{equation}
where
	$$
	\delta_{pl}=\left\{
	\begin{array}{cc}
	1,& \text{if } p \in l\\
	0,& \text{if } p \notin l.
	\end{array}
	\right.
	$$
Incidences are therefore counted with multiplicity, as illustrated in Figure \ref{fig:countingincidences}.

\begin{figure}[ht]
\vspace{20pt}
\centering
\subbottom[One incidence]{
\begin{tikzpicture}[scale=4]
\coordinate (A) at (0,0);
\coordinate (B) at (1,1);
\draw [name path=A--B] (A) -- (B);
\node (Y) at ($ (A)!.5!(B) $) {};
\foreach \point in {Y}
\fill [red,opacity=1] (\point) circle (0.75pt);
\end{tikzpicture}
}
\hspace{20pt}\subbottom[Two incidences]{
\begin{tikzpicture}[scale=4]
\coordinate (A) at (0,0);
\coordinate (B) at (1,1);
\coordinate (C) at (0,1);
\coordinate (D) at (1,0);
\draw [name path=A--B] (A) -- (B);
\draw [name path=C--D] (C) -- (D);
\path [name intersections={of=A--B and C--D,by={X}},];
\foreach \point in {X}
\fill [red,opacity=1] (\point) circle (0.75pt);
\end{tikzpicture}
}
\hspace{20pt}\subbottom[Five incidences]{
\begin{tikzpicture}[scale=4]
\coordinate (A) at (0,0);
\coordinate (B) at (1,1);
\coordinate (C) at (0,1);
\coordinate (D) at (1,0);
\coordinate (E) at (0.5,1);
\coordinate (F) at (1,0.5);
\draw [name path=A--B] (A) -- (B);
\draw [name path=C--D] (C) -- (D);
\draw [name path=E--F] (E) -- (F);
\path [name intersections={of=A--B and E--F,by={Y}},];
\path [name intersections={of=A--B and C--D,by={X}},];
\node (Z) at ($ (C)!.75!(D) $) {};
\foreach \point in {X,Y,Z}
\fill [red,opacity=1] (\point) circle (0.75pt);
\end{tikzpicture}
}
\caption{Counting incidences}\label{fig:countingincidences}
\end{figure}
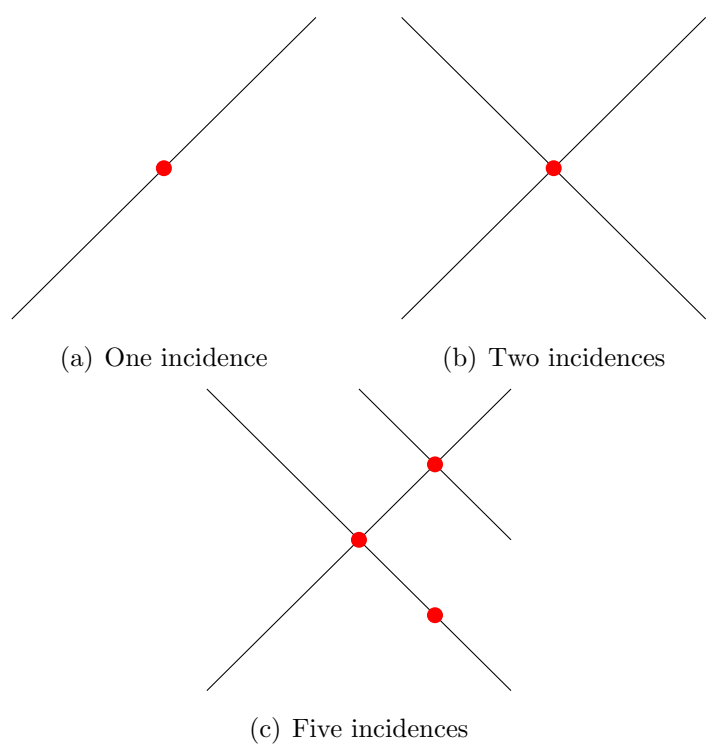

It is worth remarking (see Appendix \ref{chapter:proj}) that a `plane' could instead be taken to be the projective plane $\mathbb{P}F^2$ containing $F^2$, and `lines' to be projective lines within it. Indeed, all the results in this chapter extend without difficulty to projective space. But since no properties of the projective setting will be used, it makes sense to stay with the affine one. Chapters \ref{chapter:incidences} and \ref{chapter:expanders} make active use of the properties of projective space, and we will cross that bridge when we come to it. 

An important topic in incidence geometry is the study of upper bounds on $I(P,L)$ in terms of $|P|$ and $|L|$, referred to as \textbf{incidence bounds}. When considering these, the most straightforward observation to make is that each point in $P$ is incident to at most $|L|$ lines in $L$, implying that
	$$I(P,L)\leq |P||L|.$$
However it is immediate from the definition of points and lines that two lines in $L$ are simultaneously incident to at most one point in $P$, and two points in $P$ are simultaneously incident to at most one line in $L$. This fact, combined with the Cauchy-Schwarz inequality, leads to the following somewhat better estimate, which is nonetheless referred to as trivial.

\begin{lemma}[Trivial incidence bound]\label{theorem:trivialincidences}
If $P$ is a set of points in a plane, and $L$ is a set of lines, then
	$$I(P,L)\ll \min \left\{|P|+|P|^{1/2}|L|,|L|+|L|^{1/2}|P|\right\}.$$
\end{lemma}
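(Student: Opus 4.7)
The plan is to apply the Cauchy–Schwarz inequality to the counting of incidences line-by-line, using the fact that two distinct points determine at most one common line.

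First I would fix the second expression in the minimum and prove $I(P,L) \ll |L| + |L|^{1/2}|P|$; the other bound follows by the symmetric argument, interchanging the roles of points and lines. For each $l \in L$ let $r(l) = \#\{p \in P : p \in l\}$, so that $I(P,L) = \sum_{l \in L} r(l)$. By Cauchy–Schwarz,
\[
I(P,L)^2 \;\leq\; |L| \sum_{l \in L} r(l)^2.
\]

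Next I would interpret $\sum_{l} r(l)^2$ combinatorially as the number of triples $(p,p',l)$ with $p,p' \in P \cap l$. Splitting into the diagonal part $p = p'$ and the off-diagonal part $p \neq p'$, the diagonal contributes exactly $I(P,L)$, while the off-diagonal part is bounded by $|P|(|P|-1) \leq |P|^2$ because each pair of distinct points of $P$ lies on at most one line (the very observation flagged in the paragraph preceding the lemma). Thus
\[
I(P,L)^2 \;\leq\; |L|\bigl(I(P,L) + |P|^2\bigr).
\]

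Finally I would solve this quadratic inequality in $I(P,L)$: either $I(P,L) \leq 2|L|$ or $I(P,L)^2 \leq 2|L||P|^2$, yielding $I(P,L) \ll |L| + |L|^{1/2}|P|$. The symmetric argument, summing instead over points and using that two distinct lines meet in at most one point, gives $I(P,L) \ll |P| + |P|^{1/2}|L|$, and taking the minimum completes the proof. There is no real obstacle here; the only subtlety is remembering that Cauchy–Schwarz alone is not enough, and that one must additionally invoke the geometric fact that two distinct points determine a unique line (respectively, two distinct lines meet in at most one point) to control the off-diagonal term.
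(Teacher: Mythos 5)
Your proof is correct and is essentially the paper's own argument: the paper applies Cauchy--Schwarz summing over points and bounds pairs of distinct lines through a point to get $I(P,L)\ll |P|+|P|^{1/2}|L|$, then notes the other bound follows by swapping the roles of points and lines, which is exactly the symmetric half you chose to write out in detail. Your quadratic-inequality finish and your identification of the key geometric input (two points determine at most one line, two lines meet in at most one point) match the paper's reasoning precisely.
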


\begin{proof}
Cauchy-Schwarz and (\ref{eq:incidencecount}) imply that
	\begin{align}
	I(P,L)^2& = \left(\sum_{p \in P}\sum_{l \in L}\delta_{pl}\right)^2\nonumber\\
	& \leq |P|\sum_{p \in P}\sum_{l_1,l_2 \in L}\delta_{pl_1}\delta_{pl_2}.\label{eq:introdincidences1}
	\end{align}
Split the summation over lines $l_1,l_2 \in L$ according to whether or not $l_1$ and $l_2$ are distinct, so that
	\begin{align}
	\sum_{p \in P}\sum_{l_1,l_2 \in L}\delta_{pl_1}\delta_{pl_2}&=\sum_{p \in P}\sum_{l \in L}\delta_{pl}+\sum_{p \in 	
	P}\sum_{l_1\neq l_2 \in L}\delta_{pl_1}\delta_{pl_2}\nonumber\\
	&= I(P,L)+ \sum_{p \in P}\sum_{l_1\neq l_2 \in L}\delta_{pl_1}\delta_{pl_2}.\nonumber
	\end{align}
Substituting into \eqref{eq:introdincidences1} yields
	\begin{align}
	I(P,L)^2 &\leq |P|I(P,L)+ |P|\sum_{l_1\neq l_2 \in L}\sum_{p \in P}\delta_{pl_1}\delta_{pl_2}.\label{eq:introincidences3}
	\end{align}
Two distinct lines are simultaneously incident to at most one point, and so 
	$$\sum_{p \in P}\delta_{pl_1}\delta_{pl_2}\leq 1$$
whenever $l_1\neq l_2$, since this is the number of points in $P$ incident to both $l_1$ and $l_2$. Hence from \eqref{eq:introincidences3},
	$$I(P,L)^2\leq |P|I(P,L)+|P||L|^2.$$
So either $I(P,L)^2 \ll |P|I(P,L)$ in which case $I(P,L)\ll |P|$, or $I(P,L)^2\ll |P||L|^2$ 
in which case $I(P,L)\ll |P|^{1/2}|L|$. Overall therefore 
	$$I(P,L)\ll |P|+|P|^{1/2}|L|.$$

The proof that $I(P,L)\ll |L|+|L|^{1/2}|P|$ is the same, except that the roles of points and lines are reversed.
\end{proof}

It is often helpful to consider the `critical' case where $P$ and $L$ have the same cardinality, say $|P|=|L|=N$. In this case, the trivial bound from Lemma \ref{theorem:trivialincidences} is 
	$$I(P,L)\ll N^{3/2}$$
and so \textbf{non-trivial} bounds will be of the form 
	$$I(P,L)\ll N^{3/2-\epsilon}$$ 
for $\epsilon>0$. 

The following standard result shows that best non-trivial bound that can be hoped for is $\epsilon=\frac{1}{6}$, that is $I(P,L)\ll N^{4/3}$. 

\begin{lemma}[Constraint on incidence bounds]\label{theorem:incidences1/6}
Let $F$ be a field. If $F$ has characteristic $0$ then for any positive integer $N$ there is a set $P$ of points in $F^2$ and a set $L$ of lines, with $|P|,|L|\approx N$ such that 
	$$I(P,L)\approx N^{4/3}.$$ 
If the characteristic of $F$ is $p>0$ then the same result holds, but with the constraint that $N$ must be less than $\left(\frac{p}{2}\right)^{3/2}$. 
\end{lemma}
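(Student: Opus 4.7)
The plan is to exhibit an explicit grid-based construction, which is the classical sharpness example for the Szemer\'edi--Trotter bound. Given a positive integer $N$, set $k = \lfloor N^{1/3} \rfloor$ and take
\[
P = \{1,\ldots,k\}\times\{1,\ldots,2k^2\} \subseteq F^2,
\]
where the integers are interpreted via the canonical ring map $\mathbb{Z}\to F$. For the lines, take
\[
L = \bigl\{\,\ell_{m,b}:y=mx+b\ :\ m\in\{1,\ldots,k\},\ b\in\{1,\ldots,k^2\}\,\bigr\}.
\]
Then $|P| = 2k^3$ and $|L| = k^3$, both of which are $\approx N$.

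The core observation is that for each choice of $m\in\{1,\ldots,k\}$, $b\in\{1,\ldots,k^2\}$ and $x\in\{1,\ldots,k\}$, the value $y=mx+b$ satisfies $1\leq y\leq k\cdot k + k^2 = 2k^2$, so the point $(x,y)$ lies in $P$. Each line $\ell_{m,b}$ therefore contains at least $k$ points of $P$, giving
\[
I(P,L) \geq k\cdot |L| = k^4 \approx N^{4/3}.
\]
The matching upper bound $I(P,L) \ll N^{4/3}$ follows from the trivial bound of Lemma~\ref{theorem:trivialincidences}, since $|P|^{1/2}|L|\approx k^{3/2}\cdot k^3 = k^{9/2}$ would be too weak; here one instead simply notes that each line meets $P$ in at most $k$ points, so $I(P,L)\leq k|L| = k^4\approx N^{4/3}$. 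Hence $I(P,L)\approx N^{4/3}$.

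The only subtlety, and the one step where something can go wrong, is that in the characteristic $p>0$ setting the ring map $\mathbb{Z}\to F$ is not injective, so one must ensure that distinct integer labels correspond to distinct elements of $F$. For both the point coordinates and the line parameters, the largest integer appearing is bounded by $2k^2$; requiring $2k^2<p$ translates to $k<(p/2)^{1/2}$, i.e.\ $N=k^3<(p/2)^{3/2}$, which is exactly the hypothesis. Under this condition the grid and the lines have genuinely the claimed cardinalities and the counting above goes through verbatim. Since the argument is essentially a verification, there is no real obstacle; the only care needed is the characteristic-$p$ cardinality check, and the minor bookkeeping of matching $k^3$ to a general $N$, which is handled by choosing $k=\lfloor N^{1/3}\rfloor$ and absorbing the loss into the $\approx$ notation.
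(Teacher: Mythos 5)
Your construction is exactly the paper's: the $N^{1/3}\times 2N^{2/3}$ grid of points together with the lines $y=mx+b$ for $m\le N^{1/3}$, $b\le N^{2/3}$, with the characteristic-$p$ case handled by the same cardinality constraint ensuring the integer labels remain distinct in $\mathbb{F}_p$. The argument is correct, and your explicit verification of the membership $(x,mx+b)\in P$ and of the matching upper bound $I(P,L)\le k|L|$ only makes precise what the paper leaves as "easy to check."
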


\begin{proof}
Write $l_{rs}$ for the line given by $y=rx+s$. In the zero-characteristic case take 
	\begin{align*}
	P&=\left\{(x,y) \in \mathbb{Z}^2:1\leq x \leq N^{1/3},1\leq y \leq 2N^{2/3}\right\}\\
	L&=\left\{l_{rs}:(r,s) \in \mathbb{Z}^2:1\leq r \leq N^{1/3},1\leq s \leq N^{2/3}\right\}.
	\end{align*}
There are $\Theta\left(N\right)$ distinct points in $P$ and $\Theta\left(N\right)$ distinct lines in $L$, and it is easy to check that each line in $L$ is incident to $\Theta\left(N^{1/3}\right)$ points in $P$, meaning that there are $\Theta(N^{4/3})$ incidences. 

The characteristic $p>0$ case is the same; simply replace $\mathbb{Z}^2$ in the construction with $\mathbb{F}_p^2$. The constraint that $N<\left(\frac{p}{2}\right)^{3/2}$ ensures that all the points and lines are distinct. 
\end{proof}

\section{The Szemer\'edi-Trotter theorem}

What can be said about non-trivial incidence bounds? The answer depends on the underlying field over which the plane is defined. This section considers the classical case of the plane $\mathbb{R}^2$ where Szemer\'edi and Trotter \cite{ST} proved the following flagship result of incidence geometry.

\begin{theorem}[Szemer\'edi, Trotter]\label{theorem:ST}
If $P$ is a set of points in $\mathbb{R}^2$ and $L$ is a set of lines, then
	$$I(P,L)\ll |P|^{2/3}|L|^{2/3}+|P|+|L|.$$
\end{theorem}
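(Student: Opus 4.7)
The plan is to prove this via Sz\'ekely's crossing-number argument, which distils the problem into a single application of the crossing lemma for graphs drawn in the plane. The crossing lemma states that if $G$ is a graph drawn in $\mathbb{R}^2$ with $n$ vertices and $e \geq 4n$ edges, then any drawing has at least $\Omega(e^3/n^2)$ crossings. This follows from Euler's formula for planar graphs via a probabilistic amplification, and is the one genuinely topological ingredient of the whole argument.

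First I would dispense with the easy cases: if $I(P,L)$ fails to exceed a large multiple of $|P|+|L|$ then the inequality holds trivially, so assume $I(P,L) \gg |P|+|L|$ and in particular that every line in $L$ carries at least two points of $P$. Next, associate to $P$ and $L$ a graph $G$ drawn in $\mathbb{R}^2$ as follows: take $V(G) = P$, and for each line $\ell \in L$ incident to $k_\ell \geq 2$ points of $P$, add the $k_\ell - 1$ edges joining consecutive points of $P \cap \ell$ along $\ell$. Then $G$ has $n = |P|$ vertices and $e = I(P,L) - |L| \gg I(P,L)$ edges, and the drawing inherited from $\mathbb{R}^2$ uses honest straight-line segments.

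Now I would count crossings in two different ways. On the one hand, two edges of $G$ can only cross if they lie on distinct lines of $L$, and any two distinct lines meet in at most one point, so the number of crossings is at most $\binom{|L|}{2} \ll |L|^2$. On the other hand, provided $e \geq 4n$, the crossing lemma gives at least $\Omega(e^3/n^2)$ crossings. Comparing the two bounds yields $I(P,L)^3 \ll |P|^2|L|^2$, equivalent to $I(P,L) \ll |P|^{2/3}|L|^{2/3}$. The hypothesis $e \geq 4n$ fails only when $I(P,L) \ll |P|$, which is absorbed into the trivial term, and combining with the initial easy cases produces the full three-term bound.

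The main obstacle, or more honestly the only substantive step, is the crossing lemma itself, which rests on Euler's formula and is therefore intrinsically topological. A weaker estimate $\mathrm{cr}(G) \geq e - 3n$ follows directly from Euler, and sampling each vertex independently with probability $p \approx n/e$ upgrades this to the cubic bound $\mathrm{cr}(G) \gg e^3/n^2$; I would either cite this standard lemma or prove it separately. The reliance on planarity is precisely the reason the finite-field analogue of Szemer\'edi-Trotter demands fundamentally different methods, which is the subject of Chapter \ref{chapter:incidences}.
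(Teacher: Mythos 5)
Your proposal is correct and is precisely Sz\'ekely's crossing-number argument, which is the proof the paper itself gives: the same graph on $P$ with $k_\ell-1$ edges per line, the same edge count $|E|=I(P,L)-|L|$, the crossing lemma on one side and the bound $\mathrm{cross}(G)\leq |L|^2$ from pairwise line intersections on the other. The only cosmetic difference is that you sketch the proof of the crossing lemma, which the paper cites to Tao and Vu.
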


In the critical case this gives $I(P,L)\ll N^{4/3}$, which is sharp up to the implicit constant by Lemma \ref{theorem:incidences1/6}. Before proving Szemer\'edi-Trotter it is worth recording an immediate consequence which is often useful in applications to other problems.

\begin{corollary}\label{theorem:ST'}
Let $L$ be a set of lines in $\mathbb{R}^2$. Then the number of points incident to at least $k$ lines in $L$ is $O\left(\frac{|L|^2}{k^3}+\frac{|L|}{k}\right)$. Similarly, the number of lines incident to at least $k$ points in a point set $P$ is $O\left(\frac{|P|^2}{k^3}+\frac{|P|}{k}\right)$.
\end{corollary}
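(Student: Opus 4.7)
The plan is to derive both statements by a direct pigeonhole on the Szemerédi--Trotter bound, first bounding $k$-rich points and then dualising.

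For the first statement, let $P_k \subseteq \mathbb{R}^2$ denote the set of points each of which is incident to at least $k$ lines of $L$; this is the set whose cardinality we wish to bound. Counting incidences one point at a time gives the trivial lower bound
\[
I(P_k, L) \geq k |P_k|.
\]
Applying Theorem \ref{theorem:ST} to the pair $(P_k, L)$ gives
\[
k|P_k| \leq I(P_k, L) \ll |P_k|^{2/3}|L|^{2/3} + |P_k| + |L|.
\]
By the convention recorded in the Standard Notation chapter that $X \ll Y_1 + Y_2 + Y_3$ is equivalent to $X \ll \max\{Y_1, Y_2, Y_3\}$, we split into three cases. If the first term dominates, then $k|P_k| \ll |P_k|^{2/3}|L|^{2/3}$, which rearranges to $|P_k| \ll |L|^2/k^3$. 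If the third term dominates, then $k|P_k| \ll |L|$, giving $|P_k| \ll |L|/k$ directly. If the middle term dominates, then $k|P_k| \ll |P_k|$, which gives only $k \ll 1$; in this regime the corollary is non-vacuous only for $k \geq 2$, and any point of $P_k$ is then determined by some choice of two distinct lines through it, so $|P_k| \leq \binom{|L|}{2} \ll |L|^2$, which is absorbed into $|L|^2/k^3$ since $k$ is bounded. Combining the three cases yields the claimed bound.

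The second statement is the same argument with the roles of points and lines swapped. Define $L_k \subseteq L$ to be those lines incident to at least $k$ points of $P$, so that $I(P, L_k) \geq k|L_k|$, and apply Szemerédi--Trotter to the pair $(P, L_k)$; the algebra is identical with $P$ and $L$ interchanged.

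The only genuine subtlety is the middle case above, where the bare pigeonhole gives no information and one has to fall back on the trivial fact that two distinct lines meet in at most one point; every other step is a routine manipulation of the Szemerédi--Trotter bound.
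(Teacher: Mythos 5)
Your proof is correct and follows essentially the same route as the paper: bound $I(P_k,L)$ below by $k|P_k|$, above by Szemer\'edi--Trotter, and compare. You are in fact slightly more careful than the paper's own write-up, which silently drops the $|P_k|$ term from the Szemer\'edi--Trotter bound, whereas you dispose of that degenerate case (where $k\ll 1$) explicitly via the $\binom{|L|}{2}$ bound.
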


\begin{proof}
Let $P_k$ be the set of points incident to at least $k$ lines in $L$. Then 
	$$|P_k|k\leq I(P_k,L).$$
On the other hand, the Szemer\'edi-Trotter theorem shows that
	$$I(P_k,L) \ll |P_k|^{2/3}|L|^{2/3}+|L| $$
and so comparing upper and lower bounds on $I(P_k,L)$ gives  
	$$|P_k|\ll \frac{|L|^2}{k^3}+\frac{|L|}{k}$$
as required. The proof for the number of lines incident to at least $k$ points is similar.
\end{proof}

Now for a proof of the Szemer\'edi-Trotter theorem. There have been several proofs since Szemer\'edi and Trotter's original. The one here is due to Sz\'ekely \cite{szekely}. A third proof, using different techniques again, can be found in a recent paper of Kaplan, Matou\u{s}ek and Sharir \cite{KMS}.

\begin{proof}[Proof of Theorem \ref{theorem:ST}.]
Without loss of generality assume that every point is incident to at least one line, and every line is incident to at least one point. Recall that a \textbf{graph} $G(V,E)$ consists of a set $V$ of \textbf{vertices}, and a set $E$ of unordered pairs of vertices, called \textbf{edges}.

Construct a graph $G(V,E)$ as follows. Take the set $V$ of vertices to be the set of points $P$. To construct the set $E$ of edges, say that $(p_1,p_2)$ is an edge if and only if $p_1$ and $p_2$ are adjacent along a line in $L$, in the sense that the line segment connecting them is contained in a line from $L$ and contains no other points from $P$.

For each $l \in L$ write $k(l)=\sum_{p \in P}\delta_{pl}$ for the number of points $p \in P$ that are incident to $L$. Note that $l$ contains $k(l)-1$ edges from $E$, as illustrated in Figure \ref{fig:STproof}. 

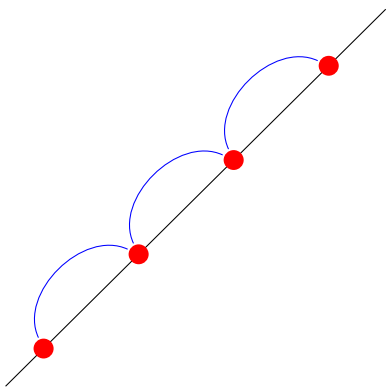
\begin{figure}[ht]
\vspace{20pt}
\centering
\begin{tikzpicture}[scale=5]
\coordinate (A) at (0,0);
\coordinate (B) at (1,1);
\draw [name path=A--B] (A) -- (B);

\node (W) at ($ (A)!.1!(B) $) {};
\node (X) at ($ (A)!.35!(B) $) {};
\node (Y) at ($ (A)!.6!(B) $) {};
\node (Z) at ($ (A)!.85!(B) $) {};

\foreach \point in {W,X,Y,Z}
\fill [red,opacity=1] (\point) circle (0.75pt);

\draw [-,color=blue] (W) to [bend left=70](X);
\draw [-,color=blue] (X) to [bend left=70](Y);
\draw [-,color=blue] (Y) to [bend left=70](Z);

\end{tikzpicture}

\caption{$k$ collinear points determine $k-1$ edges.}
\label{fig:STproof}
\end{figure}

It follows that
	\begin{align}
	|E|&= \sum_{l \in L}(k(l) -1)\nonumber \\
	&=\sum_{l \in L}\sum_{p \in P}\delta_{pl} - |L|\nonumber \\
	&=I(P,L) - |L|\label{eq:edgeincidence}. 
	\end{align}
	
We now apply a graph-theoretic result. A \textbf{drawing} of $G=G(V,E)$ is an identification of each vertex in $V$ with a distinct point in $\mathbb{R}^2$, and of each edge $(u,v)\in E$ with a curve connecting $u$ and $v$. A \textbf{crossing} occurs whenever two curves corresponding to edges intersect away from a vertex. Write $\text{cross}(G)$ for the minimum number of crossings in a drawing of $G$. We will employ the following lemma; see Chapter 8 of the book of Tao and Vu \cite{TV} for a proof.

\begin{lemma}[Crossing number lemma]
Let $G(V,E)$ be a graph with $|E| \geq 4|V|$. Then $\text{cross}(G) \geq \frac{|E|^3}{64 |V|^2}$.
\end{lemma}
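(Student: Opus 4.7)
The plan is to prove the crossing number lemma in two stages, a deterministic warm-up based on Euler's formula followed by a probabilistic amplification. First I would establish the weaker linear bound $\mathrm{cross}(G) \geq |E| - 3|V|$ for every graph $G$. This follows from the standard consequence of Euler's formula that a simple planar graph on $n$ vertices has at most $3n - 6$ edges: if $G$ has a drawing with $c$ crossings, then deleting one edge from each crossing produces a planar subgraph, so $|E| - c \leq 3|V| - 6 < 3|V|$, giving $c \geq |E| - 3|V|$. Taking the infimum over drawings yields the claim.

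Next comes the probabilistic step. Fix a drawing of $G$ realising $\mathrm{cross}(G)$ crossings, pick a parameter $p \in (0,1]$ to be chosen later, and form a random induced subgraph $G'(V',E')$ by keeping each vertex of $V$ independently with probability $p$. Let $C'$ denote the number of crossings in the induced drawing of $G'$. Applying the linear bound to $G'$ deterministically gives $C' \geq |E'| - 3|V'|$, so in expectation
\begin{equation*}
\mathbb{E}[C'] \geq \mathbb{E}[|E'|] - 3\,\mathbb{E}[|V'|].
\end{equation*}
Since a vertex survives with probability $p$, an edge (needing both endpoints) with probability $p^2$, and a crossing (needing all four endpoints of its two edges to be distinct and present) with probability $p^4$, this becomes
\begin{equation*}
p^4 \,\mathrm{cross}(G) \geq p^2 |E| - 3p |V|.
\end{equation*}

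Finally I would optimise in $p$. Rearranging gives $\mathrm{cross}(G) \geq \frac{|E|}{p^2} - \frac{3|V|}{p^3}$, and choosing $p = 4|V|/|E|$ (which lies in $(0,1]$ precisely because of the hypothesis $|E| \geq 4|V|$) produces the claimed bound $\mathrm{cross}(G) \geq \frac{|E|^3}{64 |V|^2}$ after a short calculation. The one genuinely clever step is the random sampling amplification, since without it the linear bound $|E| - 3|V|$ is far too weak to yield a cubic-in-$|E|$ lower bound; everything else is bookkeeping. The mild obstacle is tracking the constants so as to land on $64$ rather than some other absolute constant, but this is purely arithmetic and does not affect the later use of the lemma in proving Szemerédi--Trotter, where only the exponent $3$ on $|E|$ matters.
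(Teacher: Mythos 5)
Your proof is correct and is the standard probabilistic-deletion argument; the paper does not prove this lemma itself but cites Chapter 8 of Tao and Vu, where exactly this argument (the Euler-formula bound $\mathrm{cross}(G)\geq |E|-3|V|$ followed by random vertex sampling and the choice $p=4|V|/|E|$) appears, and your constants do land on $64$. The one point worth making explicit is that the fixed optimal drawing should be taken so that no two edges sharing an endpoint cross (a standard reduction), since this is what justifies your parenthetical claim that every crossing involves four distinct vertices and hence survives with probability exactly $p^4$.
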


Applying the crossing number lemma and \eqref{eq:edgeincidence} shows that at least one of the following bounds holds 
	\begin{align}
	I(P,L) &\ll |P| +|L|\label{eq:crossingfails}\\
	\text{cross}(G) &\gg \frac{I(P,L)^3}{|P|^2}.\label{eq:crossinglower}
	\end{align}
Let's consider what happens when \eqref{eq:crossinglower} holds. It is clear that 
	\begin{equation}\label{eq:crossingupper}
	\text{cross}(G) \leq |L|^2
	\end{equation}
since for two edges to cross it is necessary that two lines cross. Comparing the bounds \eqref{eq:crossinglower} and \eqref{eq:crossingupper} yields 
	$$ \frac{I(P,L)^3}{|P|^2}\ll\text{cross}(G)\leq |L|^2$$
and hence 
	\begin{equation}\label{eq:crossingholds}
	I(P,L)\ll |P|^{2/3}|L|^{2/3}
	\end{equation}
whenever \eqref{eq:crossinglower} holds. Thus either \eqref{eq:crossingfails} or \eqref{eq:crossingholds} holds, implying overall that
	$$I(P,L)\ll |P|^{2/3}|L|^{2/3}+|P|+|L|$$
as required.	
\end{proof}

The key to the above proof is the crossing number inequality. Beyond this the only properties used are the trivial facts that any two lines are simultaneously incident to at most one point, and any two points are simultaneously incident to at most one line. With a few modifications\footnote{See for example Theorem 8.10 of the book of Tao and Vu \cite{TV}.} the same argument goes through when $L$ is a set of curves rather than lines, such that any two points in $P$ are simultaneously incident to at most $\alpha$ curves and any two curves in $L$ are simultaneously incident to at most $\beta$ points in $P$. The implicit constant in the statement then depends on $\alpha$ and $\beta$ and so the theorem becomes 
	$$I(P,L)\ll_{\alpha,\beta} |P|^{2/3}|L|^{2/3}+|P|+|L|.$$	
The required changes are to replace the crossing number lemma for graphs with one for multigraphs, and to replace \eqref{eq:crossingupper} with 
	$$\text{cross}(G) \leq \beta |L|^2.$$	

Pach and Sharir \cite{pachsharir} took this approach further, to encompass the case where any $k$ points, rather than any two points, are simultaneously incident to at most $\alpha$ curves. Note that the condition on curves remains the same; any \textit{two} curves are simultaneously incident to at most $\beta$ points, rather than any $k$.

\begin{theorem}[Pach, Sharir] \label{theorem:pachsharir}
Let $P$ be a set of points in $\mathbb{R}^2$ and $L$ be a set of curves, such that any $k$ points in $P$ are simultaneously incident to at most $\alpha$ curves in $L$ and any two curves in $L$ are simultaneously incident to at most $\beta$ points in $P$. Then
	$$I(P,L)\ll_{\alpha,\beta} |P|^{\frac{k}{2k-1}}|L|^{\frac{2k-2}{2k-1}}+|P|+|L|.$$	
\end{theorem}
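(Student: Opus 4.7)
The plan is to adapt the Székely-style crossing-number argument already used to prove Theorem \ref{theorem:ST}, but with the simple graph on $P$ replaced by a multigraph (or hypergraph) whose combinatorial structure encodes the $k$-point hypothesis rather than the two-point hypothesis used for Szemerédi-Trotter.

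First I would handle the degenerate cases: curves with fewer than $k$ incident points in total contribute at most $k|L|$ to $I(P,L)$, which is absorbed into the ``$+|L|$'' term of the target bound, so it suffices to work with curves satisfying $k(l) \geq k$. Then I would construct a multigraph $G$ on vertex set $V = P$ as follows. For each curve $l$, order the incident points $p_1^{(l)},\ldots,p_{k(l)}^{(l)}$ along $l$, and for each $j = 1,\ldots,k(l)-k+1$ record an ``arc'' consisting of the $k$ consecutive points $p_j^{(l)},\ldots,p_{j+k-1}^{(l)}$. An edge of the multigraph is drawn between the two endpoints $p_j^{(l)}$ and $p_{j+k-1}^{(l)}$ of each arc; equivalently one could treat each arc as a hyperedge on $k$ vertices. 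The total edge count is $|E| = \sum_l (k(l)-k+1)$, which is $\Theta(I(P,L))$ under mild assumptions on the regime.

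Next I would bound the multiplicity of a typical edge using both hypotheses in concert. Two curves contributing the same arc share the same $k$-element subset of $P$, so at most $\alpha$ such curves exist by the $k$-point hypothesis. Two curves contributing arcs sharing only the pair of endpoints $(p,q)$ still meet in at least those two points, and by the two-curve hypothesis any two curves share at most $\beta$ points overall, which constrains how many curves can share a given pair of arc endpoints with differing interior points. These two observations should combine to yield an explicit edge-multiplicity bound $M = M(\alpha,\beta,k)$. The final step applies the multigraph crossing-number lemma to produce a lower bound on $\mathrm{cross}(G)$ in terms of $|E|$, $|V|$ and $M$, and compares with the natural upper bound $\mathrm{cross}(G) \ll \beta|L|^2$ obtained by drawing each edge along its curve. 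Solving the resulting inequality in $I(P,L)$ should give the stated estimate.

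The main obstacle is calibrating the construction so that the edge count, multiplicity bound, and crossing-number lemma combine to produce the specific exponent $k/(2k-1)$ rather than the Szemerédi-Trotter exponent $2/3$. A naive edge-based multigraph of the above form recovers only the $k=2$ exponents regardless of $k$, which suggests that the correct argument requires either a genuine hypergraph crossing-number inequality reflecting the $k$-vertex structure of the arcs, or else a complementary random-sampling or cuttings argument in which a subsample of curves partitions the plane into cells where the $k$-point condition can be exploited more delicately than bare multiplicity counting allows. Getting this calibration exactly right is where I expect the real work to lie.
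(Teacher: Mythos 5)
You should first be aware that the thesis does not prove this theorem at all: it is quoted from Pach and Sharir's paper, and the surrounding text only gestures at the fact that their argument extends Sz\'ekely's method. So your proposal has to stand or fall on its own, and as written it has a concrete gap that you partially acknowledge but underestimate. The fatal step is the claimed edge-multiplicity bound $M=M(\alpha,\beta,k)$. For $k\geq 3$ no such bound exists: the hypotheses only limit the number of curves through $k$ points (by $\alpha$) and the number of points shared by two curves (by $\beta$), so a single \emph{pair} of points $p,q$ may lie on as many as $|L|$ curves --- think of circles through two fixed points, where any two of the circles share exactly those two points and hence never violate the $\beta$-condition. Consequently the edge joining the endpoints of your arcs can have multiplicity comparable to $|L|$, the multigraph crossing lemma gives nothing, and the observation you offer (``the two-curve hypothesis constrains how many curves can share a given pair of arc endpoints'') is simply false. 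Your closing admission --- that the naive construction only ever reproduces the exponent $2/3$ --- is the second half of the same problem: you have described the scaffolding of Sz\'ekely's proof but not the mechanism that produces $k/(2k-1)$.

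What is actually needed, and what Pach and Sharir do, is to treat edge multiplicity as a parameter rather than a constant. One connects \emph{consecutive} points along each curve (not endpoints of length-$k$ arcs), fixes a threshold $m$, deletes all edges whose multiplicity exceeds $m$, applies the multigraph crossing lemma (which carries a factor $1/m$) to what remains, and balances $m \approx (|L|^2/|P|)^{(k-2)/(2k-1)}$ against the crossing bound $\ll \beta |L|^2$ to extract the exponent $k/(2k-1)$. The incidences lost on high-multiplicity pairs must then be controlled separately, and this is where the $k$-point hypothesis genuinely enters: the count $\sum_{l}\binom{k(l)}{k}\leq \alpha\binom{|P|}{k}$ gives the generalised trivial bound $I\ll_{\alpha,k}|P||L|^{1-1/k}+|L|$, and the family of curves through a fixed point has only $k-1$ degrees of freedom, which sets up an induction on $k$ to bound the contribution of pairs lying on many common curves. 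Without the deletion-plus-threshold device and the induction on degrees of freedom, your plan cannot reach the stated exponent, so the proposal as it stands is not a proof.
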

	
\section{Line counting and Beck's theorem}

This section gives an application of the Szemer\'edi-Trotter theorem to the \textbf{line counting} problem. Two distinct points in $\mathbb{R}^2$ determine a line, and so for a set $P$ of points we can consider the set $L(P)$ of lines determined by pairs of points in $P$. This could have just one element, which would happen if all the points in $P$ lie along a single line as in Figure \ref{figure:beck} (a). Or it could be as large as $\binom{|P|}{2}\approx |P|^2$ if the points are in general position as in Figure \ref{figure:beck} (b). 

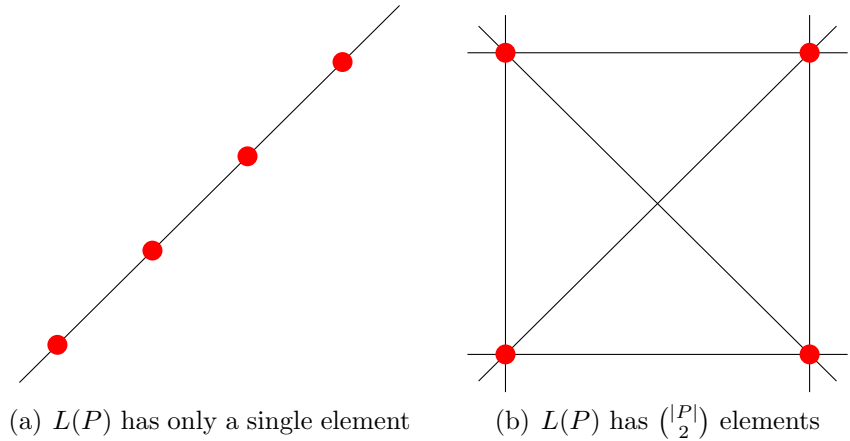
\begin{figure}[ht]
\centering
\subbottom[$L(P)$ has only a single element]
{
\begin{tikzpicture}[scale=5]
\coordinate (A) at (0,0);
\coordinate (B) at (1,1);
\draw [name path=A--B] (A) -- (B);
\node (W) at ($ (A)!.1!(B) $) {};
\node (X) at ($ (A)!.35!(B) $) {};
\node (Y) at ($ (A)!.6!(B) $) {};
\node (Z) at ($ (A)!.85!(B) $) {};
\foreach \point in {W,X,Y,Z}
\fill [red,opacity=1] (\point) circle (0.75pt);
\end{tikzpicture}
}
\hspace{20pt}
\subbottom[$L(P)$ has $\binom{|P|}{2}$ elements]
{
\begin{tikzpicture}[scale=5]
\coordinate (O) at (0.1,0.05);
\coordinate (A) at (0.1,0.1);
\coordinate (B) at (0.1,0.9);
\coordinate (C) at (0.9,0.9);
\coordinate (D) at (0.9,0.1);
\draw [shorten >=-0.5cm,shorten <=-0.5cm] (A) -- (B);
\draw [shorten >=-0.5cm,shorten <=-0.5cm] (A) -- (C);
\draw [shorten >=-0.5cm,shorten <=-0.5cm] (A) -- (D);
\draw [shorten >=-0.5cm,shorten <=-0.5cm] (B) -- (C);
\draw [shorten >=-0.5cm,shorten <=-0.5cm] (B) -- (D);
\draw [shorten >=-0.5cm,shorten <=-0.5cm] (C) -- (D);
\foreach \point in {A,B,C,D}
\fill [red,opacity=1] (\point) circle (0.75pt);
\foreach \point in {O}
\fill [red,opacity=0] (\point) circle (0.75pt);
\end{tikzpicture}
}
\caption{The two extremes for counting lines.}
\label{figure:beck}
\end{figure}

The following theorem of Beck \cite{beck} asserts that, up to multiplicative constants, these two extremes are essentially the only two possibilities.

\begin{theorem}[Beck]\label{theorem:beck}
If $P$ is a set of points in $\mathbb{R}^2$ then at least one of the following holds
\begin{enumerate}
\item At least $\Omega\left(|P|\right)$ points in $P$ are incident to a single line.
\item $|L(P)|\approx|P|^2$. 
\end{enumerate}
\end{theorem}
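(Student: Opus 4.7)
The plan is to prove the contrapositive via a dyadic application of the Szemer\'edi-Trotter theorem, in the form of Corollary \ref{theorem:ST'}. Fix a small absolute constant $c_1 > 0$ (to be chosen later) and assume the negation of condition 1: no line in $L(P)$ is incident to more than $c_1 |P|$ points of $P$. We shall deduce condition 2.

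The starting point is a pair count. For each $l \in L(P)$ let $k(l)$ denote the number of points of $P$ incident to $l$; since each unordered pair of distinct points of $P$ determines exactly one line in $L(P)$,
$$\sum_{l \in L(P)} \binom{k(l)}{2} = \binom{|P|}{2} \approx |P|^2.$$
Partition $L(P)$ dyadically: for each integer $j \geq 1$ let $L_j = \{l \in L(P) : 2^j \leq k(l) < 2^{j+1}\}$. Corollary \ref{theorem:ST'} gives $|L_j| \ll |P|^2/2^{3j} + |P|/2^j$, so the contribution of $L_j$ to the pair count on the left satisfies
$$\sum_{l \in L_j} \binom{k(l)}{2} \ll |L_j| \cdot 2^{2j} \ll \frac{|P|^2}{2^j} + |P| \cdot 2^j.$$

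Now split the sum over $l \in L(P)$ into a \emph{light} part, consisting of lines with $k(l) < C$, and a \emph{heavy} part, consisting of lines with $k(l) \geq C$, where $C$ is a large absolute constant to be chosen. Under the standing assumption $k(l) \leq c_1 |P|$ for every $l$, the heavy part corresponds to $\log_2 C \leq j \leq \log_2(c_1 |P|)$, and summing the two geometric series gives a heavy contribution bounded by an absolute multiple of $|P|^2/C + c_1 |P|^2$. Choosing $C$ sufficiently large and then $c_1$ sufficiently small (both absolute constants depending only on the implicit constant in Corollary \ref{theorem:ST'}) makes this strictly less than $|P|^2/2$, say. Consequently the light part accounts for $\gg |P|^2$ pairs; since each light line contributes at most $\binom{C}{2} = O(1)$ pairs, the number of distinct light lines is $\gg |P|^2$, giving $|L(P)| \gg |P|^2$. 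Combined with the trivial upper bound $|L(P)| \leq \binom{|P|}{2} \ll |P|^2$, this yields $|L(P)| \approx |P|^2$, which is condition 2.

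The main obstacle is simply the bookkeeping of the two thresholds $C$ and $c_1$, which must be coordinated so that both halves of the heavy contribution are comfortably absorbed into the main term $\approx |P|^2$. Conceptually the argument is just a textbook dyadic pigeonholing driven by Szemer\'edi-Trotter, and the only thing to verify beyond Corollary \ref{theorem:ST'} is the initial pair-counting identity.
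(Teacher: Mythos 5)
Your proposal is correct and is essentially the paper's own argument: the same pair-counting identity $\sum_l \binom{k(l)}{2} \approx |P|^2$, the same dyadic decomposition controlled by Corollary \ref{theorem:ST'}, and the same geometric-series bookkeeping with a large threshold $C$; the only difference is presentational, in that you phrase it as a contrapositive with an explicit small constant $c_1$ while the paper keeps both alternatives of the dichotomy alive until the end. No gaps.
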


Beck's theorem was originally published in the same journal edition as the Szemer\'edi-Trotter theorem. But it is in fact also a consequence of Szemer\'edi-Trotter. This latter proof, given here, is more straightforward and can be found for example as exercise 8.2.6 in the book of Tao and Vu \cite{TV}.

\begin{proof}[Proof of Theorem \ref{theorem:beck}]
For each $l \in L(P)$ write $\mu(l)=\sum_{p \in P}\delta_{pl}$ for the number of points $p \in P$ that are incident to $l$. It is clear that
	$$\mu(l)^2 \approx \sum_{p_1 \neq p_2 \in P}\delta_{p_1l}\delta_{p_2l}$$
since the right-hand side counts pairs of distinct points in $P$ incident to $l$. It follows that
	\begin{align*}
	\sum_{l \in L(P)}\mu(l)^2 &\approx \sum_{p_1\neq p_2 \in P}\sum_{l \in L(P)}\delta_{p_1l}\delta_{p_2l}.
	\end{align*}
For fixed $p_1 \neq p_2$ there is precisely one line in $L(P)$ that is incident to $p_1$ and $p_2$, so 
	$$\sum_{l \in L(P)}\delta_{p_1l}\delta_{p_2l}=1,$$
and hence by combining the above equations
	\begin{equation}\label{eq:introincidences4}
	\sum_{l \in L(P)}\mu(l)^2 \approx |P|^2.
	\end{equation}
	
Now for each integer $j$ let $L_{j}$ be the set of $l \in L(P)$ for which $2^{j}\leq \mu(l) < 2^{j+1}.$ Corollary \ref{theorem:ST'} implies that  
	\begin{equation}\label{eq:beckst}
	|L_j|\ll \frac{|P|^2}{2^{3j}}+\frac{|P|}{2^j}.
	\end{equation}
For points $p,q \in P$ write $l_{pq}$ for the line determined by $p$ and $q$. For any constant $C$, let $X_C$ be the set of pairs of distinct points $p,q \in P$ for which  $$C \leq \mu(l_{pq}) \leq \frac{|P|}{C}.$$
From \eqref{eq:beckst},
	\begin{align}
	|X_C| &\approx \sum_{j = \log_2 C}^{\log_2 (|P|/C)} |L_j|2^{2j} \nonumber\\
	&\ll \sum_{j = \log_2 C}^{\log_2 (|P|/C)} \left(\frac{|P|^2}{2^{3j}}+\frac{|P|}{2^j}\right)2^{2j}\label{eq:introincidences5}
	\end{align}
Summing geometric series yields 
	\begin{align*}
	\sum_{j = \log_2 C}^{\log_2 (|P|/C)} \frac{1}{2^{j}} &\ll \frac{1}{C}\\
	\sum_{j = \log_2 C}^{\log_2 (|P|/C)} 2^{j}&\ll \frac{|P|}{C}
	\end{align*}
and so (\ref{eq:introincidences5}) implies
	\begin{equation}
	|X_C| \ll \frac{|P|^2}{C}.\label{eq:introincidences6}
	\end{equation}
Comparing (\ref{eq:introincidences4}) and (\ref{eq:introincidences6}) it is possible to pick a constant $C$ sufficiently large that
	$$|X_C| \leq \frac{1}{2}\sum_{l \in L(P)}\mu(l)^2.$$
Thus for this fixed $C$, either 
	$$\sum_{l \in L(P):\mu(l)\geq |P|/C}\mu(l)^2 \gg |P|^2.$$
or
	$$\sum_{l \in L(P):\mu(l)\leq C}\mu(l)^2 \gg |P|^2$$ 

In the former case there are $\Omega(|P|^2)$ pairs $(p,q)$ for which $\mu(l_{pq})\geq |P|/C$, and so in particular there is at least one line in $L(P)$ that is incident to $\Omega(|P|)$ points. This corresponds to the first case in the conclusion of the theorem.
	
In the latter case there are $\Omega(|P|^2)$ pairs of points $p,q \in P$ for which $\mu(l_{pq})\leq C$, meaning that there are $\Omega(|P|^2)$ distinct lines in $L(P)$. This corresponds to the second case in the conclusion of the theorem. 
\end{proof}

Since it is a consequence of Szemer\'edi-Trotter, which extends to points and curves, the statement of Beck's theorem generalises in the same way. This can be further developed by using the Pach-Sharir theorem in place of Szemer\'edi-Trotter, for example to the following result. 

\begin{theorem}\label{theorem:circlebeck}
For finite $P \subseteq \mathbb{R}^2$, write $C(P)$ for the set of circles determined by non-collinear triples of points from $P$. For any $P$, at least one of the following holds:
\begin{enumerate}
\item There are $\Omega(|P|)$ cocircular points in $P$.
\item There are $\Omega(|P|)$ collinear points.
\item $|C(P)|\gg |P|^3$.
\end{enumerate}
\end{theorem}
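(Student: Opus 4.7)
The plan is to mimic the proof of Beck's theorem (Theorem~\ref{theorem:beck}), with circles playing the role of lines and Theorem~\ref{theorem:pachsharir} playing the role of Szemer\'edi-Trotter. Three non-collinear points of $\mathbb{R}^2$ determine a unique circle, and two distinct circles meet in at most two points, so Theorem~\ref{theorem:pachsharir} applies to $P$ and any set of circles with $k=3$, $\alpha = 1$ and $\beta = 2$, giving
\[
I(P,L) \ll |P|^{3/5}|L|^{4/5} + |P| + |L|.
\]
A routine argument analogous to Corollary~\ref{theorem:ST'} then shows that the number of circles incident to at least $j$ points of $P$ is $O\bigl(|P|^3/j^5 + |P|/j\bigr)$.

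Assume, for a constant $C$ to be fixed later, that neither of the first two alternatives holds, so that every line and every circle contains fewer than $|P|/C$ points of $P$. Writing $\mu(c) = \sum_{p \in P}\delta_{pc}$ as usual, the identity
\[
\sum_{c \in C(P)} \binom{\mu(c)}{3} = \#\{\text{non-collinear triples of points of } P\}
\]
holds because each non-collinear triple determines a unique circle of $C(P)$. For each pair $p_1 \neq p_2$ in $P$, the unique line through them meets $P$ in at most $|P|/C$ points, so the number of ordered collinear triples of distinct points is at most $|P|^3/C$. For $C$ sufficiently large, the non-collinear triples therefore account for $\gg |P|^3$ of the total $\binom{|P|}{3}$, and since $\mu(c) \geq 3$ for every $c \in C(P)$ it follows that $\sum_c \mu(c)^3 \gg |P|^3$.

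For each integer $j$, let $C_j$ be the set of $c \in C(P)$ with $2^j \leq \mu(c) < 2^{j+1}$; by hypothesis $C_j$ is empty for $j > \log_2(|P|/C)$. For a second constant $D$, dyadic summation and the Pach-Sharir corollary give
\[
\sum_{j \geq \log_2 D} |C_j|\, 2^{3j} \ll \sum_{j = \log_2 D}^{\log_2(|P|/C)}\left(\frac{|P|^3}{2^{2j}} + |P|\,2^{2j}\right) \ll \frac{|P|^3}{D^2} + \frac{|P|^3}{C^2}.
\]
Taking $D$ and $C$ to be sufficiently large absolute constants makes this less than half of $\sum_c \mu(c)^3$, so the contribution from shells with $\mu(c) < D$ is also $\gg |P|^3$. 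Since this contribution is at most $D^3 |C(P)|$, we obtain $|C(P)| \gg |P|^3$, establishing the third alternative.

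The main thing to be careful about is the verification of the Pach-Sharir hypotheses for circles and the dyadic bookkeeping with the new exponents $3$ and $5$ in place of $2$ and $3$; neither is difficult, and both are close adaptations of ingredients already present in the proof of Beck's theorem above.
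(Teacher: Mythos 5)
Your proposal is correct and follows exactly the route the paper intends: the paper only sketches this proof (``use the $k=3$ case of Pach--Sharir and the fact that three non-collinear points determine a circle''), and your argument is a correct and complete fleshing-out of that sketch, with the right exponents $|P|^{3/5}|L|^{4/5}$, the right rich-circle bound $O(|P|^3/j^5+|P|/j)$, and the same two-constant dyadic bookkeeping as in the proof of Beck's theorem.
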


The details of the proof are similar to that of Beck's theorem, using instead the $k=3$ case of the Pach-Sharir theorem and the fact that three distinct non-collinear points determine a circle.

\section{Incidence geometry in $\mathbb{R}^n$}\label{section:highdim}

It is interesting, and useful for applications to other problems, to consider incidence geometry in higher-dimensional vector spaces than the plane. This section generalises the material of the previous two sections from $\mathbb{R}^2$ to $\mathbb{R}^n$. Since Beck's line-counting theorem follows from the Szemer\'edi-Trotter incidence bound, the focus is on higher-dimensional incidence bounds, with their application to line counting in $\mathbb{R}^n$ left as an exercise.

There are several ways to formulate higher-dimensional results. One is to simply consider point-line incidences in the higher-dimensional setting. Another is to consider higher-dimensional objects than lines, for example incidences between points and planes in $\mathbb{R}^3$. Somewhere between these two is the study of `pseudolines' which may be higher dimensional than lines but intersect pairwise in at most $O(1)$ points. 

\subsection{Points and lines} \label{section:highdimline}

The Szemer\'edi-Trotter theorem extends by a random projection argument to the setting where the points and lines lie in $\mathbb{R}^n$ rather than $\mathbb{R}^2$. Simply project $P$ and $L$ onto a randomly chosen $2$-plane in $\mathbb{R}^n$. With positive probability the number of incidences between the projected points and projected lines will be the same as those between the originals.

Since Szemer\'edi-Trotter is sharp in $\mathbb{R}^2$ it is also sharp in $\mathbb{R}^n$. However improvements are available subject to constraints on the arrangements of lines so that, for example, not too many of them lie in any one plane. Guth and Katz \cite{GK} obtained a breakthrough result of this kind in $\mathbb{R}^3$ which enabled them to solve the famous `distinct distances' problem of Erd\"os. Their incidence result is as follows:

\begin{theorem}[Guth, Katz]\label{theorem:gk}
Let $L$ be a set of lines in $\mathbb{R}^3$ of which no more than $O\left(|L|^{1/2}\right)$ are simultaneously incident to the same point, plane or regulus\footnote{A regulus is a `doubly-ruled' surface. That is, every point in a regulus $R$ is incident to at least two lines entirely contained in $R$.}. Then the number of points in $\mathbb{R}^3$ incident to at least $k$ lines in $L$ is at most $O\left(\frac{|L|^{3/2}}{k^2}\right).$  
\end{theorem}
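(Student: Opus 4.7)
The plan is to invoke the \textbf{polynomial method}, specifically polynomial partitioning, which is the central tool used by Guth and Katz. First I would fix the parameter $k$ and write $P_k$ for the set of points incident to at least $k$ lines from $L$; the goal is to bound $|P_k|$. Without loss of generality one may assume $k \gg 1$ (small $k$ follows from the trivial bound), and after a dyadic pigeonhole one may assume every point of $P_k$ is incident to between $k$ and $2k$ lines.

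The first main step is to apply the \emph{polynomial partitioning theorem}: for a carefully chosen degree $D \approx |L|^{1/2}/k$, there exists a non-zero polynomial $f \in \mathbb{R}[x,y,z]$ of degree $\leq D$ whose zero set $Z(f)$ cuts $\mathbb{R}^3 \setminus Z(f)$ into $O(D^3)$ open cells, each containing at most $O(|P_k|/D^3)$ points of $P_k$. Next, partition the lines of $L$ into those wholly contained in $Z(f)$ (call this $L_1$) and those not so contained ($L_2$). Any line in $L_2$ meets $Z(f)$ in at most $D$ points, and by B\'ezout-type counting enters at most $D+1$ cells. One then splits $P_k$ into points lying in the cells and points lying on $Z(f)$ itself and handles each class separately.

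The contribution from points inside a single cell is controlled because each cell contains only about $|L|/D^2$ lines of $L_2$, and within a cell one can apply a Szemer\'edi--Trotter-type bound (or iterate the argument). Summing over the $O(D^3)$ cells and choosing $D$ to balance the two types of contribution yields the desired bound $O(|L|^{3/2}/k^2)$ for points off $Z(f)$. This part is essentially formal once the partitioning is in place.

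The hard part — and the reason the \emph{plane} and \emph{regulus} hypotheses appear — will be bounding incidences at points that lie on $Z(f)$ together with lines of $L_1$ lying entirely in $Z(f)$. Here I would invoke the algebraic geometry of ruled surfaces: if a component of $Z(f)$ contains more than $\deg(f)$ lines through a generic point, or is doubly ruled, then by a classical result (using the \emph{flecnode polynomial}, whose vanishing detects lines with high-order contact) that component must be a plane or a regulus. The hypothesis that no more than $O(|L|^{1/2})$ lines lie in any plane or regulus then caps the number of lines of $L_1$ on each such component, while the hypothesis about lines through a common point bounds concurrences on the remaining part. Summing the contributions of the irreducible components of $Z(f)$ and optimising $D \approx |L|^{1/2}/k$ closes the argument.

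The main obstacle is unquestionably this last algebraic step: extracting the structural dichotomy ``lines in a low-degree surface force the surface to be a union of planes and reguli'' via the flecnode construction is the genuinely novel ingredient, and it is exactly where all three non-degeneracy hypotheses (point, plane, regulus) are used in tandem. The polynomial partitioning and cell-counting estimates, by contrast, follow a now-standard template.
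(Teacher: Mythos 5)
This theorem is not proved in the thesis at all: it is quoted from Guth and Katz \cite{GK} as a black box (the text around Theorem \ref{theorem:gk} explicitly attributes it and moves on), so there is no in-paper argument to compare yours against. What you have written is a road map of the original Guth--Katz proof, and at the level of strategy it is broadly faithful to the literature: polynomial partitioning for the cellular part, and the flecnode polynomial together with the classification of ruled surfaces (planes and reguli) for the lines trapped in the zero set. Identifying where each of the three non-degeneracy hypotheses enters is also correct.

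However, as a proof it has genuine gaps rather than merely suppressed routine detail. First, every decisive ingredient --- the existence of the partitioning polynomial, the B\'ezout-type cell-entry count, the flecnode/ruled-surface dichotomy, and the treatment of critical and flat points of $Z(f)$ --- is invoked by name but not established, and you yourself concede that the algebraic step is ``the genuinely novel ingredient.'' Second, the quantitative bookkeeping does not close as written: with $D\approx |L|^{1/2}/k$ and the trivial within-cell bound (each cell has $\approx |L|/D^2$ lines, hence $\lesssim (|L|/D^2)^2/k^2$ points that are $k$-rich), summing over $O(D^3)$ cells gives $|L|^2/(Dk^2)\approx |L|^{3/2}/k$, which is off by a full factor of $k$; the actual Guth--Katz argument takes $D$ comparable to $|L|^{1/2}$ and runs an induction on $|L|$ rather than a one-shot cell count. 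Third, the proposal treats all $k$ uniformly, whereas the case $k=2$ (where the regulus hypothesis is indispensable) is handled in \cite{GK} by an entirely separate ruled-surface argument, not by partitioning. So the outline points in the right direction but does not constitute a proof, and its stated parameter choices would not yield the claimed exponent.
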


The bound in Theorem \ref{theorem:gk} should be compared to the Szmer\'edi-Trotter bound $O\left(\frac{|L|^{2}}{k^3}\right)$ arising from Corollary \ref{theorem:ST'} when $k \ll |L|^{1/2}$. The two bounds agree when $k \approx |L|^{1/2}$ but Theorem \ref{theorem:gk} is stronger for smaller $k$.

\subsection{Points and planes in $\mathbb{R}^3$}\label{section:highdimplane}

Some kind of nondegeneracy condition is necessary to say anything interesting about incidences between a set $P$ of points and a set $\Pi$ of planes, since if all the planes in $\Pi$ intersect along a single line that is incident to all the points in $P$, then $I(P,\Pi)=|P||\Pi|.$

Edelsbrunner, Guibas and Sharir \cite{EGS} proved the following result under the fairly strong condition that no three planes are collinear. There are also plenty of examples \cite{agarwalaronov,AS,brassknauer,ET} of point-plane incidence results under other nondegeneracy conditions.

\begin{theorem}[Edelsbrunner, Guibas, Sharir]\label{theorem:EGS}
Let $P$ and $\Pi$ be a set of points and planes respectively in $\mathbb{R}^3$. If no three planes are collinear then 
	$$I(P,\Pi)\ll |P|^{4/5}|\Pi|^{3/5}+|P|+|\Pi|.$$ 
\end{theorem}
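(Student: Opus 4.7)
The plan is to combine a Cauchy--Schwarz reduction with a random cutting argument in $\mathbb{R}^3$. A first (insufficient) attempt would be to write $I(P,\Pi)^2 \leq |P| \sum_{p \in P} \bigl(\sum_{\pi \ni p} 1\bigr)^2$ and expand: the off-diagonal contribution counts triples $(p, \pi_1, \pi_2)$ with $p \in \pi_1 \cap \pi_2$ and $\pi_1 \neq \pi_2$. Since no three planes of $\Pi$ share a common line, each line of $\mathbb{R}^3$ arises as such an intersection from at most one unordered pair of planes, and so the off-diagonal term is controlled by $I(P,L)$ for a set $L$ of at most $|\Pi|^2$ lines in space. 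Unfortunately the best three-dimensional point-line bound available by random projection to the plane and Szemer\'edi--Trotter gives only $I(P,\Pi) \ll |P|^{5/6}|\Pi|^{2/3}$, which is trivial in the balanced case $|P| = |\Pi|$.

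To do better, I would introduce a $(1/r)$-cutting, with $r$ a parameter to be optimised at the end. By the Clarkson--Shor theorem there is a partition of $\mathbb{R}^3$ into $O(r^3)$ simplicial cells in which each cell is crossed by at most $|\Pi|/r$ planes of $\Pi$. Split incidences into \emph{interior} incidences, where $p$ lies in the open interior of a cell $\sigma$ (so the contributing plane $\pi$ crosses $\sigma$), and \emph{boundary} incidences, where $p$ lies on a $2$-face of the cutting, that is on one of the sampled planes used to construct it.

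For interior incidences, apply the trivial bound from Lemma \ref{theorem:trivialincidences} inside each cell $\sigma$: writing $P_\sigma$ and $\Pi_\sigma$ for the points and crossing planes in $\sigma$,
\[
I(P_\sigma, \Pi_\sigma) \ll |P_\sigma| + |P_\sigma|^{1/2} |\Pi_\sigma| \ll |P_\sigma| + |P_\sigma|^{1/2} \tfrac{|\Pi|}{r}.
\]
Summing over the $O(r^3)$ cells and using $\sum_\sigma |P_\sigma| \leq |P|$ together with Cauchy--Schwarz on $\sum_\sigma |P_\sigma|^{1/2}$ yields an interior bound of $O\bigl(|P| + |P|^{1/2} r^{1/2} |\Pi|\bigr)$. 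For boundary incidences, the problem reduces inside each sample plane to a two-dimensional point-line incidence problem: the other planes of $\Pi$ cut the sample plane in a set of lines, and the hypothesis that no three planes are collinear ensures that each such line is determined by at most one further plane, so Szemer\'edi--Trotter (Theorem \ref{theorem:ST}) applies in each sample plane. An averaging argument over the random sample relates the total boundary contribution, up to constants, to $(r/|\Pi|) \cdot I(P,\Pi)$, which can be absorbed on the left-hand side.

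Balancing the interior term $|P|^{1/2} r^{1/2} |\Pi|$ against the boundary terms then singles out the optimal $r \approx |P|^{3/5} |\Pi|^{-1/5}$, which produces the exponents $4/5$ and $3/5$ in the claim. The main obstacle is the careful accounting of boundary incidences: a point can lie on many sampled planes simultaneously, so one has to charge each such incidence to a uniquely determined face of the cutting to avoid overcounting, and one has to verify, via a probabilistic argument on the random sample, that the resulting boundary contribution really can be absorbed into the desired bound rather than dominating it.
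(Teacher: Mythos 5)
The paper does not prove this theorem; it is quoted from the literature (Edelsbrunner--Guibas--Sharir, with the polylogarithmic factor removed by Apfelbaum--Sharir), so your proposal can only be judged on its own terms. Your overall strategy --- a $(1/r)$-cutting, a degenerate bound inside each cell, and Szemer\'edi--Trotter on the boundary --- is indeed the right family of argument. But there is a concrete gap at the heart of it: you apply the wrong branch of the trivial bound inside each cell. The estimate $I(P_\sigma,\Pi_\sigma)\ll |P_\sigma|+|P_\sigma|^{1/2}|\Pi_\sigma|$ from Lemma \ref{theorem:trivialincidences} rests on the fact that two of the ``lines'' are simultaneously incident to at most one point; for two planes this fails, since they intersect in a full line which may contain many points of $P_\sigma$. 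The hypothesis that no three planes are collinear gives you the \emph{other} Cauchy--Schwarz branch: any two distinct points determine a line, and at most two planes of $\Pi$ contain that line, so $I(P_\sigma,\Pi_\sigma)\ll |\Pi_\sigma|+|\Pi_\sigma|^{1/2}|P_\sigma|$. Summing that correct bound over the $O(r^3)$ cells gives $O\bigl(r^2|\Pi| + (|\Pi|/r)^{1/2}|P|\bigr)$, which balances at $r\approx |P|^{2/5}|\Pi|^{-1/5}$ and yields exactly $|P|^{4/5}|\Pi|^{3/5}$. Your version, by contrast, produces $O\bigl(|P|+|P|^{1/2}r^{1/2}|\Pi|\bigr)$, which is increasing in $r$, so the cutting buys you nothing and you can never beat the trivial $|P|^{1/2}|\Pi|$; correspondingly, your claimed optimum $r\approx |P|^{3/5}|\Pi|^{-1/5}$ does not solve any balancing equation arising from your own terms.

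A secondary but real issue is the boundary bookkeeping. The claim that the boundary contribution is $\approx (r/|\Pi|)\cdot I(P,\Pi)$ and ``can be absorbed on the left-hand side'' is not justified: points lying on the sampled planes have to be handled by a genuine two-dimensional incidence count within each such plane (where, as you note, the hypothesis makes the induced lines distinct and Theorem \ref{theorem:ST} applies), and the resulting terms must be bounded directly and shown to be dominated by $|P|^{4/5}|\Pi|^{3/5}+|P|+|\Pi|$ for the chosen $r$, rather than absorbed by a self-referential inequality. As written, neither the interior nor the boundary estimate closes.
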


In the original paper \cite{EGS} this bound is multiplied by a factor of the form $|P|^{o(1)}|\Pi|^{o(1)}$. However Apfelbaum and Sharir \cite{AS} showed that this additional factor can be eliminated with more careful analysis and so the refined version is used here.

Like the Szemer\'edi-Trotter theorem, the Edelsbrunner-Guibas-Sharir theorem has a standard corollary, the derivation of which follows precisely as for Corollary \ref{theorem:ST'}. 

\begin{corollary}\label{theorem:EGS'}
Let $\Pi$ be a set of planes in $\mathbb{R}^3$, no three of which are collinear. Then the number of points incident to at least $k$ planes in $\Pi$ is $O\left(\frac{|\Pi|^3}{k^5}+\frac{|\Pi|}{k}\right).$
\end{corollary}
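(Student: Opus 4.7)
The plan is to imitate the proof of Corollary \ref{theorem:ST'} almost verbatim, with Theorem \ref{theorem:EGS} playing the role that Szemer\'edi-Trotter played there. Let $P_k$ denote the set of points in $\mathbb{R}^3$ that are incident to at least $k$ planes from $\Pi$. The starting point is the double counting inequality
	$$k|P_k| \leq I(P_k, \Pi),$$
which comes directly from the definition of $P_k$.

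Next I would apply Theorem \ref{theorem:EGS} to the incidence count on the right. The nondegeneracy hypothesis (no three planes of $\Pi$ collinear) passes to any subset of $\Pi$ automatically, so
	$$I(P_k,\Pi) \ll |P_k|^{4/5}|\Pi|^{3/5} + |P_k| + |\Pi|.$$
Combining with the lower bound gives
	$$k|P_k| \ll |P_k|^{4/5}|\Pi|^{3/5} + |P_k| + |\Pi|.$$
We may assume $k$ is larger than whatever fixed constant is implicit in the $|P_k|$ term on the right, since otherwise the claimed bound $O(|\Pi|/k)$ is trivial from $|P_k| \leq |P_1| \leq$ (a quantity of order $|\Pi|$ obtained by a coarser version of the same argument). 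Under this assumption the middle term can be absorbed into the left-hand side, leaving
	$$k|P_k| \ll |P_k|^{4/5}|\Pi|^{3/5} + |\Pi|.$$

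Finally I would split into the two cases corresponding to which of the two terms on the right dominates. If $k|P_k| \ll |P_k|^{4/5}|\Pi|^{3/5}$, then rearranging gives $|P_k| \ll |\Pi|^3/k^5$; if instead $k|P_k| \ll |\Pi|$, then directly $|P_k| \ll |\Pi|/k$. Taking the maximum of the two possibilities yields exactly
	$$|P_k| \ll \frac{|\Pi|^3}{k^5} + \frac{|\Pi|}{k},$$
as required. I do not expect any real obstacle: the whole derivation is a mechanical dualisation of Corollary \ref{theorem:ST'}, and the only point that needs a word of comment is the handling of the $|P_k|$ summand in Theorem \ref{theorem:EGS}, which is why I flagged it above.
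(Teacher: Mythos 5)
Your proposal is correct and is exactly the argument the paper intends: the text explicitly states that the derivation of Corollary \ref{theorem:EGS'} ``follows precisely as for Corollary \ref{theorem:ST'}'', i.e.\ the same double count $k|P_k|\leq I(P_k,\Pi)$ followed by Theorem \ref{theorem:EGS} and a two-case rearrangement. The only blemish is your fallback for constant $k$: $|P_1|$ (and even $|P_2|$) can be infinite, so the right trivial bound for small $k$ is $|P_k|\leq|P_3|\leq\binom{|\Pi|}{3}\ll|\Pi|^3/k^5$, using that no three planes are collinear; with that substitution the absorption of the $|P_k|$ term is fully justified.
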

	
\subsection{Points and pseudolines}\label{section:psuedolines}

A set $P$ of points and a collection $V$ of varieties in $\mathbb{R}^n$ form a \textbf{pseudoline system} if

\begin{enumerate}
\item Any pair of varieties from $V$ intersect in at most $O(1)$ points in $P$.
\item Any pair of points from $P$ are simultaneously incident to at most $O(1)$ varieties in $V$.
\item Points in $P$ can be incident to varieties in $V$ only at their smooth points.
\item If two varieties intersect at a point in $P$ then their tangent spaces intersect \textit{only} at that point.
\end{enumerate}

Solymosi and Tao \cite{SolymosiTao} proved the following result which generalises the Szemer\'edi-Trotter theorem, up to a loss of $o(1)$ in the exponent of $|P|$, to pseudoline systems in $\mathbb{R}^n$ for which the varieties are of dimension at most $n/2$. 

\begin{theorem}[Solymosi, Tao]
Let $n \geq 2k$ and let $P$ and $V$ form a pseudoline system in $\mathbb{R}^n$. If the varieties in $V$ are at most $k$-dimensional, and all have degree at most $O(1)$ then
	$$I(P,V)\ll_{\epsilon} |P|^{2/3+\epsilon}|V|^{2/3} +|P| +|L|.$$
\end{theorem}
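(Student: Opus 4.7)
The plan is to combine polynomial partitioning with an induction on $|P|$, and in the hypersurface step with an additional induction on the ambient dimension, exploiting the condition $n \geq 2k$ to push the standard cell-decomposition argument behind Szemer\'edi--Trotter into the high-dimensional pseudoline setting. Fix $\epsilon > 0$ and let $C_\epsilon$ be a large constant to be determined; assume inductively that the bound holds with this constant for all systems on fewer points. By the Guth--Katz polynomial partitioning theorem, for any integer parameter $D \geq 1$ there exists a nonzero $f \in \mathbb{R}[x_1,\ldots,x_n]$ of degree at most $D$ whose zero set $Z = Z(f)$ partitions $\mathbb{R}^n \setminus Z$ into $O(D^n)$ open cells, each containing $O(|P|/D^n)$ points of $P$.

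Split $P = P_Z \sqcup P_*$ along $Z$, so that $I(P,V) \leq I(P_*, V) + I(P_Z, V)$. For the cellular term $I(P_*, V)$, varieties contained in $Z$ contribute nothing (since $P_* \cap Z = \emptyset$), and each of the remaining $V \in V$ is at most $k$-dimensional of bounded degree and so meets at most $O(D^k)$ cells by a Milnor--Thom estimate. Apply the trivial pseudoline bound inside each cell (the direct analogue of Lemma \ref{theorem:trivialincidences}, whose proof uses only that two points lie on $O(1)$ varieties and two varieties meet in $O(1)$ points), then sum over cells via Cauchy--Schwarz, obtaining $I(P_*, V) \ll |P| + |P|^{1/2}|V|D^{k - n/2}$. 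The exponent $k - n/2 \leq 0$ is precisely where the hypothesis $n \geq 2k$ is essential; without it the cellular contribution would not improve on the trivial bound.

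The harder term is $I(P_Z, V)$: incidences on the hypersurface $Z$. These decompose into incidences with varieties contained in $Z$, which continue to form a pseudoline system on the lower-dimensional variety $Z$, and incidences with varieties transverse to $Z$, whose intersections $V \cap Z$ are subvarieties of dimension at most $k-1$. In both cases, the plan is to apply the theorem recursively to the restricted system, using that the restriction drops both the ambient dimension $(n \mapsto n-1)$ and, generically, the dimension of the varieties $(k \mapsto k-1)$, so that the condition $n \geq 2k$ is preserved under iteration.

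Choosing $D$ as a slowly growing power $D = |P|^{c\epsilon}$ for suitable $c > 0$, and iterating this scheme $O(1/\epsilon)$ times, the accumulated multiplicative constants give a finite $C_\epsilon$; this iteration is the source of the $\epsilon$-loss in the final exponent and of the blow-up of $C_\epsilon$ as $\epsilon \to 0$. The main obstacle will be verifying that the pseudoline axioms persist under restriction to the irreducible components of $Z$, and carefully tracking the constants through each recursion so that the induction genuinely closes on a parameter that strictly decreases — most naturally the pair $(n, |P|)$ under lexicographic order, with the degree bound on the varieties being preserved because $f$ is taken of fixed degree at each step.
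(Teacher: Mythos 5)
The paper does not prove this theorem; it is quoted from Solymosi and Tao \cite{SolymosiTao} as background, so there is no in-paper argument to compare against. Judged on its own terms, your proposal has the right architecture (polynomial partitioning plus induction on the point set, with a separate treatment of the zero set $Z$), which is indeed the Solymosi--Tao strategy, but the cellular step as you have written it does not work, and it is the heart of the proof.

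Concretely: applying the \emph{trivial} bound in each cell and summing by Cauchy--Schwarz gives, as you compute, $I(P_*,V)\ll |P|+|P|^{1/2}|V|D^{k-n/2}$. In the critical case $n=2k$ (e.g.\ complex lines viewed in $\mathbb{R}^4$, which is the case the thesis actually needs for Theorem \ref{theorem:zahl} and Theorem \ref{theorem:toth}) the exponent $k-n/2$ is exactly $0$, so the cellular term is $|P|^{1/2}|V|$ --- the trivial bound, with no gain whatsoever, and no choice of $D$ helps. Even for $n>2k$ with $D=|P|^{c\epsilon}$ you only shave an $\epsilon$-power off $|P|^{1/2}|V|$, which is still far above $|P|^{2/3+\epsilon}|V|^{2/3}+|P|+|V|$ in the relevant ranges. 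The missing idea is that you must apply the \emph{induction hypothesis} inside each cell, not the trivial bound, and sum the main terms by H\"older: with $O(D^n)$ cells, $|P_i|\ll |P|/D^n$, and $\sum_i|V_i|\ll D^k|V|$, one gets
$$\sum_i |P_i|^{2/3+\epsilon}|V_i|^{2/3}\ll \left(\frac{|P|}{D^n}\right)^{2/3+\epsilon}\!\!(D^n)^{1/3}\,(D^k|V|)^{2/3}=|P|^{2/3+\epsilon}|V|^{2/3}\,D^{\frac{2k-n}{3}-n\epsilon},$$
and it is the factor $D^{-n\epsilon}$ (available precisely because $n\geq 2k$ kills the $\frac{2k-n}{3}$ term) that lets a large constant $D=D(\epsilon)$ absorb the implied constants and close the induction. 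This is also the true source of the $\epsilon$-loss, not the number of iterations.

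A secondary but genuine problem: your claim that the condition $n\geq 2k$ is preserved under restriction to $Z$ fails for the varieties \emph{contained} in $Z$, which keep dimension $k$ while the ambient dimension drops to $n-1<2k$. Handling these is a real difficulty in Solymosi--Tao; it is resolved using the fourth pseudoline axiom (transversality of tangent spaces at incidences), which at a smooth point of $Z$ forces the tangent spaces of any two such varieties to sit inside the $(n-1)$-dimensional tangent space of $Z$ and meet only at that point, thereby bounding the number of contained varieties through a smooth point by $O(1)$; singular points of $Z$ are passed to a lower-degree hypersurface and handled by a separate descent. As written, your recursion does not close on this branch.
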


The case with $n=4$ and $k=2$ is particularly important in the next section, as it corresponds to a Szemer\'edi-Trotter theorem for complex numbers. In this case, Zahl \cite{zahl} succeeded in eliminating the $o(1)$ loss to give the following result.

\begin{theorem}[Zahl]\label{theorem:zahl}
Let $P$ and $V$ form a pseudoline system in $\mathbb{R}^4$. If the varieties in $V$ are all $2$-dimensional, and all have degree at most $O(1)$ then
	$$I(P,V)\ll |P|^{2/3}|V|^{2/3} +|P| +|L|.$$
\end{theorem}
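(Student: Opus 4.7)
\emph{Proof proposal.} The natural strategy is the polynomial partitioning method of Guth and Katz applied to the ambient $\mathbb{R}^4$, with extra work to eliminate the $\epsilon$-loss present in the Solymosi--Tao theorem. As a preliminary step, I would extend Lemma \ref{theorem:trivialincidences} verbatim from lines to pseudoline systems: its proof uses only the fact that two distinct lines are simultaneously incident to at most one point, which is replaced here by the hypothesis that any two varieties in $V$ meet in $O(1)$ points of $P$. This gives $I(P,V) \ll |V|+|V|^{1/2}|P|$ (and the symmetric version), which disposes of the unbalanced regimes; we may henceforth assume $|V|^{1/2} \lesssim |P| \lesssim |V|^2$, so that the target term $|P|^{2/3}|V|^{2/3}$ dominates the $|P|+|V|$ term in the conclusion.

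Next I would invoke the polynomial ham-sandwich theorem to produce a polynomial $f$ of degree $D$ (to be optimised) whose zero set partitions $\mathbb{R}^4 \setminus Z(f)$ into $O(D^4)$ open cells, each containing at most $|P|/D^4$ points of $P$. A $2$-dimensional variety of bounded degree not contained in $Z(f)$ enters at most $O(D^2)$ cells, by a Milnor--Thom estimate applied to $f|_{V_i}$. Inside each cell $\Omega_j$, with $P_j=P\cap\Omega_j$ and $V_j$ the varieties crossing $\Omega_j$, the pseudoline trivial bound gives
\[
I(P_j, V_j) \ll |V_j| + |V_j|^{1/2}|P_j|.
\]
Summing over $j$, using $\sum_j|V_j| \ll |V|D^2$, Cauchy--Schwarz, and $|P_j|\leq |P|/D^4$, produces a total cellular contribution of $O(|V|D^2+|V|^{1/2}|P|/D)$. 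The choice $D\approx (|P|/|V|^{1/2})^{1/3}$ balances the two terms at exactly $|P|^{2/3}|V|^{2/3}$, which is the desired main bound.

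What remains is to control incidences involving $P_0 := P \cap Z(f)$. I would split $V$ into the varieties contained in $Z(f)$ and those transverse to it. Transverse varieties meet $Z(f)$ in a $1$-dimensional subvariety of degree $O(D)$; a second application of polynomial partitioning inside the $3$-fold $Z(f)$ then handles this contribution. Varieties genuinely contained in $Z(f)$ give rise to a point--variety incidence problem in strictly lower dimension; the pseudoline properties (smoothness at incidence points and transversality of tangent spaces) are preserved under restriction to a smooth component of $Z(f)$, so an induction on dimension can be set up. The principal obstacle — and the whole point of Zahl's refinement over Solymosi--Tao — is to execute this recursion without picking up a $|P|^\epsilon$ factor: Solymosi and Tao use a cuttings-based decomposition that loses $\epsilon$ at each iterated scale, whereas here one must run the induction cleanly inside a single polynomial-partitioning framework, with the inductive hypothesis carefully chosen in terms of the pair $(|P|,|V|)$ so that the contribution of each level of iteration is absorbed only up to a constant. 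Making this induction terminate with a genuinely sharp exponent, as opposed to the $|P|^\epsilon$-lossy version that comes for free, is the main technical hurdle and is specific to the codimension-$2$ setting of $2$-varieties in $\mathbb{R}^4$, which is exactly why Zahl obtains his clean bound only in this case.
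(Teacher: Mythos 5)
First, note that the paper does not prove this statement: Theorem \ref{theorem:zahl} is quoted from Zahl's paper \cite{zahl} without proof, so there is no in-text argument to compare yours against. Judged on its own terms, your proposal has a sound skeleton but a genuine gap. The cellular half is fine: the extension of the trivial bound to pseudoline systems is legitimate (the proof of Lemma \ref{theorem:trivialincidences} only uses that two curves share $O(1)$ points of $P$ and two points lie on $O(1)$ curves, with the implicit constant absorbed accordingly), the reduction to the regime $|V|^{1/2}\lesssim |P|\lesssim |V|^{2}$ is correct, and the bookkeeping $\sum_j |V_j|\ll |V|D^2$, $\sum_j |V_j|^{1/2}|P_j|\ll |P||V|^{1/2}/D$, with $D\approx (|P|/|V|^{1/2})^{1/3}$, does balance at $|P|^{2/3}|V|^{2/3}$.

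The gap is that everything concerning $P_0=P\cap Z(f)$ and the varieties contained in $Z(f)$ is deferred to an ``induction on dimension'' whose essential property --- that it closes without accumulating an $\epsilon$ in the exponent --- you explicitly identify as ``the main technical hurdle'' and then do not address. This is not a loose end; it is the entire content of the theorem beyond Theorem \ref{theorem:zahl}'s $\epsilon$-lossy predecessor (Solymosi--Tao). In particular: (i) $Z(f)$ can contain \emph{all} of $P$ (nothing forces the points to be in general position with respect to a degree-$D$ hypersurface), so the boundary case cannot be dismissed as lower order; (ii) a $2$-dimensional variety contained in the $3$-fold $Z(f)$ is a hypersurface there, not a ``pseudoline,'' so the restricted problem is not of the same type as the original and the inductive hypothesis you gesture at is not even well-formulated; and (iii) the second partitioning inside $Z(f)$ that you invoke for transverse varieties requires a partitioning theorem on a variety rather than on $\mathbb{R}^n$, which is itself nontrivial. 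Zahl's actual resolution of these points (a two-level partition together with a structural analysis of $2$-surfaces lying in a low-degree $3$-fold) is precisely what is missing. As written, your argument proves only the bound $I(P,V)\ll_\epsilon |P|^{2/3+\epsilon}|V|^{2/3}+|P|+|V|$ already supplied by Solymosi and Tao, not the sharp statement claimed.
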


\section{Other fields}

So far we have considered only real vector spaces. But we are also interested in other settings, in particular complex numbers and finite fields. Chapter \ref{chapter:incidences} examines incidence geometry over finite fields and proves several original results, so we keep our powder dry and consider only the complex setting here.

T\'oth \cite{toth} announced\footnote{The 2011 preprint referenced here is a more recent version of the 2003 original.} the following generalisation of the Szemer\'edi-Trotter theorem to $\mathbb{C}^2$ in 2003, but nearly ten years later it is still in the review process.

\begin{theorem}[T\'oth]\label{theorem:toth}
If $P$ is a set of points in $\mathbb{C}^2$ and $L$ is a set of lines, then
	$$I(P,L)\ll |P|^{2/3}|L|^{2/3}+|P|+|L|.$$
\end{theorem}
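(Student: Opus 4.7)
The plan is to deduce Tóth's theorem directly from Zahl's theorem (Theorem \ref{theorem:zahl}) by identifying $\mathbb{C}^2$ with $\mathbb{R}^4$ via $(x_1+iy_1,\,x_2+iy_2)\mapsto (x_1,y_1,x_2,y_2)$. Under this identification, each complex line $L\subseteq \mathbb{C}^2$ becomes a real affine $2$-plane $V_L\subseteq\mathbb{R}^4$, cut out by two real linear equations (taking real and imaginary parts of the single complex-linear equation defining $L$). So $V_L$ is a smooth algebraic variety of dimension $2$ and bounded (indeed constant) degree. Write $V=\{V_L : L\in \mathcal{L}\}$ and view $P$ as a subset of $\mathbb{R}^4$; then incidences are preserved, $I(P,\mathcal{L})=I(P,V)$, so it suffices to verify that $(P,V)$ is a pseudoline system in the sense of Section \ref{section:psuedolines} and then invoke Theorem \ref{theorem:zahl}.

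The first two axioms are immediate from standard complex geometry: two distinct complex lines meet in at most one complex point (hence one real point), and two distinct complex points determine at most one complex line. Axiom 3 is trivial because each $V_L$ is an affine plane, so every point of $V_L$ is smooth. The substantive axiom is the fourth, that if $V_{L_1}$ and $V_{L_2}$ meet at a point $p$ then their tangent spaces meet only at $p$. The tangent space of $V_{L_j}$ at $p$ is just the translate of the underlying real $2$-dimensional linear subspace $W_j\subseteq \mathbb{R}^4$ obtained from $L_j$. Because $L_j$ is a complex line, $W_j$ is stable under the real-linear map $J$ corresponding to multiplication by $i$ (an $\mathbb{R}$-linear almost complex structure on $\mathbb{R}^4$). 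So $W_1\cap W_2$ is a $J$-invariant real subspace; if it had real dimension $\geq 1$, $J$-invariance forces its real dimension to be even, hence $\geq 2$, and since $\dim_{\mathbb{R}} W_j=2$ for each $j$, we would conclude $W_1=W_2$, contradicting $L_1\neq L_2$. Therefore $W_1\cap W_2=\{0\}$ and axiom 4 holds.

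With all four axioms verified and the varieties of dimension $2$ and degree $1$, Theorem \ref{theorem:zahl} gives
\[
I(P,\mathcal{L}) \;=\; I(P,V) \;\ll\; |P|^{2/3}|V|^{2/3} + |P| + |V| \;=\; |P|^{2/3}|\mathcal{L}|^{2/3} + |P| + |\mathcal{L}|,
\]
which is exactly the bound claimed. The main obstacle in the argument is the verification of axiom 4; once the $J$-invariance observation above is in hand, the rest is bookkeeping. I would expect that, if one wanted to avoid citing Zahl's theorem, the Solymosi--Tao theorem could be substituted to give the bound with an extra $|P|^{\epsilon}$ factor, but since Theorem \ref{theorem:zahl} is available the proof becomes essentially a one-line invocation after the pseudoline system is set up.
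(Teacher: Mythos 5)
Your proposal is correct and is exactly the route the paper itself indicates: the text notes that lines in $\mathbb{C}^2$ are pseudolines in $\mathbb{R}^4$, so Theorem \ref{theorem:toth} is an immediate corollary of Theorem \ref{theorem:zahl}. You have simply filled in the verification of the pseudoline axioms (in particular the tangent-space condition via $J$-invariance) that the paper leaves implicit.
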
    

The usual corollary to Szmer\'edi-Trotter applies here as well:

\begin{corollary}\label{theorem:toth'}
Let $L$ be a set of lines in $\mathbb{C}^2$. Then the number of points incident to at least $k$ lines in $L$ is $O\left(\frac{|L|^2}{k^3}+\frac{|L|}{k}\right)$. Similarly, the number of lines incident to at least $k$ points in $P$ is $O\left(\frac{|P|^2}{k^3}+\frac{|P|}{k}\right)$.
\end{corollary}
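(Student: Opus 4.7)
The proof will closely mirror the argument used for Corollary \ref{theorem:ST'}, which derived the corresponding bound in the real setting as a direct consequence of Szemer\'edi-Trotter. Since T\'oth's Theorem \ref{theorem:toth} is a verbatim analogue of Szemer\'edi-Trotter for $\mathbb{C}^2$, the same double-counting argument should transfer without modification.

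The plan is as follows. Fix $k$ and let $P_k$ denote the set of points in $\mathbb{C}^2$ that are incident to at least $k$ lines in $L$. By the definition of $P_k$ we immediately have the lower bound
	$$|P_k|\, k \leq I(P_k, L),$$
since each point in $P_k$ contributes at least $k$ incidences to the total. On the other hand, applying Theorem \ref{theorem:toth} to the pair $(P_k, L)$ gives the upper bound
	$$I(P_k, L) \ll |P_k|^{2/3}|L|^{2/3} + |P_k| + |L|.$$
Noting that the $|P_k|$ term on the right can be absorbed (since $|P_k| \leq I(P_k,L)/k$, this would only give a useful contribution when $k = O(1)$, in which case the bound is trivial), I would combine the two displays to obtain
	$$|P_k|\, k \ll |P_k|^{2/3}|L|^{2/3} + |L|.$$
By the standard observation that $X \ll Y + Z$ is equivalent to $X \ll \max\{Y,Z\}$, either the first or the second term on the right dominates. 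In the first case, rearranging $|P_k|\, k \ll |P_k|^{2/3}|L|^{2/3}$ yields $|P_k| \ll |L|^2/k^3$; in the second case, $|P_k|\, k \ll |L|$ yields $|P_k| \ll |L|/k$. Taking the maximum gives the claimed bound $|P_k| \ll |L|^2/k^3 + |L|/k$.

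The argument for the dual statement, bounding the number of lines incident to at least $k$ points from a fixed point set $P$, is identical with the roles of points and lines swapped; this relies on the symmetry in $|P|$ and $|L|$ of the bound in Theorem \ref{theorem:toth}. There is no real obstacle here: all the substantive work is contained in Theorem \ref{theorem:toth} itself, and this corollary is a purely mechanical consequence. The only minor care needed is in handling the absorption of the $|P_k|$ term on the right-hand side of the incidence bound, which is why one should note it can be dismissed in the regime where the conclusion is nontrivial.
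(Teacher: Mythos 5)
Your proposal is correct and is essentially identical to the paper's own derivation: the paper simply notes that ``the usual corollary applies'' and refers back to the proof of Corollary \ref{theorem:ST'}, which is exactly the double-counting argument you give (lower bound $|P_k|k \leq I(P_k,L)$, upper bound from the incidence theorem, rearrange the two dominant cases). If anything you are slightly more careful than the paper, which silently drops the $|P_k|$ term from the right-hand side; your remark that it only matters when $k=O(1)$, where the claim is trivial (for $k\geq 2$ there are at most $\binom{|L|}{2}$ such points), is the right justification.
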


More recently, the work of Solymosi, Tao and Zahl on pseudolines yields T\'oth's result as a special case. Indeed, lines in $\mathbb{C}^2$ are pseudolines in $\mathbb{R}^4$ and so Theorem \ref{theorem:toth} is an immediate corollary of Theorem \ref{theorem:zahl}.

One can of course then consider complex versions of the higher-dimensional generalisations in Section \ref{section:highdim}. Theorem \ref{theorem:toth} extends to point-line incidences $\mathbb{C}^n$, in the same way as Szemer\'edi-Trotter, although it is not known if the Guth-Katz theorem for $\mathbb{R}^3$ generalises to $\mathbb{C}^3$.
\chapter{Growth}
\label{chapter:introgrowth}

Just as the last chapter did for incidence geometry, the purpose here is to introduce the two examples of growth highlighted in the prologue: sum-product estimates and expander functions. The chapter also records particular results that will be useful in Chapter \ref{chapter:incidences}.

The idea for sum-product estimates is that, for a subset $A$ of a field, at least one of the sumset $A+A$ and the product set $AA$ must always be large. Chapter \ref{chapter:incidences} will use finite field sum-product estimates as a key ingredient to proving new finite field incidence theorems. Chapter \ref{chapter:functionfield} will establish a sum-product theorem in function fields, a relatively unexplored setting for the problem. 

Expander functions are slightly different from sum-products. Rather than considering both the sumset and the product set, the concern is with the cardinality of a single image set $f(A)$ of some multivariate function $f$. A medley of new results on expander functions are proved in Chapter \ref{chapter:expanders}.

All of the mathematics in this chapter is drawn from the literature.

\section{Sum-product estimates}

Let $A$ be a finite subset of a field $F$, and consider the cardinalities of the \textbf{sumset}
	$$A+A=\left\{a+b:a,b \in A\right\}$$
and the \textbf{product set}
	$$AA=\left\{ab:a,b \in A\right\}.$$
	
It is clear that both these sets have at least the same cardinality as $A$. And it is certainly possible to have $|A+A|\approx |A|$ or $|AA|\approx |A|$, for example if $A$ is either an arithmetic or geometric progression. But the idea of sum-product estimates is that it is \textit{not possible to have both of these at once}. Results are therefore of the form 
	$$\max\left\{|A+A|,|AA|\right\}\gg |A|^{1+\delta}$$
with $\delta>0$, holding for any finite set $A \subseteq F$ that possibly satisfies some nondegeneracy conditions. Larger values of $\delta$ correspond to stronger results.

Sum-products were first studied by Erd\"os and Szemer\'edi \cite{erdosszemeredi}, who proved the existence of an unquantified $\delta>0$ holding for any finite set $A$ of integers. They conjectured that $\delta \geq 1-o(1)$. It is not possible to remove the $o(1)$ from this conjecture, as demonstrated by the case where $A$ is the set of integers between $1$ and $N$; in this case $|A+A|\approx |A|$ but $|AA|\approx |A|^{2-o(1)}$.   

Sum-product estimates can of course be considered in the more general setting of rings instead of fields, and indeed Erd\"os and Szemer\'edi's initial work concerned the ring of integers. However, when working over general rings the sum-product phenomenon is constrained by the possibility of zero divisors. If $A$ contains too many then the sumset and the product set might both be small. The motivation for sticking with fields in this thesis is therefore that avoiding zero-divisor considerations means working with an integrel domain, and when given an intergral domain one may as well work with its field of fractions. Tao \cite{taorings} has recently obtained a general, non-explicit, sum-product result for rings in the absence of too many zero divisors. 

\subsection{Real and complex sum-products}

The strongest known sum-product estimate is $\delta \geq \frac{1}{3}-o(1)$, which was proved for any finite set of reals by Solymosi \cite{solymosi1} and recently generalised to any finite set of complex numbers by Konyagin and Rudnev \cite{konyaginrudnev}. There is a fairly extensive list \cite{elekes,ford,nathanson,solymosi3,solymosi2} of previous results in the real and complex settings.

The incidence geometry of Chapter \ref{chapter:introincidences} yields a lot of information about sum-products via the Szemer\'edi-Trotter theorem. An example, not quite as strong as the more recent Solymosi and Konyagin-Rudnev bounds, is the following result of Elekes \cite{elekes}. A more involved version of this approach can be found in another paper of Solymosi \cite{solymosi2}.

\begin{theorem}[Elekes]\label{theorem:elekes} 
Let $A \subseteq \mathbb{C}$ be finite. Then 
	$$\max\left\{|A+A|,|AA|\right\}\gg |A|^{1+\frac{1}{4}}.$$
\end{theorem}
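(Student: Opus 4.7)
The plan is to apply T\'oth's complex Szemer\'edi-Trotter theorem (Theorem \ref{theorem:toth}) to a clever point-line configuration built from $A+A$ and $AA$. The construction, due to Elekes, exhibits so many incidences that the theorem forces the product $|A+A| \cdot |AA|$ to be large. Without loss of generality I assume $0 \notin A$ (removing it changes cardinalities by at most one).

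First I take as my point set $P = (A+A) \times (AA) \subseteq \mathbb{C}^2$, so that $|P| = |A+A| \cdot |AA|$. For the line set, for each ordered pair $(a,b) \in A \times A$ let $l_{a,b}$ be the line with equation $y = a(x-b)$. Since the slope-intercept pair $(a, -ab)$ determines $(a,b)$ whenever $a \neq 0$, these $|A|^2$ lines are pairwise distinct, giving $|L| = |A|^2$.

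The key step is the incidence count. For any $a,b,c \in A$ the point $(b+c, ac)$ lies in $(A+A) \times (AA) = P$ and also on $l_{a,b}$, since $a((b+c)-b) = ac$. For fixed $(a,b)$, the $|A|$ choices of $c$ give distinct points (their first coordinates $b+c$ vary), so each line in $L$ is incident to at least $|A|$ points of $P$, yielding $I(P,L) \geq |A|^3$. Plugging into Theorem \ref{theorem:toth},
$$|A|^3 \ll (|A+A| \cdot |AA|)^{2/3}\, |A|^{4/3} + |A+A| \cdot |AA| + |A|^2.$$
The second term gives $|A+A| \cdot |AA| \gg |A|^3$ and the third is absorbed, so in every case the first term must dominate, giving $|A+A| \cdot |AA| \gg |A|^{5/2}$ and hence $\max\{|A+A|, |AA|\} \gg |A|^{5/4}$.

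The only real content is the construction itself: once the points and lines are specified, the distinctness of the lines is a one-line algebraic check, the incidence lower bound is routine bookkeeping, and the rest is a formal consequence of T\'oth's theorem. The ``hard part'' was recognizing that lines through points of $(A+A) \times (AA)$ with slope in $A$ and $x$-intercept in $A$ produce the right number of guaranteed incidences to exploit the $N^{4/3}$ ceiling of Szemer\'edi-Trotter.
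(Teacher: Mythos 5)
Your proposal is correct and follows exactly the paper's argument: the same point set $(A+A)\times(AA)$, the same lines $y=a(x-b)$, the same incidence lower bound $|A|^3$, and the same application of T\'oth's complex Szemer\'edi-Trotter theorem. Your extra remark on the distinctness of the lines (and excluding $0$ from $A$) is a detail the paper elides, but otherwise the two proofs coincide.
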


\begin{proof}
Define a point set $P$ and a line set $L$ by 
	\begin{align*}
	P&=(A+A)\times (AA)\\
	L&=\left\{l_{ab}:a,b\in A\right\} 
	\end{align*}
where $l_{ab}$ is the line given by $y=a(x-b)$. It is clear that
	\begin{align*}
	|P|&=|A+A||AA|\\
	|L|&=|A|^2.
	\end{align*} 
Each line $l_{ab} \in L$ is incident to at least $|A|$ points in $P$ since for each $c \in A$ the point $(b+c,ac)$ is an element of $P \cap l_{ab}$. Thus
	$$I(P,L)\geq |L||A|=|A|^3.$$ 
Applying the the complex Szemer\'edi-Trotter theorem (Theorem \ref{theorem:toth}) shows that
	\begin{align*}
	I(P,L) &\ll |P|^{2/3}|L|^{2/3}+|P|+|L|\\
	&\approx |A+A|^{2/3}|AA|^{2/3}|A|^{4/3}+|A+A||AA|+|A|^2.
	\end{align*}
Comparing the upper and lower bounds on $I(P,L)$ yields
	$$|A|^3 \ll |A+A|^{2/3}|AA|^{2/3}|A|^{4/3}+|A+A||AA|.$$
So either 	
	$$|A|^3 \ll |A+A|^{2/3}|AA|^{2/3}|A|^{4/3}$$
or
	$$|A|^3\ll |A+A||AA|.$$
In the event of the former, rearranging gives
	$$\max\left\{|A+A|,|AA|\right\}\gg |A|^{5/4}.$$ 
On the other hand, if the latter holds then 
	$$\max\left\{|A+A|,|AA|\right\}\gg |A|^{3/2} \geq |A|^{5/4}.$$ Either way the proof is complete.
\end{proof}

\subsection{Finite field sum-products}

Sum-product results for finite fields must be prefaced with a nondegeneracy condition, to prevent the set $A$ being too close to a copy of a subfield. For example if $A$ is a field then $\max\left\{|A+A|,|AA|\right\}=|A|$ and so non-trivial estimates are impossible. To keep this issue as straightforward as possible it is often convenient to concentrate  on the finite field $\mathbb{F}_p$ of prime order $p$, where it collapses into the requirement for an upper bound on $|A|$ in terms of $p$.

Let's now see what estimates are known in $\mathbb{F}_p$. The breakthrough result was due to Bourgain, Katz and Tao \cite{BKT} who proved the existence of an absolute $\delta>0$ dependent on $\gamma>0$ that holds whenever $|A|<p^{1-\gamma}$.

Garaev \cite{garaev2} made the Bourgain-Katz-Tao result explicit, obtaining $\delta \geq \frac{1}{14}-o(1)$ whenever $|A|<p^{1/2}$. This has been subsequently improved by a variety of authors \cite{BG, KS, li, shen, rudnev}. The most recent estimate is $\delta>\frac{1}{11}-o(1)$ whenever $|A|<p^{1/2}$, due to Rudnev \cite{rudnev}. Li and Roche-Newton \cite{LiORN1} built on a technique of Katz and Shen \cite{KSgeneral} to extend this estimate to any finite field, not necessarily of prime order, so long as $A$ is not too close to being a subfield. 

Chapter \ref{chapter:expanders} will use a particular form of Rudnev's $\frac{1}{11}-o(1)$ result. Its statement involves the multiplicative energy $E_{\times}(A)$ of $A$, which is the number of solutions to $ab=cd$ with $a,b,c,d \in A$; this is developed more fully in Chapter \ref{chapter:introapproxgroup}. As Rudnev remarks in his paper, the result is really an upper bound on $E_{\times}(A)$ in terms of the sumset $A+A$, and works just as well when this is replaced with the difference set $A-A$. In these terms, the estimate can be formulated as follows.

\begin{theorem}[Rudnev]\label{theorem:mishasumprodvariation}
Let $A \subseteq \mathbb{F}_p$. If $|A|\ll p^{1/2}$ then
	$$E_{\times}(A)^4\lesssim |A-A|^7 |A|^4.$$
\end{theorem}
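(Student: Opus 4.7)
The plan is to derive the bound via an incidence argument in $\mathbb{F}_p^2$ (or $\mathbb{F}_p^3$), exploiting the hypothesis $|A| \ll p^{1/2}$ to stay safely below the threshold at which non-trivial finite-field incidence bounds become available. Writing $r(\lambda) = |\{(a,c) \in A \times A : a = \lambda c\}|$ for $\lambda \in A/A$, the first step is the reformulation
\[E_{\times}(A) = \sum_{\lambda \in A/A} r(\lambda)^2\]
(up to the harmless contribution of quadruples involving zero), together with the geometric identity $a - d = \lambda(c - b)$ satisfied by every multiplicative quadruple $(a,b,c,d)$ with $\lambda = a/c = d/b$. Thus each such quadruple contributes a point $(c-b, a-d) \in (A-A) \times (A-A)$ lying on the line $\ell_\lambda$ through the origin of slope $\lambda$, which is the basic geometric object the argument will exploit.

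Next I would dyadically pigeonhole to isolate a level $s$ and a set $S \subseteq A/A$ of slopes on which $r(\lambda) \in [s, 2s)$, with $|S| s^2 \gtrsim E_{\times}(A)$ up to logarithms. The collection of pairs $(a,c) \in A \times A$ with $a/c \in S$ is then very rich along lines through the origin, and forming coordinate-wise differences of such pairs translates the rich-lines information into an incidence count between points of $(A-A) \times (A-A)$ and a line family parametrised by $S$. An appropriate incidence bound --- either a Szemer\'edi-Trotter-style estimate valid in $\mathbb{F}_p^2$ for point and line sets of size $\ll p$, or a point-plane incidence bound in $\mathbb{F}_p^3$ phrased as a collinear-triples estimate --- yields an inequality of the shape $|S| s^\alpha \lesssim |A-A|^\beta |A|^\gamma$ for suitable exponents. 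The Pl\"unnecke-Ruzsa inequality is then invoked to replace auxiliary quantities such as $|A/A|$ or $|A \cdot (A-A)|$ by appropriate powers of $|A-A|$ and $|A|$.

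Summing over dyadic levels and balancing exponents should recover $E_{\times}(A) \lesssim |A-A|^{7/4} |A|$, which is the fourth-power form of the claimed inequality. The hardest part is extracting precisely the exponent $7/4$ on $|A-A|$: a direct application of Szemer\'edi-Trotter to the most obvious point-line configuration yields a weaker inequality, so the sharp exponent requires either a sharper incidence bound or working with a higher multiplicative moment (for instance the third energy or the count of collinear triples through the origin inside $A \times A$) and descending back to the second moment via H\"older, with the careful balancing of exponents in this intermediate step being the most delicate aspect of the argument.
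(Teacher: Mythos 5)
Your outline founders on its central step: it presupposes a nontrivial incidence bound over $\mathbb{F}_p$ strong enough to extract the exponent $7/4$ on $|A-A|$, and no such bound is available. The Szemer\'edi--Trotter exponent $4/3$ is a theorem about $\mathbb{R}^2$ (and, via pseudolines, $\mathbb{C}^2$); its proof runs through the crossing number lemma and has no analogue over $\mathbb{F}_p$. The incidence bounds that \emph{do} hold for small sets in $\mathbb{F}_p^2$ are of the form $I(P,L)\ll N^{3/2-\epsilon}$ with $\epsilon$ on the order of $\frac{1}{662}$ at best, which would feed back into your scheme to give a sum--product gain orders of magnitude weaker than $\frac{1}{11}$. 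Worse, in this thesis those incidence bounds are themselves \emph{derived from} Rudnev's energy estimate (Chapter 4 reduces an excess of incidences to a sum--product configuration and then invokes Theorem \ref{theorem:mishasumprodvariation} through Lemmas \ref{theorem:partialsumprod} and \ref{theorem:partialsumprod2}), so deducing the energy bound from an $\mathbb{F}_p$ incidence theorem is circular here. Your own closing admission --- that the obvious point-line configuration gives a weaker inequality and that the sharp exponent needs ``either a sharper incidence bound or a higher moment'' --- is exactly where the proof is missing: that sharper input (a point--plane incidence theorem in $\mathbb{F}_p^3$ of Guth--Katz type) did not exist at the time and is not among this paper's tools; arguments of the shape you describe do appear in later work, but they prove a different, stronger exponent and are not the proof of this statement.

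The route the paper actually sketches is entirely additive-pivot, with no incidence geometry: one considers $R(A)=\left\{\frac{a-b}{c-d}\right\}=(A-A)/(A-A)$, uses $|A|\ll p^{1/2}$ to ensure $R(A)\neq\mathbb{F}_p$ (essentially), and then, since $\mathbb{F}_p$ has no proper additive subgroups, finds $\xi=\frac{a-b}{c-d}\in R(A)$ with $\xi+1\notin R(A)$. Clearing denominators gives $|A|^2=\left|(c-d)A+(a-b+c-d)A\right|$, an iterated sum of dilates of $A$ by differences of its elements, which is then controlled by covering lemmas, the Pl\"unnecke--Ruzsa inequalities, and a multiplicative-energy analysis to force either $|A-A|$ or $E_{\times}(A)^{-1}|A|^4$ to be large, yielding $E_{\times}(A)^4\lesssim|A-A|^7|A|^4$. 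If you want to prove this theorem you should follow that pivot argument rather than an incidence one.
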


The estimate $\delta \geq \frac{1}{11}-o(1)$ follows from the fact (again, see Chapter \ref{chapter:introapproxgroup}) that $E_{\times}(A)$ is greater than or equal to $\frac{|A|^4}{|AA|}.$

The proofs of all the above finite field results, including Theorem \ref{theorem:mishasumprodvariation}, take the same overall approach, sketched below.

\begin{proof}[Sketch of finite field sum-product proofs]
Given a set $A \subseteq \mathbb{F}_p$ consider the set 
	$$R(A)=\left\{\frac{a-b}{c-d}:a,b,c,d \in A, a \neq b, c \neq d\right\}.$$
Note that if $\xi \notin R(A)$ then 
	$$|A+\xi A|=|A|^2$$
since different pairs of elements from $A$ give different elements of $A+\xi A$. Indeed if there were a solution to  to 
	$$a+ \xi b = c + \xi d.$$
with $(a,b)\neq (c,d)$then rearranging would yield the contradiction
	$$\xi = \frac{a-c}{b-d}\in R(A).$$
	
Now, assume that $R(A) \neq \mathbb{F}_p$. This is immediate if $|A|<p^{1/4}$, and with a little effort we may assume something very similar so long as $|A|\ll p^{1/2}$. Since $\mathbb{F}_p$ contains no non-trivial additive subgroups, there must exist 
	$$\frac{a-b}{c-d}\in R(A)$$ 
such that 
	$$\frac{a-b}{c-d}+1\notin R(A).$$
Therefore
	\begin{align*}
	|A|^2 &= \left|A+ \left(\frac{a-b}{c-d}+1\right)A\right|\\
	&=\left|(c-d)A+(a-b+c-d)A\right|\\
	& \leq \left|AA-AA+AA-AA+AA-AA\right|. 
	\end{align*}
Thus an iterated sumset of the product set $AA$ must be large. With some work, and analysing multiplicative energy rather than the product set directly, it turns out that the only way this can happen is if either $AA$ is large itself or $A+A$ is large.	
\end{proof}

As a final remark for this section, it is worth thinking about what a reasonable sum-product conjecture might be for finite fields. Since the conjecture in the real case is $\delta \geq 1-o(1)$, it might be natural to suppose that this should hold over $\mathbb{F}_p$ whenever $|A|<p^{1/2}$. After all, this condition ensures there is no danger of the sumset or product set filling up the whole field. However the following result of Bourgain \cite{bourgain}, refined by Garaev \cite{garaev2}, shows that under this constraint the best that can be hoped for is $\delta \geq \frac{1}{2}$. To obtain $\delta \geq 1-o(1)$ will require $|A|\lesssim p^{1/3}$ as a necessary condition.

\begin{lemma}[Bourgain, Garaev]\label{theorem:bgexample}
For any prime $p$ and any integer $1 \leq N \leq p$ there is a set $A \subseteq \mathbb{F}_p$ with $|A|\approx N$ such that 		
	$$\max\left\{|A+A|,|AA|\right\}\ll N^{1/2}p^{1/2}.$$ 
In particular there is a set $A \subseteq \mathbb{F}_p$ with $|A|\approx p^{1/2}$ such that
	$$\max\left\{|A+A|,|AA|\right\}\ll |A|^{1+\frac{1}{2}}.$$ 
\end{lemma}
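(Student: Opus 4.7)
My plan is to construct $A$ explicitly as a two-dimensional generalised arithmetic progression in $\mathbb{F}_p$, with a carefully chosen dilation so that both the additive and multiplicative structure stay controlled simultaneously. First, take $Q = \lfloor \sqrt{p} \rfloor$, so that $Q$ has integer representative of order $\sqrt{p}$ and moreover $Q^{2} \bmod p$ is of absolute value at most $2\sqrt{p}$. Then set
$$A = \{ x + yQ \bmod p : 0 \leq x < M_1,\ 0 \leq y < M_2 \},$$
where the integer side-lengths satisfy $M_1 M_2 \approx N$ and the no-wraparound condition $M_1 + M_2 Q < p$, so that the representatives stay distinct and $|A| \approx N$.

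The sumset bound is immediate: $A+A$ lives inside the GAP with doubled sides $(2M_1, 2M_2)$, giving $|A+A| \ll M_1 M_2 \approx N \leq \sqrt{Np}$. For the product set one expands
$$(x_1 + y_1 Q)(x_2 + y_2 Q) \equiv (x_1 x_2 + r\, y_1 y_2) + (x_1 y_2 + x_2 y_1) Q \pmod p,$$
where $r = Q^{2} \bmod p$ is small. This shows $AA$ sits inside a family of residues of the form $u + vQ \bmod p$ for integer ranges $u, v$ dictated by $M_1, M_2$ and $r$. The target bound $|AA| \ll \sqrt{Np}$ then reduces to counting distinct residues in this family.

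The main obstacle will be precisely this product-set count. The naive bound — the number of pairs $(u, v)$ — gives only $|AA| \leq \min(N^{2}, p)$, which is much too weak in the intermediate regime. To improve it one must exploit the Diophantine fact that $Q$ and $r = Q^{2} \bmod p$ are of comparable size $\sqrt{p}$, so that the equation $\Delta u + \Delta v \cdot Q \equiv 0 \pmod p$ has many solutions with $\Delta u, \Delta v$ in the relevant integer ranges. Each such solution is a collision, and collecting them reduces the count of distinct residues from the naive value down to the desired $O(\sqrt{Np})$.

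Finally, the argument naturally breaks into cases. For small $N$, specifically $N \leq p^{1/3}$, one sidesteps the 2D GAP entirely: the ordinary arithmetic progression $\{0, 1, \ldots, N-1\}$ has $|A+A| \leq 2N$ and, because $N^{2} \leq p$ prevents any multiplicative wraparound, $|AA| \leq N^{2} \leq N^{1/2} p^{1/2}$. For $N$ near $p$, one simply takes $A = \mathbb{F}_p$, whereupon the trivial bound $|A+A|, |AA| \leq p \ll \sqrt{Np}$ suffices. The nontrivial content of the lemma is the intermediate regime, and the \emph{in particular} statement at $N \approx p^{1/2}$ corresponds to choosing $M_1 = M_2 = \lceil p^{1/4} \rceil$ in the GAP construction above.
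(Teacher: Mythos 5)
Your sumset bound, your small-$N$ case ($N\lesssim p^{1/3}$, a plain arithmetic progression) and your $N\approx p$ endpoint are all fine, but the entire content of the lemma lives in the intermediate range $p^{1/3}\ll N\ll p$, and there the crucial bound $|AA|\ll N^{1/2}p^{1/2}$ is never proved: you name it as ``the main obstacle'' and gesture at wraparound collisions without carrying out the count. Worse, the mechanism you propose cannot supply what you need. With balanced sides $M_1=M_2\approx\sqrt N$ one has $|\Delta u|\ll M_1^2+\sqrt p\,M_2^2\ll N\sqrt p$ and $|\Delta v|\ll M_1M_2=N$, so $|\Delta u+\Delta v\,Q|\ll N\sqrt p=o(p)$ whenever $N=o(\sqrt p)$; the congruence $\Delta u+\Delta v\,Q\equiv 0\pmod p$ then forces the exact integer identity $\Delta u+\Delta v\,Q=0$, i.e.\ there is no wraparound at all, and the ``many solutions'' you hope to collect do not exist. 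You are therefore asking a proper two-dimensional GAP --- a set with $|A+A|\ll|A|$ --- to also satisfy $|AA|\ll N^{1/2}p^{1/2}$, which near $N=p^{1/2}$ means $|AA|\ll N^{3/2}$. This runs head-on into the real sum--product phenomenon: Theorem \ref{theorem:elekes} already gives $|AA|\gg|A|^{5/2}/|A+A|\gg N^{3/2}$ for any such set of integers, and Solymosi's sharper bound $|AA|\,|A+A|^2\gg|A|^4/\log|A|$ gives $|AA|\gg N^2/\log N$, which exceeds $N^{1/2}p^{1/2}$ as soon as $N\gg p^{1/3+o(1)}$. Equivalently, to compress $\approx N^2$ products into $N^{1/2}p^{1/2}$ residues you would need multiplicative energy $\gg N^{7/2}/p^{1/2}$, far beyond the $O(N^2\log N)$ that a set with linear doubling can have over $\mathbb{Z}$, and the only extra coincidences available mod $p$ are the rigid Diophantine solutions of $\Delta u+\Delta v\,Q=0$. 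So the approach is not merely incomplete; its key step is false in exactly the regime that matters.

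The paper's proof is entirely different and, tellingly, non-constructive. Set $M=N^{1/2}p^{1/2}$ and let $g$ generate $\mathbb{F}_p^*$; averaging $\#\bigl(\{g^n:1\le n\le M\}\cap\{y+j:1\le j\le M\}\bigr)$ over $y\in\mathbb{F}_p$ gives $M^2/p=N$, so some shift $y$ yields an intersection of size $\gg N$, and any $N$-element subset $A$ of it lies simultaneously inside an arithmetic progression and a geometric progression of length $M$, whence $|A+A|\ll M$ and $|AA|\ll M$ at once. The moral is that the additive and multiplicative structure must both be imposed by hand --- one cannot build one of them and hope the other comes for free --- and the only known way to impose both is this pigeonhole argument, at the cost of losing explicitness.
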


\begin{proof}
Let $g \in \mathbb{F}_p^*$ be a generator of $\mathbb{F}_p^*$. Let $M=p^{1/2}N^{1/2}$. By rearranging orders of summation it is immediate that
	\begin{align*}
	\sum_{y \in \mathbb{F}_p}\#\left(\left\{g^n:1 \leq n \leq M\right\}\cap \left\{y+j:1 \leq j \leq M\right\} \right)&\approx M^2.
	\end{align*}
Therefore there exists $y \in \mathbb{F}_p$ such that
	$$\#\left(\left\{g^n:1 \leq n \leq M\right\}\cap \left\{y+j:1 \leq j \leq M\right\} \right)\gg \frac{M^2}{p}=N.$$
It is therefore possible to pick a set $A$ contained in the set on the left hand side such that $|A| \approx N$. Since $A\subseteq\left\{g^n:1 \leq n \leq M\right\}$, 
	$$|AA| \ll M.$$
And since $A\subseteq \left\{y+j:1 \leq j \leq M\right\}$,  
	$$|A+A|\ll M.$$
So altogether,
	$$\max\left\{|A+A|,|AA|\right\}\ll M= N^{1/2}p^{1/2}$$
as required.
\end{proof}

As explained in the prologue, we are not too worried in this thesis about the `large-set' finite field paradigm, where sets are subject to a \textit{minimum} as well as a maximum density requirement. However it is worth noting that under this regime, with $|A|>p^{2/3}$, Garaev \cite{garaev2} obtained the bound
	$$\max\left\{|A+A|,|AA|\right\}\gg N^{1/2}p^{1/2}$$
which is optimal in view of Lemma \ref{theorem:bgexample}.  

\section{Expander functions}

Expander functions are another example of growth, and a natural variation on sum-products. An \textbf{$n$-variable expander function} is a function $f:F^n \to F$ for which the set 
	$$f(A)=\left\{f(a_1,\ldots,a_n):a_i \in A\right\}$$
has cardinality at least $\Omega\left(|A|^{1+\delta}\right)$ for some $\delta>0$ and any $A \subseteq F$, again possibly satisfying some nondegeneracy conditions. Stronger expander results correspond not only to a larger growth exponent $\delta$ as per sum-product estimates, but also to a smaller number $n$ of variables.

Sum-product estimates give a trivial method of constructing four-variable expander functions, because it is immediate that
	$$\left|\left\{ab+cd:a,b,c,d \in A\right\}\right|\geq \max\left\{|AA|,|A+A|\right\}.$$
However it is often possible to do better, both in terms of fewer variables and bigger exponents. 

In the two-variable case, Elekes, Nathanson and Ruzsa \cite{ENR} obtained $\delta\geq \frac{1}{4}$ for the function
	$$a+\frac{1}{b}$$ 
whenever $A$ is a finite set of reals. This was subsequently extended to $\delta \geq \frac{5}{19}$ by Li and Roche-Newton \cite{LiORN2}, and was also considered by Bourgain \cite{bourgain} in the prime finite field setting for an absolute $\delta>0$. The same paper of Bourgain also gives the example of the function 
	$$f(a,b)=a^2+ab$$
which has the peculiar property of not even requiring $a$ and $b$ to be drawn from the same set in order to establish a growth result. Hart, Li and Shen \cite{hartlishen} obtained $\delta \geq \frac{1}{146}$ over finite fields for the function
	$$a+b^2.$$
However, the strongest-known two-variable expander in the real, complex and finite field setting is the function
	$$a+ab.$$
This function was first studied by Garaev and Shen \cite{GS}, and a new result is established in Chapter \ref{chapter:expanders}.

In the four-variable case, an expander follows from the recent breakthrough of Guth and Katz \cite{GK} on the Erd\"os distinct distance problem. They showed that a finite point-set $P \subseteq \mathbb{R}^2$ determines at least $\Omega\left(|P|^{1-o(1)}\right)$ distinct pairwise distances. In the particular case $P=A \times A$ this shows that 	
	$$(a-b)^2+(c-d)^2$$
is a four-variable expander over $\mathbb{R}$ with $\delta>1-o(1)$. This methodology was also adapted by Iosevich, Roche-Newton and Rudnev \cite{IRR} to show that 
	$$ad- bc$$ 
is likewise a four-variable expander with $\delta = 1-o(1)$. New, stronger, results on three and four-variable expanders over $\mathbb{R}$ and $\mathbb{C}$ are proved in Chapter \ref{chapter:expanders}.

\chapter{A handbook on sumset calculus}
\label{chapter:introapproxgroup}

This chapter comprises a useful arsenal of technical results for analysing sumsets and product sets. Everything is stated in terms of addition and sumsets, but extends without complication to multiplication and product sets.

The basic tools of sumset calculus are the Pl\"unnecke-Ruzsa inequalities, which relate the cardinalities of different sumsets. These will be of use in Chapters \ref{chapter:incidences}, \ref{chapter:expanders} and \ref{chapter:functionfield}.

Partial sumsets are a generalisation of sumsets, where interest is restricted to sums determined by only a subset of possible pairs. Balog-Szemer\'edi-Gowers type results reduce the study of partial sumsets to that of complete sumsets, but at the price of reduced efficiency. Two such results will come in handy in Chapters \ref{chapter:incidences} and \ref{chapter:expanders}.

The additive and multiplicative energy of a set were briefly touched on in Chapter \ref{chapter:introgrowth}, but the treatment here is more developed. Energy is useful for two reasons. First, sets with small sumset have high energy, which is helpful to know when analysing their structure. Second, energy-based arguments go through just as easily for partial as complete sumsets, and so can in some cases be more efficient than a Balog-Szemer\'edi-Gowers approach. This is useful in Chapters \ref{chapter:incidences} and \ref{chapter:expanders}.

Energy considerations also have a particular application in covering methods. When considering complete sumsets, these can sometimes more efficient than the Pl\"unnecke-Ruzsa inequalities. And since they are energy-based, they apply equally well to complete and partial sumsets, and so in some cases are stronger than Balog-Szemer\'edi-Gowers type results. These will be useful in Chapter \ref{chapter:expanders}. 

Except for Lemma \ref{theorem:cover1} and Lemma \ref{theorem:cover2}, which are original variations on covering lemmata, all of the mathematics in this chapter is drawn from the literature. However it should be remarked that the very useful Lemma \ref{theorem:densebsg} does not seem to be widely known or used in the research community. 

\section{Introduction}

Chapter \ref{chapter:introgrowth} defined the sumset $A+A$ and product set $AA$ of a subset $A$ of a field. This extends naturally to the \textbf{sumset} $A+B$ and \textbf{product set} $AB$ of two different sets $A,B$, defined as 
	\begin{align*}
	A+B&=\left\{a+b:a \in A, b \in B\right\}\\
	AB&=\left\{ab:a \in A, b \in B\right\}
	\end{align*}
Define also the \textbf{difference set} 
$$A-B=A+(-B)$$ 
and \textbf{ratio set} 
$$A/B=A\left(B \setminus \left\{0\right\}\right)^{-1}.$$ 

The essential problem of sumset calculus is that we are given a pair of sets $A,B$ and another related pair $C,D$. Given information about the sum, difference, product or ratio set of $A$ and $B$, what can be said about that of $C$ and $D$? 

The results in this chapter are general, in the sense that they depend only on the fact that sets are finite and contained in an abelian group. So they are quoted and proved for sumsets $A+B$, taking $+$ to be an abelian group operation rather than necessarily the additive group of a field. They therefore extend to multiplication and product sets without complication. In some instances it will be convenient to adopt slightly different notation in the multiplicative case to avoid confusion in later chapters; this will be indicated where appropriate.

\section{The Pl\"unnecke-Ruzsa inequalities}\label{section:sumsetcalculus}

Information on sumsets is often expressed in terms of cardinalities. There are trivial estimates
	$$\max{\left(|A|,|B|\right)}\leq \left|A+B\right|\leq |A||B|.$$
Informally, a sumset set is `small' if its cardinality is close to the lower bound $\max{\left(|A|,|B|\right)}$ and `large' if it is close to the upper bound $|A||B|$. The `efficiency' of an estimate corresponds to the number of factors of the form $|A+B|$ which turn up; the fewer the better since this allows the proof of stronger theorems. 

The most basic tool available for analysing sumset cardinalities is the Ruzsa triangle inequality, which applies to difference sets but not immediately to sumsets.

\begin{lemma}[Ruzsa triangle inequality]\label{theorem:ruzsa}
For sets $A,B,C$ we have
$$|A-B|\leq \frac{|A-C||B-C|}{|C|}.$$
\end{lemma}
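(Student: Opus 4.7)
The plan is to prove the inequality by constructing an injection from $C \times (A-B)$ into $(A-C) \times (B-C)$, which immediately yields $|C||A-B| \leq |A-C||B-C|$ and hence the desired bound.

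First I would fix, for each element $d \in A - B$, a specific representation $d = \alpha(d) - \beta(d)$ with $\alpha(d) \in A$ and $\beta(d) \in B$. Such a choice exists by definition of the difference set; the choice is not canonical, but any selection will do. This is the standard trick for turning a set-theoretic difference into functions.

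Next I would define the map $\phi : C \times (A-B) \to (A-C) \times (B-C)$ by
\[
\phi(c,d) \;=\; \bigl(\alpha(d) - c,\; \beta(d) - c\bigr).
\]
The codomain is correct since $\alpha(d)-c \in A-C$ and $\beta(d) - c \in B - C$. The key step is to verify injectivity: given the image $(x,y) = \phi(c,d)$, one recovers $d$ as $x - y = \alpha(d) - \beta(d) = d$; then $\alpha(d)$ and $\beta(d)$ are determined by the choice of representation; and finally $c = \alpha(d) - x$. So $(c,d)$ is uniquely reconstructible.

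Injectivity gives $|C||A-B| \leq |A-C||B-C|$, and dividing by $|C|$ (which is positive) yields the stated inequality. There is no real obstacle here — the only conceptual point is the need to fix representations $\alpha(d),\beta(d)$ in advance so that the inverse map is well-defined; once that is done, injectivity is a one-line check.
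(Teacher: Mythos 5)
Your proposal is correct and is essentially identical to the paper's argument: the paper also fixes a representation $x=a_x-b_x$ for each $x\in A-B$ and shows the map $(x,c)\mapsto(a_x-c,\,b_x-c)$ into $(A-C)\times(B-C)$ is injective. The only difference is cosmetic (the order of the factors in the domain).
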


\begin{proof}
For each element $x \in A-B$, pick $a_x \in A$ and $b_x \in B$ such that 
	$$x=a_x-b_x.$$
Consider the map $f$ from $(A-B) \times C$ to $(A-C) \times (B-C)$ given by $$f(x,c)=(a_x-c,b_x-c).$$
We show that $f$ is an injection. Suppose that $f(x,c)=f(y,d)$. Then
	\begin{align}
	a_x-c&=a_y-d \label{eq:ruzsa1}\\
	b_x-c&=b_y-d \label{eq:ruzsa2}
	\end{align}
Subtracting (\ref{eq:ruzsa2}) from (\ref{eq:ruzsa1}) gives $a_x-b_x=a_y-b_y$ and so $x=y$, which in turn forces $a_x=a_y$ and $b_x=b_y$. Plugging this back into (\ref{eq:ruzsa1}) shows that $c=d$, and so $(x,c)=(y,d).$ Thus $f$ is an injection and 	
	$$|A-B||C| \leq |A-C||B-C|.$$ 
Rearranging gives the required inequality.
\end{proof}

Note that the essential observation used in the above proof is that
	$$a-b=(a-c)-(b-c)$$
for any $a,b,c$. Variations on this approach are used several more times in the thesis, and are referred to as `Ruzsa-type'.

The Ruzsa triangle inequality is a simple and useful tool, but it applies only to \textit{difference} sets and it is often necessary to care about general \textit{sum}sets. This can be mitigated by clever applications, for example in the case $A=B$ taking $C=-A$ gives
	$$|A-A|\leq \frac{|A-(-A)||(-A)-A|}{|A|}=\frac{|A+A|^2}{|A|} $$ 
but this is not always terribly efficient. Moreover the Ruzsa triangle inequality does not help very much when analysing \textbf{iterated sumsets} of the form
	$$kA = \underbrace{A+\ldots+A}_k.$$ 

Such situations call for Pl\"unnecke's theorem, which was promulgated in this context by Ruzsa \cite{plunnecke}. A much shorter proof was recently given by Petridis \cite{petridis}, for which we follow an exposition given by Gowers \cite{gowerspetridis}.

\begin{lemma}[Pl\"unnecke]\label{theorem:pl}
For sets $A,B$ there exists $A' \subseteq A$ such that for any natural number $k$,
	\begin{equation}\label{eq:semicleverpl}
	\left|A'+kB\right|\leq \frac{|A'||A+B|^k}{|A|^k}.
	\end{equation}
In particular,
	\begin{equation}\label{eq:stupidpl}
	|kB|\leq \frac{\left|A+B\right|^k}{|A|^{k-1}}.
	\end{equation}
\end{lemma}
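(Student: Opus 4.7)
The plan is to follow Petridis's short proof. First I would select the set $A'$: among all nonempty subsets $X \subseteq A$, choose $A'$ to be one minimizing the ratio $K = |X+B|/|X|$. By this choice, $K \leq |A+B|/|A|$ (taking $X = A$), and crucially $|X+B| \geq K|X|$ for every nonempty $X \subseteq A$. The strategy is then to prove, by induction on $|C|$, the key claim that for every finite set $C$,
\begin{equation}\label{eq:plankey}
|A'+B+C| \leq K\,|A'+C|.
\end{equation}
Once \eqref{eq:plankey} is established, iterating it with $C = (k-1)B$, then $C=(k-2)B$, and so on, yields $|A'+kB| \leq K^k |A'| \leq (|A+B|/|A|)^k |A'|$, which is exactly \eqref{eq:semicleverpl}. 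The second statement \eqref{eq:stupidpl} then follows by fixing any $a \in A'$: translation by $a$ is a bijection, so $|kB| = |a+kB| \leq |A'+kB| \leq (|A+B|/|A|)^k |A'| \leq |A+B|^k/|A|^{k-1}$.

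The main obstacle is the inductive step of \eqref{eq:plankey}, which is the whole content of the theorem. The base case $|C|=1$ is immediate from the definition of $K$. For the inductive step, I would write $C' = C \cup \{c\}$ with $c \notin C$, and introduce the auxiliary subset
\[
A'' = \bigl\{a \in A' : a+B+c \subseteq A'+B+C\bigr\}.
\]
By definition of $A''$, the contribution to $A'+B+C'$ coming from the new element $c$ that is not already counted in $A'+B+C$ lies in $(A' \setminus A'')+B+c$, giving
\[
|A'+B+C'| \leq |A'+B+C| + |(A' \setminus A'')+B+c|.
\]
Applying the inductive hypothesis to the first term and the minimality of $K$ to the set $A''$ (which bounds $|A''+B|$ from below by $K|A''|$) should yield
\[
|A'+B+C'| \leq K|A'+C| + K|A'| - K|A''| = K|A'+C| + K|A' \setminus A''|.
\]

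To finish, I need the matching inequality $|A'+C'| \geq |A'+C| + |A' \setminus A''|$. For this, I would argue that for any $a \in A' \setminus A''$ there exists $b \in B$ with $a+b+c \notin A'+B+C$, which forces $a+c \notin A'+C$ (else adding $b$ would place $a+b+c$ in $A'+B+C$). Hence the set $(A' \setminus A'')+c$ is disjoint from $A'+C$, and since both are contained in $A'+C'$, the claimed inequality follows. Combining the two bounds gives \eqref{eq:plankey} for $C'$, completing the induction. The delicate point that needs care is the definition of $A''$ and verifying that $A''+B+c$ really is absorbed into $A'+B+C$ while the complement $A' \setminus A''$ contributes genuinely new elements to $A'+C'$; this is where the argument's efficiency lives.
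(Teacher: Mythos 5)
Your proposal follows exactly the Petridis argument that the paper itself uses: the same minimizing choice of $A'$, the same key claim $|A'+B+C|\leq K|A'+C|$ proved by induction on $|C|$, and the same derivation of \eqref{eq:semicleverpl} and \eqref{eq:stupidpl} from it. The paper's inductive step is phrased via inclusion--exclusion with the auxiliary set $A'\cap(A'+C'-\{x\})$, whereas you use the (equivalent, and also standard) set $A''=\{a\in A':a+B+c\subseteq A'+B+C\}$; your disjointness argument giving $|A'+C'|\geq |A'+C|+|A'\setminus A''|$ is correct.

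There is, however, one step that fails as written. You bound the new elements of $A'+B+C'$ by the set $(A'\setminus A'')+B+c$ and then claim $|(A'\setminus A'')+B+c|\leq K|A'|-K|A''|$, invoking $|A''+B|\geq K|A''|$. That deduction would require $|(A'\setminus A'')+B|\leq |A'+B|-|A''+B|$, which is false in general: $(A'\setminus A'')+B$ and $A''+B$ can overlap substantially, so the left side can exceed the right (take $A'=\{0,1\}$, $A''=\{0\}$, $B=\{0,1\}$: then $|(A'\setminus A'')+B|=2$ but $|A'+B|-|A''+B|=1$). Minimality of $K$ only gives a \emph{lower} bound on $|(A'\setminus A'')+B|$, which is the wrong direction. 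The repair is immediate and is what both Petridis and the paper actually do: since $A''+B+c\subseteq A'+B+C$, the new elements lie in $(A'+B+c)\setminus(A''+B+c)$, whose cardinality is exactly $|A'+B|-|A''+B|$ because $A''+B+c\subseteq A'+B+c$; this is $\leq K|A'|-K|A''|=K|A'\setminus A''|$ by the definition of $K$ and minimality applied to $A''$. With that substitution the rest of your argument goes through.
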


\begin{proof}
Pick $A' \subseteq A$ such that 
	$$\frac{|A'+B|}{|A'|}=K$$
is minimal across all subsets of $A$. It suffices to prove that
	\begin{equation}\label{eq:plsuffice1}
	|A'+kB|\leq |A'| K^k
	\end{equation}
since by minimality in the choice of $K$,
	$$K \leq \frac{|A+B|}{|A|}.$$ 
To establish (\ref{eq:plsuffice1}) it suffices in turn to show that
	\begin{equation}\label{eq:plsuffice2}
	|A'+B+C|\leq K|A'+C|
	\end{equation}
for any set $C$. Indeed, once \eqref{eq:plsuffice2} is established, inequality (\ref{eq:plsuffice1}) follows by induction on $k$, since taking $C=(k-1)B$ gives
	$$|A'+kB| \leq K |A'+(k-1)B| $$
and by inductive hypothesis
	$$|A'+(k-1)B|\leq K^{k-1}|A'|.$$

Now to prove (\ref{eq:plsuffice2}). Induction is used here as well, this time on $|C|$. When $|C|=1$ it is immediate that $|D+C|=|D|$ for any set $D$. In particular, 
	$$|A'+B+C|=|A'+B|$$
and 
	$$K|A'+C|=K|A'|$$ 
so \eqref{eq:plsuffice2} holds with equality by definition of $K$. Now suppose that $|C|\geq 2$ and write 
	$$C=C' \cup \left\{x\right\}$$
so that by inductive hypothesis,
	$$|A'+B+C'|\leq K|A'+C'|.$$
By inclusion-exclusion,
	\begin{align}
	|A'+B+C|&=|A'+B+(C' \cup \left\{x\right\})|\nonumber\\
	&=|A'+B+C'|+|\left(A'+B+\left\{x\right\}\right)\backslash (A'+B+C')|\nonumber\\
	&=|A'+B+C'|+|A'+B+\left\{x\right\}|\nonumber\\
	&\qquad -|\left(A'+B+\left\{x\right\}\right)\cap (A'+B+C')|\nonumber\\
	&\leq|A'+B+C'|+|A'+B|-|\left(A'+\left\{x\right\}\right)\cap (A'+C')+B|\nonumber\\
	&= |A'+B+C'|+|A'+B|-|A'\cap \left(A'+C'-\left\{x\right\}\right)+B|\nonumber\\
	&\leq K \left(\left|A'+C'\right|+\left|A'\right| - \left|A'\cap \left(A'+C'-\left\{x\right\}\right)\right|\right)\label{eq:pl1}
	\end{align}
On the other hand,
	\begin{align}
	|A'+C|&=\left|A'+C'\right|+\left|(A'+\left\{x\right\})\backslash (A'+C')\right|\nonumber\\
	&=\left|A'+C'\right|+\left|A'+\left\{x\right\}\right|-\left|(A'+\left\{x\right\})\cap (A'+C')\right|\nonumber\\
	&=|A'+C'|+|A'|-\left|A'\cap \left(A'+C'-\left\{x\right\}\right)\right|\label{eq:pl2}
	\end{align}
and so subsituting (\ref{eq:pl2}) into (\ref{eq:pl1}) gives 
	$$|A'+B+C|\leq K|A'+C|$$ 
as required.
\end{proof}

Lemma \ref{theorem:pl} is typically applied in the form (\ref{eq:stupidpl}) since this omits mention of the subset $A' \subseteq A$. However Katz and Shen \cite{KS} observed that with a little work it is possible to show that $|A'|\approx |A|$, which can in some circumstances make (\ref{eq:semicleverpl}) a better bet. 

\begin{corollary}[Katz, Shen]\label{theorem:cleverpl}
For sets $A,B$, there exists $A' \subseteq A$ with $|A'| \geq \frac{|A|}{2}$ such that 
	$$\left|A'+kB\right|\ll_k \frac{\left|A+B\right|^k}{|A|^{k-1}}.$$
\end{corollary}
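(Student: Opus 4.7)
The plan is to iterate Lemma \ref{theorem:pl} until the accumulated subset is large enough. That lemma hands us a single $A_1 \subseteq A$ satisfying the desired iterated-sumset bound, but with no lower bound on its cardinality. Inspecting the proof, what drives the conclusion is the ratio $K := \min_{T \subseteq A}|T+B|/|T|$, which is at most $|A+B|/|A|$. Applying the lemma recursively to the leftover set $A \setminus A_1$ produces another subset $A_2$ of the same type, and we continue peeling off disjoint pieces until their union covers at least half of $A$.

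More precisely, I would set $S_0 = A$ and, for each $i \geq 1$, apply Lemma \ref{theorem:pl} to the pair $(S_{i-1}, B)$ to extract a nonempty $A_i \subseteq S_{i-1}$ with
$$|A_i + kB| \leq |A_i| K_i^k, \qquad K_i \leq \frac{|S_{i-1}+B|}{|S_{i-1}|}.$$
Setting $S_i = S_{i-1} \setminus A_i$, I halt at the first $n$ for which $|A_1 \cup \cdots \cup A_n| \geq |A|/2$; since $A$ is finite and each $A_i$ is nonempty, halting is guaranteed, and the $A_i$ are pairwise disjoint by construction.

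The crucial observation is that while the process is still running, $|S_{i-1}| > |A|/2$, and since $S_{i-1} \subseteq A$ this forces $K_i \leq |A+B|/|S_{i-1}| < 2|A+B|/|A|$ uniformly in $i$. Writing $A' = A_1 \cup \cdots \cup A_n$ and summing the individual bounds over the disjoint $A_i$ yields
$$|A'+kB| \leq \sum_{i=1}^n |A_i+kB| \leq |A'|\left(\frac{2|A+B|}{|A|}\right)^k \leq \frac{2^k|A+B|^k}{|A|^{k-1}},$$
together with $|A'| \geq |A|/2$ as required. There is no real obstacle beyond choosing the threshold correctly: any fixed fraction of $|A|$ would work, but $|A|/2$ keeps $K_i$ comparable to $|A+B|/|A|$ and so keeps the implicit constant in $\ll_k$ to $2^k$.
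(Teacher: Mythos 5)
Your proposal is correct and is essentially the paper's own argument: iterate Pl\"unnecke's lemma on the leftover set $S_{i-1}=A\setminus(A_1\cup\cdots\cup A_{i-1})$, use $|S_{i-1}+B|\leq|A+B|$ together with $|S_{i-1}|>|A|/2$ to control each ratio $K_i$, and sum the bounds over the disjoint pieces until half of $A$ is covered. No differences worth noting.
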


In situations where nothing is lost by passing to a constant-proportion subset of $A$, Corollary \ref{theorem:cleverpl} effectively implies that	
	$$|kA|\ll_k\frac{|A+A|^{k-1}}{|A|^{k-2}},$$
which is better than Lemma \ref{theorem:pl} by a factor of $\frac{|A+A|}{|A|}$. 

\begin{proof}[Proof of Corollary \ref{theorem:cleverpl}]
Let $A_*$ be any subset of $A$ with $|A_*|\geq \frac{|A|}{2}$. By Lemma \ref{theorem:pl} there exists $A_*' \subseteq A_*$ such that
	$$|A_*'+kB|\leq \frac{|A_*'||A_*+B|^k}{|A_*|^k}\ll_k \frac{|A_*'||A+B|^k}{|A|^k}.$$

Apply the above observation recursively. Begin by taking $A_*=A$ to find $A_1 \subseteq A$ such that
	$$|A_1+kB|\ll_k \frac{|A_1|\left|A+B\right|^k}{|A|^{k}}.$$
Now take $A_*=A \setminus A_1$ to find $A_2 \subseteq A$ disjoint from $A_1$ such that
	$$|A_2+kB|\ll_k \frac{|A_2|\left|A+B\right|^k}{|A|^{k}}.$$
Repeat this process until reaching $n$ such that $\bigcup_{i=1}^n A_i$ is of cardinality at least $\frac{|A|}{2}$. Let 
	$$A'=\bigcup_{i=1}^n A_i.$$
Since all of the $A_i$ are disjoint it follows that $|A'|\geq \frac{|A|}{2}$ and
	\begin{align*}
	|A'+kB|&\leq \sum_{i=1}^n |A_i+kB|\\
	&\ll_k \frac{|A+B|^k}{|A|^k} \sum_{i=1}^n|A_i|\\
	& \approx \frac{|A+B|^k}{|A|^{k-1}} 
	\end{align*}
as required.
\end{proof}

\section{Partial sumsets}
For sets $A,B$ and $G \subseteq A \times B$, call the set 
	$$A \overset{G}+ B=\left\{a+b:(a,b) \in G\right\}$$
a \textbf{partial sumset}. In the case where $G= A \times B$ this collapses into the sumset $A+B$, referred to as the \textbf{complete sumset} where there is a possibility for confusion. The motivation for using the letter $G$ is that $G \subseteq A \times B$ corresponds to (the edges of) a bipartite graph connecting $A$ and $B$. 

Analysing partial sumsets is critical to the research in Chapters \ref{chapter:incidences} and \ref{chapter:expanders}, but the Pl\"unnecke-Ruzsa inequalities developed above apply only to \textit{complete} sumsets and so are not immediately useful. Fortunately, it turns out that if $G$ is a reasonably large part of $A \times B$ and the \textit{partial} sumset $A\overset{G}+B$ is small, then there are large subsets of $A$ and $B$ whose \textit{complete} sumset is small as well. Results of this kind are called \textbf{Balog-Szemer\'edi-Gowers type} or \textbf{BSG-type} after the standard theorem in this area, due to Balog and Szemer\'edi and strengthened by Gowers\footnote{See Theorem 2.29 of \cite{TV} for a formulation and proof of the Balog-Szemer\'edi-Gowers theorem. Note however that the factor $K^4$ in equation (2.20) of that formulation should be replaced with $K^5$ due to an error in the text.}.

Despite their usefulness, Balog-Szemer\'edi-Gowers type results are disproportionately costly. This is in the sense that they yield upper bounds on complete sumsets that are out of proportion to the density of $G$ in $A \times B$, and in particular they pick up more than a constant factor when $|G|\approx |A||B|$. Sections \ref{section:energy} and \ref{section:covermethods} describe methods which do not see this distinction and so can in some circumstances be more efficient.

This thesis uses two BSG-type results. The first result is more efficient but is applicable only when $G$ is especially dense in $A \times B$. It is a consequence of Exercise 2.5.4 in \cite{TV}.

\begin{lemma}[BSG-type for dense sets]\label{theorem:densebsg}
Let $0<\epsilon<1/4$ and let $G \subseteq A \times B$ with $|G| \geq (1-\epsilon)|A||B|$. Then there exists $A' \subseteq A$ with $|A'|\geq (1-\sqrt{\epsilon})|A|$ such that 
	$$|A'-A'|\ll_{\epsilon}\frac{|A \overset{G}-B|^2}{|A|}.$$
\end{lemma}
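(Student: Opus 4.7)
The plan is to run a two-stage argument. First, extract from $A$ a subset $A'$ of the promised size consisting of those $a \in A$ whose $G$-neighbourhood fills up most of $B$; then, mimic the proof of the Ruzsa triangle inequality (Lemma \ref{theorem:ruzsa}) to control $|A'-A'|$ using the partial difference set as the output of the witnessing injection.

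For the first stage, set $B_a = \{b \in B : (a,b) \in G\}$ for each $a \in A$. The density hypothesis $|G| \geq (1-\epsilon)|A||B|$ rearranges as $\sum_{a \in A}(|B| - |B_a|) \leq \epsilon|A||B|$, so by Markov's inequality the set
$$A' = \{a \in A : |B_a| \geq (1 - \sqrt{\epsilon})|B|\}$$
satisfies $|A'| \geq (1-\sqrt{\epsilon})|A|$. The constraint $\epsilon < 1/4$ forces $\sqrt{\epsilon}<1/2$, so $A'$ genuinely occupies most of $A$.

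For the second stage, if $a_1, a_2 \in A'$ then inclusion--exclusion inside $B$ gives $|B_{a_1} \cap B_{a_2}| \geq (1 - 2\sqrt{\epsilon})|B|$, which is a positive fraction of $|B|$ by the same constraint. For each $x \in A'-A'$, fix once and for all a representation $x = a_1(x) - a_2(x)$ with both $a_i(x) \in A'$, and consider the map
$$\phi \colon (x, b) \longmapsto \bigl(a_1(x) - b,\; a_2(x) - b\bigr),$$
defined on those pairs with $b \in B_{a_1(x)} \cap B_{a_2(x)}$. The image lies in $(A \overset{G}- B)^2$, and $\phi$ is injective: from the output $(y_1, y_2)$ one reads off $x = y_1 - y_2$, which recovers the preselected pair $(a_1(x), a_2(x))$, and then $b = a_1(x) - y_1$. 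Counting domain against codomain gives
$$|A'-A'| \cdot (1 - 2\sqrt{\epsilon})|B| \;\leq\; |A \overset{G}- B|^2,$$
which rearranges to the advertised bound (appealing to the symmetry between $A$ and $B$ to arrange $|A|$ in the denominator if that is the preferred form).

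The main obstacle is the bookkeeping of the constants: both the Markov step producing $A'$ and the inclusion--exclusion step producing the common witness set $B_{a_1} \cap B_{a_2}$ consume portions of the $\epsilon$-budget, and the hypothesis $\epsilon<1/4$ is exactly what keeps both $1-\sqrt{\epsilon}$ and $1-2\sqrt{\epsilon}$ bounded away from zero so that the implicit constant $\ll_\epsilon$ is meaningful. The delicate point is that the Ruzsa-style injection only works when the representatives $a_1(x), a_2(x)$ lie in $A'$ rather than merely in $A$; otherwise the lower bound on $|B_{a_1} \cap B_{a_2}|$ collapses.
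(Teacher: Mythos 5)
Your proof is correct and follows essentially the same route as the paper: an averaging step isolating the subset $A'$ of high-$G$-degree elements (the paper's Lemma \ref{theorem:highdensityG}), inclusion--exclusion to lower-bound the joint $G$-degree, and then the Ruzsa-type injection on a fixed representation of each difference (the paper's Lemma \ref{theorem:bsgsufficient}). One small caveat: the injection naturally produces $|B|$ rather than $|A|$ in the denominator, and this is not fixed by ``symmetry between $A$ and $B$'' since $A'$ must be a subset of $A$ — but the paper's own proof has exactly the same feature, and the discrepancy is immaterial because every application of the lemma in the thesis takes $A=B$.
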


Lemma \ref{theorem:densebsg} does not seem to be widely used in the literature. But, where it is applicable, it is remarkably efficient for a BSG-type result.

The second BSG-type result, due to Bourgain and Garaev \cite{BG}, is less efficient but is on the other hand applicable in a wider variety of situations.

\begin{lemma}[BSG-type for less-dense sets ]\label{theorem:bg}
For sets $A,B$ and $G \subseteq A \times B$ there exists $A' \subseteq A$ with $|A'|\gg \frac{|G|}{|B|}$ such that 
	$$|A'-A'|\ll \frac{|A|^4|B|^3|A \overset{G}-B|^4}{|G|^5}.$$
\end{lemma}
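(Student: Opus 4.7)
The plan is a Balog-Szemer\'edi-Gowers style extraction, combining dyadic pigeonholing on vertex degrees of $G$, anchoring on popular vertices of both $A$ and $B$, and the Ruzsa-Pl\"unnecke inequalities to pass from partial to complete difference sets.

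Write $m = |G|$ and $K = |A \overset{G}- B|$. First restrict to the high-degree subset $A_1 = \{a \in A : |N_G(a)| \geq m/(2|A|)\}$, which by a double-counting argument captures at least half the edges of $G$. Averaging over $b \in B$ yields some $b_0$ with $|N_G(b_0) \cap A_1| \gtrsim m/|B|$. Define $A' := N_G(b_0) \cap A_1$; then $|A'| \gg m/|B|$ as required, and each $a \in A'$ satisfies $a - b_0 \in S := A \overset{G}- B$, so $A' \subseteq S + b_0$ and hence $A' - A' \subseteq S - S$.

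Symmetrically, anchor on a popular $a_0 \in A_1$ to obtain $B' := N_G(a_0)$ (intersected with the analogously-defined high-degree set $B_1 \subseteq B$) of size $\gtrsim m/|A|$ and contained in $a_0 - S$. Then
$$A' - B' \subseteq (S + b_0) - (a_0 - S) = S + S + (b_0 - a_0),$$
so $|A' - B'| \leq |S + S|$. The Ruzsa triangle inequality (Lemma \ref{theorem:ruzsa}) then gives
$$|A' - A'| \leq \frac{|A' - B'|^2}{|B'|} \leq \frac{|S + S|^2 |A|}{m}.$$

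The remaining task is to bound $|S + S|$ in terms of $K$, $|A|$, $|B|$, and $m$. This is achieved by a further iteration of the Pl\"unnecke-Ruzsa machinery (Lemma \ref{theorem:pl}) applied to $S$ itself, exploiting the $m$ representations of elements of $S$ as differences $a - b$ along edges of $G$. The resulting bound of the form $|S + S| \ll K^2 (|A||B|/m)^{3/2}$ combines with the Ruzsa triangle step to yield the target $|A' - A'| \ll |A|^4 |B|^3 K^4/m^5$.

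The main obstacle is this final bound on $|S + S|$: the trivial $|S + S| \leq K^2$ is adequate only when $m \lesssim (|A||B|)^{3/4}$, and the dense regime requires an additional Pl\"unnecke-Ruzsa iteration that exploits the structure of $S$ as a graph-parametrized difference set. Tracking the precise exponents through the successive applications---five factors of $m$ in the denominator and four factors of $K$ in the numerator---is the main bookkeeping effort.
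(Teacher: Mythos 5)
Your reduction discards exactly the information that produces the $|G|^5$ in the denominator, and the missing step cannot be repaired. Everything up to the Ruzsa triangle inequality is fine: with $S=A\overset{G}-B$, $K=|S|$ and $m=|G|$ you correctly reach $|A'-A'|\ll |S+S|^2|A|/m$. But from that point your argument only sees $S$ as an abstract set of $K$ elements, and no Pl\"unnecke--Ruzsa iteration can say anything about $|S+S|$ beyond the trivial $|S+S|\ll K^2$, because you possess no small sumset involving $S$ to feed into Lemma \ref{theorem:pl}. Concretely, for your chain to reach the target you need $|S+S|\leq K^2(|A||B|)^{3/2}/m^2$, and this intermediate inequality is false: take $G=A\times B$ with $A=B$ a dissociated set of size $n$ (e.g.\ $\{2^i:i\leq n\}$), so that $m=n^2$, $S=A-A$, $K\approx n^2$ and $|S+S|=|2A-2A|\approx n^4\approx K^2$, whereas the required bound is $n^3$. (The bound you actually state, $|S+S|\ll K^2(|A||B|/m)^{3/2}$, is weaker than the trivial one since $m\leq|A||B|$, and substituting it yields $K^4|A|^4|B|^3/m^4$, a full factor of $m$ short of the target, so the bookkeeping does not close even on its own terms.)

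The idea you are missing is the paths-of-length-two count. The paper (Lemmata \ref{theorem:bsgrefine} and \ref{theorem:bsgsufficient}) first finds, by Cauchy--Schwarz, a neighbourhood $A'=N_G(b)$ of size $\gg m/|B|$ in which almost all pairs $(a_1,a_2)$ have joint $G$-degree at least $\Omega\bigl(m^2/(|A|^2|B|)\bigr)$; each such difference $a_1-a_2$ then has that many representations $(a_1-b')-(a_2-b')$ with both edges in $G$, and the injection $(x,b')\mapsto(a^1_x-b',\,a^2_x-b')$ gives $|A'\overset{H}-A'|\leq K^2/(\text{joint degree})$. A second round of the same device upgrades the partial difference set to the complete one. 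It is this division by the joint degree, applied twice, that generates the five powers of $|G|$. Bounding $A'-A'$ through $S+S$ replaces ``many common neighbours'' by ``one common neighbour'' and so can never beat $K^2$ per application, which is why your route only survives in the sparse regime $m\lesssim(|A||B|)^{3/4}$.
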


There is considerable overlap between the proofs of Lemma \ref{theorem:densebsg} and Lemma \ref{theorem:bg}. For both it is convenient to adopt the following definitions.

For $a \in A$, write $B_G(a)$ for the set of $b \in B$ such that $(a,b)\in G$. Call the cardinality of $B_G(a)$ the \textbf{$G$-degree} of $a$, and the cardinality of the intersection $B_G(a_1)\cap B_G(a_2)$ the \textbf{joint $G$-degree} of $a_1$ and $a_2$. 

For both results it suffices to find a large subset of $A$ with the property that the joint $G$-degree of any two distinct elements is large. This is demonstrated by the following Ruzsa-type lemma.

\begin{lemma}\label{theorem:bsgsufficient}

Let $G \subseteq A \times B$. Suppose that $A' \subseteq A$ and $H \subseteq A' \times A'$ are such that every the joint $G$-degree of any $(a_1,a_2)\in H$ is at least $K$. Then 
	$$|A'\overset{H}-A'|\leq \frac{|A\overset{G}-B|^2}{K}.$$
In particular, if the joint $G$-degree of any  pair of elements from $A'$ is at least $K$ then 
	$$|A'-A'|\leq \frac{|A\overset{G}-B|^2}{K}.$$
\end{lemma}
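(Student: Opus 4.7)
The approach is a direct generalisation of the Ruzsa triangle inequality proof (Lemma \ref{theorem:ruzsa}), exploiting the key identity $a_1-a_2=(a_1-b)-(a_2-b)$. The plan is to build an injection from a large domain indexed by $A'\overset{H}-A'$ into $(A\overset{G}-B)\times(A\overset{G}-B)$ and then compare cardinalities.

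First I would invoke the axiom of choice to fix, for every $x\in A'\overset{H}-A'$, some representative pair $(a_x,a'_x)\in H$ with $x=a_x-a'_x$. By hypothesis the joint $G$-degree of this pair is at least $K$, so the set $S_x=B_G(a_x)\cap B_G(a'_x)$ has size at least $K$. The critical point is that for any $b\in S_x$ both $(a_x,b)$ and $(a'_x,b)$ lie in $G$, so $a_x-b$ and $a'_x-b$ are both genuine elements of the partial difference set $A\overset{G}-B$.

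Next I would define
$$f\colon\bigl\{(x,b):x\in A'\overset{H}-A',\,b\in S_x\bigr\}\longrightarrow (A\overset{G}-B)\times (A\overset{G}-B),\qquad f(x,b)=(a_x-b,\,a'_x-b).$$
To verify injectivity, suppose $f(x,b)=f(y,c)$. Subtracting the two coordinate equations gives $a_x-a'_x=a_y-a'_y$, i.e.\ $x=y$; the choice function then forces $a_x=a_y$ and $a'_x=a'_y$, and substituting back yields $b=c$.

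Counting then finishes the argument: the domain of $f$ has size at least $K\cdot|A'\overset{H}-A'|$ and its codomain has size at most $|A\overset{G}-B|^2$, which rearranges to the stated bound. The ``in particular'' clause is the special case $H=A'\times A'$. The only substantive step is the injectivity verification, and that is essentially routine once the identity $a_x-a'_x=(a_x-b)-(a'_x-b)$ is in hand — I do not expect any serious obstacle.
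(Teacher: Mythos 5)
Your proposal is correct and is essentially identical to the paper's proof: both fix a representative pair in $H$ for each element of $A'\overset{H}-A'$, use the joint $G$-degree hypothesis to get at least $K$ common neighbours, and inject the resulting pairs $(x,b)$ into $(A\overset{G}-B)\times(A\overset{G}-B)$ via $(a_x-b,a'_x-b)$. No differences worth noting.
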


\begin{proof}
For each $x \in A'\overset{H}-A'$ pick $a_x^1,a_x^2 \in A'$ such that 
	$$a_x^1-a_x^2=x.$$ 
Let $Y \subseteq (A' \overset{H}-A')\times B$ be given by
	$$Y= \left\{(x,b): b \in B_G(a_x^1)\cap B_G(a_x^2) \right\}. $$
By hypothesis, 
	$$|Y|\geq |A'\overset{H}-A'|K.$$ 
On the other hand, the injection
	$$(x,b)\mapsto \left(a_x^1-b,a_x^2-b\right)$$
from $Y$ into $(A \overset{G}-B)\times (A \overset{G}-B)$ shows that 
	$$|Y|\leq |A\overset{G}-B|^2.$$
Comparing the upper and lower bounds on $|Y|$ gives the result. 
\end{proof}

The ease of finding sets satisfying the conditions of Lemma \ref{theorem:bsgsufficient} depends on the density of $G$, corresponding to the two different BSG-type results. Over the next few pages, Section \ref{section:highdensity} builds the proof of Lemma \ref{theorem:densebsg} and Section \ref{section:mediumdense} builds the proof of Lemma \ref{theorem:bg}.

\subsection{High-density partial sumsets}\label{section:highdensity}
This section proves Lemma \ref{theorem:densebsg}, the BSG-type result for high-density partial sumsets. In view of Lemma \ref{theorem:bsgsufficient} this is accomplished by the following.

\begin{lemma}\label{theorem:highdensityG}
Let $\epsilon>0$ and $G \subseteq A \times B$ with $|G|\geq (1-\epsilon)|A||B|$. There exists $A' \subseteq A$ with $|A'|\geq (1-\sqrt{\epsilon})|A|$ such that the $G$-degree of every $a \in A'$ is at least $(1-\sqrt{\epsilon})|B|$. 

In particular, any two elements of $A'$ have joint $G$-degree at least $(1-2\sqrt{\epsilon})|B|$ whenever $\epsilon<1/4$.
\end{lemma}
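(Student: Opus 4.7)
The plan is a direct averaging (Markov-type) argument to locate $A'$, followed by an inclusion–exclusion to pass from individual to joint $G$-degree.

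First I would define $A'$ to consist of exactly those $a \in A$ whose $G$-degree is at least $(1-\sqrt{\epsilon})|B|$, and set $A'' = A \setminus A'$. The goal is to show $|A''| \leq \sqrt{\epsilon}|A|$. The key identity is
	$$|A \times B| - |G| = \sum_{a \in A}\bigl(|B| - |B_G(a)|\bigr),$$
and the left-hand side is at most $\epsilon|A||B|$ by hypothesis. On the other hand, each $a \in A''$ contributes more than $\sqrt{\epsilon}|B|$ to the right-hand side by definition, so restricting the sum to $A''$ yields
	$$\sqrt{\epsilon}|B|\cdot|A''| < \epsilon|A||B|,$$
which immediately gives $|A''| < \sqrt{\epsilon}|A|$ and hence the required lower bound on $|A'|$.

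For the joint $G$-degree statement, I would apply inclusion–exclusion in $B$: for any $a_1, a_2 \in A'$,
	$$|B_G(a_1) \cap B_G(a_2)| \geq |B_G(a_1)| + |B_G(a_2)| - |B| \geq 2(1-\sqrt{\epsilon})|B| - |B| = (1-2\sqrt{\epsilon})|B|.$$
The restriction $\epsilon < 1/4$ is precisely what is needed to keep this quantity positive and nontrivial.

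There is no real obstacle here; the only thing to be slightly careful about is that the averaging argument gives the right square-root loss, which is where the constants $\sqrt{\epsilon}$ and $2\sqrt{\epsilon}$ come from. The statement is essentially a clean Markov inequality plus a one-line union bound, and its power in the subsequent Lemma \ref{theorem:densebsg} comes entirely from feeding it into Lemma \ref{theorem:bsgsufficient}.
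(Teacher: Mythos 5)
Your proposal is correct and is essentially the paper's own argument: the paper also defines $A'$ as the set of elements with $G$-degree at least $(1-\sqrt{\epsilon})|B|$ and runs the same averaging computation, just phrased as an upper bound on $|G|=\sum_a|B_G(a)|$ split over $A'$ and its complement rather than as a Markov bound on the total deficiency $|A||B|-|G|$. The inclusion--exclusion step for the joint degree is likewise the intended (and only) justification of the ``in particular'' clause.
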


Applying Lemma \ref{theorem:bsgsufficient} to the conclusion of Lemma \ref{theorem:highdensityG} with $H=A' \times A'$ and $K=(1-2\sqrt{\epsilon})|B|$ immediately yields the statement of Lemma \ref{theorem:densebsg}.

\begin{proof}[Proof of Lemma \ref{theorem:highdensityG}]
Let $A'\subseteq A$ be the set of $a \in A$ with $G$-degree at least $(1-\sqrt{\epsilon})|B|$. It suffices to show that $|A'|\geq (1-\sqrt{\epsilon})|A|$. To this end, observe that
	\begin{align*}
	(1-\epsilon)|A||B|& \leq |G|\\
	&= \sum_{a \in A}|B_G(a)|\\
	&\leq \sum_{a \in A'}|B| + \sum_{a \in A \setminus A'}\left(1-\sqrt{\epsilon}\right)|B|\\
	&= |A'||B| + (|A|-|A'|)\left(1-\sqrt{\epsilon}\right)|B|.
	\end{align*}
It follows that
	$$(1-\epsilon)|A| \leq |A'|+(|A|-|A'|)\left(1-\sqrt{\epsilon}\right)$$
and so $|A'|\geq (1-\sqrt{\epsilon})$ as required. 
\end{proof}

\subsection{Less-dense partial sumsets}\label{section:mediumdense}

This section proves Lemma \ref{theorem:bg}, the BSG-type result for less-dense partial sumsets. The proof is not quite as direct as that for Lemma \ref{theorem:densebsg}, and the main additional ingredient is the following lemma.

\begin{lemma}\label{theorem:bsgrefine}
Let $G \subseteq A \times B$. Then for any $\epsilon >0$ there exists $A' \subseteq A$ with $|A'|\gg \frac{|G|}{|B|}$, and $H \subseteq A' \times A'$ with $|H|\geq(1-\epsilon)|A'|^2$, such that the joint $G$-degree of any $(a_1,a_2)\in H$ is at least $
\frac{\epsilon|G|^2}{2|A|^2|B|}$.
\end{lemma}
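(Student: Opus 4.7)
The plan is to construct $A'$ as the $G$-neighbourhood $\{a \in A : (a,b^\ast) \in G\}$ of a cleverly chosen $b^\ast \in B$. This is natural because every pair $(a_1,a_2)$ in such an $A'$ automatically shares $b^\ast$ as a common $G$-neighbour, which gives the joint $G$-degree a positive baseline; the task is to upgrade this baseline past the threshold $\tau := \epsilon|G|^2/(2|A|^2|B|)$ for all but an $\epsilon$-fraction of pairs, while keeping $|A'|\gg |G|/|B|$. The engine is the Cauchy--Schwarz inequality: writing $A_G(b) := \{a \in A : (a,b) \in G\}$, we have
$$\sum_{b \in B}|A_G(b)|^2 \;\geq\; \frac{\bigl(\sum_{b \in B}|A_G(b)|\bigr)^2}{|B|} \;=\; \frac{|G|^2}{|B|}.$$

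I would then pick $b^\ast \in B$ uniformly at random and study the random variables $X(b^\ast) := |A_G(b^\ast)|^2$ and $Y(b^\ast) :=$ the number of \emph{bad} pairs $(a_1,a_2) \in A_G(b^\ast)^2$, that is, pairs whose joint $G$-degree is less than $\tau$. The Cauchy--Schwarz bound above gives $\mathbb{E} X \geq |G|^2/|B|^2$, while reordering summations yields
$$\mathbb{E} Y \;=\; \frac{1}{|B|} \sum_{(a_1,a_2)\text{ bad}} |B_G(a_1) \cap B_G(a_2)| \;\leq\; \frac{|A|^2\,\tau}{|B|} \;=\; \frac{\epsilon\,|G|^2}{2|B|^2} \;\leq\; \frac{\epsilon}{2}\mathbb{E} X.$$

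The hard part is choosing a single $b^\ast$ that simultaneously makes $X(b^\ast)$ large \emph{and} $Y(b^\ast) \leq \epsilon X(b^\ast)$, as a plain Markov application delivers either property in isolation but not both. I would handle this with a truncated first-moment argument: if every $b$ with $X(b) \geq \mathbb{E} X/2$ satisfied $Y(b) > \epsilon X(b)$, then $\mathbb{E} Y > \epsilon\, \mathbb{E}\bigl[X \mathds{1}_{\{X \geq \mathbb{E} X/2\}}\bigr] \geq \epsilon(\mathbb{E} X - \mathbb{E} X/2) = (\epsilon/2)\mathbb{E} X$, contradicting the bound just established. Fixing a $b^\ast$ avoiding this contradiction, the choice $A' := A_G(b^\ast)$ gives $|A'| \geq \sqrt{\mathbb{E} X/2} \geq |G|/(\sqrt{2}\,|B|) \gg |G|/|B|$, while taking $H := \{(a_1,a_2) \in A' \times A' : |B_G(a_1) \cap B_G(a_2)| \geq \tau\}$ gives $|H| = X(b^\ast) - Y(b^\ast) \geq (1-\epsilon)|A'|^2$, with every element of $H$ meeting the required joint-degree lower bound by construction.
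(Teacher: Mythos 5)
Your proposal is correct and follows essentially the same route as the paper: both take $A'$ to be the $G$-neighbourhood $A_G(b^\ast)$ of a single well-chosen $b^\ast\in B$, both rely on the Cauchy--Schwarz bound $\sum_{b}|A_G(b)|^2\geq |G|^2/|B|$, and both bound the total mass of low-joint-degree pairs by $\epsilon|G|^2/(2|B|)$. The only difference is cosmetic: the paper selects $b^\ast$ in one averaging step over the weighted count $\sum_{a_1,a_2\in A_G(b)}\bigl(1-\epsilon^{-1}\mathds{1}((a_1,a_2)\notin H)\bigr)$, whereas you achieve the same simultaneous selection via a truncated first-moment contradiction.
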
 

\begin{proof}
Let $H$ be the set of $(a_1,a_2)\in A \times A$ with joint $G$-degree at least $
\frac{\epsilon|G|^2}{2|A|^2|B|}$. To prove the lemma it suffices to find $A' \subseteq A$ with $|A'|\gg \frac{|G|}{|B|}$ such that 
	$$|(A' \times A')\cap H|\geq (1-\epsilon)|A'|^2.$$ 
It is clear that
	\begin{equation*}
	\sum_{(a_1,a_2) \notin H}\left|B_G(a_1)\cap B_G(a_2)\right|< \frac{\epsilon|G|^2}{2|B|}
	\end{equation*}
and hence 
	\begin{equation}\label{eq:bsg2}
	\sum_{a_1,a_2 \in A}\left|B_G(a_1)\cap B_G(a_2)\right|\frac{\mathds{1}\left((a_1,a_2)\notin H\right)}{\epsilon}< 			
	\frac{|G|^2}{2|B|}.
	\end{equation}
Also, $\sum_{b \in B}|B_G(b)|=|G|$ and so by Cauchy-Schwarz 
	\begin{equation}\label{eq:bsg1}
	\sum_{a_1,a_2 \in A}\left|B_G(a_1)\cap B_G(a_2)\right|\geq \frac{|G|^2}{|B|}.
	\end{equation}
Comparing (\ref{eq:bsg2}) and (\ref{eq:bsg1}) shows that
	\begin{equation*}\label{eq:bsg3}
	\sum_{a_1,a_2 \in A} \left|B_G(a_1)\cap B_G(a_2)\right|\left(1-\frac{\mathds{1}\left((a_1,a_2)\notin 			
	H\right)}{\epsilon}\right)\geq \frac{|G|^2}{2|B|}.
	\end{equation*}
Rearranging gives
	\begin{equation*}
	\sum_{b \in B}\sum_{a_1,a_2 \in A_G(b)} \left(1-\frac{\mathds{1}\left((a_1,a_2)\notin H\right)}{\epsilon}\right)\geq 		
	\frac{|G|^2}{2|B|}.
	\end{equation*}
So there exists $b \in B$ such that 
	\begin{equation*}\label{eq:bsg4}
	\sum_{a_1,a_2 \in A_G(b)} \left(1-\frac{\mathds{1}\left((a_1,a_2)\notin H\right)}{\epsilon}\right)\geq \frac{|G|^2}{2|B|^2}.
	\end{equation*}
Evaluating the left hand side shows
	\begin{equation*}
	|A_G(b)|^2 - \frac{\left|(A_G(b) \times A_G(b)) \setminus H \right|}{\epsilon} \geq \frac{|G|^2}{2|B|^2}.
	\end{equation*}
Since the left hand side is at most $|A_G(b)|^2$ it follows that $|A_G(b)|\gg \frac{|G|}{|B|}$. Take $A'=A_G(b)$ so that 
	$$\left|(A' \times A') \setminus H \right|\leq \epsilon \left(|A'|^2 - \frac{|G|^2}{2|B|^2}\right)\leq \epsilon |A'|^2.$$
This implies $|(A' \times A')\cap H| \geq (1-\epsilon)|A'|^2$ as required.
\end{proof}

With this established, it is now possible to prove Lemma \ref{theorem:bg}. 

\begin{proof}[Proof of Lemma \ref{theorem:bg}]
Let $\epsilon>0$ be sufficiently small, and fixed. By Lemma \ref{theorem:bsgrefine} there exists $A' \subseteq A$ and $H \subseteq A' \times A'$ with $|A'|\gg \frac{|G|}{|B|}$ and $|H| \geq (1-\epsilon)|A'|^2$, such that any $(a_1,a_2)\in H$ have joint $G$-degree at least $\Omega\left(\frac{|G|^2}{|A|^2|B|}\right)$. Lemma \ref{theorem:bsgsufficient} implies
	$$|A'\overset{H}-A'|\ll \frac{|A\overset{G}-B|^2|A|^2|B|}{|G|^2}.$$
By Lemma \ref{theorem:highdensityG} there is a subset $A'' \subseteq A'$ with $|A''|\gg |A'|$ such that any two $a_1,a_2 \in A''$ have joint $H$-degree at least $(1-2\sqrt{\epsilon})|A'|$. A final application of Lemma \ref{theorem:bsgsufficient} shows that
	\begin{align*}
	|A''-A''|&\ll \frac{|A'\overset{H}-A'|^2}{|A'|}\ll \frac{|A|^4|B|^3 |A\overset{G}-B|^4}{|G|^5}
	\end{align*}
as required.
\end{proof}

\section{Additive energy}\label{section:energy}

Define the \textbf{additive energy} $E_+(A,B)$ of set $A$ and $B$ to be the number of solutions to the equation
	\begin{equation}\label{eq:mainenergy}
	a+b=a'+b'
	\end{equation}
with $a,a' \in A$ and $b,b' \in B$. Note that 
	$$E_+(A,B)=E_+(A,-B)$$ 
since (\ref{eq:mainenergy}) holds if and only if $a-b'=a'-b$. Define the additive energy of a single set $A$ to be 		
	$$E_+(A)=E_+(A,A).$$
	
When working with the multiplicative group of a field, we refer instead to \textbf{multiplicative energy}, denoted by $E_{\times}(A,B)$, i.e. the number of solutions to $a b=a'b'$ with $a,a' \in A$ and $b,b' \in B$. 

Energy and sumsets are closely related, since the only way in which $|A+B|$ can be small is if there are many pairs of elements giving the same sums. This would necessitate many solutions to \eqref{eq:mainenergy} and thus a large energy. This intuition will shortly be made rigorous, but in order to do so it is first necessary to generalise and slightly reformulate energy.

Just like generalising sumsets to partial sumsets, given $G \subseteq A \times B$ it is natural to consider the number of solutions to (\ref{eq:mainenergy}) with $(a,b),(a',b')\in G$. This restriction is called the additive energy of $G$ and denoted by $E_+(G)$. As with partial sumsets, this collapses to the definition of $E_+(A,B)$ when $G=A \times B$.

The following lemma shows that additive energy can be helpfully formulated as an $L^2$ norm.

\begin{lemma}[Reformulation of additive energy]\label{theorem:energyformulation}
For $G \subseteq A \times B$, let $\mu_G(x)$ denote the number of $(a,b)\in G$ with $a+b=x$. Then
	\begin{align}E_+(G)&=\sum_{x \in A\overset{G}+B} \mu_G(x)^2\label{eq:energy1}\\
	&=\sum_{(a,b)\in G} \mu_G(a+b).\label{eq:energy2}
	\end{align}
\end{lemma}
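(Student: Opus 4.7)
The plan is essentially a direct double-counting argument; both equalities will fall out of rearranging the definition of $E_+(G)$.

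For the first equality, I would start by writing $E_+(G)$ explicitly as the number of quadruples $((a,b),(a',b')) \in G \times G$ with $a+b = a'+b'$. The idea is to stratify this count by the common value $x = a+b = a'+b'$. Since $\mu_G(x)$ is by definition the number of pairs $(a,b) \in G$ with $a+b = x$, the number of quadruples with a fixed such $x$ is exactly $\mu_G(x)^2$. Summing over $x$ and noting that the only $x$ contributing are those in $A\overset{G}{+}B$ (since otherwise $\mu_G(x) = 0$) yields
$$E_+(G) = \sum_{x \in A \overset{G}{+} B} \mu_G(x)^2.$$

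For the second equality, I would rewrite one factor of $\mu_G(x)$ as an explicit sum over pairs in $G$ summing to $x$. That is, use $\mu_G(x) = \sum_{(a,b) \in G} \mathds{1}(a+b = x)$, so that
$$\sum_{x} \mu_G(x)^2 = \sum_{x} \mu_G(x) \sum_{(a,b) \in G} \mathds{1}(a+b=x) = \sum_{(a,b) \in G} \sum_{x} \mu_G(x)\mathds{1}(a+b=x).$$
The inner sum over $x$ collapses to $\mu_G(a+b)$, giving the claimed identity. No obstacles are anticipated, since this is purely a bookkeeping manipulation; the only point to be careful about is that the outer sum in the first equality ranges only over $x \in A \overset{G}{+} B$, which is fine because other values of $x$ contribute zero.
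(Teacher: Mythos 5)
Your proposal is correct and follows essentially the same double-counting argument as the paper: stratify the quadruple count by the common sum $x$ to get the first identity, and a trivial rearrangement gives the second. The only cosmetic difference is that you derive \eqref{eq:energy2} from \eqref{eq:energy1} by expanding one factor of $\mu_G(x)$, whereas the paper obtains it directly from the definition of $E_+(G)$ by fixing $(a,b)\in G$ and counting its partners; both are equivalent bookkeeping.
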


\begin{proof}
It is clear that
	\begin{align*}
	E_+(G)&=\sum_{x \in A\overset{G}+B}\#\left\{(a,b),(a',b')\in G:a+b=a'+b'=x\right\}\\
	&=\sum_{x \in A\overset{G}+B}\#\left\{(a,b)\in G:a+b=x\right\}^2\\
	&=\sum_{x \in A+B}\mu_G(x)^2
	\end{align*}
which establishes \eqref{eq:energy1}. Similarly,
	\begin{align*}
	E_+(G)&=\sum_{(a,b)\in G}\#\left\{(a',b'):a+b=a'+b'\right\}\\
	&=\sum_{(a,b)\in G} \mu_G(a+b)
	\end{align*}
which gives \eqref{eq:energy2}.
\end{proof}

When $G=A \times B$ we have $\mu_G(x)=|A \cap (x-B)|$ and the following corollary.

\begin{corollary}\label{theorem:energyformulation2}
For sets $A,B$ we have
	\begin{align}E_+(A,B)&=\sum_{x \in A+B} |A \cap(x-B)|^2 \label{eq:eg1}\\
	&=\sum_{a \in A, b \in B} |A \cap(a+b-B)|\label{eq:eg2}\\
	&=\sum_{a,a' \in A}|(B+a)\cap (B+a')|\label{eq:eg3}
	\end{align}
\end{corollary}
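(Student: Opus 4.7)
The plan is to deduce this corollary directly from Lemma \ref{theorem:energyformulation} by specialising to the complete case $G = A \times B$, and to handle the third identity separately by a direct rearrangement of the defining equation for additive energy.

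First I would unpack the quantity $\mu_G(x)$ in the special case $G=A\times B$. By definition $\mu_G(x)$ counts pairs $(a,b)\in A\times B$ with $a+b=x$, and such pairs are in bijection with elements $a\in A$ satisfying $x-a\in B$, i.e.\ with elements of $A\cap(x-B)$. Hence $\mu_G(x)=|A\cap(x-B)|$. Substituting into \eqref{eq:energy1} immediately yields \eqref{eq:eg1}, and substituting into \eqref{eq:energy2} (noting that the sum is now over all of $A\times B$) immediately yields \eqref{eq:eg2}.

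For \eqref{eq:eg3} I would argue directly from the definition of $E_+(A,B)$ as the number of solutions to $a+b=a'+b'$ with $a,a'\in A$ and $b,b'\in B$. For fixed $a,a'\in A$, the number of pairs $(b,b')\in B\times B$ satisfying $a+b=a'+b'$ equals the number of common elements of $B+a$ and $B+a'$; indeed, $y\in(B+a)\cap(B+a')$ corresponds precisely to a choice of $b,b'\in B$ with $y=b+a=b'+a'$. Summing over $a,a'\in A$ therefore gives \eqref{eq:eg3}.

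There is no real obstacle here — the whole corollary is a matter of bookkeeping, unwinding definitions and the Lemma \ref{theorem:energyformulation} reformulation. The only small care needed is checking that $\mu_G$ is correctly interpreted as $|A\cap(x-B)|$ when $G=A\times B$, and that the reindexing in \eqref{eq:eg3} matches the definition of $E_+(A,B)$; both are immediate.
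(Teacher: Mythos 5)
Your proposal is correct and matches the paper's proof: equations \eqref{eq:eg1} and \eqref{eq:eg2} are obtained exactly as in the text, by specialising Lemma \ref{theorem:energyformulation} to $G=A\times B$ and noting $\mu_G(x)=|A\cap(x-B)|$. For \eqref{eq:eg3} you reindex the defining fourfold count directly while the paper rearranges the summation in \eqref{eq:eg2}; these are the same trivial bookkeeping, so no substantive difference.
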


\begin{proof}
Equations \eqref{eq:eg1} and \eqref{eq:eg2} follow immediately from Lemma \ref{theorem:energyformulation} and the fact that $\mu_G(x)=|A \cap (x-B)|$ when $G=A \times B$. Equation \eqref{eq:eg3} follows by rearranging orders of summation from \eqref{eq:eg2}. 

\end{proof}

Now let's make rigorous the earlier discussion of the relationship between energy and sumsets. The following estimates are trivial.
	$$|A||B| \leq E_+(A,B)\leq |A|^2|B|.$$
Energy close to $|A||B|$ is `low' and that close to $|A|^2|B|$ is `high'. The following result shows that, as expected, small sumset implies high energy. This can be useful to know when analysing the structure of sets with small sumset.

\begin{lemma}\label{theorem:energycs1}
If $G \subseteq A\times B$ then
	$$E_+(A,B)\geq E_+(G) \geq \frac{|G|^2}{|A \overset{G}+B|}$$
\end{lemma}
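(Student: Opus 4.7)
The plan is to handle the two inequalities separately, with the first being essentially definitional and the second being a standard Cauchy--Schwarz application on top of Lemma \ref{theorem:energyformulation}.

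For the left-hand inequality $E_+(A,B) \geq E_+(G)$, I would simply observe that $E_+(G)$ counts quadruples $(a,b),(a',b') \in G$ with $a+b=a'+b'$, while $E_+(A,B)$ counts the same quadruples but with the weaker constraint that $(a,b),(a',b') \in A \times B$. Since $G \subseteq A \times B$, every quadruple counted by $E_+(G)$ is counted by $E_+(A,B)$, and the inequality is immediate.

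For the right-hand inequality $E_+(G) \geq \frac{|G|^2}{|A \overset{G}+ B|}$, I would invoke the identity \eqref{eq:energy1} from Lemma \ref{theorem:energyformulation}, which rewrites $E_+(G) = \sum_{x \in A \overset{G}+B} \mu_G(x)^2$. Then by Cauchy--Schwarz applied to the sum indexed by $x \in A \overset{G}+B$,
\[
\left(\sum_{x \in A \overset{G}+B} \mu_G(x)\right)^{\!2} \leq |A \overset{G}+B| \sum_{x \in A \overset{G}+B} \mu_G(x)^2 = |A \overset{G}+B| \cdot E_+(G).
\]
The inner sum on the left evaluates to $|G|$, since every $(a,b) \in G$ contributes to exactly one $\mu_G(x)$ (namely $x = a+b$). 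Rearranging gives the desired lower bound.

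There is no real obstacle here; the only point requiring minor care is ensuring the summation range in Cauchy--Schwarz matches that in \eqref{eq:energy1}, i.e. running over $x \in A \overset{G}+B$ rather than over all of $A+B$, so that $\sum_x \mu_G(x) = |G|$ holds exactly.
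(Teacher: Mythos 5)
Your proposal is correct and matches the paper's own proof: both use $\sum_{x}\mu_G(x)=|G|$, Cauchy--Schwarz over $x \in A\overset{G}+B$, and the identity \eqref{eq:energy1} from Lemma \ref{theorem:energyformulation}. Your explicit justification of the monotonicity $E_+(A,B)\geq E_+(G)$ is a small addition the paper leaves implicit, but it is the obvious containment argument.
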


Note that in this lemma, partial and complete sumsets are on the same footing: unlike BSG-type results there is no additional premium to pay for working with partial sumsets.

\begin{proof}[Proof of Lemma \ref{theorem:energycs1}]
It is clear that
	$$\sum_{x \in A\overset{G}+B}\mu_G(x)=|G|.$$
So by Cauchy-Schwarz,
	$$|G|^2 \leq |A\overset{G}+B|\sum_{x \in A\overset{G}+B}\mu_G(x)^2.$$
By Lemma \ref{theorem:energyformulation} the right-hand side is equal to $|A\overset{G}+B|E_+(G)$ and so
	$$E_+(A,B)\geq E_+(G) \geq \frac{|G|^2}{|A\overset{G}+B|}$$
which completes the proof.
\end{proof}

The converse to Lemma \ref{theorem:energycs1} is false: high energy does not automatically imply small sumset. For example if $A$ is the union of an arithmetic progression $A_1$ and a geometric progression $A_2$, each of cardinality $|A|/2$, then $A$ is high-energy since
	$$E_+(A)\gg E_+(A_1) \approx |A|^3.$$
However the sumset $|A+A|$ is also large since
	$$|A+A|\geq |A_2+A_2|\approx |A|^2.$$
	
What \textit{does} hold, however, is the following partial converse. If a set has high energy then there must exist a fairly large $G \subseteq A \times A$ for which the partial sumset $A \overset{G}+A$ is small. This fact will not be required in the thesis and so precise details are omitted.

\section{Covering methods}\label{section:covermethods}

A \textbf{covering lemma} is a result of the following form. If $A$ and $B$ have small sumset then a large part of $A$ can be `covered by' (contained in the union of) a small number of translates of $B$ or some modification of $B$. The canonical example is due to Ruzsa, for which a proof can be found in \cite{TV}:

\begin{lemma}[Ruzsa]
For any sets $A,B$, the set $A$ is contained in the union of $\frac{|A-B|}{|B|}$ translates of $B-B$ 
\end{lemma}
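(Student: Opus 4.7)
The plan is to find a small set $X \subseteq A$ whose translates of $B - B$ cover $A$, by a standard maximality (greedy) argument. The size of $X$ will be bounded above by noting that disjoint translates of $B$ must fit inside $A - B$.

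First, I would choose $X \subseteq A$ to be a maximal subset with the property that the translates $\{x - B : x \in X\}$ are pairwise disjoint. Such an $X$ exists because we can build it greedily: start with $X = \emptyset$ and repeatedly add any element of $A$ whose translate $x - B$ does not meet the translates of the elements already chosen, stopping when no such element exists.

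Next I would bound $|X|$. Each translate $x - B$ with $x \in X \subseteq A$ is contained in $A - B$, and by construction these translates are disjoint, so
\begin{equation*}
|X| \, |B| \;=\; \Bigl|\bigsqcup_{x \in X}(x - B)\Bigr| \;\leq\; |A - B|,
\end{equation*}
giving $|X| \leq |A-B|/|B|$.

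Finally I would show that the translates $\{x + (B - B) : x \in X\}$ cover $A$. Fix any $a \in A$. If $a \in X$ then $a = a + 0 \in a + (B - B)$ and we are done. Otherwise maximality of $X$ forces $(a - B) \cap (x - B) \neq \emptyset$ for some $x \in X$, so there exist $b, b' \in B$ with $a - b = x - b'$, i.e.\ $a = x + (b' - b) \in x + (B - B)$. Hence $A \subseteq \bigcup_{x \in X}\bigl(x + (B - B)\bigr)$, a union of at most $|A-B|/|B|$ translates of $B - B$, as required. There is no real obstacle here; the only subtle point is remembering to handle $a \in X$ separately (or simply to observe that $0 \in B - B$ so translates of $B - B$ cover $X$ trivially).
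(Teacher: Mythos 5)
Your proof is correct and is precisely the standard greedy argument that the thesis defers to Tao and Vu for: the maximal family $X$ with pairwise disjoint translates $x-B$, the packing bound $|X|\,|B|\leq |A-B|$, and the use of maximality to cover the remaining elements of $A$ by translates of $B-B$. The only blemish is a harmless sign slip at the end (from $a-b=x-b'$ one gets $a=x+(b-b')$ rather than $x+(b'-b)$, though both lie in $B-B$ by symmetry), and implicitly $B$ must be nonempty, which is forced anyway by the $|B|$ in the denominator.
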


Another more recent example is due to Shen \cite{shen}. In a similar spirit to the improvement of Lemma \ref{theorem:cleverpl} over Lemma \ref{theorem:pl}, this enables a covering with translates of $B$ instead of $B-B$, at the expense of leaving a small part of $A$ uncovered.

\begin{lemma}[Shen]\label{theorem:shencover}
For sets $A,B$ and $\epsilon>0$ there exists $A' \subseteq A$ with $|A'|\geq (1-\epsilon)|A|$ such that $A'$ is contained in the union of $O_{\epsilon}\left(\frac{|A-B|}{|B|}\right)$ translates of $B$.
\end{lemma}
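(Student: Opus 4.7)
The plan is to use a greedy iterative procedure: at each step, locate a single translate of $B$ covering a large fraction of the elements of $A$ not yet covered, discard those elements, and repeat until only an $\epsilon$-proportion of $A$ remains uncovered. This differs from the Ruzsa covering lemma (which uses \emph{maximal disjointness} of the $a-B$ sets to force $A \subseteq X + (B-B)$) precisely because admitting a small uncovered fraction is what allows us to pick up translates of $B$ itself rather than of $B-B$.

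The key averaging step is as follows. Write $K = |A-B|/|B|$. For any subset $A_i \subseteq A$, I count the pairs $(a,b) \in A_i \times B$ two ways to obtain
\begin{equation*}
\sum_{t \in A-B} |A_i \cap (t+B)| = |A_i||B|,
\end{equation*}
the point being that every $(a,b) \in A_i \times B$ contributes once with $t = a-b \in A-B$. Since the sum has at most $|A-B|$ nonzero terms, by averaging there exists $t_i \in A-B$ with
\begin{equation*}
|A_i \cap (t_i + B)| \geq \frac{|A_i||B|}{|A-B|} = \frac{|A_i|}{K}.
\end{equation*}

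The iteration is then straightforward. Set $A_0 = A$ and, having chosen $t_0,\ldots,t_{i-1}$, define $A_{i+1} = A_i \setminus (t_i + B)$ where $t_i$ is the translate furnished by the averaging step applied to $A_i$. This gives $|A_{i+1}| \leq (1 - 1/K)|A_i|$, hence $|A_n| \leq (1-1/K)^n |A| \leq e^{-n/K}|A|$. Choosing $n = \lceil K\log(1/\epsilon) \rceil = O_\epsilon(K)$ forces $|A_n| \leq \epsilon|A|$, and then $A' := A \setminus A_n$ has $|A'| \geq (1-\epsilon)|A|$ and is covered by the translates $t_0 + B, \ldots, t_{n-1} + B$.

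There is no real obstacle here; the mild subtlety is simply to notice that one should average over $t \in A-B$ rather than over some smaller set depending on $A_i$, so that the denominator in the averaging bound is the fixed quantity $|A-B|$ at every step. Once that is done, the geometric decay $|A_{i+1}| \leq (1-1/K)|A_i|$ yields the logarithmic factor $\log(1/\epsilon)$ absorbed into the $O_\epsilon$ notation.
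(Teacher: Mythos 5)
Your proof is correct, and its overall structure --- a greedy iteration that at each stage finds a single translate of $B$ covering at least a $1/K$ fraction of the not-yet-covered elements, with the resulting geometric decay and the $\log(1/\epsilon)$ factor absorbed into the $O_{\epsilon}$ --- is the same as the argument the paper uses for this lemma's variations (Lemmas \ref{theorem:cover1} and \ref{theorem:cover2}). The only point of difference is how the good translate is located. You use the first-moment identity $\sum_{t \in A-B}|A_i \cap (t+B)| = |A_i||B|$ and average over the at most $|A-B|$ values of $t$; the paper's route goes through additive energy, i.e.\ the second moment: by Cauchy--Schwarz, $E_+(A_i,B) \geq |A_i|^2|B|^2/|A-B|$, and since $E_+(A_i,B) = \sum_{a \in A_i, b \in B}|A_i \cap (a-b+B)|$ has $|A_i||B|$ terms, some term is at least $|A_i||B|/|A-B|$. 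The two yield identical per-step guarantees, and yours is the more elementary. What the energy formulation buys is that it transfers directly to partial sumsets, where one only controls $|A \overset{G}- B|$ for a dense $G \subseteq A \times B$ (this is exactly how Lemma \ref{theorem:cover1} is proved); for the complete-sumset statement at hand nothing is lost by your first-moment shortcut.
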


The proof of Shen's covering result is based on additive energy, and so with some tweaks applies to the situation where only a dense partial sumset, rather than a complete sumset, is small. In certain situations, such as in Chapter \ref{chapter:expanders}, this can be a more-efficient substitute for Balog-Szemer\'edi-Gowers type methods, since there is no additional cost for dealing with partial rather than complete sumsets. 

Two original variations on this theme are used in Chapter \ref{chapter:expanders}. The first, below, essentially restates Shen's result with $A-B$ replaced by $A\overset{G}-B$.

\begin{lemma}[Shen variation 1]\label{theorem:cover1}
Let $G \subseteq A \times B$ and $0<\epsilon<1/4$. If $|G|\geq (1-\epsilon)|A||B|$ then there exists $A' \subseteq A$ with $|A'|\geq (1-2\sqrt{\epsilon}) |A|$ such that $A'$ is contained in the union of $O_{\epsilon}\left(\frac{|A\overset{G}-B|}{|B|}\right)$ translates of $B$. 

Similarly, there is a subset $A'' \subseteq A$ with $|A''|\geq (1-2\sqrt{\epsilon}) |A|$  such that $A''$ in contained in the union of $O_{\epsilon}\left(\frac{|A\overset{G}-B|}{|B|}\right)$ translates of $-B$.
\end{lemma}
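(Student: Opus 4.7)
The plan is a greedy covering argument that exploits the high density of $G$ inside $A\times B$. First I would apply Lemma~\ref{theorem:highdensityG} to pass to a subset $A_1\subseteq A$ with $|A_1|\geq (1-\sqrt{\epsilon})|A|$ such that the $G$-degree of every $a\in A_1$ is at least $(1-\sqrt{\epsilon})|B|$. The key observation is that $a\in x+B$ iff $x\in a-B$, and for any $a\in A_1$ the subset $a-B_G(a)$ of $a-B$ has size at least $(1-\sqrt{\epsilon})|B|$ and lies inside $A\overset{G}-B$. The problem is thereby reduced to producing a small $X\subseteq A\overset{G}-B$ that meets $a-B_G(a)$ for all but at most $\sqrt{\epsilon}|A|$ values of $a\in A_1$.

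Now I would build $X$ greedily. Let $U\subseteq A_1$ be the currently uncovered set, and count the pairs $(a,x)$ with $a\in U$ and $x\in a-B_G(a)$: this total is at least $(1-\sqrt{\epsilon})|U||B|$. Averaging over $x$, which ranges over at most $|A\overset{G}-B|$ points, yields some $x^{*}$ lying in $a-B_G(a)$ for at least $(1-\sqrt{\epsilon})|U||B|/|A\overset{G}-B|$ values of $a\in U$. Adjoining $x^{*}$ to $X$ and deleting those covered $a$ from $U$ multiplies $|U|$ by a factor at most
\[
1-\frac{(1-\sqrt{\epsilon})|B|}{|A\overset{G}-B|}.
\]
After $k=O_{\epsilon}\!\left(|A\overset{G}-B|/|B|\right)$ iterations, $|U|\leq \sqrt{\epsilon}|A|$, so the covered set $A'=A_1\setminus U$ satisfies $|A'|\geq (1-\sqrt{\epsilon})|A|-\sqrt{\epsilon}|A|=(1-2\sqrt{\epsilon})|A|$, as required.

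The second assertion, covering $A''$ by translates of $-B$, follows by the symmetry $(a,b)\mapsto(a,-b)$, which carries $G\subseteq A\times B$ to a graph $\tilde{G}\subseteq A\times(-B)$ of the same density; the previous argument applied to $(A,-B,\tilde{G})$ produces the desired cover, with the same cardinality bound on $A''$. The main technical obstacle is bookkeeping the constants so that the losses compound to exactly $(1-2\sqrt{\epsilon})|A|$, which requires the initial degree cleanup in Lemma~\ref{theorem:highdensityG} to supply a density $(1-\sqrt{\epsilon})$ rather than anything weaker; a secondary subtlety is that restricting to the high-degree set $A_1$ does not enlarge the partial difference set (since $G\cap(A_1\times B)\subseteq G$), so the bound $|A\overset{G}-B|$ in the statement remains the right quantity to divide by $|B|$ throughout the iteration.
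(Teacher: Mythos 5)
Your proof of the first assertion is correct and is essentially the paper's argument: the paper runs the same greedy iteration after the same appeal to Lemma~\ref{theorem:highdensityG}, phrasing the averaging step via the additive energy bound $E_+(A_*,B)\geq |G_*|^2/|A\overset{G}-B|$ together with the expansion $E_+(A_*,B)=\sum_{a\in A_*,b\in B}|A_*\cap(a-b+B)|$, whereas you average directly over $x\in A\overset{G}-B$; these are the same double count.

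The second assertion, however, has a genuine gap. The symmetry $(a,b)\mapsto(a,-b)$ carries the partial difference set to $A\overset{\tilde G}-(-B)=A\overset{G}+B$, so applying the first part to $(A,-B,\tilde G)$ covers $A''$ by $O_{\epsilon}\left(|A\overset{G}+B|/|B|\right)$ translates of $-B$ --- a bound in terms of the partial \emph{sumset}, not the partial difference set as the lemma claims, and there is no general inequality between the two. Equivalently, in your direct greedy the relevant sets $a+B_G(a)$ live inside $A\overset{G}+B$, so the averaging only produces $O_{\epsilon}(|A\overset{G}+B|/|B|)$ centres. The paper's fix is to route both cases through the complete energy $E_+(A_*,B)$, which is symmetric under $B\mapsto -B$: it is bounded below by $|G_*|^2/|A\overset{G}-B|$ via the difference representation, yet it can also be expanded as $\sum_{a\in A_*,b\in B}|A_*\cap(a+b-B)|$, so the covering translates have the form $a+b-B$ while their number is still controlled by $|A\overset{G}-B|$. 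You need this identity (or an equivalent bridge) to obtain the stated bound for the $-B$ covering.
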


\begin{proof}
We shall prove the case for covering with translates of $B$, and remark on the slight alteration needed to cover with translates of $-B$. 

Since $|G|\geq (1-\epsilon)|A||B|$ there is by Lemma \ref{theorem:highdensityG} a subset $A_1 \subseteq A$ with $|A_1|\geq (1-\sqrt{\epsilon}) |A|$ such that every element of $A_1$ has $G$-degree at least $(1-\sqrt{\epsilon}) |B|$. 

Now for any subset $A_* \subseteq A_1$ let $G_*=G \cap (A_* \times B)$ so that 
	$$|G_*| \geq (1-\sqrt{\epsilon})|A_*||B|.$$ 
By Lemma \ref{theorem:energycs1} it follows that
	$$E_+(A_*,B) \geq \frac{|G_*|^2}{|A\overset{G}-B|}\geq \frac{|A_*|^2(1-\sqrt{\epsilon})^2|B|^2}{|A\overset{G}-B|}.$$
By Corollary \ref{theorem:energyformulation2},
	\begin{equation}\label{eq:energydef}
	E_+(A_*,B)=E_+(A_*,-B)=\sum_{a \in A_*, b \in B}|A_* \cap (a-b)+B |
	\end{equation}
and so for any $A_* \subseteq A_1$ there exist $a \in A_*$, $b \in B$ such that 
	$$|A_* \cap (a-b+B) |\geq \frac{|A_*| (1-\sqrt{\epsilon})^2 |B|}{|A\overset{G}-B|}.$$
	
Apply the above discussion to a sequence of subsets of $A_1$. Begin by taking $A_*=A_1$ to find $a_1 \in A_1$, $b_1 \in B$ such that
	$$|A_1 \cap (a_1-b_1+B) |\geq \frac{|A_1| (1-\sqrt{\epsilon})^2 |B|}{|A\overset{G}-B|}.$$
The translate $(a_1-b_1)+B$ covers $\frac{|A_1| (1-\sqrt{\epsilon})^2 |B|}{|A\overset{G}-B|}$ elements of $A_1$. Discard $A_1 \cap (a_1-b_1)+B$ from $A_1$ and let $A_2$ be the set of elements remaining, now taking $A_*=A_2$ and repeating the process. 

Iterate $O_{\epsilon}\left(\frac{|A\overset{G}-B|}{|B|}\right)$ times until the set remaining is of cardinality no more than $\sqrt{\epsilon}|A_1|$. Then take $A'$ to be the set of elements discarded across all iterations, so that $|A'|\geq (1-\sqrt{\epsilon})|A_1|$. Since $|A_1|\geq (1-\sqrt{\epsilon})|A|$ we get $|A'|\geq (1-2\sqrt{\epsilon})|A|$ as required, which completes the proof for covering with translates of $B$.

The proof for covering with translates of $-B$ is identical, except that in place of (\ref{eq:energydef}), the identity
	$$E_+(A_*,B)=\sum_{a \in A_*, b \in B}|A_* \cap (a+b-B) |$$
is used instead.
\end{proof}

The second original variation on Shen's result has a slightly different formulation. Instead of covering a large part of $A$ with translates of $B$, it yields a large part of $G$ whose corresponding partial difference set is covered by few translates of $B$. 

\begin{lemma}[Shen variation 2] \label{theorem:cover2}
Let $G \subseteq A \times B$ and $0<\epsilon<1$. Then there exists $G' \subseteq G$ with $|G'|\geq (1- \epsilon)|G|$ such that $A\overset{G'}-B$ is contained in the union of $O_{\epsilon}\left(\frac{|A\overset{G}-B||A|}{|G|}\right)$ translates of $B$. 
\end{lemma}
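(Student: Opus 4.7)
The plan is an iterative greedy covering argument closely parallelling the proof of Lemma \ref{theorem:cover1}, but targeted at the partial difference set rather than at $A$. Write $D := A\overset{G}- B$. I would set $G_0 := G$ and build up a family $\mathcal{T}$ of translates one step at a time. At iteration $i$, provided the surviving set $G_i$ still has $|G_i| \geq \epsilon|G|$, I would find a single translate $t_i + B$ capturing at least $\Omega_\epsilon\bigl(|G|^2/(|A|\cdot|D|)\bigr)$ pairs of $G_i$, in the sense that at least this many pairs $(a, b) \in G_i$ satisfy $a - b \in t_i + B$, then delete those pairs to form $G_{i+1}$ and append $t_i$ to $\mathcal{T}$. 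On termination I take $G' := G \setminus G_{\text{final}}$; by construction $|G'| \geq (1 - \epsilon)|G|$ and $A\overset{G'}- B \subseteq \bigcup_{t \in \mathcal{T}}(t + B)$, and a direct count of iterations gives $|\mathcal{T}| = O_\epsilon(|A||D|/|G|)$ as required.

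The technical heart is the per-iteration lower bound on pairs covered. Let $D_i := A\overset{G_i}- B$ and $\mu_i(d) := |\{(a, b) \in G_i : a - b = d\}|$. Applying Lemma \ref{theorem:energycs1} to the negated partial set $\widetilde{G_i} := \{(a, -b) : (a, b) \in G_i\}$ yields $\sum_{d \in D_i}\mu_i(d)^2 \geq |G_i|^2/|D_i| \geq |G_i|^2/|D|$. The key combinatorial input is the trivial bound $|A \cap (d + B)| \geq \mu_i(d)$ for every $d \in D_i$: each pair $(a, b) \in G_i$ with $a - b = d$ contributes a distinct $a \in A \cap (d + B)$ (the second coordinate $b = a - d$ is determined). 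Combining these,
$$\sum_{a \in A}\sum_{d \in D_i \cap (a - B)}\mu_i(d) \;=\; \sum_{d \in D_i}\mu_i(d)\,|A \cap (d + B)| \;\geq\; \sum_{d \in D_i}\mu_i(d)^2 \;\geq\; \frac{|G_i|^2}{|D|},$$
so averaging over $a \in A$ produces an index $a_i$ for which the single translate $a_i - B$ captures at least $|G_i|^2/(|A||D|) \geq \epsilon^2|G|^2/(|A||D|)$ pairs of $G_i$, and the desired per-iteration estimate follows.

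The main obstacle is one of orientation: the averaging above naturally produces translates of $-B$ (namely $a_i - B$) rather than the $+B$-translates required by the statement. I expect to resolve this exactly as in Lemma \ref{theorem:cover1}, where the two companion identities $E_+(A_*, B) = \sum_{a, b}|A_* \cap (a + b - B)|$ and $E_+(A_*, -B) = \sum_{a, b}|A_* \cap (a - b + B)|$ produced $-B$ and $+B$ covers respectively. The analogue here needs the right weighted identity for partial sums: rather than averaging over all $c$ in the naive support $D - B$ (which would degrade the bound by the uncontrollable factor $|D - B|/(|A||D|/|G|)$), one should index translates by an $A$-sized parameterising family such as $\{a - b_0 + B : a \in A\}$ for a well-chosen $b_0 \in B$. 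The bookkeeping between $G_i$, $D_i$, the multiplicities $\mu_i$, and this parameterising family is the delicate part, but should go through so that the per-step gain of $|G_i|^2/(|A||D|)$ pairs is preserved under the conversion to $+B$-translates.
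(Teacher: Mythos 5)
The core of your argument (your first two paragraphs) is precisely the paper's proof: the same greedy iteration on subsets $G_*\subseteq G$, the same appeal to Lemma \ref{theorem:energycs1} to produce at least $|G_*|^2/|A\overset{G}-B|$ solutions of $a-b=a'-b'$ inside $G_*$, and the same pigeonholing over $a'\in A$ to extract a single $a_*$ for which the set $a_*-B$ captures $\Omega_\epsilon\bigl(|G|^2/(|A||A\overset{G}-B|)\bigr)$ pairs per step; your bookkeeping via $\mu_i(d)^2\leq \mu_i(d)|A\cap(d+B)|$ is just a rephrasing of that count. The ``orientation obstacle'' of your third paragraph is not something you need to resolve, and the paper does not resolve it either: its proof stops at the translates $a_*-B$, which are translates of $-B$, and simply records them as the covering family. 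The mismatch with the statement's ``translates of $B$'' is a sign slip in the paper that is immaterial in every application (in Corollary \ref{theorem:energy'} and the proof of Theorem \ref{theorem:twovariableFp} the cover only enters through sumset bounds that are insensitive to replacing $B$ by $-B$). So you should delete the third paragraph rather than build the weighted-identity machinery it sketches; the honest formulation covers $A\overset{G'}-B$ by translates of $-B$, and that is all that is ever used.
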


\begin{proof}
Let $G_*$ be any subset of $G$. By Lemma \ref{theorem:energycs1} there are at least $\frac{|G_*|^2}{|A\overset{G}-B|}$ solutions to 
	$$a-b=a'-b'$$
with $(a,b),(a',b') \in G_*$. So there exists $a_* \in A$ for which there are at least $\frac{|G_*|^2}{|A\overset{G}-B||A|}$ pairs $(a,b) \in G_*$ with
	$$a-b \in a_* - B.$$
In other words, the translate $a_*-B$ accounts for the differences of $\frac{|G_*|^2}{|A\overset{G}-B||A|}$ pairs from $G_*$.

Apply the above discussion to a sequence of subsets of $G$. Begin by taking $G_*=G$ to find a translate of $B$ accounting for the differences of $\frac{(1-\epsilon)^2|G|^2}{|A\overset{G}-B||A|}$ pairs from $G$. Discard these pairs from $G_1$ and let $G_2$ be the set of pairs remaining, now taking $G_*=G_2$ and repeating the process.

Iterate $O_{\epsilon}\left(\frac{|A\overset{G}-B||A|}{|G|}\right)$ times until the subset of $G$ remaining is of cardinality no more than $\epsilon |G|$. Then take $G'$ to be the set of discarded pairs, so that $|G'|\geq (1-\epsilon)|G|$, and $A\overset{G'}-B$ is contained in no more than $O_{\epsilon}\left(\frac{|A\overset{G}-B||A|}{|G|}\right)$ translates of $B$, as required. 
\end{proof}

\chapter{Incidence theorems over finite fields}
\label{chapter:incidences}

Chapter \ref{chapter:introincidences} introduced incidence geometry but deferred discussion of the finite field case. This chapter now picks up that baton.

The state of the art for incidences is weaker over finite fields than $\mathbb{R}$ and $\mathbb{C}$, and the discrepancy is much greater than the analogous gap for growth results. This chapter narrows that divide with two new theorems: an incidence bound and a line-counting result. 

An earlier version of this work has been submitted to the European Journal of Combinatorics. A preprint \cite{me806}, and a more recent update \cite{me662} also used here, are available on the arXiv.

\section{Results}

This section describes the two results proved in this chapter.

\subsection{Incidence bounds}
Chapter \ref{chapter:introincidences} showed that in the `critical' case $|P|,|L|=N$, non-trivial incidence bounds are of the form 
	$$I(P,L)\ll N^{3/2-\epsilon}$$
for $\epsilon>0$. The Szemer\'edi-Trotter theorem (Theorem \ref{theorem:ST}) gives the sharp result $\epsilon = \frac{1}{6}$ in $\mathbb{R}^2$ and generalises to the complex setting $\mathbb{C}^2$ (Theorem \ref{theorem:toth}).

Chapter \ref{chapter:introgrowth} noted that any nontrivial results on growth in finite fields must be predicated on keeping away from subfields. The same is true for incidence bounds, but this time one must keep away from subplanes. Just like growth, to keep these considerations as straightforward as possible it is often convenient to work with the field $\mathbb{F}_p$ of prime order $p$. 

To see why avoiding subplanes is necessary, consider the example where $P$ is the whole plane $\mathbb{F}_p^2$ and $L$ is the set of all lines in $\mathbb{F}_p^2$. Taking $N=p^2$ gives $|P|,|L| \approx N$, but since every line in $L$ is incident to $p$ points in $P$ it follows that $I(P,L)\approx N^{3/2}$ and so in this case a non-trivial estimate is impossible. 

Working over $\mathbb{F}_p^2$, Bourgain, Katz and Tao \cite{BKT} proved the existence of a non-trivial $\epsilon >0$, dependent on $\gamma>0$, whenever $N<p^{2-\gamma}$. This has been made explicit in two cases:
\begin{itemize}
\item In the `small-set' regime $N<p$, Helfgott and Rudnev \cite{HR} obtained a bound of $\epsilon \geq \frac{1}{10,678}$. 
\item In the `large-set' regime $p^{1+\gamma} \leq N \leq p^{2-\gamma}$, Vinh \cite{vinh} obtained $\epsilon \geq \frac{\gamma}{4}$.
\end{itemize}
For the purposes of this thesis, where as explained in the prologue the emphasis is on small rather than large sets, the Helfgott-Rudnev bound is the one to beat. 

It is instructive to compare with the situation for growth and sum-products. With sum-products there is a growth exponent of $\delta \geq \frac{1}{3}-o(1)$ over $\mathbb{R}$ and $\mathbb{C}$, and $\delta \geq \frac{1}{11}-o(1)$ over finite fields, which is a bit weaker but not excessively so. By contrast the finite field incidence bound of $\epsilon \geq \frac{1}{10,678}$ is an awfully long way from the real and complex bound of $\epsilon \geq \frac{1}{6}$ implied by Szemer\'edi-Trotter. 

The first new result in this chapter goes some way to redressing this disparity by obtaining a much stronger finite field incidence bound of $\epsilon \geq \frac{1}{662}-o(1)$.

\begin{theorem}\label{theorem:incidences}
Let $N<p$. If $P$ and $L$ are a set of points and lines over $\mathbb{F}_p$ with $|P|,|L| \leq N$ then 
	$$I(P,L)\lesssim N^{\frac{3}{2}-\frac{1}{662}}.$$
\end{theorem}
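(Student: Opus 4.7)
The plan is to argue by contradiction in the standard Bourgain--Katz--Tao mould, but to replace the sum-product input used in Helfgott--Rudnev's $\epsilon \geq 1/10{,}678$ argument with Rudnev's stronger bound (Theorem \ref{theorem:mishasumprodvariation}), and then to carefully minimise the losses from the Balog--Szemer\'edi--Gowers-type and Pl\"unnecke--Ruzsa steps along the way. Assume for contradiction that $I(P,L) \gtrsim N^{3/2 - \epsilon}$ with $\epsilon < 1/662$. The first task is a clean-up phase: by dyadic pigeonholing I extract refined subsets $P' \subseteq P$ and $L' \subseteq L$ along with dyadic parameters $k$ and $k'$ so that every line of $L'$ meets $\approx k$ points of $P'$, every point of $P'$ lies on $\approx k'$ lines of $L'$, and $|P'|k' \approx |L'|k \approx I(P',L') \approx I(P,L)$ up to polylogarithmic losses of the sort absorbed into $\lesssim$.

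The second phase is a projective normalisation, using the material of Appendix \ref{chapter:proj}. Pick a line $\ell_\infty \in L'$ that is rich (incident to $\approx k$ points of $P'$) and a point $p_\infty \in P' \setminus \ell_\infty$ that is rich (lying on $\approx k'$ lines of $L'$). A projective transformation sends $\ell_\infty$ to the line at infinity and $p_\infty$ to the vertical direction, so the lines of $L'$ through $p_\infty$ become vertical lines at abscissae forming a set $X \subseteq \mathbb{F}_p$ with $|X| \approx k'$, and the ``horizontal'' intercepts cut out by $\ell_\infty$ on each remaining rich line give a set of slopes parameterised by a set $Y \subseteq \mathbb{F}_p$ with $|Y| \approx k$. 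Following Helfgott--Rudnev, counting triples of concurrent lines through $p_\infty$ together with collinear triples along $\ell_\infty$ produces, after a further pigeonholing of popular ``intersection patterns,'' a single set $A \subseteq \mathbb{F}_p$ together with a dense graph $G \subseteq A \times A$ such that both the partial difference set $A \overset{G}{-} A$ and the partial ratio set are small relative to $|A|$. Quantitatively, the exponents are controlled by $\epsilon$ and the dyadic parameters.

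The third phase converts this partial structure into complete structure using the BSG-type machinery of Chapter \ref{chapter:introapproxgroup}. Since the graph $G$ is only moderately dense, I use the Bourgain--Garaev lemma (Lemma \ref{theorem:bg}) to pass to a subset $A' \subseteq A$ with $|A' - A'|$ controlled by a suitable power of $|A \overset{G}{-} A|$, and analogously for $A'/A'$; one then passes to a common refinement where both sets are genuinely small. The multiplicative side is packaged as an upper bound on $|A'-A'|$ and a lower bound on $E_\times(A')$, since the latter is what Theorem \ref{theorem:mishasumprodvariation} consumes. This step needs the cardinality condition $N < p$ (which forces $|A'| \ll p^{1/2}$) so that Rudnev's theorem is applicable. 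Feeding the estimates into $E_\times(A')^4 \lesssim |A' - A'|^7 |A'|^4$ yields a polynomial inequality in $\epsilon$ and the dyadic parameters $k,k'$; optimising produces the numerical threshold $\epsilon \geq 1/662 - o(1)$, contradicting the assumption.

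The main obstacle is the third phase: the arithmetic has to be arranged so that every intermediate pigeonhole loss, every BSG refinement, and every Pl\"unnecke--Ruzsa step contributes the minimum possible cost to the exponent. In particular, it is tempting to use only the weaker BSG lemma, but the dense version (Lemma \ref{theorem:densebsg}) must be exploited in the regime where $G$ is close to the full Cartesian product, since the stronger exponent of Rudnev's sum-product bound is easily undone by a sloppy BSG application. The constant $662$ emerges precisely from chasing these exponents carefully through the chain $I(P,L) \to (k,k') \to (|A|, |A - A|, E_\times(A)) \to \text{Theorem \ref{theorem:mishasumprodvariation}}$, and any inefficiency anywhere worsens it.
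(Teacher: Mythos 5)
Your overall skeleton (assume $I(P,L)\gg N^{3/2-\epsilon}$, regularise by pigeonholing, reduce projectively to a partial difference/ratio structure, pass to complete sets by BSG, and feed the result into Theorem \ref{theorem:mishasumprodvariation}) is the same as the paper's. But there are two genuine problems. The first is your assertion that the condition $N<p$ ``forces $|A'|\ll p^{1/2}$'' so that Rudnev's theorem applies. It does not: the sets produced by the reduction have cardinality $\ll K$, where $K$ is a dyadic richness parameter bounded only by $N<p$, so $K$ can comfortably exceed $p^{1/2}$. The paper must split into the cases $K<p^{1/2}$ and $K>|G|/p^{1/2}$ (the intermediate range is excluded because it would force $|G|\geq p>N$), and the second case requires a separate partial sum-product lemma (Lemma \ref{theorem:partialsumprod2}) in which one first shrinks $A'$ to size $\approx|G|/|B|$ and bounds the energy from below via $E_\times(A')\geq |A'|^4/|A'/A'|$ rather than via the graph. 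Without this dichotomy your argument simply does not cover the regime $K\geq p^{1/2}$.

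The second problem is that the reduction phase, which is where the improvement from $1/10{,}678$ to $1/662$ actually comes from, is left as a gesture at the Helfgott--Rudnev argument (``counting triples of concurrent lines through $p_\infty$ together with collinear triples along $\ell_\infty$''). The paper does something sharper: it locates \emph{four} foci $p_1,p_2,p_3,p_4$, with $p_2,p_3,p_4$ collinear along a base line avoiding $p_1$ and avoiding $P$, such that the common intersection $P_{p_1L}\cap P_{p_2L}\cap P_{p_3L}\cap P_{p_4L}$ has cardinality $\gg |P|K^8/|L|^4$ (Lemma \ref{theorem:mainrefine}, obtained by two rounds of a paired-focus Cauchy--Schwarz argument). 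A single projective map sending $p_1$ to the origin and the base line to infinity then makes this one set simultaneously live on a $K\times K$ grid, on $\ll K$ lines through the origin (small partial ratio set), and on $\ll K$ parallel lines (small partial difference set); this is Lemma \ref{theorem:reduction}. If one instead runs the one-line-one-point normalisation you describe and merely upgrades the sum-product input and the BSG step, the resulting exponent is substantially weaker than $1/662$. As written, your proposal asserts the final numerical threshold without supplying the configuration-finding step and the exponent chase that actually produce it.
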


\subsection{Line counting}
Beck's theorem from Chapter \ref{chapter:introincidences} shows that any set of points in $\mathbb{R}^2$ satisfies at least one of two extremes. Either there are at least $\Omega(|P|)$ collinear points, or the set $L(P)$ of lines determined by pairs of points in $P$ is of cardinality at least $\Omega(|P|^2)$. 

As with incidence bounds, a nondegeneracy condition is required for Beck-type theorems over finite fields. For example if $P=\mathbb{F}_p^2$ then no more than $|P|^{1/2}$ points are collinear, and so the first possible conclusion of Beck's theorem cannot hold. But the second cannot hold either since $|L(P)| \approx |P|$. 

As with incidence bounds, there have been explicit finite field versions of Beck's theorem in two instances:
\begin{itemize}
\item In the particular case of the `small-set' regime $|P|<p$ for which $P=A \times A$ with $A \subseteq \mathbb{F}_p$, Helfgott and Rudnev \cite{HR} showed that $|L(P)| \gg |P|^{1+ \frac{1}{267}}$.
\item In the `large-set' regime, Iosevich, Rudnev and Zhai \cite{IRZ} recently showed that $|L(P)|\approx p^2$ whenever $|P|>p \log p$.
\end{itemize}
The `small-set' Helfgott-Rudnev result is again, for the purposes of this thesis, the one to beat. The second new result in this chapter does so in two respects. First, there is a stronger exponent of $\frac{1}{133}-o(1)$ in place of $\frac{1}{267}$. Second, the result holds for general $P \subseteq \mathbb{F}_p^2$ with $|P|<p$ rather than simply those of the form $P=A \times A$. 

\begin{theorem}\label{theorem:beck2}
If $P \subseteq \mathbb{F}_p^2$ and $|P|<p$ then at least one of the following must occur:
\begin{enumerate}
\item At least $\widetilde{\Omega}\left(|P|\right)$ points from $P$ are contained in a single line.
\item $|L(P)|\gtrsim|P|^{1+\frac{1}{133}}$. \end{enumerate}
\end{theorem}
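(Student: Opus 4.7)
The plan is to follow the structure of the real Beck theorem (Theorem \ref{theorem:beck}) but feed in the new finite-field incidence bound of Theorem \ref{theorem:incidences} in place of Szemer\'edi--Trotter. As in the real case, write $\mu(l)=\sum_{p \in P}\delta_{pl}$ for each $l\in L(P)$; counting ordered pairs gives $\sum_{l\in L(P)}\mu(l)^2\approx |P|^2$. Decompose dyadically into $L_j=\{l:2^j\leq\mu(l)<2^{j+1}\}$, and by dyadic pigeonhole locate a dominant level $k^*=2^{j^*}$ with
$$|L_{k^*}|(k^*)^2 \gtrsim |P|^2.$$
The broad strategy is to show that $k^*$ cannot sit in a middle range: if $k^*$ is small enough the dyadic lower bound on $|L_{k^*}|$ already gives case (2), while if $k^*$ is close to $|P|$ it gives case (1) outright.

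The new ingredient is the application of Theorem \ref{theorem:incidences} to the pair $(P,L_{k^*})$. With $N=\max(|P|,|L_{k^*}|)$ it yields $|L_{k^*}|k^* \lesssim N^{3/2-1/662}$, which splits into two regimes. If $|L_{k^*}|>|P|$, rearranging gives the lower bound $|L_{k^*}|\gtrsim (k^*)^{2/(1-2/662)}$, which already exceeds $|P|^{1+1/133}$ once $k^*$ exceeds roughly $|P|^{1/2+o(1)}$, yielding case (2). If $|L_{k^*}|\leq|P|$, the incidence bound gives $|L_{k^*}|\lesssim |P|^{3/2-1/662}/k^*$, and combining with the pigeonhole lower bound $|L_{k^*}|\gtrsim|P|^2/((k^*)^2\log|P|)$ forces the consistency condition $k^*\gtrsim|P|^{1/2+1/662}/\log|P|$. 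In both regimes, taking the corresponding pigeonhole bound gives $|L(P)|\geq|L_{k^*}|\gtrsim|P|^2/((k^*)^2\log|P|)$, so that case (2) follows whenever $k^*\lesssim|P|^{(1-1/133)/2}$; conversely, $k^*\gtrsim |P|^{1-o(1)}$ immediately yields case (1) via a single rich line.

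The main obstacle is the middle range $|P|^{1/2+o(1)}\lesssim k^*\lesssim |P|^{1-o(1)}$, which survives the naive argument and only produces the weaker exponent $1+1/331$ analogous to the $|X_C|\ll|P|^{5/2-1/662}/C$ computation one gets by mechanically imitating the proof of Theorem \ref{theorem:beck}. Closing the remaining factor of two requires iterating between incidences and growth: one extracts from $L_{k^*}$ a configuration of slopes and intercepts to which the sum--product bound of Theorem \ref{theorem:mishasumprodvariation} can be applied, much as in the proof of Theorem \ref{theorem:elekes}. A refinement via a Balog--Szemer\'edi--Gowers type step (Lemma \ref{theorem:densebsg} when $L_{k^*}$ is already dense over the available slopes, or Lemma \ref{theorem:bg} otherwise) is likely needed to pass from a partial sumset controlled by incidences to the complete sumset and product set that feed Theorem \ref{theorem:mishasumprodvariation}. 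The quantitative book-keeping of this interaction is what produces the final exponent $1+1/133$, and getting the exponent arithmetic to balance cleanly, while keeping all of the losses inside the $\widetilde{\Omega}$ notation, is the most delicate part of the argument.
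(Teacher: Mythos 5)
Your outer skeleton matches the paper's: the dyadic decomposition of $L(P)$ by richness, the dominant level $k^*$ with $|L_{k^*}|(k^*)^2\gtrsim |P|^2$, the observation that $k^*\gtrsim |P|^{1-o(1)}$ gives case (1) and $k^*\lesssim |P|^{66/133}$ gives case (2), and the correct diagnosis that feeding Theorem \ref{theorem:incidences} into Beck's argument as a black box only reaches the exponent $1+\frac{1}{331}$. But the entire content of the theorem is the middle range, and there your proposal stops at a description of what \emph{would} need to happen rather than an argument. Saying that one ``extracts from $L_{k^*}$ a configuration of slopes and intercepts \ldots much as in the proof of Theorem \ref{theorem:elekes}'' is not a step that can be executed: Elekes' construction runs in the opposite direction, converting sum--product structure \emph{into} a point--line system with many incidences. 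What is needed here is the reverse implication, and the paper obtains it through machinery you never invoke: a second dyadic pigeonholing on points (producing $P_1$ and a typical point-degree $K$ with $|P_1|K\gtrsim |P|^2/k$), Lemma \ref{theorem:mainrefine} to locate four foci $p_1,p_2,p_3,p_4$ forming a $K$-sum-product configuration inside $P_1$, and the projective transformation of Lemma \ref{theorem:reduction} sending the base line to infinity, which converts the four pencils of lines into sets $A,B$ and a dense $G\subseteq A\times B$ with $|A\overset{G}{-}B|,|A\overset{G}{/}B|\ll K$. Only then do the BSG-type lemmata and Theorem \ref{theorem:mishasumprodvariation} enter, packaged as Lemmata \ref{theorem:partialsumprod} and \ref{theorem:partialsumprod2}, with a case split on whether $K<p^{1/2}$ or $K>|G|/p^{1/2}$ deciding which of the two is applicable.

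Concretely, the gap is twofold. First, you have no mechanism for passing from ``$L_{k^*}$ carries many incidences with $P$'' to a pair of sets with simultaneously small (partial) difference and ratio sets; this is the heart of the proof and cannot be waved through by analogy with Theorem \ref{theorem:elekes}. Second, even granting such a reduction, the exponent $1+\frac{1}{133}$ is not produced by ``quantitative book-keeping'' of an unspecified interaction: it comes out of the specific inequalities $|P_1|^{55}K^{331}\ll |L_1|^{220}$ (resp.\ $|P_1|^{67}K^{403}\lesssim |L_1|^{268}$) delivered by the partial sum--product lemmata, combined with the two pigeonhole inequalities $|P_1|K\gtrsim |P|^2/k$ and $K\gtrsim |P|/k$ and with $|L_1|k^2\gtrsim |P|^2$. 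Without carrying out that reduction and that arithmetic, the middle range remains open and the claimed exponent is unsupported.
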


\subsection{Structure}
The body of this chapter is concerned with the proofs of Theorems \ref{theorem:incidences} and \ref{theorem:beck2}. Both follow from new observations that efficiently relate incidences to sum-product estimates.

Unlike the incidence material in Chapter \ref{chapter:introincidences}, the analysis here makes crucial use of the properties of the projective plane. \textbf{Section \ref{section:interpretation}} uses this to show that the existence of a certain configuration of points and lines would imply the existence of sets $A,B$ and fairly dense $G \subseteq A \times B$ such that the difference set $A \overset{G}-B$ and the ratio set $A \overset{G}/B$ are both small. Such a configuration is called a `sum-product configuration'.

\textbf{Section \ref{section:refine}} goes on to show that if there are too many points incident to too many lines, which would arise if either Theorem \ref{theorem:incidences} or Theorem \ref{theorem:beck2} were to fail, then there must also exist a large sum-product configuration.

The situation is then almost ripe for applying a finite field sum-product theorem from Chapter \ref{chapter:introgrowth} to show that a sum-product configuration cannot arise, and so there cannot be too many incidences. The final hurdle is to relate the \textit{partial} difference and ratio sets arising from a sum-product configuration to \textit{complete} sets. This is accomplished in \textbf{Section \ref{section:bounding}} using a concoction of Balog-Szemer\'edi-Gowers type results from Chapter \ref{chapter:introapproxgroup}.  

\textbf{Section \ref{section:incidenceproofs}} then uses the analysis from the previous sections to prove Theorems \ref{theorem:incidences} and \ref{theorem:beck2}.

\textbf{Section \ref{section:incidencesconclusion}} gives pointers for further work.

\section{Foci and configurations}\label{section:interpretation}

This section uses the theory of projective transformations to interpret particular types of point-line configurations in terms of sum-product (or strictly speaking, difference-ratio) estimates. The key observation is that difference and ratio sets can be interpreted in terms of gradients, and that a projective transformation enables these to be viewed as point-line incidences. 

There are several pictures of points and lines in this section, apparently treating them as if in $\mathbb{R}^2$. These are for illustration only; they do not correspond rigorously to the plane $\mathbb{F}_p^2$, although the definitions they illustrate do.  
  
Let $F$ be any field. Recall, or quickly check Appendix \ref{chapter:proj}, that the projective plane $\mathbb{P}F^2$ is given by equivalence classes of $F^3 \setminus \left\{(0,0,0)\right\}$ modulo dilation. More concretely, it can be viewed as the union of $F^2$ with a
 `line at infinity' $l_{\infty}$. Points of $l_{\infty}$ correspond to gradients in $\mathbb{P}F^1=F \cup \left\{\infty \right\}$, so that for each $\lambda \in \mathbb{P}F^1$ there is a point $p_{\lambda} \in l_{\infty}$ that is incident to all lines of gradient $\lambda$.

Recall also some theory on \textbf{projective transformations} of $\mathbb{P}F^2$. These are line-preserving permutations of $\mathbb{P}F^2$. A \textbf{frame} in $\mathbb{P}F^2$ is a set of four distinct points, no three of which are collinear. For any two frames there is a unique projective transformation that maps one to the other.

Now for some new definitions. If $P$ is a set of points in $\mathbb{P}F^2$ and $p \notin P$ is an individual point then say that $p$ is a \textbf{$K$-focus} for $P$ if $P$ is supported over at most $K$ lines through $p$. This is illustrated in Figure \ref{fig:focus} where the red point is a 3-focus for the blue points. Note that if $p$ is a $K$-focus for $P$ then it is also a $K$-focus for any subset of $P$.

\begin{figure}[ht]
\vspace{20pt}
\centering
\begin{tikzpicture}[scale=7]
\coordinate (focus) at (0,0);
\coordinate (point1) at (0.32,0.95);
\coordinate (point2) at (0.71,0.71);
\coordinate (point3) at (0.95,0.32);
\draw [name path=line1] (focus) -- (point1);
\draw [name path=line2] (focus) -- (point2);
\draw [name path=line3] (focus) -- (point3);
\node (point11) at ($ (focus)!.3!(point1) $) {};
\node (point12) at ($ (focus)!.55!(point1) $) {};
\node (point13) at ($ (focus)!.8!(point1) $) {};
\node (point21) at ($ (focus)!.3!(point2) $) {};
\node (point22) at ($ (focus)!.5!(point2) $) {};
\node (point23) at ($ (focus)!.8!(point2) $) {};
\node (point31) at ($ (focus)!.3!(point3) $) {};
\node (point32) at ($ (focus)!.5!(point3) $) {};
\node (point33) at ($ (focus)!.8!(point3) $) {};
\foreach \point in {point11,point12,point13,point21,point22,point23,point31,point32,point33}
\fill [blue,opacity=1] (\point) circle (0.75pt);
\foreach \point in {focus}
\fill [red,opacity=1] (\point) circle (0.75pt);
\end{tikzpicture}
\caption{A point set and focus}\label{fig:focus}
\end{figure}
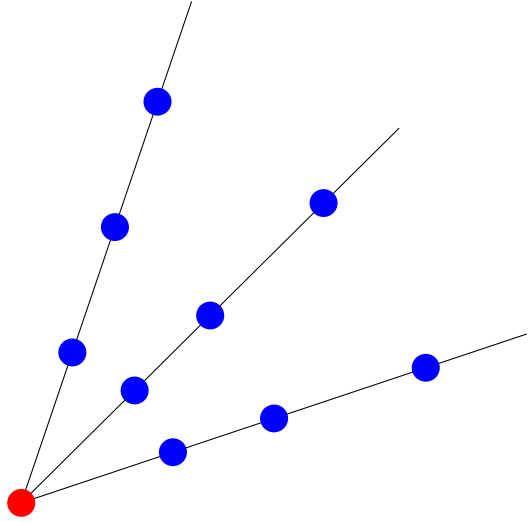

We also give a highly suggestive name to a particular configuration of points and foci. Let $P$ be a point set and $p_1,p_2,p_3,p_4 \notin P$ be distinct points. Say that $P$ and the $p_i$ form a \textbf{$K$-sum-product configuration} if 
	\begin{samepage}
	\begin{enumerate}
	\item Each $p_i$ is a $K$-focus for $P$.
	\item There is a line, which we call the \textbf{base line}, incident to $p_2,p_3$ and $p_4$ but not $p_1$.
	\item No point in $P$ is incident to the base line.
	\end{enumerate} 
	\end{samepage}
Figure \ref{fig:sumprodconfig} illustrates sum-product configurations with (a) the base line in general position and (b) the base line at infinity. The red, black, green and yellow points are the foci $p_i$ and the blue points are the elements of $P$.

\begin{figure}[ht]
\vspace{20pt}
\centering
\subbottom[Base line in general position]{
\begin{tikzpicture}[scale=7]
\coordinate (U) at (0,1);
\coordinate (V) at (1.5,1);
\draw [name path=base, thick] (U) -- (V);
\node (X) at ($ (U)!0.1!(V) $) {};
\node (Y) at ($ (U)!.5!(V) $) {};
\node (Z) at ($ (U)!0.9!(V) $) {};
\draw [name path=line1] (X) -- (1.4,0.3);
\draw [name path=line2] (X) -- (1,0.1);
\draw [name path=line3,color=yellow!50!brown] (Z) -- (0.1,0.3);
\draw [name path=line4,color=yellow!50!brown] (Z) -- (0.5,0.1);
\path [name intersections={of=line1 and line3,by={A}},];
\path [name intersections={of=line1 and line4,by={B}},];
\path [name intersections={of=line2 and line3,by={C}},];
\path [name intersections={of=line2 and line4,by={D}},];
\draw [shorten >=-2.5cm,color=green!70!black](Y)--(A);
\draw [shorten >=-2cm,color=green!70!black](Y)--(B);
\draw [shorten >=-2cm,color=green!70!black](Y)--(C);
\coordinate (W) at ($ (C)!-1.2!(B) $) {};
\draw [shorten >=-2cm, color=red](W)--(B);
\draw [shorten >=-3cm, color=red](W)--(A);
\draw [shorten >=-2.5cm, color=red](W)--(D);
\foreach \point in {W}
\fill [red,opacity=1] (\point) circle (0.75pt);
\foreach \point in {X}
\fill [black,opacity=1] (\point) circle (0.75pt);
\foreach \point in {Y}
\fill [green!70!black,opacity=1] (\point) circle (0.75pt);
\foreach \point in {Z}
\fill [yellow!50!brown,opacity=1] (\point) circle (0.75pt);
\foreach \point in {A,B,C,D}
\fill [blue,opacity=1] (\point) circle (0.75pt);
\end{tikzpicture}}

\subbottom[Base line at infinity]{
\begin{tikzpicture}[scale=7]
\coordinate (vert1) at (0.33,0);
\coordinate (vert2) at (0.66,0);
\coordinate (vert1') at (0.33,1);
\coordinate (vert2') at (0.66,1);
\coordinate (hor1) at (0,0.33);
\coordinate (hor2) at (0,0.66);
\coordinate (hor1') at (1,0.33);
\coordinate (hor2') at (1,0.66);
\draw [name path=vert1, color=yellow!50!brown] (vert1) -- (vert1');
\draw [name path=vert2, color=yellow!50!brown] (vert2) -- (vert2');
\draw [name path=hor1] (hor1) -- (hor1');
\draw [name path=hor2] (hor2) -- (hor2');
\coordinate (focus) at (0,0);
\path [name intersections={of=vert2 and hor2,by={point1}},];
\path [name intersections={of=vert2 and hor1,by={point2}},];
\coordinate (point3) at (0.33,0.33);
\coordinate (point4) at (0.33,0.66);
\draw [name path=line1,color=red] (focus) -- (0.9,0.9);
\draw [name path=line2,color=red] (focus) -- (1,0.5);
\draw [name path=line2,color=red] (focus) -- (0.5,1);
\foreach \point in {focus}
\fill [red,opacity=1] (\point) circle (0.75pt);
\draw [name path=add1,color=green!70!black] (0,1) -- (1,0);
\draw [name path=add2,color=green!70!black] (0,0.66) -- (0.66,0);
\draw [name path=add3,color=green!70!black] (0.33,1) -- (1,0.33);
\foreach \point in {point1,point2,point3,point4}
\fill [blue,opacity=1] (\point) circle (0.75pt);
\end{tikzpicture}}
\caption{Sum-product configurations}\label{fig:sumprodconfig}
\end{figure}

The following lemma justifies the choice of definition by showing that a sum-product configuration does indeed correspond to a (partial) sumset and product set.

\begin{samepage}
\begin{lemma}\label{theorem:reduction}
Let $F$ be a field. Suppose $P$ is a set of points in $F^2$, and that it forms a $K$-sum-product configuration with points $p_1,p_2,p_3,p_4$. Then there exist sets $A,B \subseteq F$ and $G \subseteq A \times B$ with 
	$$|G|=|P|$$
and 
	$$|A|,|B|,|A \overset{G}-B|,|A \overset{G}/B| \leq K.$$
\end{lemma}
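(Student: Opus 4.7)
The plan is to reduce the problem to a canonical configuration by a projective transformation, then read off the sets $A$, $B$ and $G$ directly as projections of $P$. The key observation is that, after this reduction, differences $x - y$ correspond to slope-one lines, ratios $x/y$ correspond to lines through the origin, and the two projections correspond to vertical and horizontal lines, so the four focus conditions on $p_1, p_2, p_3, p_4$ translate exactly into the four cardinality bounds required.

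First, I would produce a projective transformation $\phi$ of $\mathbb{P}F^2$ sending the base line to the line at infinity $l_\infty$, sending $p_1$ to the origin $(0,0)$, and sending $p_2, p_3, p_4$ respectively to the points at infinity $[1:0:0]$, $[0:1:0]$, $[1:1:0]$ corresponding to the horizontal, vertical, and slope-one directions. I would build such a $\phi$ in two steps. First, pick any projective transformation $\psi$ sending the base line to $l_\infty$ and $p_1$ to $(0,0)$; this exists by standard projective geometry because $p_1$ lies off the base line. Then compose with a linear map $\tau \in GL_2(F)$ (which fixes both $(0,0)$ and $l_\infty$) chosen to send the three distinct points $\psi(p_2), \psi(p_3), \psi(p_4) \in l_\infty$ to the three specified target points, using the fact that $GL_2(F)$ acts triply transitively on $l_\infty$. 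Replacing $P$ by $\phi(P)$, which still forms a $K$-sum-product configuration because projective transformations preserve incidences and hence all focus counts, and using condition (3) to conclude $\phi(P) \subseteq F^2$, I may from now on work entirely in the affine plane with the $p_i$ in the stated standard positions.

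Second, I would set $A = \{x : (x,y) \in P \text{ for some } y\}$, $B = \{y : (x,y) \in P \text{ for some } x\}$, and take $G = P$ viewed as a subset of $A \times B$, which gives $|G| = |P|$ automatically. The focus condition at $p_3 = [0:1:0]$ says $P$ is covered by at most $K$ vertical lines, yielding $|A| \leq K$; symmetrically $|B| \leq K$ from $p_2$. The focus at $p_4 = [1:1:0]$ says $P$ is covered by at most $K$ lines of the form $y = x + c$, so $\{y - x : (x,y) \in P\}$ takes at most $K$ values, which is exactly $|A \overset{G}{-} B| \leq K$. For the ratio bound, the focus at $p_1 = (0,0)$ says $P$ lies on at most $K$ lines through the origin; on each non-horizontal such line the value $x/y$ (for $y \neq 0$) is constant, equal to $1/m$ on $y = mx$ with $m \neq 0$ and to $0$ on the $y$-axis, while the horizontal axis contributes nothing to the partial ratio set because $y = 0$ there. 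Hence each origin-line contributes at most one element to $A \overset{G}{/} B$, giving $|A \overset{G}{/} B| \leq K$.

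The only real subtlety is the construction of $\phi$: the four source points are forced to include a collinear triple and so do not themselves form a frame, meaning the fundamental theorem of projective geometry does not apply directly. Once this is handled by the two-step recipe above, the remainder of the argument is a direct translation of the four focus conditions, with only the partial ratio calculation needing a moment's care to verify that the bound is $K$ rather than $K+1$.
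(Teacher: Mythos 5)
Your proof is correct and follows essentially the same route as the paper: a projective normalisation sending the base line to $l_\infty$ and $p_1$ to the origin, followed by reading off $A$, $B$, $G$ and the four cardinality bounds from the four pencils of lines through the foci. The only difference is cosmetic: the paper normalises just the non-collinear triple $p_1,p_3,p_4$ (so the lines through $\tau(p_2)$ have some gradient $\lambda$, removed afterwards by replacing $B$ with $-\lambda B$), whereas you also pin the image of the fourth focus to the slope-one direction via a second, $GL_2$ step --- both being valid ways around the fact that the four foci do not form a frame.
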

\end{samepage}

\begin{proof}
There is a projective transformation mapping any frame (four points, no three of which are collinear) to any other frame, and so certainly there is one mapping any non-collinear triple to any other. Since $p_1,p_3$ and $p_4$ are not collinear, and nor are $[1,0,0],[0,1,0],[0,0,1]$, it is possible to pick a projective transformation $\tau$ such that
\begin{samepage}
\begin{itemize}
\item $p_1$ is sent to the origin $[0,0,1]$ of $F^2$.
\item $p_3$ is sent to $[0,1,0]\in l_{\infty}$.
\item $p_4$ is sent to $[1,0,0]\in l_{\infty}$.
\end{itemize}
\end{samepage}
Since $\tau$ preserves linearity and sends $p_3$ and $p_4$, which are incident to the base line, to points on $l_{\infty}$, it in fact sends the whole base line to $l_{\infty}$.  

Let $G=\tau(P)$. This is contained in $F^2$ since none of the points in $P$ are incident to the base line and so none are mapped into $l_{\infty}$. 

The set $G$ is supported over $K$ `vertical' lines\footnote{that is, lines of the form $x=c$ for some $c \in F$} and $K$ `horizontal' lines\footnote{that is, lines of the form $y=c$ for some $c \in F$}, since lines incident to $[0,1,0]\in l_{\infty}$ are all vertical and those incident to $[1,0,0]\in l_{\infty}$ are all horizontal. Let $A$ be the set of $x$-intercepts of the vertical lines and $B$ be the set of $y$-intercepts of the horizontal lines, so that 
	$$G \subseteq A \times B$$
and 
	$$|A|,|B|\leq K.$$
These correspond to the yellow and black lines respectively in Figure \ref{fig:sumprodconfig}. 

Furthermore, $G \subseteq A \times B$ is supported over $K$ lines through the origin. These are identified by their gradient, and a point $(a,b)\in G$ is incident to the line with gradient $\xi$ if and only if $\frac{a}{b}=\xi$. Thus each element of $A \overset{G}/ B$ corresponds to a different line through the origin, i.e. a red line in Figure \ref{fig:sumprodconfig}, and so 
	$$|A \overset{G}/ B|\leq K.$$ 

Finally, $G$ is supported over $K$ lines through $\tau(p_2)$. Since $\tau$ preserves linearity and sends the base line, which is incident to $p_2$, to $l_{\infty}$ we know that $\tau(p_2)\in l_{\infty}$. So all lines incident to $\tau(p_2)$ have the same gradient, say $\lambda \in F$. These are identified by the intercept with the $y$-axis, and a point $(a,b)\in G$ is incident to the line with intercept $\rho$ if and only if $a+\lambda b = \rho$. Thus each element of $A \overset{G}+ \lambda B$ corresponds to a different line of gradient $\lambda$, i.e. a green line in Figure \ref{fig:sumprodconfig}. Therefore 
	$$|A \overset{G}+ \lambda B|\leq K.$$  

Now let $B'= -\lambda B$ and $G'=\left\{(a,\lambda b):(a,b)\in G\right\}$ to obtain 
	$$|G'|=|P|$$ 
and 
	$$|A|,|B'|,|A \overset{G'}-B'|,|A \overset{G'}/B'| \leq K.$$
\end{proof}

\section{How to find foci}\label{section:refine}

Section \ref{section:interpretation} shows that a sum-product configuration of points and foci corresponds to an upper bound on partial sum-products. This section shows that such a configuration must arise whenever there are many point-line incidences. The main result is the following lemma.

\begin{lemma}[Finding a sum-product configuration] \label{theorem:mainrefine}
Let $P$ and $L$ be a set of points and lines respectively in a plane such that every point in $P$ is incident to $\Theta(K)$ lines in $L$. Suppose that $K \gg  \frac{|L|^{3/5}}{|P|^{1/5}}$, and that each line in $L$ is incident to at most 
$$O\left(\min \left\{\frac{|P|K^{8}}{|L|^4},\frac{|P|K^{4}}{|L|^2}\right\}^{1-o(1)}\right)$$ 
points in $P$. 

Then there exists a subset of $P$ of cardinality $\Theta\left(\frac{|P|K^8}{|L|^4}\right)$, and points $p_1,p_2,p_3,p_4$ with which it forms a $O(K)$-sum-product configuration.
\end{lemma}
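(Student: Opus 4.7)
The plan is to choose the four foci as ``heavy'' points of $L$, where $p$ is heavy if it lies on $\Theta(K)$ lines of $L$. The conclusion $|P'| = \Theta(|P|K^{8}/|L|^{4}) = \Theta(|P|\cdot(K^{2}/|L|)^{4})$ is precisely what one would expect if four heavy foci were chosen randomly and each had a ``shadow'' $P_{p} := \{q \in P : \exists\, l \in L \text{ with } p,q \in l\}$ of relative density $K^{2}/|L|$ in $P$. The strategy is to find $p_{2},p_{3},p_{4}$ collinear along a base line $l^{\ast}$ disjoint from $P$, and $p_{1}$ off $l^{\ast}$, with $P'$ the intersection of the four shadows.

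I would begin in the projective closure, using the pair-counting identity $\sum_{p}\binom{n_{p}}{2} = \binom{|L|}{2}$ (with $n_{p}$ the number of lines of $L$ through $p$) and dyadic pigeonholing to isolate a set $\mathcal{H}$ of heavy points of size $\lesssim |L|^{2}/K^{2}$, with each line of $L$ carrying at most $O(|L|/K)$ heavy points. Combined with the hypothesis on $\mu_{l}$, a double counting of incidences between $\mathcal{H}$ and $P$ yields $\sum_{p\in\mathcal{H}}|P_{p}|\approx |P|\cdot|L|$, so the typical heavy shadow has size $|P|K^{2}/|L|$.

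To fix a base line, I would count lines that avoid $L$ and $P$ and carry three or more heavy points, via $\sum_{l}\binom{|\mathcal{H}\cap l|}{3}$ restricted to admissible $l$; a H\"older or Cauchy--Schwarz bound, together with the hypothesis on $\mu_{l}$ to prevent lines meeting $P$ from depleting the count, should yield an $l^{\ast}$ with $|\mathcal{H}\cap l^{\ast}|\gtrsim K$. The hypothesis $K\gg|L|^{3/5}/|P|^{1/5}$ enters precisely here, guaranteeing sufficient heavy-point supply on admissible lines. Taking $p_{2},p_{3},p_{4}$ as three heavy points on $l^{\ast}$, one averages over $p_{1}\in\mathcal{H}\setminus l^{\ast}$: setting $f(q):=|\{p\in\mathcal{H}\setminus l^{\ast}:q\in P_{p}\}|$, a H\"older-type bound on $\sum_{q\in P_{p_{2}}\cap P_{p_{3}}\cap P_{p_{4}}}f(q)$ (with the lower bound on the triple shadow intersection supplied by a similar averaging over triples in $l^{\ast}$) forces a choice of $p_{1}$ with $|P'|\gtrsim|P|K^{8}/|L|^{4}$. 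By construction $l^{\ast}\cap P'=\emptyset$, and since each $p_{i}$ has $O(K)$ lines of $L$ through it and every line $p_{i}q$ with $q\in P'\subseteq P_{p_{i}}$ must itself lie in $L$, each $p_{i}$ is an $O(K)$-focus for $P'$.

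The main obstacle I foresee is the interplay between the geometric constraint of three collinear foci and the multi-stage averaging that yields the $|P|K^{8}/|L|^{4}$ size. The averaging must remain effective even with the restriction $p_{2},p_{3},p_{4}\in l^{\ast}$, and both branches of the hypothesis on $\mu_{l}$ must be harnessed together with the lower bound on $K$ to prevent the base-line existence step from collapsing. This delicate balancing of heavy-point supply against geometric admissibility is the quantitative content of the hypothesis $K\gg|L|^{3/5}/|P|^{1/5}$.
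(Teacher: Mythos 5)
You have the right overall shape (foci with shadows of relative density $K^2/|L|$, a fourfold intersection of size about $|P|(K^2/|L|)^4$, three foci collinear on a base line), but there are two genuine gaps. The decisive one is your treatment of the base line: you require $l^*$ to avoid $P$ entirely and to lie outside $L$, while carrying three or more heavy points. Under the hypotheses the natural (and possibly the only) points lying on $\Theta(K)$ concurrent lines of $L$ are the points of $P$ themselves; the pair-count $\sum_p\binom{n_p}{2}\leq\binom{|L|}{2}$ gives only an \emph{upper} bound on the number of heavy points and guarantees none outside $P$. So a line through three heavy points will in general meet $P$, and your count of admissible base lines can be zero. The definition of a sum-product configuration only asks that the base line avoid the configuration's point set, i.e.\ the final subset of size $\Theta(|P|K^8/|L|^4)$, not all of $P$. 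The paper exploits exactly this: the foci are taken in $P$, the base line is a line of $L$ through $p_2$ that is \emph{rich} in points of $Q=P_{p_1L}\cap P_{p_2L}$, the points $p_3,p_4$ are chosen among the points of $Q$ on it, and only at the end are the $O\left((|P|K^{8}/|L|^{4})^{1-o(1)}\right)$ points of the intersection set lying on the base line discarded. This is one of the two places the richness hypothesis on lines is used; the other is to guarantee at least two rich lines of $L$ through $p_2$, so that one of them misses $p_1$.

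The second gap is that your counting steps deliver upper bounds where lower bounds are needed. From $|\mathcal{H}|\lesssim|L|^2/K^2$ and at most $O(|L|/K)$ heavy points per line you get $\sum_{p\in\mathcal{H}}|P_p|\lesssim|P||L|$, but nothing forces the typical shadow to be as large as $|P|K^2/|L|$, nor forces admissible lines to carry many heavy points, nor gives the lower bound on the triple shadow intersection that your final averaging over $p_1$ requires. The paper obtains the needed lower bounds by first passing to rich lines and rich points (each surviving line of $L$ carries $\gg K|P|/|L|$ points of $P$, whence $|P_{pL}|\gg K^2|P|/|L|$ for almost all $p\in P$) and then nesting the constructions: $p_2$ is found inside $P_{p_1L}$, and $p_3,p_4$ inside $Q$ on a common rich line through $p_2$, via a Cauchy--Schwarz over the points of $Q$ on that line. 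The hypothesis $K\gg|L|^{3/5}/|P|^{1/5}$ is used precisely there, to make the off-diagonal term dominate so that two \emph{distinct} points $p_3\neq p_4$ with large joint shadow exist, not, as you propose, to guarantee a supply of heavy points on lines avoiding $P$.
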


It will be helpful when proving Lemma \ref{theorem:mainrefine} to adopt some additional notation. For points $p,q$ in the plane, let $l_{pq}$ be the line determined by $p$ and $q$. Given a set $P$ of points and a set $L$ of lines, both satisfying the conditions of Lemma \ref{theorem:mainrefine}, and a single point $p\in P$, define 
	$$P_{pL}=\left\{q\in P: l_{pq} \in L\right\}$$
so that $P_{pL}$ is the set of points in $P$ incident to lines in $L$ going through $p$. Thus $P_{pL}$ has $p$ as an $O(K)$-focus, since every point in $P$ is incident to $\Theta(K)$ lines in $L$.

As a first step, it is convenient to record the following standard result. 

\begin{lemma}[Rich points and lines]\label{theorem:pointrefine}
Let $P_1$ be the set of points in $P$ incident to at least $\frac{I(P,L)}{2|P|}$ lines in $L$. Then 
	$$I(P_1,L)\approx I(P,L).$$
Similarly, if $L_1$ is the set of lines in $L$ incident to at least $\frac{I(P,L)}{2|L|}$ points in $P$ then 
	$$I(P,L_1)\approx I(P,L).$$
\end{lemma}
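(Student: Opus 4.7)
The plan is to prove this by a standard popularity (averaging) argument: if we throw away the points that are incident to very few lines, they can account for at most half of all incidences by a trivial count, so the remaining ``rich'' points must account for at least half.

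More precisely, for the first statement I would write $I(P,L) = \sum_{p \in P} d(p)$ where $d(p) = \sum_{l \in L}\delta_{pl}$ is the number of lines of $L$ incident to $p$. Set $P_0 = P \setminus P_1$, so that $d(p) < \frac{I(P,L)}{2|P|}$ for every $p \in P_0$. Then
\[
I(P_0,L) = \sum_{p \in P_0} d(p) < |P_0| \cdot \frac{I(P,L)}{2|P|} \leq \frac{I(P,L)}{2}.
\]
Since $I(P,L) = I(P_0,L) + I(P_1,L)$, this gives $I(P_1,L) > \frac{I(P,L)}{2}$. Combined with the trivial upper bound $I(P_1,L) \leq I(P,L)$, we conclude $I(P_1,L) \approx I(P,L)$.

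For the second statement I would run the identical argument with the roles of points and lines swapped. Write $I(P,L) = \sum_{l \in L}\mu(l)$ where $\mu(l) = \sum_{p \in P}\delta_{pl}$, let $L_0 = L \setminus L_1$, and observe $I(P,L_0) = \sum_{l \in L_0} \mu(l) < |L_0|\cdot\frac{I(P,L)}{2|L|} \leq \frac{I(P,L)}{2}$, so $I(P,L_1) > \frac{I(P,L)}{2}$.

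There is no substantive obstacle here; the only care needed is in choosing the threshold $\frac{I(P,L)}{2|P|}$ (respectively $\frac{I(P,L)}{2|L|}$) so that the discarded contribution is provably at most half of the total. Any constant strictly less than $1$ would work in place of $\frac{1}{2}$, but $\frac{1}{2}$ is the cleanest choice and suffices for the $\approx$ conclusion since the implicit constants in $\approx$ absorb the factor of $2$.
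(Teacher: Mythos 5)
Your proof is correct and is essentially the same averaging argument as the paper's: the paper also bounds the contribution of the points incident to fewer than $\frac{I(P,L)}{2|P|}$ lines by $\frac{I(P,L)}{2}$ and subtracts, leaving the lines case as an exercise. No issues.
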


\begin{proof}
We prove the result for points, leaving that for lines as an exercise. Let $P_2$ be the set of points in $P$ incident to at most $\frac{I(P,L)}{2|P|}$ lines in $L$. Then 
	\begin{align*}I(P_2,L)&=\sum_{p \in P_2}\#\left\{l \in L \text{ incident to } p\right\}\leq |P_2| \frac{I(P,L)}{2|P|}\leq 		
	\frac{I(P,L)}{2}.
	\end{align*} 
Since 
	$$I(P,L)=I(P_1,L)+I(P_2,L)$$ 
it follows that
	$$I(P_1,L)\geq \frac{I(P,L)}{2}$$ 
as required.
\end{proof}

Armed with Lemma \ref{theorem:pointrefine} we now build incrementally towards Lemma \ref{theorem:mainrefine}. We shall show how to find individual foci, and simultaneous pairs of foci, before finding four arranged in a sum-product configuration. The initial results on finding singleton or paired foci are adjustments of similar methods used in the proofs of Bourgain-Katz-Tao \cite{BKT} and Helfgott-Rudnev \cite{HR}; they are applied in the proof of the more developed Lemma \ref{theorem:mainrefine}.  

The following result enables us find individual foci. 

\begin{lemma}[Finding individual foci]\label{theorem:incidencescover1}
Let $P$ be a set of points and $L$ a set of lines, such that every point is incident to $\Theta(K)$ lines in $L$. Then there exists $P_1 \subseteq P$ with $|P_1|\approx |P|$ such that 
	$$|P_{pL}|\gg \frac{K^2|P|}{|L|}$$ 
for each $p \in P_1$.
\end{lemma}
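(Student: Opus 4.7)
The plan is a double-counting argument combined with Cauchy--Schwarz and a pigeonholing step. For each line $l \in L$, write $n_l = |l \cap P|$ for its $P$-multiplicity. The key identity, obtained by enumerating ordered pairs of distinct points $(p, q) \in P \times P$ whose joining line $l_{pq}$ lies in $L$, is
$$\sum_{p \in P} |P_{pL}| \;=\; \sum_{l \in L} n_l(n_l - 1),$$
since both sides count such pairs (each pair contributes once on each side). The hypothesis that every $p \in P$ is incident to $\Theta(K)$ lines of $L$ yields $\sum_l n_l = I(P, L) = \Theta(K|P|)$.

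I would then apply Cauchy--Schwarz to the line degrees:
$$\sum_{l \in L} n_l^2 \;\geq\; \frac{\bigl(\sum_l n_l\bigr)^2}{|L|} \;=\; \Theta\!\left(\frac{K^2|P|^2}{|L|}\right),$$
which, in the nontrivial range $K|P| \gg |L|$, gives $\sum_p |P_{pL}| \gtrsim K^2|P|^2/|L|$. So the average value of $|P_{pL}|$ over $p \in P$ is already at least a constant multiple of $K^2|P|/|L|$, which is the pointwise lower bound we want for elements of $P_1$.

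To extract $P_1$, take $P_1 = \{p \in P : |P_{pL}| \geq c\,K^2|P|/|L|\}$ for a sufficiently small absolute constant $c$. The points in $P \setminus P_1$ contribute at most $c K^2|P|^2 / |L|$ to the total; choosing $c$ smaller than the Cauchy--Schwarz constant ensures the complementary sum over $P_1$ remains of order $K^2|P|^2/|L|$. To finally conclude $|P_1| \approx |P|$ (a constant fraction), I would combine this with the upper half of the two-sided hypothesis: the bound $|L_p| \leq O(K)$, together with Cauchy--Schwarz applied \emph{within} each $L_p$, caps the maximum value of $|P_{pL}|$ and prevents a sparse set of heavy-incident points from dominating the sum.

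The main obstacle is precisely this last step. Markov's inequality applied naively only guarantees $|P_1| \gtrsim K^2|P|/|L|$, which is much weaker than the desired $|P_1| \approx |P|$ when $K$ is small compared with $\sqrt{|L|/|P|}$. Upgrading to a constant fraction of $P$ genuinely requires the two-sided hypothesis $|L_p| = \Theta(K)$, and is the only place in the argument where the uniform lower bound on $|L_p|$ is not merely a pigeonhole convenience but a structural input.
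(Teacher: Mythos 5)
Your opening identity $\sum_{p}|P_{pL}| = \sum_{l} n_l(n_l-1)$, the Cauchy--Schwarz step, and the resulting lower bound on the average of $|P_{pL}|$ are all correct (and you are right that one needs $K|P|\gg|L|$ to absorb the $-\sum_l n_l$ term; the paper's proof needs essentially the same thing implicitly, since a line contributes to $P_{pL}$ only through its points other than $p$). The gap is exactly where you suspect it, and your proposed repair does not close it. The hypothesis $|L_p|\leq O(K)$ does \emph{not} cap $|P_{pL}|$: one has $|P_{pL}|=\sum_{l\in L_p\cap L}(n_l-1)$, and a single line through $p$ may contain up to $|P|$ points of $P$, so the only universal upper bound is $|P_{pL}|\leq|P|$. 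Cauchy--Schwarz within $L_p$ only bounds $|P_{pL}|$ by $|L_p|^{1/2}\bigl(\sum_{l\in L_p}(n_l-1)^2\bigr)^{1/2}$, which involves a second moment you do not control. With the cap $|P|$, Markov gives only $|P_1|\gg K^2|P|/|L|$, as you yourself note, and nothing else in your argument improves this: a large value of $\sum_{p\in P_1}|P_{pL}|$ is consistent with all of that mass sitting on a small set of points lying on one very rich line, so the sum alone cannot certify $|P_1|\approx|P|$.

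The paper closes this gap by averaging a different quantity. It first passes to the set $L_1$ of lines incident to $\Omega(K|P|/|L|)$ points of $P$; discarding the poorer lines loses at most half the incidences. It then averages, over $p\in P$, the number $|L_1\cap L_p|$ of \emph{rich lines through $p$} rather than $|P_{pL}|$ itself. This quantity has average $\Omega(K)$ and is bounded above by $O(K)$ thanks to the two-sided degree hypothesis, so its max-to-average ratio is $O(1)$ and a straightforward averaging produces $P_1$ with $|P_1|\gg|P|$ on which $|L_1\cap L_p|\gg K$. For such $p$ the $\Omega(K)$ rich lines through $p$ meet pairwise only at $p$, and each carries $\Omega(K|P|/|L|)$ points of $P$, whence $|P_{pL}|\gg K^2|P|/|L|$. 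So the missing ingredient is to interpose the rich-line refinement \emph{before} averaging over points: the quantity whose average you take must have its maximum controlled by the degree hypothesis, and $|P_{pL}|$ does not.
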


\begin{proof}
Since every point in $P$ is incident to $\Theta(K)$ lines in $L$ it is immediate that
	$$I(P,L)\approx K|P|.$$
Let $L_1$ be the set of lines in $L$ incident to $\Omega\left(\frac{I(P,L)}{|L|}\right)=\Omega\left(\frac{K|P|}{|L|}\right)$ points in $P$ so that by Lemma \ref{theorem:pointrefine}, it follows that
	$$I(P,L_1)\approx I(P,L)\approx K|P|.$$ 
Now let $P_1$ be the set of points in $P$ incident to $\Omega\left(\frac{I(P,L_1)}{|P|}\right)=\Omega\left(K\right)$ lines in $L_1$. By Lemma \ref{theorem:pointrefine} again,
	\begin{equation}\label{eq:incidences1}
	I(P_1,L_1)\approx I(P,L_1)\approx K|P|.
	\end{equation}
Since $P_1 \subseteq P$ and $L_1 \subseteq L$, each point in $P_1$ is incident to at most $O(K)$ lines in $L_1$ and so  
	\begin{equation}\label{eq:incidences2}
	I(P_1,L_1)\ll K|P_1|.
	\end{equation}
Comparing (\ref{eq:incidences1}) and (\ref{eq:incidences2}) shows that that $|P_1|\gg |P|$, implying that $|P_1|\approx |P|$ since $P_1$ is a subset of $P$. 

Now, each $p \in P_1$ is incident to $\Omega\left(K\right)$ lines in $L_1$, and each of these lines is incident to $\Omega\left(\frac{K|P|}{|L|}\right)$ points in $P$. So 
	$$|P_{pL}|\geq |P_{pL_1}|\gg \frac{K^2|P|}{|L|}$$ 
for each $p \in P_1$, as required.
\end{proof}

The following result now takes the ability to find singleton foci, and uses it to find them in simultaneous pairs. 

\begin{lemma}[Finding paired foci]\label{theorem:bktrefine}
Let $P$ and $L$ be a set of points and lines respectively in a plane such that every point in $P$ is incident to $\Theta(K)$ lines in $L$. Then there exist $p_1,p_2 \in P$ such that
	$$\left|P_{p_1L}\cap P_{p_2L}\right|\gg \frac{|P|K^4}{|L|^2}.$$
\end{lemma}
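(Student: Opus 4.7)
The plan is to combine Lemma \ref{theorem:incidencescover1}, which guarantees a large subset $P_1 \subseteq P$ on which each point is a focus for many others, with a standard second-moment (Cauchy--Schwarz plus averaging) argument to locate two points whose focal point-sets overlap heavily.

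More concretely, first I would apply Lemma \ref{theorem:incidencescover1} to obtain $P_1 \subseteq P$ with $|P_1| \approx |P|$ and $|P_{pL}| \gg \frac{K^2 |P|}{|L|}$ for every $p \in P_1$. The key observation is that the relation defining $P_{pL}$ is symmetric in the two arguments: for $p, q \in P$, one has $q \in P_{pL}$ iff $l_{pq} \in L$ iff $p \in P_{qL}$. Hence for every $q \in P$,
$$\#\{(p_1,p_2) \in P_1 \times P_1 : q \in P_{p_1L} \cap P_{p_2L}\} = |P_{qL} \cap P_1|^2,$$
and summing over $q$ yields
$$\sum_{p_1, p_2 \in P_1} |P_{p_1L} \cap P_{p_2L}| \;=\; \sum_{q \in P} |P_{qL} \cap P_1|^2.$$

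Next I would lower-bound the right-hand side. By the same symmetry,
$$\sum_{q \in P} |P_{qL} \cap P_1| \;=\; \sum_{p \in P_1} |P_{pL}| \;\gg\; |P_1| \cdot \frac{K^2 |P|}{|L|} \;\approx\; \frac{K^2 |P|^2}{|L|},$$
so Cauchy--Schwarz gives
$$\sum_{q \in P} |P_{qL} \cap P_1|^2 \;\geq\; \frac{1}{|P|}\left(\sum_{q \in P} |P_{qL} \cap P_1|\right)^2 \;\gg\; \frac{K^4 |P|^3}{|L|^2}.$$
Dividing by $|P_1|^2 \approx |P|^2$ and invoking the pigeonhole principle produces $p_1, p_2 \in P_1 \subseteq P$ with $|P_{p_1 L} \cap P_{p_2 L}| \gg \frac{K^4 |P|}{|L|^2}$, as required.

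Because the statement of the lemma permits $p_1 = p_2$, there is no need to separate a diagonal contribution; if one wanted distinct foci, one would have to check that the diagonal term $\sum_{p \in P_1} |P_{pL}|$ is dominated by the off-diagonal bound, which holds as soon as $K^2 |P| / |L| \gg 1$. I expect no real obstacle here: the heart of the argument is the symmetry of incidences combined with one application of Cauchy--Schwarz, and the quantitative input from Lemma \ref{theorem:incidencescover1} is already tuned to exactly deliver the claimed exponent of $K^4 |P| / |L|^2$.
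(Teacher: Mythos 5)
Your argument is correct as a proof of the statement as written, but it takes a genuinely different route from the paper. The paper simply applies Lemma \ref{theorem:incidencescover1} twice: once to $P$ to find a single $p_1$ with $|P_{p_1L}| \gg K^2|P|/|L|$, and then again to the point set $P_{p_1L}$ (whose points are still incident to $\Theta(K)$ lines of $L$, being a subset of $P$) to find $p_2 \in P_{p_1L}$ with $|(P_{p_1L})_{p_2L}| \gg K^2|P_{p_1L}|/|L| \gg K^4|P|/|L|^2$. Your second-moment argument --- symmetry of the relation $q \in P_{pL}$, double counting, Cauchy--Schwarz, then averaging --- reaches the same exponent. It is arguably more informative, since it shows the bound holds for an \emph{average} pair from $P_1 \times P_1$ rather than for one cleverly chosen pair; the paper's greedy iteration is shorter and, importantly, automatically produces \emph{distinct} points, because $p_2$ is drawn from $P_{p_1L}$ and $p_1 \notin P_{p_1L}$.

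That last point is the one caveat worth flagging. Although the statement does not literally require $p_1 \neq p_2$, the downstream proof of Lemma \ref{theorem:mainrefine} does: it selects a base line through $p_2$ that avoids $p_1$, which is impossible if the two coincide. Your averaging argument can return the diagonal pair, and your stated condition for excluding it is not justified: $\sum_{p\in P_1}|P_{pL}|$ is only bounded \emph{below} by $K^2|P|^2/|L|$ (some lines could be very rich), so the honest comparison uses the trivial bound $\sum_{p\in P_1}|P_{pL}| \le |P|^2$, yielding the condition $K^4|P| \gg |L|^2$ rather than $K^2|P|/|L| \gg 1$. Fortunately the stronger condition follows from the hypothesis $K \gg |L|^{3/5}/|P|^{1/5}$ under which Lemma \ref{theorem:mainrefine} is applied, so your approach is repairable where it matters, but the distinctness remark as you state it is imprecise.
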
 

\begin{proof}
By Lemma \ref{theorem:incidencescover1} there exists $P_1 \subseteq P$ with $|P_1|\approx |P|$ such that $|P_{pL}|\gg \frac{K^2|P|}{|L|}$ for each $p \in P_1$. In particular there is a point $p_1 \in P$ such that 
	$$\left|P_{p_1L}\right|\gg \frac{K^2|P|}{|L|}.$$ 
Applying Lemma \ref{theorem:incidencescover1} again, this time to $P_{p_1L}$ and $L$, there is a point $p_2 \in P_{p_1L}$ such that 
	$$\left|P_{p_1L}\cap P_{p_2L}\right|\geq \left|\left(P_{p_1L}\right)_{p_2L}\right|\gg \frac{K^2|P_{p_1L}|}{|L|}\gg \frac{K^4|P|}{|L|^2}$$
as required.	
\end{proof}

Now with the ability to find foci both one and two at a time, it is time for a proof of Lemma \ref{theorem:mainrefine}

\begin{proof}[Proof of Lemma \ref{theorem:mainrefine}]
It suffices to find $p_1,p_2,p_3,p_4 \in P$ with
	$$\left|P_{p_1L}\cap P_{p_2L} \cap P_{p_3L} \cap P_{p_4L} \right| \gg \frac{|P|K^8}{|L|^4}$$
such that $p_2,p_3$ and $p_4$ are collinear along a line in $L$ that is \textit{not} incident to $p_1$. There will then exist an $O(K)$-sum-product configuration of appropriate size since:

\begin{itemize}
\item Each $p \in P$ is incident to $\Theta(K)$ lines in $L$, and so is an $O(K)$-focus for $P_{pL}$. This means that the first condition in the definition of an $O(K)$-sum-product configuration is satisfied for any subset of $P_{p_1L}\cap P_{p_2L} \cap P_{p_3L} \cap P_{p_4L}$.
\item The second condition of a sum-product configuration is clearly satisfied by virtue of the arrangement of $p_1,p_2,p_3,p_4$.
\item No line in $L$ is incident to more than $O\left(\left(\frac{|P|K^8}{|L|^4}\right)^{1-o(1)}\right)$ points in $P$, and so 
$P_{p_1L}\cap P_{p_2L} \cap P_{p_3L} \cap P_{p_4L}$ contains a subset of cardinality $\Theta\left(\frac{|P|K^8}{|L|^4}\right)$ for which the third condition of an $O(K)$-sum-product configuration is satisfied. 
\end{itemize}

Observe first that by Lemma \ref{theorem:bktrefine} there exist points $p_1,p_2 \in P$ such that
	\begin{equation}\label{eq:incidences14}
	\left|P_{p_1L}\cap P_{p_2L}\right|\gg \frac{|P|K^4}{|L|^2}.
	\end{equation}
For convenience, define 
	\begin{equation}\label{eq:incidences14'}
	Q=P_{p_1L}\cap P_{p_2L}.
	\end{equation}
The proof will be complete if we can find distinct $p_3,p_4 \in Q$ collinear with $p_2$ along a line in $L$ that is not incident to $p_1$ such that
	\begin{equation}\label{eq:target}
	\left|Q_{p_3L} \cap Q_{p_4L}\right|\gg \frac{|P|K^8}{|L|^4}.
	\end{equation}

Now since $Q \subseteq P$, every point in $Q$ is incident to $\Theta(K)$ lines in $L$. So by Lemma \ref{theorem:incidencescover1} there exists $Q_1 \subseteq Q$ with $|Q_1|\approx |Q|$ such that 
\begin{equation}\label{eq:incidences13}
|Q_{pL}|\gg \frac{K^2|Q|}{|L|}
\end{equation}	
for each $p \in Q_1$. 

Since $p_2$ is an $O(K)$-focus for $P_{p_2L}$, and $Q_1 \subseteq Q \subseteq P_{p_2L}$, it follows that $p_2$ is an $O(K)$-focus for $Q_1$ as well. Let $J \subseteq L$ be the set of $O(K)$ lines through $p_2$ supporting $Q_1$, so that 
	$$I(Q_1,J)=|Q_1|$$
and
	$$|J|\ll K.$$ 

Let $J_1$ be the set of $l \in J$ incident to at least $\Omega\left(\frac{I(Q,J)}{|J|}\right)=\Omega\left(\frac{|Q|}{K}\right)$ points in $Q_1$. Observe that $|J_1|\geq 2$. Indeed, Lemma \ref{theorem:pointrefine} implies  
	\begin{equation}\label{eq:incidences11}
	I(Q_1,J_1)\approx I(Q_1,J)\approx |Q|.
	\end{equation}
But since $Q_1\subseteq P$ and $J_1 \subseteq L$, and it is a hypothesis that each line in $L$ is incident to at most 	
	$$O\left(\left(\frac{|P|K^{4}}{|L|^2}\right)^{1-o(1)}\right)=O(|Q|^{1-o(1)})$$
points in $P$, it follows that each line in $J_1$ is incident to at most $O(|Q|^{1-o(1)})$ points in $Q$. Hence
	\begin{equation}\label{eq:incidences12}
	I(Q_1,J_1)\ll |J_1||Q|^{1-o(1)}.
	\end{equation}
Comparing (\ref{eq:incidences11}) and (\ref{eq:incidences12}) gives $|J_1|\gg |Q|^{o(1)}$. So by appropriate choice of constants in the statement of the theorem $|J_1| \geq 2$ as claimed.

Since there are at least two lines in $J_1$, and they are all incident to $p_2$, at least one of them is not incident to $p_1$. Fix this line $l^* \in J_1$, which will be the base line of the sum-product configuration. Since $l^*$ is incident to $p_2$ but not $p_1$, it suffices to establish that \eqref{eq:target} holds for some distinct $p_3,p_4 \in Q \cap l^*$.

Because $l^* \in J_1$ we have 
	\begin{equation}\label{eq:incidences15}
	|Q_1 \cap l^*|\gg \frac{|Q|}{K}
	\end{equation}
From \eqref{eq:incidences13} it follows that
	\begin{equation}\label{eq:incidences6}
	\frac{K^2|Q|}{|L|}\left|Q_1 \cap l^*\right|\ll \sum_{p \in Q_1 \cap l^*}|Q_{pL}|.
	\end{equation}
On the other hand, by Cauchy-Schwarz, 
	\begin{align*}
	\sum_{p \in Q_1 \cap l^*}|Q_{pL}|& \leq |Q|^{1/2}\left(\sum_{p_3,p_4 \in Q_1\cap l^*}\left|Q_{p_3L}\cap Q_{p_4L}\right|\right)^{1/2}\\
	&= |Q|^{1/2}\left( \sum_{p \in Q_1 \cap l^*}|Q_{pL}|+ \sum_{\substack{p_3, p_4 \in Q_1\cap l^* \\p_3 \neq p_4}}\left|Q_{p_3L}\cap Q_{p_4L}\right|\right)^{1/2}.
	\end{align*}
If the first summation on the right were to dominate then it would mean 
	$$\sum_{p \in Q_1 \cap l^*}|Q_{pL}|\ll |Q|.$$
Comparing with \eqref{eq:incidences6} and applying \eqref{eq:incidences15} would then yield $|Q|K \ll |L|$. By \eqref{eq:incidences14} and \eqref{eq:incidences14'}, this would mean $K \ll \frac{|L|^{3/5}}{|P|^{1/5}}$, and so by an appropriate choice of constant contradict the hypothesis $K \gg \frac{|L|^{3/5}}{|P|^{1/5}}$. 

Thus the second summation on the right dominates instead and so by \eqref{eq:incidences6},
	$$\frac{K^4 |Q| \left|Q \cap l_*\right|^2}{|L|^2} \ll \sum_{\substack{p_3,p_4 \in Q_1\cap l^* \\ p_3 \neq p_4}}|Q_{p_3L}\cap Q_{p_4L}|.$$
Hence there exist distinct $p_3,p_4 \in Q_1 \cap l^* \subseteq Q \cap l^*$ such that 
	$$|Q_{p_3L}\cap Q_{p_4L}|\gg \frac{K^4 |Q|}{|L|^2}\gg \frac{|P|K^8}{|L|^4}$$
as required.
\end{proof}

\section{Bounding partial sum-products}\label{section:bounding}

Between them, Section \ref{section:interpretation} and Section \ref{section:refine} show that the existence of too many incidences implies the existence of $A,B \subseteq \mathbb{F}_p$ and a large $G \subseteq A \times B$ for which the partial difference and ratio sets $|A\overset{G}-B|$ and $|A\overset{G}/B|$ are both small relative to $|A|$ and $|B|$. 

This offends our sum-product sensibilities, since Chapter \ref{chapter:introgrowth} showed that one or other of a product set or sumset must always be large. However those results were concerned with \textit{complete} sumsets and product sets, whereas here   only \textit{partial} sum-products are controlled. Fortunately, Chapter \ref{chapter:introapproxgroup} has  tools designed for this kind of situation, in the form of Balog-Szemer\'edi-Gowers type results that allow passage from partial to complete sets. 

The main results in this section are the following `partial sum-product' results for $\mathbb{F}_p$. They will be proved using a combination of BSG-type results and Rudnev's finite field sum-product estimate Theorem \ref{theorem:mishasumprodvariation}.

\begin{lemma}[Partial sum-products in $\mathbb{F}_p$, v1]\label{theorem:partialsumprod}
Let $A,B \subseteq \mathbb{F}_p$ and $G \subseteq A \times B$. If $|A|\ll p^{1/2} $ then
	$$|G|^{55}\lesssim |A|^{36}|B|^{37}|A \overset{G}-B|^{28}|A \overset{G}/ B|^8.$$
\end{lemma}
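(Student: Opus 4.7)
The plan is to combine the Bourgain--Garaev partial-BSG result (Lemma \ref{theorem:bg}) with Rudnev's finite-field sum-product estimate (Theorem \ref{theorem:mishasumprodvariation}), by using the partial-sumset hypotheses to produce simultaneous bounds on the complete difference and ratio sets of a single subset of $A$, and then feeding those into Rudnev.

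First I would apply Lemma \ref{theorem:bg} multiplicatively to $G\subseteq A\times B$, obtaining $A'\subseteq A$ with $|A'|\gg |G|/|B|$ and
\[
|A'/A'|\lesssim \frac{|A|^{4}|B|^{3}|A\overset{G}/B|^{4}}{|G|^{5}}.
\]
The proof of Lemma \ref{theorem:bg} constructs $A'$ as $A_{G}(b_{0})$ for a witness $b_{0}\in B$ selected by an averaging argument. Running the same averaging in parallel for the additive partial difference shows, by a joint pigeonhole, that $b_{0}$ can be chosen so that the same $A'$ simultaneously satisfies
\[
|A'-A'|\lesssim \frac{|A|^{4}|B|^{3}|A\overset{G}-B|^{4}}{|G|^{5}}.
\]

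Because $|A'|\leq |A|\ll p^{1/2}$, Rudnev's theorem applies to $A'$ and gives $E_{\times}(A')^{4}\lesssim |A'-A'|^{7}|A'|^{4}$. Combined with the Cauchy--Schwarz lower bound $E_{\times}(A')\geq |A'|^{4}/|A'/A'|$ (Lemma \ref{theorem:energycs1} applied to the multiplicative group), this yields the key inequality
\[
|A'|^{12}\lesssim |A'-A'|^{7}|A'/A'|^{4}.
\]
Substituting the two BSG bounds and the lower bound $|A'|\gg |G|/|B|$, then clearing denominators, should produce the claimed estimate.

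I anticipate two main obstacles. The first is arranging the additive and multiplicative applications of Lemma \ref{theorem:bg} to share a common witness $b_{0}$, and hence a common $A'$; this requires a careful joint pigeonhole inside the proof of that lemma, or, failing that, an intersection argument between the two candidate subsets. The second is the exponent bookkeeping: the naive substitution sketched above gives only the weaker $|G|^{67}\lesssim |A|^{44}|B|^{45}|A\overset{G}-B|^{28}|A\overset{G}/B|^{16}$, so attaining the sharper stated exponents will likely require a tighter passage from partial to complete sumsets. Natural candidates are the Katz--Shen refinement of Pl\"unnecke (Corollary \ref{theorem:cleverpl}), which would save factors of $|A|$ and $|B|$, or a Ruzsa-triangle step exploiting the containments $A'-b_{0}\subseteq A\overset{G}-B$ and $A'/b_{0}\subseteq A\overset{G}/B$ to bound $|A'/A'|$ economically and so replace the $|A'/A'|^{4}$ factor by something quadratic in $|A\overset{G}/B|$, which would reduce the $R$-exponent from $16$ to the desired $8$.
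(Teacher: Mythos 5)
Your architecture (BSG-type reduction plus Rudnev's Theorem \ref{theorem:mishasumprodvariation}) is the paper's, and you correctly diagnose that the naive combination of $E_{\times}(A')\geq |A'|^4/|A'/A'|$ with two complete-set BSG bounds only reaches the exponents of Lemma \ref{theorem:partialsumprod2}. But neither of your proposed repairs closes the gap. The Katz--Shen refinement of Pl\"unnecke is not relevant here (no iterated sumsets occur). The Ruzsa-triangle idea via $A'/b_0\subseteq A\overset{G}/B$ degenerates: since $A'\subseteq A_G(b_0)$, the set $A'/b_0$ is a dilate of $A'$, so the containment yields only $|A'|\leq |A\overset{G}/B|$ and hence the trivial bound $|A'/A'|\leq |A'|^2\leq |A\overset{G}/B|^2$. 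Substituting this into $|A'|^{12}\lesssim |A'-A'|^7|A'/A'|^4$ either erases the multiplicative information entirely (leaving $|A'|^4\lesssim|A'-A'|^7$) or, if you keep the factor $|A\overset{G}/B|^8$ formally, lands at $|G|^{47}\lesssim |A|^{28}|B|^{33}|A\overset{G}-B|^{28}|A\overset{G}/B|^8$, which does not imply the stated $|G|^{55}\lesssim |A|^{36}|B|^{37}|A\overset{G}-B|^{28}|A\overset{G}/B|^{8}$ (when $|G|\approx|A||B|$ the target is strictly stronger by a factor $|B|^4$).

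The paper's actual device is to never pass to the complete ratio set on the multiplicative side. Its Lemma \ref{theorem:halfbsg} retains the dense graph $H'\subseteq A'\times A'$ produced by the BSG machinery and lower-bounds the multiplicative energy directly by Cauchy--Schwarz over the \emph{partial} ratio set (Lemma \ref{theorem:energycs1}): $E_{\times}(A')\geq |H'|^2/|A'\overset{H'}/A'|\gg |G|^2|A'|^4/(|A\overset{G}/B|^2|A|^2|B|)$, where Lemma \ref{theorem:bsgsufficient} bounds $|A'\overset{H'}/A'|$ by only the square of $|A\overset{G}/B|$, versus the fourth power one pays for the complete set $|A'/A'|$. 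Raising to the fourth power in Rudnev then costs $|A\overset{G}/B|^{8}$ rather than $|A\overset{G}/B|^{16}$, which is exactly the saving that separates v1 from v2. Your common-witness worry is legitimate and is resolved essentially by your fallback: both the additive and multiplicative strands are run from the single $A_1$ and $H$ supplied by Lemma \ref{theorem:bsgrefine}, and the two refined subsets each occupy a $(1-\sqrt{\epsilon})$-proportion of $A_1$ (Lemma \ref{theorem:densebsg}), so their intersection still has size $\gg |G|/|B|$; invoking Lemma \ref{theorem:bg} twice as a black box would not guarantee this.
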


\begin{lemma}[Partial sum-products in $\mathbb{F}_p$, v2]\label{theorem:partialsumprod2}
Let $A,B \subseteq \mathbb{F}_p$ and $G \subseteq A \times B$. If $|G| \ll p^{1/2} |B|$ then
	$$|G|^{67}\lesssim |A|^{44}|B|^{45}|A \overset{G}-B|^{28}|A \overset{G}/ B|^{16}.$$
\end{lemma}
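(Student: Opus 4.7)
The plan is to apply additive and multiplicative versions of the less-dense Balog--Szemer\'edi--Gowers result (Lemma \ref{theorem:bg}) simultaneously, producing a single subset $A^{*}\subseteq A$ whose \emph{complete} difference and ratio sets are both controlled, and then to close the argument with Rudnev's sum-product estimate (Theorem \ref{theorem:mishasumprodvariation}). The key arithmetic observation behind the exponents is that the factor of $p^{6}$ that arises when Rudnev is applied at the threshold $|A^{*}|\approx p^{1/2}$ can be absorbed using the hypothesis $|G|\ll p^{1/2}|B|$ in the form $|G|^{12}\lesssim p^{6}|B|^{12}$, which accounts for exactly the extra $|G|^{12}/|B|^{12}$ separating this lemma from Lemma \ref{theorem:partialsumprod}.

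To carry out the joint BSG step I would rerun the proof of Lemma \ref{theorem:bg} with both group operations in mind. Lemma \ref{theorem:bsgrefine} produces $A'\subseteq A$ with $|A'|\gg|G|/|B|$ together with a dense graph $H\subseteq A'\times A'$ whose pairs all have joint $G$-degree at least $K\gg|G|^{2}/(|A|^{2}|B|)$, and this construction is blind to whether we ultimately care about differences or ratios. Lemma \ref{theorem:bsgsufficient} applies identically to subtraction and to division, since its injection $(x,b)\mapsto(a_{x}^{1}\star b,a_{x}^{2}\star b)$ works for either operation $\star$ (for division one discards $0\in B$, losing at most $|A|$ elements of $G$, which is harmless). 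The same pair $(A',H)$ therefore satisfies
\begin{equation*}
|A'\overset{H}-A'|\leq\frac{|A\overset{G}-B|^{2}}{K}\qquad\text{and}\qquad|A'\overset{H}/A'|\leq\frac{|A\overset{G}/B|^{2}}{K}.
\end{equation*}
Applying Lemma \ref{theorem:highdensityG} to $H$, followed by a second application of Lemma \ref{theorem:bsgsufficient} with $H$ now in the role of $G$, refines to a subset $A^{*}\subseteq A'$ with $|A^{*}|\gg|A'|\gg|G|/|B|$ satisfying
\begin{equation*}
|A^{*}-A^{*}|\lesssim\frac{|A|^{4}|B|^{3}|A\overset{G}-B|^{4}}{|G|^{5}}\qquad\text{and}\qquad|A^{*}/A^{*}|\lesssim\frac{|A|^{4}|B|^{3}|A\overset{G}/B|^{4}}{|G|^{5}}.
\end{equation*}

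With $A^{*}$ in hand, combining the Cauchy--Schwarz bound $E_{\times}(A^{*})\geq|A^{*}|^{4}/|A^{*}/A^{*}|$ with Theorem \ref{theorem:mishasumprodvariation} yields $|A^{*}|^{12}\lesssim|A^{*}-A^{*}|^{7}|A^{*}/A^{*}|^{4}$ whenever $|A^{*}|\ll p^{1/2}$. In that regime, substituting $|A^{*}|\gg|G|/|B|$ and the two BSG bounds causes the algebra to collapse exactly to $|G|^{67}\lesssim|A|^{44}|B|^{45}|A\overset{G}-B|^{28}|A\overset{G}/B|^{16}$, the exponents $7$ and $4$ being tuned precisely for this. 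Otherwise I would restrict to an arbitrary $A^{**}\subseteq A^{*}$ of size comparable to $p^{1/2}$, whose difference and ratio sets are no larger, and apply Theorem \ref{theorem:mishasumprodvariation} to $A^{**}$ to obtain $p^{6}\lesssim|A^{*}-A^{*}|^{7}|A^{*}/A^{*}|^{4}$; invoking $|G|^{12}\lesssim p^{6}|B|^{12}$ then converts this into the same final inequality. The main obstacle is verifying that a single $A^{*}$ is simultaneously effective in the additive and multiplicative settings; fortunately Lemma \ref{theorem:bsgsufficient} is a purely graph-theoretic statement whose injection argument is agnostic to the group operation, so the two operations can be threaded through in parallel, and everything downstream is routine bookkeeping.
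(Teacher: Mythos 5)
Your proposal is correct and follows essentially the same route as the paper: the two BSG conclusions you derive for a single $A^{*}$ are exactly items 1 and 2 of Lemma \ref{theorem:halfbsg}, and the paper likewise closes via $E_{\times}\geq|A'|^{4}/|A'/A'|$ and Theorem \ref{theorem:mishasumprodvariation}. The only cosmetic difference is that the paper handles the $p^{1/2}$ constraint by passing up front to a subset of size $\approx|G|/|B|$ (legitimate since the difference and ratio set bounds survive passage to subsets), whereas you split into two cases; both give the same arithmetic, which checks out.
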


Because sumsets and product sets are being analysed simultaneously, care is required when using Balog-Szemer\'edi-Gowers type results to prove these lemmata. Results from Chapter \ref{chapter:introapproxgroup} yield the following preliminary result, which captures all the additive and multiplicative properties that need to be controlled. 

\begin{lemma}[BSG-type for sum-products]\label{theorem:halfbsg}
If $A,B \subseteq \mathbb{F}_p$ and $G \subseteq A \times B$ then there exists $A' \subseteq A$ with $|A'|\gg \frac{|G|}{|B|}$ such that 
\begin{enumerate}
\item $|A'-A'|\ll \frac{|A\overset{G}-B|^4|A|^4|B|^3}{|G|^5}$
\item $|A'/A'|\ll \frac{|A\overset{G}/B|^4|A|^4|B|^3}{|G|^5}$
\item $E_{\times}(A') \gg \frac{|G|^2|A'|^4}{|A\overset{G}/B|^2|A|^2|B|}$.
\end{enumerate}
\end{lemma}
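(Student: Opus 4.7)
The plan is to extract a single subset $A'$ satisfying all three requirements at once by running the proof of Lemma \ref{theorem:bg} so that each refinement step is a purely combinatorial statement about the graph $G$. The key observation is that the successive refinements depend only on joint $G$-degrees, not on the ambient group operation, so the same $A'$ inherits both additive and multiplicative control simultaneously.

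First I would apply Lemma \ref{theorem:bsgrefine} with a fixed small $\epsilon>0$ to produce $A_1 \subseteq A$ with $|A_1| \gg |G|/|B|$ and $H \subseteq A_1 \times A_1$ with $|H| \geq (1-\epsilon)|A_1|^2$, in which every pair has joint $G$-degree $\gg |G|^2/(|A|^2|B|)$. Running Lemma \ref{theorem:bsgsufficient} additively in $\mathbb{F}_p$ and then verbatim in the multiplicative group (the Ruzsa-type argument only requires an abelian group operation) yields
$$|A_1 \overset{H}- A_1| \ll \frac{|A \overset{G}- B|^2 |A|^2 |B|}{|G|^2} \quad \text{and} \quad |A_1 \overset{H}/ A_1| \ll \frac{|A \overset{G}/ B|^2 |A|^2 |B|}{|G|^2}.$$

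Next I would refine $A_1$ to $A'$ via Lemma \ref{theorem:highdensityG} applied to the dense graph $H$, obtaining $A' \subseteq A_1$ with $|A'| \geq (1-\sqrt{\epsilon})|A_1|$ in which any two distinct elements have joint $H$-degree at least $(1 - 2\sqrt{\epsilon})|A_1| \gg |G|/|B|$. Crucially, this $A'$ is determined only by the combinatorics of $H$, so the same set serves both parts (i) and (ii). A second pass through Lemma \ref{theorem:bsgsufficient} (additively, and then multiplicatively) now gives
$$|A' - A'| \ll \frac{|A_1 \overset{H}- A_1|^2}{|A_1|} \ll \frac{|A \overset{G}- B|^4 |A|^4 |B|^3}{|G|^5},$$
and the analogous bound for $|A'/A'|$ in terms of $|A \overset{G}/ B|$.

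For the multiplicative energy in part (iii) I would restrict $H$ to $H' = H \cap (A' \times A')$; inclusion--exclusion using $|H| \geq (1-\epsilon)|A_1|^2$ together with $|A'| \geq (1-\sqrt{\epsilon})|A_1|$ yields $|H'| \gg |A'|^2$ for $\epsilon$ small. The multiplicative version of Lemma \ref{theorem:energycs1} then supplies
$$E_\times(A') \geq E_\times(H') \geq \frac{|H'|^2}{|A' \overset{H'}/ A'|} \gg \frac{|A'|^4}{|A_1 \overset{H}/ A_1|} \gg \frac{|A'|^4 |G|^2}{|A \overset{G}/ B|^2 |A|^2 |B|},$$
using $A' \overset{H'}/ A' \subseteq A_1 \overset{H}/ A_1$. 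The only real care needed is to ensure that the single refinement $A_1 \to A'$ is the \emph{same} set throughout, which is automatic because it depends only on $H$; the remaining work is bookkeeping of exponents, with no genuine obstacle.
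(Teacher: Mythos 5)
Your proposal is correct and follows essentially the same route as the paper: Lemma \ref{theorem:bsgrefine}, then Lemma \ref{theorem:bsgsufficient} run once additively and once multiplicatively, a graph-only refinement to a common subset, and the restriction $H' = H \cap (A' \times A')$ for the energy bound. The only difference is cosmetic: the paper applies the packaged Lemma \ref{theorem:densebsg} twice and intersects the two resulting subsets $A_2 \cap A_3$, whereas you inline it via Lemma \ref{theorem:highdensityG} and observe that the refinement depends only on $H$, so a single subset serves both parts at once.
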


\begin{proof}
Let $\epsilon>0$ be sufficiently small. By Lemma \ref{theorem:bsgrefine} there exist $A_1 \subseteq A$ and $H \subseteq A_1 \times A_1$ with 
	$$|A_1|\gg \frac{|G|}{|B|}$$
and 
	$$|H|\geq (1-\epsilon) |A_1|^2$$
such that every pair $(a_1,a_2)\in H$ has joint $G$-degree at least $\frac{\epsilon |G|^2}{2|A|^2|B|}$. By Lemma \ref{theorem:bsgsufficient}, applied once additively and once multiplicatively,
	\begin{align*}
	|A_1\overset{H}-A_1|&\ll \frac{|A\overset{G}- B|^2 |A|^2 |B|}{|G|^2}\\
	|A_1\overset{H}/A_1|&\ll \frac{|A\overset{G}/ B|^2 |A|^2 |B|}{|G|^2}.
	\end{align*}
Apply Lemma \ref{theorem:densebsg} once additively and once multiplicatively to obtain $A_2,A_3 \subseteq A_1$ with 		
	$$|A_2|,|A_3|\geq(1-\sqrt{\epsilon})|A_1|$$
such that
	\begin{align*}
	|A_2-A_2|&\ll \frac{|A_1\overset{H}-A_1|^2}{|A_1|}\ll \frac{|A\overset{G}-B|^4|A|^4|B|^3}{|G|^5}\\
	|A_3/A_3|&\ll \frac{|A_1\overset{H}/A_1|^2}{|A_1|}\ll \frac{|A\overset{G}/B|^4|A|^4|B|^3}{|G|^5}.\\
	\end{align*}
Let $A'=A_2 \cap A_3$. By the cardinalities of $A_2$ and $A_3$,
	$$|A'|\geq (1-2\sqrt\epsilon)|A_1|\gg \frac{|G|}{|B|}.$$ 
That $A'$ satisfies properties 1 and 2 is immediate from the above difference and ratio set estimates. To show that property 3 is also satisfied, let 
	$$H' = H \cap (A'\times A').$$
Since both $H$ and $A' \times A'$ are of cardinality at least $(1-2\sqrt{\epsilon})^2|A_1|^2$ we have 
	$$|H'|\geq (1-8\sqrt{\epsilon}) |A_1|^2$$
and so
	$$E_{\times}(A')\geq \frac{|H'|^2}{|A'\overset{H'}/A'|} \gg \frac{|A_1|^4}{|A_1\overset{H}/A_1|} \gg \frac{|G|^2|A_1|^4}{|A\overset{G}-B|^2|A|^2|B|}$$
which completes the proof.
\end{proof}

We can now prove Lemma \ref{theorem:partialsumprod} and Lemma \ref{theorem:partialsumprod2}.

\begin{proof}[Proof of Lemma \ref{theorem:partialsumprod}.]
Apply Lemma \ref{theorem:halfbsg} to obtain $A' \subseteq A$ with $|A'|\gg \frac{|G|}{|B|}$ such that 
	\begin{align*}
	|A'-A'|&\ll \frac{|A\overset{G}-B|^4|A|^4|B|^3}{|G|^5}\\
	E_{\times}(A') &\gg \frac{|G|^2|A'|^4}{|B||A|^2|A\overset{G}/B|^2}.
	\end{align*}
Since $|A|\ll p^{1/2}$, the set $A' \subseteq A$ is of cardinality at most $O\left(p^{1/2}\right)$ as well, and the sum-product estimate Theorem \ref{theorem:mishasumprodvariation} can be applied to obtain
	$$\frac{|G|^{8}|A'|^{16}}{|B|^{4}|A|^8|A \overset{G} / B|^8}\ll E_{\times}(A')^4 \lesssim |A' - A'|^7 |A'|^4 \ll \frac{|A\overset{G}-B|^{28}|A|^{28}|B|^{21}|A'|^4}{|G|^{35}}. $$
Rearranging gives 
	$$|G|^{43}|A'|^{12} \lesssim |A \overset{G}-B|^{28}|A \overset{G}/ B|^8|A|^{36}|B|^{25}$$
and so 
	$$|G|^{55}\lesssim A|^{36}|B|^{37}|A \overset{G}-B|^{28}|A \overset{G}/ B|^8| $$
as required.
\end{proof}

\begin{proof}[Proof of Lemma \ref{theorem:partialsumprod2}.]
Apply Lemma \ref{theorem:halfbsg} to obtain $A' \subseteq A$ with $|A'|\gg \frac{|G|}{|B|}$ such that
	\begin{align*}
	|A'-A'|&\ll \frac{|A\overset{G}-B|^4|A|^4|B|^3}{|G|^5}\\
	|A'/A'|&\ll \frac{|A\overset{G}/B|^4|A|^4|B|^3}{|G|^5}.
	\end{align*}
Unlike in the proof of Lemma \ref{theorem:partialsumprod}, where multiplicative energy was involved, these properties are preserved when passing to subsets of $A'$ and so we may assume that $|A'|\approx \frac{|G|}{|B|}$. Since $|G| \ll p^{1/2} |B|$ the set $A'$ is then of cardinality at most $O\left(p^{1/2}\right)$ and so Theorem \ref{theorem:mishasumprodvariation} can be applied to obtain
	\begin{align*}
	\frac{|G|^{36}}{|B|^{28}|A|^{16}|A \overset{G}/B|^{16}}\ll \left(\frac{|A'|^4}{|A'/A'|}\right)^4  \ll E_{\times}(A')^4& \lesssim |A'|^4|A'-A'|^7 \\
	&\ll \frac{|A\overset{G}-B|^{28}|A|^{28}|B|^{17}}{|G|^{31}}.
	\end{align*}
Rearranging gives 
	$$|G|^{67}\lesssim |A|^{44}|B|^{45}|A \overset{G}-B|^{28}|A \overset{G}/ B|^{16}$$
as required.
\end{proof}

\section{Proving Theorems \ref{theorem:incidences} and \ref{theorem:beck2}}\label{section:incidenceproofs}

This section uses the analysis from Sections \ref{section:interpretation}, \ref{section:refine} and \ref{section:bounding} to give proofs of Theorems \ref{theorem:incidences} and \ref{theorem:beck2}.

\begin{proof}[Proof of Theorem \ref{theorem:incidences}.]

Suppose that $I(P,L)\gg N^{3/2-\epsilon}$. The aim is to show that $\epsilon \geq \frac{1}{662}-o(1)$. 

First, a standard argument enables overly rich lines to be discarded. Let $L_1$ be the set of lines in $L$ incident to at most $O\left(N^{1/2+\epsilon}\right)$ points in $P$. We have 		
	$$I(P,L_1)\approx N^{3/2-\epsilon}$$ 
since if $L_+$ is the set of lines in $L$ incident to at least $C N^{1/2+\epsilon}$ points in $P$ then
	\begin{align*}
I(P,L_+)&=\sum_{l \in L_2}\sum_{p \in P}\delta_{pl}\\
&\leq \frac{1}{C N^{1/2+\epsilon}}\sum_{l \in L_+}\sum_{p_1,p_2}\delta_{p_1l}\delta_{p_2l}\\
&\leq \frac{1}{C N^{1/2+\epsilon}}\left(I(P,L)+|P|^2\right)\\
&\leq \frac{2N^{3/2-\epsilon}}{C}	
	\end{align*}
and so by an appropriate choice of constant $I(P,L_+)\leq \frac{I(P,L)}{2}$. 

By a dyadic pigeonholing there is a subset $P_1 \subseteq P$ and an integer $K$ with 
	\begin{equation}\label{eq:incidences7}
	|P_1|K \gtrsim N^{3/2- \epsilon}
	\end{equation}
such that every point in $P_1$ is incident to $\Theta(K)$ lines in $L_1$. Note moreover that
	\begin{equation}\label{eq:incidences8}
	K \gtrsim N^{1/2-\epsilon}
	\end{equation}
since $|P_1|\leq N$. Applying Lemma \ref{theorem:mainrefine} to $P_1$ and $L_1$, at least one of the following is true:
	\begin{enumerate}
	\item $K \ll \frac{|L_1|^{3/5}}{|P_1|^{1/5}}$.
	\item There is a line in $L_1$ incident to at least $\Omega\left(\left(\frac{|P_1|K^{4}}{|L_1|^2}\right)^{1-o(1)}\right)$ points in $P_1$.
	\item There is a line in $L_1$ incident to at least $\Omega\left(\left(\frac{|P_1|K^{8}}{|L_1|^4}\right)^{1-o(1)}\right)$ points in $P_1$.
	\item There exists $P_2 \subseteq P_1$ with $|P_2|\approx \frac{|P_1|K^8}{|L_1|^4}$ and points $p_1,p_2,p_3,p_4$ in an $O(K)$-sum-product	configuration.
	\end{enumerate}

The first three cases are quickly dispensed with. For the first, applying \eqref{eq:incidences7} and then \eqref{eq:incidences8} yields $\epsilon \geq \frac{1}{10}-o(1)$, which is far better than required. For the second, since every line in $L_1$ is incident to at most $N^{1/2+\epsilon}$ points in $P$, it follows that $\epsilon \geq \frac{1}{10}-o(1)$ as well. The third case is like the second, but this time yields $\epsilon \geq \frac{1}{18}-o(1)$.

The fourth case remains, and is the core of the argument. Apply Lemma \ref{theorem:reduction} to $P_2$ to obtain $A, B \subseteq \mathbb{F}_p$ with 
	$$|A|,|B| \ll K$$
and $G \subseteq A \times B$ with 
	$$|G|\approx \frac{|P_1|K^8}{|L|^4}$$ 
such that 
	\begin{equation}\label{eq:refineinvar}
	|A \overset{G}-B|,|A \overset{G}/B|\ll K.
	\end{equation}
Note that either $K < p^{1/2}$ or $K > \frac{|G|}{p^{1/2}}$ since if
	$$p^{1/2} \leq K \leq \frac{|G|}{p^{1/2}}$$
then $|G|\geq p$ which contradicts the fact that $|G| \leq N <p$.	
	
If $K < p^{1/2}$ then $|A|\ll p^{1/2}$ and an application of Lemma \ref{theorem:partialsumprod} gives 
	$$|G|^{55} \lesssim K^{109}$$ 
which implies by the cardinality of $G$ that
	$$|P_1|^{55}K^{331} \ll |L|^{220}.$$ 
Applying \eqref{eq:incidences7}, \eqref{eq:incidences8} and the fact that $|L| \leq N$ gives after rearranging
	$$N^{1/2} \lesssim N^{331 \epsilon}$$
which implies $\epsilon \geq \frac{1}{662}-o(1)$ as required.

On the other hand, suppose that $K > \frac{|G|}{p^{1/2}}$. Assume also that $K \geq p^{1/2}$ since otherwise we are done by the previous paragraph. Note that with a fixed $G$, the bounds in \eqref{eq:refineinvar} are not affected by passing to supersets of $A$ or $B$. So assume $|B|\approx K$ and thus
	$$\frac{|G|}{|B|}\ll p^{1/2}. $$
Hence Lemma \ref{theorem:partialsumprod2} is applicable, giving 
	$$|G|^{67}\lesssim K^{133}$$ 
and therefore 
	$$|P_1|^{67}K^{403}\lesssim |L|^{268}.$$
Since $K\geq p^{1/2}>N^{1/2}$ and $|L| \leq N$ this means
	$$|P_1|^{67}K^{67}\lesssim N^{100}.$$
By \eqref{eq:incidences7} it falls out that $\epsilon \geq \frac{1}{134}-o(1)$, which is far better than required.
\end{proof}

\begin{proof}[Proof of Theorem \ref{theorem:beck2}.]
For $l \in L(P)$, write $\mu(l)$ for the number of points in $P$ incident to $l$. It is clear, as per the proof of Beck's theorem in Chapter \ref{chapter:introincidences} that
	$$|P|^2 \approx \sum_{l \in L(P)}\mu(l)^2.$$
By a dyadic pigeonholing there exists $L_1 \subseteq L(P)$ and an integer $k$ such that $\mu(l)\approx k$ for all $l \in L_1$ and 
	\begin{equation}
	\label{eq:incidences9}
	|L_1|k^2 \gtrsim |P|^2.
	\end{equation}
To prove the theorem it suffices to show that either 
	$$k \gtrsim |P|,$$ 
in which case there are $\tilde{\Omega}(|P|)$ collinear points, or 
	$$k^{133} \lesssim |P|^{66}$$
in which case $k^2 \lesssim |P|^\frac{132}{133}$ and so $|L(P)|\geq |L_1|\gtrsim |P|^{1+\frac{1}{133}}$ as required.

Note that $I(P,L_1)\approx |L_1|k$. So by another dyadic pigeonholing there exists $P_1 \subseteq P$ and an integer $K$ such that every point in $P_1$ is incident to $\Theta(K)$ lines in $L_1$ and 
	\begin{equation}\label{eq:incidences16}
	|P_1|K \gtrsim |L_1|k \gtrsim \frac{|P|^2}{k}.
	\end{equation}
Since $|P_1|\leq |P|$ we also have
	\begin{equation}\label{eq:incidences17}
	K \gtrsim \frac{|P|}{k}.
	\end{equation} 
Applying Lemma \ref{theorem:mainrefine} to $P_1$ and $L_1$, at least one of the following is true:

\begin{enumerate}
\item $K \ll \frac{|L_1|^{3/5}}{|P_1|^{1/5}}$.
\item There is a line in $L_1$ incident to at least $\Omega\left(\left(\frac{|P_1|K^{4}}{|L_1|^2}\right)^{1-o(1)}\right)$ points in $P_1$.
\item There is a line in $L_1$ incident to at least $\Omega\left(\left(\frac{|P_1|K^{8}}{|L_1|^4}\right)^{1-o(1)}\right)$ points in $P_1$.
\item There exists $P_2 \subseteq P_1$ with $|P_2|\approx \frac{|P_1|K^8}{|L_1|^4}$ and points $p_1,p_2,p_3,p_4$ in a $K$-sum-product configuration.
\end{enumerate}

As with Theorem \ref{theorem:incidences}, the first three cases fall easily. In the first case \eqref{eq:incidences16} and \eqref{eq:incidences17} give
	$$\frac{|P|^6}{k^5}\lesssim |L_1|^3$$	
By \eqref{eq:incidences9} this in turn implies that $k \lesssim 1$ which is far better than required. In the second case, since all lines in $L_1$ are incident to $\Theta(k)$ points in $P$ and hence $O(k)$ points in $P_1$, it follows that 
	$$k\gg \frac{|P_1|K^4}{|L_1|^2}.$$ 
By \eqref{eq:incidences16} and \eqref{eq:incidences17} this gives $|L_1|^2k^5\gtrsim |P|^5$ and so \eqref{eq:incidences9} implies $k \gtrsim |P|$. The third case similarly yields $k \gtrsim |P|$.
 
The fourth case is left. Apply Lemma \ref{theorem:reduction} to to $P_2$ to obtain $A, B \subseteq \mathbb{F}_p$ with $|A|,|B| \ll K$ and $G \subseteq A \times B$ with 
	$$|G|\approx \frac{|P_1|K^8}{|L_1|^4}$$
such that 
	$$|A \overset{G}-B|,|A \overset{G}/B|\ll K.$$ 

As with the proof of Theorem \ref{theorem:incidences}, either $K<p^{1/2}$ or $K>\frac{|G|}{p^{1/2}}$. If $K<p$ then as per the last proof, Lemma \ref{theorem:partialsumprod} gives 
	$$|P_1|^{55} K^{331} \ll |L_1|^{220}.$$ 
By (\ref{eq:incidences16}) and (\ref{eq:incidences17}) this gives 
	$$|P|^{386} \lesssim |L|^{220}k^{331}$$ 
By (\ref{eq:incidences9}) we then get $k^{109}\lesssim |P|^{54}$, which is better than the sufficient $k^{133}\lesssim |P|^{66}$.

On the other hand, if $K>\frac{|G|}{p^{1/2}}$ and $K>p^{1/2}$ then by Lemma \ref{theorem:partialsumprod2}
	$$|P_1|^{67}K^{403}\lesssim |L_1|^{268}.$$
By (\ref{eq:incidences16}) and (\ref{eq:incidences17}) this gives 	
	$$|P|^{470} \lesssim |L_1|^{268}k^{403}$$ 	
By \eqref{eq:incidences9} it follows that 
	$$k^{133} \lesssim |P|^{66}$$
as required.
\end{proof}

\section{Further work}\label{section:incidencesconclusion}

\textbf{An intrinsic proof in the plane?} The work here, and the previous work of Bourgain-Katz-Tao and Helfgott-Rudnev uses finite field sum-product estimates as an animating force. The progress in this chapter comes down to finding a particularly efficient way of relating this to incidences by showing that a `sum-product' configuration of points and lines must arise if there are too many incidences, but that such a configuration cannot exist since it would contradict known finite field growth results.

Is it instead possible to construct a proof that lives entirely in the plane? Such an approach, if it worked, could yield better incidence results. Since the Elekes result (Theorem \ref{theorem:elekes}) in Chapter \ref{chapter:introgrowth} shows that incidence bounds yield growth results, this could also drive forward the study of growth in finite fields.

There may be some cause for optimism. The setup of points and lines that we called a `sum-product' configuration is not only interesting because of its relationship to sum-products. It is also of intrinsic interest on the plane, where it invites comparison with Desargues' theorem. This theorem, actually a defining property of the projective plane, says that two triangles of points are perspective to a point if and only if they are perspective to a line, the meaning of which is illustrated in Figure \ref{fig:conclusionfig} below.

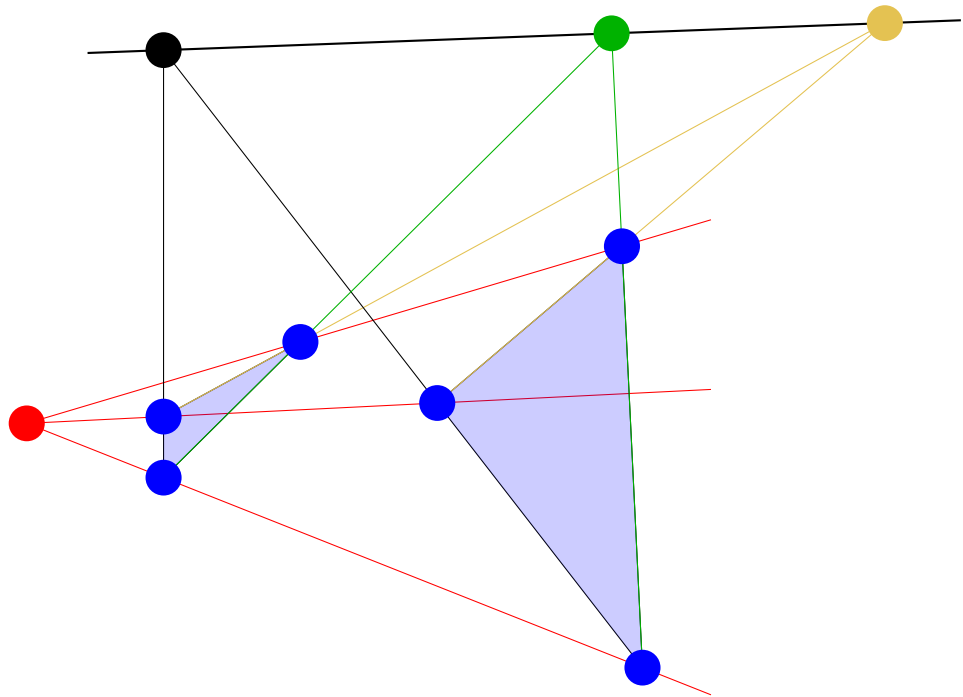
\begin{figure}[ht]
\vspace{20pt}
\centering
\begin{tikzpicture}[scale=9]
\coordinate (O) at (0,0);
\fill [red,opacity=1] (O) circle (0.75pt);
\coordinate(l1) at (1,0.05);
\coordinate(l2) at (1,0.3);
\coordinate(l3) at (1,-0.4);
\draw [name path=line1, color=red] (O) -- (l1);
\draw [name path=line2, color=red] (O) -- (l2);
\draw [name path=line3, color=red] (O) -- (l3);
\coordinate (A1) at ($ (O)!0.2!(l1) $);
\coordinate (A2) at ($ (O)!0.6!(l1) $);
\coordinate (B1) at ($ (O)!0.4!(l2) $);
\coordinate (B2) at ($ (O)!0.87!(l2) $);
\coordinate (C1) at ($ (O)!0.2!(l3) $);
\coordinate (C2) at ($ (O)!0.9!(l3) $);
\draw [name path=triangle1](A1) -- (B1) -- (C1) -- cycle;
\fill [blue,opacity=0.2] (A1)--(B1)--(C1)--cycle;
\draw [name path=triangle2](A2)--(B2)--(C2)--cycle;
\fill [blue,opacity=0.2] (A2)--(B2)--(C2)--cycle;
\coordinate (U) at (intersection of A1--B1 and A2--B2);
\coordinate (V) at (intersection of A1--C1 and A2--C2);
\coordinate (W) at (intersection of B1--C1 and B2--C2);
\draw [shorten >=-1cm,shorten <=-1cm, thick](U)--(V);
\draw [color=yellow!50!brown](A1)--(U);
\draw [color=yellow!50!brown](A2)--(U);
\draw (A1)--(V);
\draw (A2)--(V);
\draw [color=green!70!black](C1)--(W);
\draw [color=green!70!black](C2)--(W);
\fill [yellow!50!brown] (U) circle (0.75pt);
\fill [black] (V) circle (0.75pt);
\fill [green!70!black] (W) circle (0.75pt);
\foreach \point in {A1,A2,B1,B2,C1,C2}
\fill [blue,opacity=1] (\point) circle (0.75pt);
\end{tikzpicture}
\caption[An illustration of Desargues' theorem.]{\textbf{An illustration of Desargues' theorem.} The two shaded triangles are perspective to the red point, and are also perspective to the line carrying the black, green and yellow points.}\label{fig:conclusionfig}
\end{figure}

A sum-product configuration should imply the existence of many pairs of triangles perspective to the \textit{same} point and perspective to the \textit{same} line, at the \textit{same} three places. Could pursuing the geometric implications of such an arrangement be a fruitful line of inquiry?

\chapter{Expander functions}
\label{chapter:expanders}

Chapter \ref{chapter:introgrowth} introduced expander functions as a type of growth result. Recall that for a field $F$, an $n$-variable expander is a function $f:F^n \to F$ such that for any subset $A$ of $F$, maybe satisfying some nondegeneracy conditions, the set $f(A)=\left\{f(a_1,\ldots,a_n)\right\}$ is of cardinality at least $|A|^{1+\delta}$ with $\delta>0$. This chapter sets new records in two variables over finite fields, and in three and four variables over real and complex numbers.

An earlier version of the two-variable finite field work formed one half of a joint paper \cite{meandollie}\footnote{The paper  is the union of two theorems proved independently by the two authors.} with Oliver Roche-Newton has been accepted for publication in the Journal of Combinatorial Theory Series A.

An earlier version of the three and four-variable real and complex results has been submitted to Discrete and Computational Geometry, and a preprint \cite{mecross} is available on the arXiv.

\section{Results}\label{section:expandersresults}

This section describes the new theorems proved in the chapter.

\subsection{Two variables}

As mentioned in Chapter \ref{chapter:introgrowth}, the strongest known two variable expanders is the function $f(a,b)=a+ab$, which was first studied by Garaev and Shen \cite{GS}. They proved three results about the size of the set $f(A)$, depending on the ambient field and the density of $A$ within it:

\begin{align}
&\label{eq:gs1}\text{If } A \subseteq \mathbb{F}_p \text{ with } |A|<p^{1/2} \text{ then } |f(A)|\gtrsim |A|^{1+\frac{1}{105}}\\
&\label{eq:gs2}\text{If } A \subseteq \mathbb{F}_p \text{ with } |A|\geq p^{2/3} \text{ then }|f(A)|\gg |A|^{1/2}p^{1/2}\\
&\label{eq:gs3}\text{If } A \subseteq \mathbb{R} \text { is finite then } |f(A)|\gg |A|^{1+\frac{1}{4}}.
\end{align}

Result (\ref{eq:gs2}) is sharp but (\ref{eq:gs1}) and (\ref{eq:gs3}) are not. The first new result of this chapter is the following improvement on (\ref{eq:gs1}).

\begin{theorem}\label{theorem:twovariableFp}
If $A \subseteq \mathbb{F}_p$ and $|A|<p^{1/2}$ then
	$$\left|f(A)\right|\gtrsim |A|^{1+\frac{1}{53}}.$$
\end{theorem}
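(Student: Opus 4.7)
The plan is to assume $M := |f(A)| = K|A|$ and show $K \gtrsim |A|^{1/53-o(1)}$ by combining multiplicative Plünnecke-Ruzsa calculus with Rudnev's sum-product estimate (Theorem \ref{theorem:mishasumprodvariation}). Write $N = |A|$ and $B = 1+A$, so that $f(A) = A \cdot B$, $|B| = N$, and, crucially, the additive structure is preserved: $B - B = A - A$.

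First, apply the multiplicative version of the Plünnecke-Ruzsa inequality (Lemma \ref{theorem:pl}) to the pair $(A,B)$. Since $|A \cdot B| = KN$ and $|A| = |B| = N$, we obtain $|B \cdot B^{-1}| \leq K^2 N$; by the symmetry $|A \cdot B| = |B \cdot A|$, the same argument with the roles reversed gives $|A \cdot A|, |A/A| \leq K^2 N$. Cauchy--Schwarz (Lemma \ref{theorem:energycs1}) then yields $E_{\times}(A) \geq N^4/|A \cdot A| \geq N^3/K^2$. Applying Theorem \ref{theorem:mishasumprodvariation} to $A$, which is valid because $N \ll p^{1/2}$, gives
\[
\frac{N^{12}}{K^8} \leq E_\times(A)^4 \lesssim |A-A|^7 N^4,
\]
and hence $|A-A| \gtrsim N^{8/7}/K^{8/7}$. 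This is the principal lower bound on $|A-A|$ in terms of $K$ and $N$.

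To close the argument, the plan is to extract a complementary upper bound on $|A-A|$. The identity $(1+b)(a - a') = f(a,b) - f(a',b)$ embeds the partial product set $(A-A)\cdot B$ into $f(A) - f(A)$, which---combined with a dyadic decomposition of $f(A) - f(A)$ into fibres of bounded multiplicity and the BSG-type partial sumset machinery of Chapter \ref{chapter:introapproxgroup}, notably Lemma \ref{theorem:bg} and the covering Lemmas \ref{theorem:cover1} and \ref{theorem:cover2}---allows $|A-A|$ to be controlled above by a polynomial in $K$ and $N$. Matching this upper bound with the lower bound from the first step and optimising the exponents is expected to deliver $K \gtrsim N^{1/53-o(1)}$.

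The principal obstacle is the upper bound on $|A-A|$: because $f(A) = A\cdot B$ is defined multiplicatively, it carries no a priori additive structure, so the trivial estimate $|f(A)-f(A)| \leq M^2$ is far too weak. Overcoming this requires pigeonholing onto a large subset of $A \times A$ on which a partial difference-set analogue has small doubling, and then applying a second round of sum-product calculus in the spirit of the finite-field sum-product proof sketched at the end of Chapter \ref{chapter:introgrowth}. A careful bookkeeping of the exponents produced by the two applications of Theorem \ref{theorem:mishasumprodvariation}---one to $A$ directly and one to the subset emerging from the pigeonholing---should then deliver the $1/53$ exponent, exactly doubling the Garaev--Shen $1/106$ thanks to the improved sum-product exponent underlying Theorem \ref{theorem:mishasumprodvariation}.
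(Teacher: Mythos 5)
Your first step is sound: the multiplicative Pl\"unnecke--Ruzsa calculus does give $|AA|\leq K^2|A|$ from $|A(A+1)|=K|A|$, hence $E_{\times}(A)\geq |A|^3/K^2$, and Theorem \ref{theorem:mishasumprodvariation} then yields $|A-A|\gtrsim |A|^{8/7}K^{-8/7}$. But the entire content of the theorem lies in the step you defer, namely an upper bound on $|A-A|$ in terms of $K$ and $|A|$, and the identity you propose for it points the wrong way. The relation $f(a,b)-f(a',b)=(a-a')(1+b)$ only embeds the complete product set $(A-A)(1+A)$ into $f(A)-f(A)$; since distinct $b$ produce \emph{distinct} elements $(a-a')(1+b)$ rather than repeated representations of a single element, there is no multiplicity to exploit, and the resulting inequality $|(A-A)(1+A)|\leq |f(A)-f(A)|\leq K^2|A|^2$ gives no control on $|A-A|$ whatsoever. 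The observation the chapter actually runs on is the transposed one: whenever $ab=cd$ one has $(a+ab)-(c+cd)=a-c$, so each difference $a-c$ is represented many times (roughly $E_\times$-many times, after pigeonholing on the ratio $a/c$) as a difference of two elements of $A(A+1)$; this is what produces Lemma \ref{theorem:idea}, i.e.\ $|A\stackrel{G}{-}B|\ll |A(B+1)||B(A+1)||A/B|/(|A||B|)$ for a dense $G$, and hence the upper bound $|A'-A'|\ll |A(A+1)|^8/|A|^7$ of Corollary \ref{theorem:horrific}. Without this reversed identity your ``second round of sum-product calculus'' has nothing to run on.

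Even granting that missing ingredient, your architecture cannot reach the stated exponent. Plugging $|A'-A'|\ll |A(A+1)|^8/|A|^7=K^8|A|$ into your lower bound $|A'-A'|\gtrsim |A'|^{8/7}K^{-8/7}$ gives only $K\gtrsim |A|^{1/64}$ (a two-set energy refinement via $E_\times(A,A+1)\geq |A|^3/K$ improves this to about $|A|^{1/60}$, still short of $|A|^{1/53}$). The paper does not use Theorem \ref{theorem:mishasumprodvariation} here at all: it reruns the finite-field sum-product dichotomy on $R(A_1)=\left\{\frac{\alpha-\beta}{\gamma-\delta}\right\}$ from scratch, after pigeonholing on the multiplicative energy $E_\times(A,A+1)$ to extract $A_1$ and $N$, and it replaces the lossy Balog--Szemer\'edi--Gowers passage from partial to complete difference sets by the covering Lemmas \ref{theorem:cover1} and \ref{theorem:cover2} (via Corollaries \ref{theorem:energy} and \ref{theorem:energy'}), which carry no density penalty. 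That efficiency, not an improved black-box sum-product exponent, is what delivers $54/53$; the hypothesis $|A|<p^{1/2}$ enters only in the case $R(A_1)=\mathbb{F}_p$ to find $\xi$ with $E_+(A_1,\xi A_1)\ll |A_1|^2$. To complete your proof you would need either to reproduce that partial-sumset machinery or to prove an upper bound of the form $|A-A|\ll K^{45/7}|A|$, which is substantially stronger than anything Corollary \ref{theorem:horrific} provides.
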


Result (\ref{eq:gs3}), has also been recently improved, in the same paper \cite{meandollie} as that containing an earlier version of Theorem \ref{theorem:twovariableFp}, to $|f(A)|\gtrsim |A|^{1+\frac{5}{19}}$.

\subsection{Three and four variables}

As mentioned in Chapter \ref{chapter:introgrowth}, the breakthrough result of Guth and Katz \cite{GK} on the distinct distances problem yields the function $$(a-b)^2+(c-d)^2$$
as a four-variable expander over $\mathbb{R}$ with $\delta \geq 1-o(1)$. Iosevich, Roche-Newton and Rudnev \cite{IRR} used the same methodology to show that 
	$$ad-bc$$
is likewise a four-variable expander with $\delta \geq 1-o(1)$. Both this and the Guth-Katz result are sharp, as shown by the case where $A$ is an arithmetic progression.

The next two new results in this chapter are examples of functions with stronger growth properties. The first is a function in only three real variables rather than four which is nevertheless an expander with $\delta \geq 1-o(1)$. 

\begin{theorem}\label{theorem:threevariableC}
Let $g(a,b,c)=\frac{a-b}{a-c}$. For any finite $A \subseteq \mathbb{C}$ we have
	$$|g(A)|\gtrsim |A|^2.$$ 
\end{theorem}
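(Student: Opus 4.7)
The plan is to bound $|g(A)|$ from below by a Cauchy--Schwarz argument applied to the representation function of $g$, thereby reducing the problem to counting collinear triples in $A\times A \subset \mathbb{C}^2$.

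For each $\lambda \in g(A)$ define $r(\lambda) = \#\{(a,b,c)\in A^3 : a\ne c,\ g(a,b,c)=\lambda\}$, so that $\sum_\lambda r(\lambda) = |A|^3 - |A|^2 \gg |A|^3$. Cauchy--Schwarz yields
\[ |g(A)| \;\geq\; \frac{\bigl(\sum_\lambda r(\lambda)\bigr)^2}{\sum_\lambda r(\lambda)^2} \;\gg\; \frac{|A|^6}{\sum_\lambda r(\lambda)^2}, \]
so it suffices to establish the energy-type bound $\sum_\lambda r(\lambda)^2 \lesssim |A|^4$.

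The quantity $\sum_\lambda r(\lambda)^2$ counts sextuples $(a,b,c,a',b',c')\in A^6$ with $a\ne c$, $a'\ne c'$ satisfying $(a-b)(a'-c')=(a-c)(a'-b')$. A direct expansion shows this equation is equivalent to
\[ a'(b-c) + b'(c-a) + c'(a-b) = 0, \]
which is precisely the determinantal condition for the three points $(a,a'),\,(b,b'),\,(c,c')$ of $\mathbb{C}^2$ to be collinear. Hence $\sum_\lambda r(\lambda)^2$ is dominated, up to $O(|A|^4)$ degenerate contributions in which two of the three points coincide, by the number of ordered collinear triples in the point set $P = A\times A$.

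To estimate the latter, apply Corollary \ref{theorem:toth'}: the number of lines incident to at least $k$ points of $P$ is $O(|P|^2/k^3 + |P|/k) = O(|A|^4/k^3 + |A|^2/k)$. Since no line meets $P$ in more than $|A|$ points, a dyadic summation bounds the number of ordered collinear triples in $P$ by
\[ \sum_{k\text{ dyadic},\,k\leq |A|} k^3\left(\frac{|A|^4}{k^3} + \frac{|A|^2}{k}\right) \ll |A|^4\log|A| + |A|^4 \lesssim |A|^4. \]
Combining with the Cauchy--Schwarz step yields $|g(A)| \gtrsim |A|^2$, as required. The only genuinely non-routine observation is that $\sum_\lambda r(\lambda)^2$ admits a clean reformulation as a collinear-triple count in $\mathbb{C}^2$; this is what unlocks the complex Szemer\'edi--Trotter machinery of Chapter~\ref{chapter:introincidences} and gives the $|A|^{2-o(1)}$ exponent in place of the $|A|^{3/2}$ one would obtain from a naive Elekes-type point--curve incidence bound on the graphs of $a\mapsto(a-b)/(a-c)$.
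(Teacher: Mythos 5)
Your proof is correct, and it reaches the same incidence count as the paper by a different, more elementary reduction. The paper interprets $g(a,b,c)$ as the cross ratio $X(\infty,a,b,c)$, invokes Lemma \ref{theorem:invariant} to convert the energy equation into the existence of a M\"obius map fixing $\infty$ and sending $a_i\mapsto b_i$, and then uses the points and planes lemmata of Section \ref{section:embedding} to turn $\sum_\tau N(\tau)^3$ into a point--line incidence problem inside the plane $\pi_{\infty\infty}$, finished by Corollary \ref{theorem:toth'}. You instead verify directly that $(a-b)(a'-c')=(a-c)(a'-b')$ is the vanishing of the determinant
\[
\det\begin{pmatrix} a & a' & 1\\ b & b' & 1\\ c & c' & 1\end{pmatrix},
\]
i.e.\ collinearity of $(a,a'),(b,b'),(c,c')$ in $A\times A$, and then bound ordered collinear triples in $A\times A$ by the same Corollary \ref{theorem:toth'}; your degenerate contributions (two coinciding points) are indeed $O(|A|^4)$, and the dyadic summation is carried out correctly, with the top scale $k\leq|A|$ justified because no complex line meets $A\times A$ in more than $|A|$ points. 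The two arguments are projectively dual: a transformation $z\mapsto pz+q$ fixing $\infty$ corresponds to the line $y=px+q$, and $N(\tau)$ is exactly the number of points of $A\times A$ on that line, so $\sum_\tau N(\tau)^3$ \emph{is} the collinear-triple count. What your route buys is economy: no cross-ratio theory, no $PSL_2$, no Appendix \ref{chapter:proj}. What the paper's route buys is uniformity with Theorem \ref{theorem:fourvariableR}, where the full group $PSL_2(\mathbb{R})$ and the point--plane machinery of Edelsbrunner--Guibas--Sharir are genuinely needed and no comparably clean determinantal reformulation in the plane is available.
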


The second is a four-variable expander with $\delta \geq 1$ instead of $1-o(1)$.

\begin{theorem}\label{theorem:fourvariableR}
Let $h(a,b,c,d)=\frac{(a-b)(c-d)}{(b-c)(a-d)}$. For any finite $A \subseteq \mathbb{R}$ we have 
	$$|h(A)|\gg |A|^2.$$ 
\end{theorem}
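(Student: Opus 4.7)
The plan is to carry out a Cauchy--Schwarz reduction and then invoke an incidence bound for curves associated with M\"obius transformations. Writing $r(t) := |\{(a,b,c,d) \in A^4 : h(a,b,c,d) = t\}|$, so that $\sum_t r(t) = |A|^4$, Cauchy--Schwarz gives
$$|h(A)| \cdot \sum_t r(t)^2 \geq |A|^8,$$
which reduces the problem to establishing the energy bound $\sum_t r(t)^2 \ll |A|^6$.

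The key observation is that $h$ is the classical projective cross-ratio, so two quadruples with pairwise distinct entries satisfy $h(a,b,c,d) = h(a',b',c',d')$ if and only if there is a unique M\"obius transformation $\phi \in \mathrm{PGL}_2(\mathbb{R})$ with $(\phi(a),\phi(b),\phi(c),\phi(d)) = (a',b',c',d')$. Setting $N_\phi := |A \cap \phi^{-1}(A)|$, this yields
$$\sum_t r(t)^2 \leq \sum_\phi N_\phi^4 + O(|A|^6),$$
where the error absorbs the three degenerate values $t \in \{0,-1,\infty\}$ corresponding to coincidences among $a,b,c,d$; a direct count shows $r(t) \ll |A|^3$ for each such $t$, contributing $O(|A|^6)$ in total.

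To bound $\sum_\phi N_\phi^4$ I identify each M\"obius transformation with its graph $\{(x,\phi(x))\} \subset \mathbb{R}^2$, so that $N_\phi$ is precisely the incidence count of this curve with the point set $P := A \times A$. Any three distinct points in $\mathbb{R}^2$ lie on at most one such graph, and any two distinct M\"obius graphs meet in at most two points, so the Pach--Sharir theorem (Theorem \ref{theorem:pachsharir}) applies with $k=3$, $\alpha=1$, $\beta=2$. Decomposing dyadically based on the magnitude of $N_\phi$ then gives
$$|\{\phi : 2^j \leq N_\phi < 2^{j+1}\}| \ll \frac{|A|^6}{2^{5j}} + \frac{|A|^2}{2^j}.$$
Multiplying by $2^{4j}$ and summing over $j$ (which ranges up to $\log_2 |A|$) yields two convergent geometric-series contributions totalling $\sum_\phi N_\phi^4 \ll |A|^6$, as required. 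The main obstacle is setting up the M\"obius correspondence cleanly---verifying that $h = 0, -1, \infty$ really are the only degenerate values, that their contributions are $O(|A|^6)$, and that the graph-curves of M\"obius transformations satisfy the combinatorial hypotheses of Pach--Sharir---after which the incidence-theoretic core of the argument proceeds routinely and without any $o(1)$ loss.
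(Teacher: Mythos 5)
Your argument is correct, and it shares the paper's overall architecture --- the Cauchy--Schwarz reduction to the energy bound $\sum_t r(t)^2\ll|A|^6$, the identification of cross-ratio coincidences with M\"obius transformations via Lemma \ref{theorem:invariant}, the removal of the degenerate values (your list $\{0,-1,\infty\}$, each forced by a coincidence among $a,b,c,d$ and hence with $r(t)\ll|A|^3$, is exactly right for this form of the cross ratio), and the fourth-moment bound $\sum_\phi N_\phi^4$ --- but it controls that fourth moment by a genuinely different incidence input. The paper follows the Elekes--Sharir template into three dimensions: it embeds $PSL_2$ into $\mathbb{P}\mathbb{R}^3$ (the points lemma), encodes each constraint $\tau(a)=b$ as a plane $\pi_{ab}$ (the planes lemma, whose ``no three planes collinear'' clause is what licenses the next step), and applies the Edelsbrunner--Guibas--Sharir bound (Corollary \ref{theorem:EGS'}) to get $|P_j|\ll|A|^6 2^{-5j}+|A|^2 2^{-j}$. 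You stay in $\mathbb{R}^2$, realise each $\phi$ as the algebraic curve $y=\phi(x)$ meeting $A\times A$ in exactly $N_\phi$ points, and invoke Pach--Sharir with $k=3$; since three points determine at most one M\"obius graph and two distinct graphs meet in at most two points, this yields the numerically identical rich-curve bound, after which the dyadic summation is the same (your second series is increasing rather than convergent, but truncation at $j\le\log_2|A|$ gives $O(|A|^5)$, which is harmless). What the planar route buys is the complete avoidance of the projective-$3$-space apparatus of Section \ref{section:embedding}; what it costs is reliance on Pach--Sharir for disconnected degree-two curves (each M\"obius graph with a pole is a two-branched hyperbola, so strictly one should split curves into branches, which only perturbs the constants $\alpha,\beta$ in Theorem \ref{theorem:pachsharir}), and it does not share machinery with Theorem \ref{theorem:threevariableC}, which in the paper reuses the same point--plane dictionary restricted to the plane $\pi_{\infty\infty}$. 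One piece of bookkeeping worth making explicit in either approach: the sum over $\phi$ must range over the finitely many transformations determined by some triple of incidences with $A\times A$, i.e.\ those with $N_\phi\ge 3$, so that the dyadic decomposition is over a finite family.
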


As with the results of Guth and Katz and Iosevich, Roche-Newton and Rudnev, the example of an arithmetic progression shows that Theorem \ref{theorem:threevariableC} is sharp up to logarithmic factors. However it is not clear that the same is true of Theorem \ref{theorem:fourvariableR}: in the case of an arithmetic progression one has $|h(A)|\gtrsim |A|^3$ and so there may well be scope for improvement.

\subsection{Structure}

In the rest of this chapter, \textbf{Section \ref{section:finiteexp}} gives the proof of Theorem \ref{theorem:twovariableFp}, and \textbf{Section \ref{section:realexp}} gives the proofs of Theorems \ref{theorem:threevariableC} and \ref{theorem:fourvariableR}. \textbf{Section \ref{section:expanderfurther}} suggests possible further work.

\section{Two-variable finite field expanders}\label{section:finiteexp}

This section is concerned with proving Theorem \ref{theorem:twovariableFp}. The overall strategy is similar to the finite field case in the work of Garaev and Shen \cite{GS}, exploiting the fact that 
	$$f(A)=\left\{a+ab:a,b \in A\right\}$$
can be written as simply the product set $A(A+1)$. A sum-product philosophy suggests that at least one of $A(A+1)$ and the difference set $A-A$ must be large. If it is the former then of course there is nothing to prove, so the idea is to deal with the latter case. That is, we want to show that if the difference set is large then so is $A(A+1)$.

The innovation here is to take a more efficient route than \cite{GS}, via the simple Ruzsa-type observation that if $ab=cd$ then 
	$$(a+ab)-(c+cd)=a-c.$$

\textbf{Section \ref{section:partial}} establishes the new key result and shows three different ways of implementing it. \textbf{Section \ref{section:proof}} then deploys these in a modified sum-product type proof to establish Theorem \ref{theorem:twovariableFp}.

\subsection{Bounding partial sumsets}\label{section:partial}

The following result is the key to our progress.

\begin{lemma}\label{theorem:idea}
Let $A,B\subseteq \mathbb{F}_p$, and let $\epsilon>0$. There exists $G \subseteq A \times B$ with $|G|\geq (1-\epsilon)|A||B|$ such that 
$$|A\stackrel{G}{-} B|\ll_{\epsilon} \frac{|A(B+1)||B(A+1)||A/B|}{|A||B|}.$$
\end{lemma}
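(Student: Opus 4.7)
The plan is to exploit the following identity: whenever $(a, b, a', b') \in (A \times B)^2$ satisfies the multiplicative relation $ab' = a'b$ (equivalently $a/b = a'/b'$), expanding and using $ab' = a'b$ gives
$$a(b'+1) - b(a'+1) = (ab' - a'b) + (a - b) = a - b.$$
This represents each element of $A-B$ as a difference between an element of $A(B+1)$ and an element of $B(A+1)$, with the number of such representations controlled by the abundance of multiplicative coincidences in $A \times B$, i.e.\ by $|A/B|$.

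To quantify this, for $r \in A/B$ set $\rho(r) = |\{(a,b) \in A \times B : a/b = r\}|$ and choose the threshold $M = \epsilon |A||B|/|A/B|$. Since $\sum_{r : \rho(r) < M} \rho(r) \leq M |A/B| = \epsilon |A||B|$, the set
$$G = \{(a,b) \in A \times B : a, b \neq 0 \text{ and } \rho(a/b) \geq M\}$$
satisfies $|G| \geq (1-\epsilon)|A||B|$, after absorbing the negligible loss from discarding any zero elements of $A$ or $B$.

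For each $x \in A \stackrel{G}{-} B$, pick any $(a,b) \in G$ with $a - b = x$. Applying the identity to each of the $\rho(a/b) \geq M$ pairs $(a', b') \in A \times B$ with $a'/b' = a/b$ produces $M$ points $(a(b'+1), b(a'+1)) \in A(B+1) \times B(A+1)$ whose coordinate-differences all equal $x$. These points are distinct because $a, b \neq 0$ makes the map $(a', b') \mapsto (a(b'+1), b(a'+1))$ injective. Letting $N(x)$ denote the number of $(u,v) \in A(B+1) \times B(A+1)$ with $u - v = x$, we therefore have $N(x) \geq M$ for every $x \in A\stackrel{G}{-} B$, and so
$$M \cdot |A\stackrel{G}{-} B| \leq \sum_{x} N(x) = |A(B+1)||B(A+1)|,$$
which rearranges to the claimed estimate with implicit constant $1/\epsilon$.

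The main obstacle is pinning down the right identity: the particular form $a(b'+1) - b(a'+1) = a - b$ under $ab' = a'b$ is what couples additive and multiplicative structure, letting a small $|A/B|$ force each element of the partial difference set to have many representations in $A(B+1) - B(A+1)$. Once this identity is in hand, the remainder is a standard multiplicative pigeonhole followed by a double-counting argument, with only a minor housekeeping step needed to excise the zero element and ensure injectivity.
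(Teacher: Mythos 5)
Your proof is correct and follows essentially the same route as the paper: the same key identity $a(b'+1)-b(a'+1)=a-b$ under $ab'=a'b$, the same pigeonhole restricting to ratios with at least $\epsilon|A||B|/|A/B|$ representations to define $G$, and the same double count of representations in $A(B+1)\times B(A+1)$ (the paper phrases the last step as a single global injection on triples $(\xi,c,d)$ rather than your per-difference count $N(x)\geq M$, but these are interchangeable). Your handling of zero elements matches the paper's ``without loss of generality $0\notin A,B$'' in both spirit and level of rigor.
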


\begin{proof}
Without loss of generality assume that $0 \notin A,B$. Note that 
	$$\sum_{x \in A/B}\left|A \cap xB\right|=|A||B|.$$
Let $X$ be the set of $x \in A/B$ for which $|A \cap xB|\geq \frac{\epsilon|A||B|}{|A/B|}$. Then
	\begin{align*}
	|A||B| &= \sum_{x \in X}|A \cap xB|+\sum_{x \notin X}|A \cap xB|\\
	& \leq \sum_{x \in X}|A \cap xB| + \epsilon|A||B|
	\end{align*}
and so 
	$$\sum_{x \in X}|A \cap xB| \geq (1-\epsilon)|A||B|.$$ 
Let $G \subseteq A \times B$ be given by
	$$G=\left\{(a,b) \in A \times B:\frac{a}{b} \in X\right\}$$ 
so that 
	\begin{align*}
	|G|&=\sum_{x \in X}|A\cap xB|\geq (1-\epsilon) |A||B|.
	\end{align*}
For each $\xi \in A \stackrel{G}{-}B$ pick $a_{\xi} \in A $ and $b_{\xi}\in B$ such that 
	$$a_{\xi}-b_{\xi}=\xi.$$ 
Let $S \subseteq (A\overset{G}-B)\times A \times B$ be given by
	$$S=\left\{\left(\xi,c,d\right):\frac{c}{d}=\frac{a_{\xi}}{b_{\xi}}\right\}$$
Note that
	$$|S|\gg_{\epsilon} \frac{|A||B||A\overset{G}-B|}{|A/B|}$$
since there are $|A\stackrel{G}{-}B|$ choices of $\xi$, each of which has at least $\frac{\epsilon|A||B|}{|A/B|}$ associated pairs $(c,d)$. We now show that 
	$$|S| \leq \left|A(B+1)\right|\left|B(A+1)\right|.$$ 
This will follow after showing that the map 
	$$\psi:S \to A(B+1) \times B(A+1)$$ 
	$$\psi(\xi,c,d)= \left(a_{\xi}+a_{\xi}d,b_{\xi}+b_{\xi}c\right)$$
is an injection, i.e. that for given $(t_1,t_2)$ in $\psi(S)$ there is only one choice of $(\xi,c,d)\in S$ for which $\psi\left(\xi,c,d\right)=(t_1,t_2)$. Indeed for given $(t_1,t_2)$ in $\psi(S)$ it is immediate that
	\begin{align*}
	\xi& = a_{\xi}-b_{\xi}\\
	&=\left(a_{\xi}+a_{\xi}d\right)-\left(b_{\xi}+b_{\xi}c\right)\\
	&=t_1-t_2
	\end{align*}
so we know $\xi$ and therefore $a_{\xi}$ and $b_{\xi}$. We therefore also know $(c,d)$ since 
	\begin{align*}
	t_1&=a_{\xi}+a_{\xi}d\\
	t_2&=b_{\xi}+b_{\xi}c.
	\end{align*}
So $\psi$ is indeed an injection and the upper bound on $|S|$ holds.

Comparing the upper and lower bounds on $|S|$ shows that 
	$$\frac{|A||B||A\overset{G}-B|}{|A/B|} \ll_{\epsilon}|S| \leq \left|A(B+1)\right|\left|B(A+1)\right|$$
and so 
	$$|A\stackrel{G}{-}B|\ll_{\epsilon} \frac{|A(B+1)||B(A+1)||A/B|}{|A||B|}$$
as required.
\end{proof}

Let's now apply this, using sumset calculus from Chapter \ref{chapter:introapproxgroup}, to give upper bounds on $|A-A|$ in terms of $|A(A+1)|$. First, a Balog-Szemer\'edi-Gowers approach yields a quite-efficient relationship:

\begin{corollary}\label{theorem:horrific}
For any set $A \subseteq \mathbb{F}_p$ there exists $A'\subseteq A$ with $|A'|\approx |A|$ such that $\left|A'-A'\right|\ll \frac{|A(A+1)|^8}{|A|^7}.$
\end{corollary}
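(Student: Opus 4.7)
The plan is to deploy Lemma \ref{theorem:idea} with $B=A$, bound the resulting ratio-set factor using a Ruzsa-type argument, and then translate the partial difference set to a complete one via the dense Balog--Szemer\'edi--Gowers inequality (Lemma \ref{theorem:densebsg}). Without loss of generality I would first remove $0$ and $-1$ from $A$, since doing so only drops the cardinality by at most $2$ and leaves every relevant sumset, ratio set, and product set unchanged up to constants.

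With $B=A$, Lemma \ref{theorem:idea} yields (for any fixed small $\epsilon>0$) a graph $G\subseteq A\times A$ with $|G|\geq (1-\epsilon)|A|^2$ such that
\[
|A\overset{G}{-}A|\ll_{\epsilon}\frac{|A(A+1)|^{2}|A/A|}{|A|^{2}}.
\]
The next step is to absorb the factor $|A/A|$ into a power of $|A(A+1)|$. For this I would apply the multiplicative Ruzsa triangle inequality (Lemma \ref{theorem:ruzsa} with $\cdot$ in place of $+$) to the sets $A,A$ and $C=(A+1)^{-1}$, noting that $A\cdot C^{-1}=A(A+1)$ and $|C|=|A+1|=|A|$. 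This gives the clean estimate
\[
|A/A|\leq\frac{|A(A+1)|^{2}}{|A|},
\]
and substituting back produces $|A\overset{G}{-}A|\ll_{\epsilon}\frac{|A(A+1)|^{4}}{|A|^{3}}$.

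Finally, since $G$ is dense in $A\times A$, I would invoke Lemma \ref{theorem:densebsg} (with a fixed constant $\epsilon<1/16$, say) to extract $A'\subseteq A$ with $|A'|\geq(1-\sqrt{\epsilon})|A|\approx|A|$ and
\[
|A'-A'|\ll_{\epsilon}\frac{|A\overset{G}{-}A|^{2}}{|A|}\ll\frac{|A(A+1)|^{8}}{|A|^{7}},
\]
which is the required bound. There is no real obstacle here: the only subtleties are the mild nondegeneracy assumptions $0,-1\notin A$ needed to apply Lemma \ref{theorem:idea} and to make $(A+1)^{-1}$ well-defined, and the choice of $\epsilon$ small enough that $(1-\sqrt{\epsilon})|A|\approx|A|$, both of which are essentially bookkeeping.
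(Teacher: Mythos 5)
Your proposal is correct and follows essentially the same route as the paper: apply Lemma \ref{theorem:idea} with $B=A$, pass from the dense partial difference set to a complete one via Lemma \ref{theorem:densebsg}, and eliminate $|A/A|$ with the multiplicative Ruzsa triangle inequality $|A/A|\leq |A(A+1)|^2/|A|$. The only difference is that you absorb the $|A/A|$ factor before the BSG step rather than after, which is immaterial since the bounds are simply chained.
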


\begin{proof}
Apply Lemma \ref{theorem:idea} with $A=B$ and some fixed sufficiently small $\epsilon>0$ to find $G \subseteq A \times A$ with $|G|\geq (1-\epsilon)|A|^2$ such that 
	$$|A \overset{G}-A|\ll \frac{|A(A+1)|^2|A/A|}{|A|^2}.$$ 
By Lemma \ref{theorem:densebsg} there exists $A' \subseteq A$ with $|A'|\approx |A|$ such that 
	$$|A'-A'|\ll \frac{|A \overset{G}-A|^2}{|A|}\ll \frac{|A(A+1)|^4|A/A|^2}{|A|^5}.$$
By Lemma \ref{theorem:ruzsa} applied multiplicatively, $|A/A|\leq \frac{|A(A+1)|^2}{|A|}$ and so the result follows.
\end{proof}

Corollary \ref{theorem:horrific} is useful by itself, but we can do better in some places. Covering results are often used in sum-product proofs, and applying Lemma \ref{theorem:cover1} to Lemma \ref{theorem:idea} yields one that will be helpful here.

\begin{corollary}\label{theorem:energy}
Let $A,B,C \subseteq \mathbb{F}_p$, and $A,B\subseteq xC+y$ for some $x \in \mathbb{F}_p^*, y \in \mathbb{F}_p$. Let $0<\epsilon<\frac{1}{16}$. Then $(1-\epsilon)|A|$ elements of $A$ can be covered by $$O_{\epsilon}\left( \frac{|C(C+1)|^2|C/C|}{|A||B|^2}\right)$$ translates of $B$. Similarly, $(1-\epsilon)|A|$ elements of $A$ can be covered by this many translates of $-B$. 
\end{corollary}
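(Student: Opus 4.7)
The plan is to combine Lemma \ref{theorem:idea} with the covering lemma \ref{theorem:cover1}, after first reducing to the case $x=1$, $y=0$ (so that $A, B \subseteq C$) by an affine change of variables.

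Consider the affine bijection $\phi: z \mapsto xz + y$ of $\mathbb{F}_p$, which carries $C$ onto $xC+y$. Define $A^* = (A-y)/x$ and $B^* = (B-y)/x$; then $A^*, B^* \subseteq C$, and $\phi(A^*) = A$, $\phi(B^*) = B$. For any $A^{**} \subseteq A^*$ and scalars $s_1,\ldots,s_N$, the inclusion $A^{**} \subseteq \bigcup_i (B^* + s_i)$ transforms under $\phi$ into $\phi(A^{**}) \subseteq \bigcup_i (B + xs_i)$, with $|\phi(A^{**})| = |A^{**}|$. Therefore a cover of a $(1-\epsilon)$-fraction of $A^*$ by $N$ translates of $B^*$ yields the same kind of cover of $A$ by $N$ translates of $B$. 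So we may henceforth assume $A, B \subseteq C$.

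In this reduced setting, set $\epsilon' = \epsilon^2/4$, which lies in $(0, 1/4)$ by the hypothesis $\epsilon < 1/16$. Apply Lemma \ref{theorem:idea} to the pair $(A, B)$ with parameter $\epsilon'$ to obtain $G \subseteq A \times B$ with $|G| \geq (1-\epsilon')|A||B|$ and
$$|A \overset{G}{-} B| \ll_{\epsilon} \frac{|A(B+1)||B(A+1)||A/B|}{|A||B|}.$$
The three containments $A(B+1) \subseteq C(C+1)$, $B(A+1) \subseteq C(C+1)$ and $A/B \subseteq C/C$, valid because $A, B \subseteq C$, then collapse this bound to
$$|A \overset{G}{-} B| \ll_{\epsilon} \frac{|C(C+1)|^2\,|C/C|}{|A||B|}.$$

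Finally, apply Lemma \ref{theorem:cover1} to $(A, B, G)$ with the same parameter $\epsilon'$. It produces $A' \subseteq A$ with $|A'| \geq (1-2\sqrt{\epsilon'})|A| = (1-\epsilon)|A|$ contained in the union of $O_{\epsilon'}(|A \overset{G}{-} B|/|B|) = O_\epsilon\!\left(\frac{|C(C+1)|^2|C/C|}{|A||B|^2}\right)$ translates of $B$. The assertion for translates of $-B$ is identical, invoking the second clause of Lemma \ref{theorem:cover1}. The main subtlety is the initial affine reduction: without it, the right-hand side of Lemma \ref{theorem:idea} involves the non-affine-invariant quantities $|A(B+1)|$, $|B(A+1)|$ and $|A/B|$ (not directly bounded by $|C(C+1)|$ and $|C/C|$ in terms of the enveloping set $xC+y$), and once the reduction is in place the rest is simply a concatenation of the two input lemmata with a careful choice of $\epsilon'$.
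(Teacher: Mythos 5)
Your proposal is correct and follows essentially the same route as the paper: rescale by the affine map to land $A,B$ inside $C$, apply Lemma \ref{theorem:idea} with parameter $\epsilon^2/4$, bound the resulting partial difference set via the containments in $C(C+1)$ and $C/C$, and finish with Lemma \ref{theorem:cover1}. The only cosmetic difference is that the paper pulls the graph $G$ back to $A\times B$ and covers $A$ directly, whereas you cover $A^*$ by translates of $B^*$ and transport the cover through the affine bijection; these are interchangeable.
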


\begin{proof}
Applying Lemma \ref{theorem:idea} to the sets $A_{xy}=\frac{A-y}{x} \subseteq C$ and $B_{xy}=\frac{B-y}{x} \subseteq C$ there exists $G_{xy}\subseteq A_{xy} \times B_{xy}$ of cardinality at least $\left(1-\frac{\epsilon^2}{4}\right)|A||B|$ such that 
	\begin{align*}|A_{xy}\overset{G_{xy}}-B_{xy}|&\ll \frac{|A_{xy}(B_{xy}+1)||B_{xy}(A_{xy}+1)||A_{xy}/B_{xy}|}{|A||B|}\\
	&\leq \frac{|C(C+1)|^2|C/C|}{|A||B|}.
	\end{align*}
Then let 
	$$G=\left\{(a,b):\left(\frac{a-y}{x},\frac{b-y}{x}\right)\in G_{xy}\right\}$$ 
to obtain 
	$$|A\overset{G}-B|=|A_{xy}\overset{G_{xy}}-B_{xy}|\ll \frac{|C(C+1)|^2|C/C|}{|A||B|}.$$
The result follows by applying Lemma \ref{theorem:cover1} to $G$, with $\epsilon$ replaced by $\epsilon^2/4$.
\end{proof}

Applying Lemma \ref{theorem:cover2} yields another helpful covering result.

\begin{corollary}\label{theorem:energy'}
Let $0<\epsilon<\frac{1}{2}$. Then there exists $G\subseteq A \times A$ with $|G| \geq (1-\epsilon)|A|^2$ such that $A \overset{G}-A$ is covered by $O_{\epsilon}\left(\frac{|A(A+1)|^2|A/A|}{|A|^3}\right)$ translates of $A$.
\end{corollary}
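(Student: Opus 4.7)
The plan is a straightforward composition: apply Lemma \ref{theorem:idea} to produce a dense $G_0 \subseteq A \times A$ whose partial difference set is small, then feed this into Lemma \ref{theorem:cover2} to cover that partial difference set by few translates of $A$, shrinking $G_0$ only slightly in the process.

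More concretely, fix $\epsilon$ with $0<\epsilon<1/2$, and pick an auxiliary parameter $\eta=\eta(\epsilon)>0$ small enough that $(1-\eta)^2 \geq 1-\epsilon$ (for instance $\eta = \epsilon/3$ works). First I would apply Lemma \ref{theorem:idea} with $B=A$ and with $\eta$ in place of $\epsilon$, obtaining some $G_0 \subseteq A \times A$ with $|G_0|\geq (1-\eta)|A|^2$ satisfying
\[
|A \overset{G_0}- A| \ll_{\eta} \frac{|A(A+1)|^2 |A/A|}{|A|^2}.
\]
At this stage we have the required upper bound on the \emph{cardinality} of the partial difference set, but not yet a covering statement.

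Next I would feed $G_0$ into Lemma \ref{theorem:cover2}, again with the parameter $\eta$. This yields $G \subseteq G_0$ with $|G|\geq (1-\eta)|G_0|$ such that $A \overset{G}- A$ is contained in the union of
\[
O_{\eta}\!\left(\frac{|A \overset{G_0}- A|\,|A|}{|G_0|}\right)
\]
translates of $A$. Combining $|G_0|\geq (1-\eta)|A|^2$ with our bound on $|A \overset{G_0}- A|$ from the previous paragraph, this number of translates is at most
\[
O_{\eta}\!\left(\frac{|A(A+1)|^2|A/A|}{|A|^2}\cdot \frac{|A|}{(1-\eta)|A|^2}\right) \;=\; O_{\epsilon}\!\left(\frac{|A(A+1)|^2|A/A|}{|A|^3}\right),
\]
which is exactly the bound we want. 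Finally, $|G|\geq (1-\eta)|G_0|\geq (1-\eta)^2|A|^2 \geq (1-\epsilon)|A|^2$ by the choice of $\eta$, so $G$ has the required density.

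There is really no obstacle: both ingredients are already in place, and the only care needed is to split the tolerance $\epsilon$ between the two applications so that the density losses compose correctly. The result is a pure plug-and-play corollary, analogous to how Corollary \ref{theorem:energy} was obtained by plugging Lemma \ref{theorem:idea} into Lemma \ref{theorem:cover1} rather than Lemma \ref{theorem:cover2}.
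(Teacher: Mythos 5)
Your proposal is correct and is essentially identical to the paper's own proof: both apply Lemma \ref{theorem:idea} with a reduced tolerance parameter and then feed the resulting dense graph into Lemma \ref{theorem:cover2}, composing the density losses (the paper uses $\epsilon/2$ where you use $\epsilon/3$, which is immaterial). No gaps.
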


\begin{proof}
By Lemma \ref{theorem:idea}, with $\epsilon$ replaced by $\frac{\epsilon}{2}$, there exists $G' \subseteq A \times A$ with $|G'| \geq (1-\frac{\epsilon}{2})|A|^2$ such that
	$$|A \overset{G'}-A|\ll_{\epsilon} \frac{|A(A+1)|^2|A/A|}{|A|^2}.$$
Then by Lemma \ref{theorem:cover2} there exists $G \subseteq G'$ with 
	$$|G| \geq \left(1-\frac{\epsilon}{2}\right)|G'| \geq \left(1-\frac{\epsilon}{2}\right)^2|A|^2 \geq (1-\epsilon)|A|^2$$
such that $A\overset{G}-A$ is covered by $O_{\epsilon}\left(\frac{|A(A+1)|^2|A/A|}{|A|^3}\right)$	translates of $A$, as required.
\end{proof}

\subsection{Proving Theorem \ref{theorem:twovariableFp}}\label{section:proof}

This section uses Corollaries \ref{theorem:horrific}, \ref{theorem:energy} and \ref{theorem:energy'} to prove Theorem \ref{theorem:twovariableFp}. 

By Corollary \ref{theorem:horrific} and passing to a subset of $A$ if necessary we may assume that
	\begin{equation}\label{eq:finite1}
	|A-A|\ll \frac{|A(A+1)|^8}{|A|^7}.
	\end{equation}
By Corollary \ref{theorem:cleverpl} and again passing to a subset if necessary we may assume that
	\begin{equation}\label{eq:finite2}
	|A-A-A-A|\ll \frac{|A-A|^3}{|A|^2}.
	\end{equation}
Now, by Corollary \ref{theorem:energyformulation2} and Lemma \ref{theorem:energycs1},  
	$$\sum_{a,b \in A}\left|a(A+1) \cap b(A+1) \right|=E_{\times}(A,A+1)\geq \frac{|A|^4}{|A(A+1)|}.$$
So there exists $b_0 \in A$ such that
	$$\sum_{a \in A}\left|a(A+1) \cap b_0 (A+1)\right|\geq \frac{|A|^3}{|A(A+1)|}.$$
By dyadic pigeonholing there exists $A_1 \subseteq A$ and $N \in \mathbb{N}$ such that
	$$\left|a(A+1) \cap b_0 (A+1)\right|\approx N$$ 
for all $a \in A_1$ and 
	\begin{equation}\label{eq:finite3}
	N|A_1|\gtrsim \frac{|A|^3}{|A(A+1)|}.
	\end{equation}
Since $|A_1|\leq |A|$ this also implies 
	\begin{equation}\label{eq:finite4}
	N\gtrsim \frac{|A|^2}{|A(A+1)|}.
	\end{equation}

Now consider the set $$R(A_1)=\left\{\frac{\alpha-\beta}{\gamma-\delta}:\alpha,\beta,\gamma, \delta \in A_1, \alpha \neq \beta, \gamma \neq \delta \right\}$$ and break into two cases according to whether or not $R(A_1)=\mathbb{F}_p$.

\subsubsection{$R(A_1)\neq \mathbb{F}_p$}

The important thing about $R(A_1)$ is that if $\xi \notin R(A_1)$ then 
	\begin{equation}\label{eq:abuse}
	|A_1\overset{G}+\xi A_1|= |G|
	\end{equation}
for any $G \subseteq A_1 \times A_1$. Note that \eqref{eq:abuse} employs a slight abuse of notation: by $A_1\overset{G}+\xi A_1$ it means the set of $a+\xi b$ for which $(a,b)\in G$. It holds because there can be no repetition in $A_1+\xi A_1$, since if
	$$a+\xi b = c+\xi d$$
with $(a,b)\neq (c,d)$ then $\xi = \frac{a-c}{b-d} \in R(A_1)$ which is a contradiction. 

Now since $R(A_1)\neq \mathbb{F}_p$ there must exist $\xi=\frac{\alpha-\beta}{\gamma-\delta} \in R(A_1)$ such that $\xi-1 \notin R(A_1)$. For \textit{any} $G \subseteq A_1 \times A_1$ it follows that
	\begin{align}
	|G| &= |A_1 \overset{G}+(\xi-1)A_1|\nonumber\\
	&= \left|A_1 \overset{G}+ \frac{\alpha-\beta-\gamma + \delta}{\gamma-\delta}A_1\right|\nonumber\\
	&= |(\gamma - \delta)A_1\overset{G}+ (\alpha-\beta-\gamma + \delta)A_1|\label{eq:finite7}
	\end{align}
where there is another abuse of notation in \eqref{eq:finite7}. 

Now proceed to fix a \textit{particular} choice of $G$. Let $\epsilon>0$ be sufficiently small and for convenience write
	$$\lambda=\frac{|A(A+1)|^2|A/A|}{N^2|A_1|}.$$
Applying Corollary \ref{theorem:energy} to the sets $\alpha(A_1+1)$ and $b_0(A_1+1)\cap \alpha(A_1+1)$ shows that there is a set $A_{\alpha} \subseteq A_1$ with 		
	$$\left|A_{\alpha}\right|\geq (1-\epsilon)|A_1|$$
such that $\alpha A_{\alpha}$ is contained in the union of $O_{\epsilon}\left(\lambda\right)$ translates of $b_0 A$. Similarly, there are sets $A_{\beta},A_{\gamma},A_{\delta} \subseteq A_1$ with
	$$|A_{\beta}|,|A_{\gamma}|,|A_{\delta}|\geq (1-\epsilon)|A_1| $$
such that $\beta A_{\beta}$ and $\gamma A_{\gamma}$ are contained in the union of 
$O\left(\lambda\right)$ translates of $b_0 A$, and $\delta A_{\delta}$ is contained in the union of $O\left(\lambda\right)$ translates of $-b_0 A$. Let  
	$$A_2 = A_{\alpha}\cap A_{\beta}\cap A_{\gamma} \cap A_{\delta}$$
so that 
	$$|A_2|\geq (1-4\epsilon)|A_1|.$$ 
By Corollary \ref{theorem:energy'} there exists $G \subseteq A_2 \times A_2$ with 
	$$|G| \approx |A_2|^2 \approx |A_1|^2$$
such that $A_2 \overset{G}-A_2$ is covered by $O\left(\frac{|A(A+1)|^2|A/A|}{|A_1|^3}\right)$ translates of $A_2$. Fix this choice of $G$ so that from (\ref{eq:finite7}),
	\begin{align*}
	|A_1|^2 &\ll \left|(\gamma - \delta)(A_2\overset{G}-A_2)+(\alpha-\beta)A_2\right|\\
	&\ll \frac{|(\gamma-\delta)A_2-(\alpha-\beta)A_2||A(A+1)|^2|A/A|}{|A_1|^3}\\
	&\leq \frac{|\alpha A_2- \beta A_2 - \gamma A_2 + \delta A_2||A(A+1)|^2|A/A|}{|A_1|^3}.
	\end{align*}
Since 
	$$A_2 \subseteq A_{\alpha},A_{\beta},A_{\gamma},A_{\delta},$$
and $\alpha A_{\alpha}, \beta A_{\beta}, \gamma A_{\gamma}$
are each contained in the union of $O\left(\lambda\right)$ translates of $b_0 A$, and $\delta A_{\delta}$ is contained in the union of $O\left(\lambda\right)$ translates of $-b_0 A$, it follows that
	\begin{align}
	|A_1|^2&\ll \frac{|\alpha A_{\alpha}- \beta A_{\beta} - \gamma A_{\gamma} + \delta A_{\delta}||A(A+1)|^2|A/A|}{|A_1|^3}\label{eq:sameas}\\
	&\ll \frac{\lambda^4|b_0A-b_0A-b_0A-b_0A||A(A+1)|^2|A/A|}{|A_1|^3}\nonumber\\
	&= \frac{|A-A-A-A||A(A+1)|^{10}|A/A|^5}{N^8|A_1|^7} \nonumber.
	\end{align}
By \eqref{eq:finite2}, this gives 
	\begin{equation}\label{eq:finite5}
	|A_1|^2\ll \frac{|A-A|^3|A(A+1)|^{10}|A/A|^5}{N^8|A_1|^7|A|^2}.
	\end{equation}
Now by the Ruzsa triangle inequality (Lemma \ref{theorem:ruzsa}) applied multiplicatively, 
	\begin{equation}\label{eq:finite6}
	|A/A|\leq \frac{|A(A+1)|^2}{|A|}.
	\end{equation}
Applying (\ref{eq:finite1}) and (\ref{eq:finite6}) to (\ref{eq:finite5}) yields
	\begin{align*}
	|A_1|^2&\ll \frac{|A(A+1)|^{44}}{N^8|A_1|^7|A|^{28}}.
	\end{align*}
Rearranging and applying (\ref{eq:finite3}) and (\ref{eq:finite4}) gives
	$$|A(A+1)|^{44}\gg |A_1|^9 N^8 |A|^{28} \gtrsim \frac{|A_1| |A|^{52}}{|A(A+1)|^8} \gtrsim \frac{|A|^{54}}{|A(A+1)|^9}$$
and so 
	$$|A(A+1)|\gtrsim |A|^{54/53}$$ 
as required.

\subsubsection{$R(A_1)= \mathbb{F}_p$}

Let $E$ be the number of solutions to 
	\begin{equation}\label{eq:energyeq}
	a+\xi b = c+\xi d 
	\end{equation}
with $a,b,c,d \in A_1$ and $\xi \in R(A_1)$. Moreover, for each $\xi \in R(A_1)$ recall that $E_+(A_1,\xi A_1)$ is the additive energy of $A_1$ and $\xi A_1$, i.e. the number of solutions to (\ref{eq:energyeq}) with $\xi$ fixed, so that 
	$$E=\sum_{\xi \in R(A_1)}E_+(A_1,\xi A_1).$$
	
There are no more than $|R(A_1)||A_1|^2=p|A_1|^2$ solutions to (\ref{eq:energyeq}) for which $(a,b)=(c,d)$. And there are no more than $|A_1|^4$ solutions with $(a,b)\neq(c,d)$. So in total
	$$\sum_{\xi \in R(A_1)}E_+(A_1,\xi A_1)=E\leq |A_1|^4 +p|A_1|^2.$$
Since $|A_1|\leq |A|<p^{1/2}$ this gives
	$$\sum_{\xi \in R(A_1)}E_+(A_1,\xi A_1)\ll p|A_1|^2.$$
So there exists $\xi = \frac{\alpha-\beta}{\gamma-\delta} \in R(A_1)$ such that 
	$$E_+(A_1,\xi A_1)\ll |A_1|^2.$$
Moreover, for any $A_2 \subseteq A_1$ we have also
	$$E_+(A_2,\xi A_2)\ll |A_1|^2.$$
Now by Lemma \ref{theorem:energycs1}, 
	$$E_+(A_2,\xi A_2)\geq \frac{|A_2|^4}{|A_2-\xi A_2|}$$
and so if $|A_2|\approx |A_1|$ then
	\begin{align*}
	|\alpha A_2- \beta A_2 - \gamma A_2 + \delta A_2|&\geq \left|A_2- \frac{\alpha-\beta}{\gamma-\delta} A_2\right|\\
	&= |A_2- \xi A_2|\\
	&\gg |A_1|^2.
	\end{align*}
Let $A_\alpha, A_{\beta},A_{\gamma}, A_{\delta}$ be as before and fix 
	$$A_2=A_{\alpha}\cap A_{\beta} \cap A_{\gamma} \cap A_{\delta}.$$
This yields the same situation as considered at \eqref{eq:sameas} in the $R(A_1)\neq \mathbb{F}_p$ case, but with one less factor of $\frac{|A(A+1)|^2|A/A|}{|A_1|^3}$ to deal with. So we obtain (and in fact exceed) the required bound. \qed

\section{Three and four-variable real and complex expanders}\label{section:realexp}

This section is concerned with the proofs of Theorems \ref{theorem:threevariableC} and \ref{theorem:fourvariableR}.

Let's first place these in the context of the Guth-Katz proof on distinct distances, which led to the example of the function $(a-b)^2+(c-d)^2$ as a four-variable expander. This was based on a framework of Elekes and Sharir \cite{ES}. The idea, when counting the number of distinct objects determined by a set, is to analyse functions under which that object is invariant. 

In the Guth-Katz proof, where the objective is to count distances, the approach is to analyse the group $SE_2$ of orientation-preserving isometries. This analysis can be parameterised as an incidence problem of points and lines in $\mathbb{R}^3$. The usual Szmer\'edi-Trotter theorem is too weak to be of help directly, but Guth and Katz were able to amplify it to Theorem \ref{theorem:gk}, which they developed using a novel `polynomial partitioning' technique specifically for this purpose. 

The approach of Iosevich, Roche-Newton and Rudnev for showing that $ad-bc$ is a four-variable expander follows the same approach, but uses the fact that this function can be viewed as a determinant. Thus the approach is to analyse determinant-preserving maps, i.e. elements of $SL_2$. Like the Guth-Katz result, this too required an application of Theorem \ref{theorem:gk}.

The approach for proving Theorems \ref{theorem:threevariableC} and \ref{theorem:fourvariableR} is likewise founded on the Elekes-Sharir paradigm. Whereas Guth and Katz counted distances by analysing isometries from the group $SE_2$, and Iosevich, Roche-Newton and Rudnev counted determinants by analysing $SL_2$, the functions $g$ and $h$ considered here are instances of \textbf{cross ratios}, which are preserved by the group $PSL_2$ of projective transformations of the line.

There are several advantages to working with cross ratios: 
\begin{itemize}
\item As shown in Section \ref{section:expandersresults}, we are able to prove quantitatively stronger expander results.
\item A smaller arsenal is required. Neither Theorem \ref{theorem:gk} nor any other application of the Guth-Katz polynomial partitioning technique is required. Instead only the  Szemer\'edi-Trotter theorem on points and lines is used when proving Theorem \ref{theorem:threevariableC} and only the Edelsbrunner-Guibas-Sharir theorem on points and planes is used when proving Theorem \ref{theorem:fourvariableR}. 
\item Parameterising as an incidence problem is more straightforward. The Guth and Katz and Iosevich, Roche-Newton and Rudnev proofs go through a certain amount of hassle in order to make the parameterisation and verify that appropriate nondegeneracy conditions are satisfied. But with cross ratios and projective transformations everything falls out naturally.
\end{itemize}

In what follows, \textbf{Section \ref{section:crossratio}} describes the standard theory of cross ratios, establishing that they are invariants of projective transformations. \textbf{Section \ref{section:embedding}} then identifies projective transformations in a natural way with points in three-dimensional projective space, and establishes how the transformations' behaviour corresponds to line and plane structures of points. \textbf{Section \ref{section:threeandfourvariableproof}} then uses the results of the preceding two sections to prove Theorems \ref{theorem:threevariableC} and \ref{theorem:fourvariableR}.

\subsection{Cross ratios}\label{section:crossratio}

This section records some standard theory on cross ratios. 

Recall, or consult Appendix \ref{chapter:proj}, that for a field $F$ the projective line $\mathbb{P}F^1$ is the set of equivalence classes $[x,y]$ of $F^2 \setminus \left\{(0,0)\right\}$ modulo dilation, and that it can more concretely be viewed as the extended line $\overline{F}=F \cup \left\{\infty\right\}$ by identifying $[1,0]$ with $\infty$ and $[x,1]$ with $x \in F$. Under this identification, the \textbf{cross ratio} $X(a,b,c,d) \in \overline{F}$ of four elements $a,b,c,d \in \overline{F}$ is given by 
	$$X(a,b,c,d)=\frac{(a-b)(c-d)}{(b-c)(a-d)}$$ 
interpreted in the sense of limits where necessary. Note that the functions $g$ and $h$ from Theorems \ref{theorem:threevariableC} and \ref{theorem:fourvariableR} are given by 
	\begin{align*}
	g(a,b,c)&= X\left(\infty,a,b,c\right)\\
	h(a,b,c,d)&= X\left(a,b,c,d\right)
	\end{align*}
So to prove the theorems it suffices respectively to show that
	\begin{align*}
	\#\left\{X(\infty,a,b,c):a,b,c \in A\right\} \gg |A|^{2-o(1)} \text{ for any finite }A \subseteq \mathbb{C}\\
 	\#\left\{X(a,b,c,d):a,b,c,d \in A\right\} \gg |A|^2 \text{ for any finite }A \subseteq \mathbb{R}.
	\end{align*}

The importance of the cross ratio is that it is a projective invariant of quadruples, in the sense of the following result which can be found in \cite{PS}, for example.

\begin{lemma}\label{theorem:invariant}
Let $a_i \in \overline{F}$ be distinct for $i=1,2,3,4$ and the same for $b_i \in \overline{F}$. Then
$X(a_1,a_2,a_3,a_4)=X(b_1,b_2,b_3,b_4)$ if and only if there is a projective transformation in $PSL_2(F)$ that sends each $a_i$ to $b_i$.
\end{lemma}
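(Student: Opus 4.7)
The plan is to handle the two directions separately, starting with invariance and then using the sharply 3-transitive action of $PSL_2(F)$ on $\overline{F}$ for the converse.

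For the forward direction, the goal is to show that the cross ratio is preserved by any $\phi \in PSL_2(F)$. The cleanest route is direct computation: any $\phi \in PSL_2(F)$ is represented by $x \mapsto \frac{\alpha x + \beta}{\gamma x + \delta}$ with $\alpha\delta - \beta\gamma \neq 0$, and a short calculation gives
\[
\phi(a) - \phi(b) \;=\; \frac{(\alpha\delta - \beta\gamma)(a-b)}{(\gamma a + \delta)(\gamma b + \delta)}.
\]
Each factor of the form $(\gamma a + \delta)$ appearing in the numerator differences of $X(\phi(a_1),\phi(a_2),\phi(a_3),\phi(a_4))$ also appears in the denominator differences, and likewise the $(\alpha\delta - \beta\gamma)$ factors cancel in pairs, leaving $X(a_1,a_2,a_3,a_4)$. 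The case when some $a_i = \infty$ (or some $\gamma a_i + \delta = 0$) is handled by taking limits, or equivalently by recalling that $X$ is a well-defined function on $\overline{F}^4$ for distinct arguments.

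For the converse, the key ingredient is that $PSL_2(F)$ acts sharply 3-transitively on $\overline{F}$: given any two triples of distinct points $(a_1,a_2,a_3)$ and $(b_1,b_2,b_3)$, there is a unique $\phi \in PSL_2(F)$ with $\phi(a_i) = b_i$ for $i=1,2,3$. This is standard; an explicit formula can be written down (for instance, $\phi$ is determined by sending $a_1,a_2,a_3$ to $0,1,\infty$ via one such transformation and then sending $0,1,\infty$ to $b_1,b_2,b_3$ via another). Apply this to our situation to obtain a unique $\phi \in PSL_2(F)$ with $\phi(a_i) = b_i$ for $i=1,2,3$, and set $b_4' := \phi(a_4)$. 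By the invariance already established,
\[
X(b_1,b_2,b_3,b_4') \;=\; X(\phi(a_1),\phi(a_2),\phi(a_3),\phi(a_4)) \;=\; X(a_1,a_2,a_3,a_4) \;=\; X(b_1,b_2,b_3,b_4),
\]
where the last equality is the hypothesis of the converse direction.

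It remains to conclude that $b_4' = b_4$, and this is the one step requiring a moment's care (and is where I'd expect the only mild subtlety to lie). The point is that with $b_1,b_2,b_3$ fixed and distinct, the map $\lambda \mapsto X(b_1,b_2,b_3,\lambda)$ from $\overline{F}\setminus\{b_1,b_2,b_3\}$ to $\overline{F}$ is itself a M\"obius transformation in $\lambda$, hence injective. Therefore the equality of cross ratios forces $b_4' = b_4$, completing the proof. The only thing to watch is the treatment of $\infty$, both in the statement (since the $a_i,b_i$ are drawn from $\overline{F}$, not $F$) and when invoking 3-transitivity; this is handled uniformly by working in homogeneous coordinates on $\mathbb{P}F^1$ throughout rather than with the affine formulas, or alternatively by breaking out one or two cases by hand.
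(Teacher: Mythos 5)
Your proof is correct and follows essentially the same route as the paper: both arguments rest on the sharp $3$-transitivity of $PSL_2(F)$ on $\overline{F}$ together with the observation that $\lambda\mapsto X(b_1,b_2,b_3,\lambda)$ is itself a projective transformation of the line and hence injective (the paper calls this map $\tau_{b_1b_2b_3}$ and writes down its matrix, whereas you verify invariance by the direct difference computation, but this is a cosmetic difference). The points you flag as needing care --- the treatment of $\infty$ and the injectivity step --- are exactly the ones the paper handles via homogeneous coordinates and the explicit matrix for $\tau_{abc}$.
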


\begin{proof}
By Lemma \ref{theorem:frametransitive} there is a unique projective transformation $\mu \in PSL_2(F)$ that sends $a_i$ to $b_i$ for $i=1,2,3$. We shall show that $X(a_1,a_2,a_3,a_4)=X(b_1,b_2,b_3,b_4)$ if and only if $\mu$ also sends $a_4$ to $b_4$.

First note that $X(a,b,c,d)=\tau_{abc}(d)$ where $\tau_{abc} \in PSL_2(F)$ is the unique projective transformation that sends $\left(a,b,c\right)$ to $\left(\infty,1,0\right)$. To see this it suffices simply to check that 
	$$\tau_{abc}=\left[
	\left(
	\begin{array}{lr}
	b-a & (a-b)c\\
	c-b & (b-c)a
	\end{array}
	\right)\right]$$
and then that $\tau_{abc}(d)=X(a,b,c,d)$.

It follows that 
	$$X(a_1,a_2,a_3,a_4)=X(b_1,b_2,b_3,b_4)$$ 
if and only if 
	\begin{equation}\label{eq:crossratiopf}
	\tau_{a_1 a_2 a_3}(a_4)=\tau_{b_1b_2b_3}(b_4).
	\end{equation}
Now by definition of $\mu$ we have 
	$$\tau_{a_1a_2a_3}=\tau_{b_1b_2b_3}\circ \mu$$ 
and so by injectivity of $\tau_{b_1 b_2 b_3}$, equation \eqref{eq:crossratiopf} holds precisely when $\mu(a_4)=b_4.$ This completes the proof.
\end{proof}

\subsection{Points, planes and transformations}\label{section:embedding}

This section contains two results that apply to any field $F$. The first, a `points lemma', identifies projective transformations from $PSL_2(F)$ with points in $\mathbb{P}F^3$. The second, a `planes lemma', establishes that the behaviour of transformations corresponds to line and plane structures of their associated points.

\begin{lemma}[Points lemma]\label{theorem:points}
Define $\psi:PSL_2(F)\to \mathbb{P}F^3$ by 
	$$\psi\left[\left(
	\begin{array}{cc}
	p&q\\
	r&s	
	\end{array}
	\right)\right]
	=[p,q,r,s].$$
The map $\psi$ is well-defined and injective, and its image is $\mathbb{P}F^3\setminus Q$ where $Q$ is the quadratic surface given by $ps=qr$.
\end{lemma}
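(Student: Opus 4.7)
The plan is to verify in sequence the three claims: well-definedness of $\psi$, its injectivity, and the identification of its image with $\mathbb{P}F^3 \setminus Q$. The underlying observation in each case is that both $PSL_2(F)$ and $\mathbb{P}F^3$ are quotients by scalar multiplication on, respectively, invertible $2\times 2$ matrices and nonzero $4$-tuples, and that the assignment $\left(\begin{smallmatrix} p & q \\ r & s\end{smallmatrix}\right)\mapsto (p,q,r,s)$ respects scalar multiplication component-wise.

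For well-definedness, I would take two matrix representatives $M$ and $M'$ of the same class in $PSL_2(F)$, so that $M'=\lambda M$ for some $\lambda\in F^*$. Then the associated $4$-tuples satisfy $(p',q',r',s')=\lambda(p,q,r,s)$, so they define the same point of $\mathbb{P}F^3$. Injectivity is obtained by running the same argument in reverse: if $\psi([M])=\psi([M'])$ then the $4$-tuples differ by a nonzero scalar $\lambda$, hence so do the matrices, whence $[M]=[M']$.

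For the image, I would observe that a point $[p,q,r,s]\in\mathbb{P}F^3$ lifts to the class of the matrix $\left(\begin{smallmatrix} p & q \\ r & s\end{smallmatrix}\right)$, and this class lies in $PSL_2(F)$ exactly when the matrix is invertible, i.e.\ when $ps-qr\neq 0$. Since the sign of $ps-qr$ under rescaling $(p,q,r,s)\mapsto \lambda(p,q,r,s)$ only changes by a factor of $\lambda^2\neq 0$, the condition $ps-qr\neq 0$ is well-defined on $\mathbb{P}F^3$ and cuts out precisely the complement of the quadric $Q=\{ps=qr\}$. Thus $\psi$ maps onto $\mathbb{P}F^3\setminus Q$, and conversely every $[p,q,r,s]$ off $Q$ arises as $\psi$ of the corresponding matrix class.

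The main obstacle is essentially notational bookkeeping rather than mathematical depth: one needs to keep track of the fact that $PSL_2(F)$ here denotes the projective linear group (equivalence classes of invertible matrices modulo scalar multiplication, as is consistent with the use of the bracket notation $[\,\cdot\,]$ in the proof of Lemma~\ref{theorem:invariant}) and verify at each step that quantities and conditions descend from matrices or $4$-tuples to their respective quotients.
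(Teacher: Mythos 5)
Your proof is correct and follows essentially the same route as the paper's (a direct verification of well-definedness, injectivity, and the image, all resting on the fact that the matrix-to-$4$-tuple map intertwines the two scalar quotients). One point is worth flagging, and it concerns the definition rather than your argument. The paper's Appendix \ref{chapter:proj} defines $PSL_{n+1}(F)$ as $SL_{n+1}(F)/\pm I$, i.e.\ classes of determinant-one matrices modulo sign, whereas you read $PSL_2(F)$ as $GL_2(F)$ modulo all scalars. Under the paper's literal definition your injectivity step needs one extra line: from $(p',q',r',s')=\lambda(p,q,r,s)$ one takes determinants to get $\lambda^2=1$, hence $\lambda=\pm 1$ and $[M]=[M']$ in $SL_2(F)/\pm I$. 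More importantly, the surjectivity onto $\mathbb{P}F^3\setminus Q$ actually \emph{requires} your reading: a point $[p,q,r,s]$ with $ps-qr=d\neq 0$ can be rescaled to a determinant-one representative only when $d$ is a square in $F^*$, so over $\mathbb{R}$ or $\mathbb{F}_p$ the group $SL_2(F)/\pm I$ hits only part of $\mathbb{P}F^3\setminus Q$. The paper's own one-line justification ("follows from the definition $PSL_2(F)=SL_2(F)/\pm I$") glosses over this; the lemma as stated is true precisely for the projective general linear group you are using, which is also the group needed for the sharp transitivity on frames invoked elsewhere. So your proof is sound, and if anything it is your interpretation, made explicit, that the paper should adopt.
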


\begin{proof}
That $\psi$ is well-defined and injective follows from checking that if $t_1,t_2 \in SL_2(F)$ then $\psi[t_1]=\psi[t_2]$ if and only if $t_1=\pm t_2$. That the image is $\mathbb{P}F^3\setminus Q$ follows from the definition $PSL_2(F)=SL_2(F)/\pm I$. 
\end{proof}

\begin{lemma}[Planes lemma]\label{theorem:planes}
Let $\psi$ be as in the points lemma. For each $(a,b)\in \overline{F}\times \overline{F}$ there is a plane $\pi_{ab}\subseteq \mathbb{P}F^3$ with the following properties. 
	\begin{enumerate}
	\item If $\tau \in PSL_2(F)$ then $\tau(a)=b$ if and only if $\psi(\tau) \in \pi_{ab}$.
	\item No three planes are collinear.
	\item Different pairs $(a,b)\in\overline{F}\times \overline{F}$ determine different planes $\pi_{ab}$.
	\item Different pairs of planes $\left\{\pi_{ab},\pi_{cd}\right\}$ intersect in different lines $\pi_{ab}\cap \pi_{cd}$.
	\item For any $A \subseteq F$, a point $p \in \mathbb{P}F^3\setminus Q$ is incident to at most $|A|$ of the planes 		
	from $\left\{\pi_{ab}:a,b \in A\right\}$. 
	\end{enumerate}
\end{lemma}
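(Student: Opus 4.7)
The plan is to define $\pi_{ab}$ as the linear constraint imposed by property~(1), then deduce the remaining properties from the geometry of that definition. Working in projective coordinates $a=[a_1:a_2]$ and $b=[b_1:b_2]$ on $\mathbb{P}F^1$, a representative matrix $\left(\begin{smallmatrix}p&q\\r&s\end{smallmatrix}\right)$ of $\tau\in PSL_2(F)$ sends $a$ to $b$ exactly when $(pa_1+qa_2)b_2-(ra_1+sa_2)b_1=0$, a linear form in the ambient coordinates $[p,q,r,s]$ of $\mathbb{P}F^3$. Taking $\pi_{ab}$ to be its zero locus gives a plane --- its coefficient vector $(a_1b_2,a_2b_2,-a_1b_1,-a_2b_1)$ is nonzero as it is the outer product of the nonzero vectors $(a_1,a_2)$ and $(b_2,-b_1)$ --- handles the cases $a=\infty$ and $b=\infty$ uniformly, and makes property~(1) a tautology.

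The key observation for the other properties is that this coefficient vector is, up to a fixed permutation of coordinates, the Segre image of $([a_1:a_2],[b_2:-b_1])\in\mathbb{P}F^1\times\mathbb{P}F^1$ inside the dual space $(\mathbb{P}F^3)^\ast$. The image of the Segre embedding is a smooth quadric surface $Q^\ast\subseteq(\mathbb{P}F^3)^\ast$, carrying two families of lines (rulings) corresponding respectively to fixing $a$ or fixing $b$. Property~(3) is immediate from injectivity of the Segre map. For property~(4), B\'ezout's theorem says that a projective line in $(\mathbb{P}F^3)^\ast$ meets $Q^\ast$ in exactly two points unless it lies wholly in $Q^\ast$ as a ruling, so away from the degeneracy of a shared $a$- or $b$-coordinate, two distinct points of $Q^\ast$ determine a unique line, and hence distinct pairs of planes intersect in distinct lines. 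Property~(2) is similar: three collinear points on a quadric in $\mathbb{P}^3$ must lie on a common line of the quadric, which for $Q^\ast$ forces a common $a$- or $b$-value; otherwise the three linear forms are independent and their common zero locus is a single point.

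Property~(5) falls directly out of property~(1): a point $p\in\mathbb{P}F^3\setminus Q$ is $\psi(\tau_0)$ for a unique $\tau_0\in PSL_2(F)$, so the planes $\pi_{ab}$ with $(a,b)\in A\times A$ that contain $p$ are precisely the $\pi_{a,\tau_0(a)}$ with $a\in A$ and $\tau_0(a)\in A$, and there are at most $|A|$ of these because $\tau_0$ is a function. The main subtlety I anticipate is the ruling degeneracy appearing in (2) and (4), where triples or pairs of pairs sharing an $a$- or $b$-coordinate do produce genuinely collinear planes and repeated intersection lines; handling this will need either an implicit nondegeneracy clause in the statement or the observation that in the intended application in Section~\ref{section:threeandfourvariableproof} the relevant pairs form the graph of an injection and so automatically avoid ruling-type coincidences.
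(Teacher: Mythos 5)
Your construction of $\pi_{ab}$ and your arguments for properties 1, 3 and 5 are correct, and 1 and 5 are exactly the paper's. For properties 2--4 you take a genuinely different route: the paper identifies triple intersections of planes with the unique $\tau$ supplied by sharp transitivity of $PSL_2(F)$ on frames (Lemma \ref{theorem:frametransitive}), and then deduces 3 and 4 from 2 by intersecting with a third plane; you instead read everything off the Segre quadric in the dual space together with B\'ezout. Your route is arguably more informative, because it makes explicit something the paper's proof silently skips: the frame argument only covers triples $\pi_{ad},\pi_{be},\pi_{cf}$ with $(a,b,c)$ and $(d,e,f)$ each consisting of distinct elements, and your ruling analysis identifies exactly which triples genuinely fail --- for fixed $a$ every plane $\pi_{ab}$ contains the line $\left\{ap+q=0,\ ar+s=0\right\}$, and dually for fixed $b$. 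So properties 2 and 4 as literally stated are false, and you are right to flag this.

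Where your proposal goes wrong is the suggested repair. In the application (the proof of Theorem \ref{theorem:fourvariableR}) the plane set is $\Pi=\left\{\pi_{ab}:a,b\in A\right\}$, the full grid $A\times A$, not the graph of an injection, so ruling-type collinearities certainly do occur among the planes actually used. The correct observation is the one your own computation hands you: both families of common lines, $\left\{ap+q=0,\ ar+s=0\right\}$ and $\left\{p=br,\ q=bs\right\}$, satisfy $ps=qr$ and hence lie entirely inside the quadric $Q$, which by the points lemma is disjoint from the point set $P=\psi(T)$. The degenerate collinearities are therefore invisible to the incidence count; making this precise --- either by restating 2 and 4 with the qualifier that the common line is not contained in $Q$, or by checking the hypothesis of the point--plane incidence theorem only against lines that actually meet $P$ --- is the step missing from both your writeup and the paper's.
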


\begin{proof}
From the theory in Appendix \ref{chapter:proj}, a projective transformation $\tau = \left[\left(
	\begin{array}{cc}
	p&q\\
	r&s
	\end{array}
	\right)\right]$
sends $a$ to $b$ if and only if 
	$$\frac{ap+q}{ar+s}=b,$$
which is the same as 
	$$ap+q-bar-bs=0.$$ 
For fixed $a,b$ this is an homogeneous linear constraint on $\psi(\tau)=[p,q,r,s]\in \mathbb{P}F^3$ and so describes a plane in $\mathbb{P}F^3$, which we define to be $\pi_{ab}$. Property $1$ is satisfied by construction, and it is now straightforward to establish properties $2$ to $5$ in turn
	\begin{enumerate}
	\setcounter{enumi}{1}
	\item It suffices to show that three planes intersect in a point. Let $(a,b,c)$ and $(d,e,f)$ be two triples of distinct elements of $F$. By Lemma \ref{theorem:frametransitive} there is a unique $\tau\in PSL_2(F)$ that sends $(a,b,c)$ to $(d,e,f)$. So 
	$$\pi_{ad}\cap \pi_{be} \cap \pi_{cf}=\psi(\tau)$$ which is a single point in $\mathbb{P}F^3$.
	\item If $\pi_{ab}=\pi_{cd}$ for some $(a,b)\neq (c,d)$ then $\pi_{ab}\cap \pi_{cd} \cap 	
	\pi_{ef}$ is either a line or a plane for any third pair $(e,f)$, which contradicts property $2$.
	\item Suppose that 
		$$\pi_{ab}\cap \pi_{cd}= \pi_{a'b'}\cap \pi_{c'd'}.$$
	Then $$\pi_{ab}\cap \pi_{cd}\cap \pi_{a'b'} = \pi_{a'b'}\cap \pi_{c'd'}.$$ But by property $2$ the set on the left hand side is a point, whereas that on the right is a line, unless $\pi_{ab} \in \left\{\pi_{a'b'},\pi_{c'd'}\right\}$. Similarly a contradiction follows unless $\pi_{cd} \in \left\{\pi_{a'b'},\pi_{c'd'}\right\}$ 
	\item Let $p$ be a point in $\mathbb{P}F^3\setminus Q$, so that $p = \psi(\tau)$ for some $\tau \in PSL_2(F)$. For 	
	each $a \in A$ there is at most one $b \in A$ for which $p$ is incident to $\pi_{ab}$, as otherwise $\tau(a)$ would take two 	
	different values. Counting over all $a \in A$ shows that $p$ is incident to at most $|A|$ planes.
	\end{enumerate}
\end{proof}

\subsection{Proving Theorems \ref{theorem:threevariableC} and \ref{theorem:fourvariableR}}\label{section:threeandfourvariableproof}

This section uses the results from Sections \ref{section:crossratio} and \ref{section:embedding} to prove Theorems \ref{theorem:threevariableC} and \ref{theorem:fourvariableR}. We first give the proof of Theorem \ref{theorem:threevariableC}, which uses the Szemer\'edi-Trotter theorem.

\begin{proof}[Proof of Theorem \ref{theorem:threevariableC}.]
Since 
	$$|g(A)|=\#\left\{X(\infty,a,b,c):a,b,c \in A\right\},$$
we want to show that 
	$$\#\left\{X(\infty,a,b,c):a,b,c \in A\right\}\gg |A|^{2-o(1)} $$
for any finite $A \subseteq \mathbb{C}$. To this end write $E(A)$ for the number of solutions to the equation
	\begin{equation}\label{eq:expanderenergy1}
	X(\infty,a_1,a_2,a_3)=X(\infty,b_1,b_2,b_3)
	\end{equation}
with each of the $a_i$ and $b_i$ in $A$. Write $\mu(x)$ for the number of $a_1,a_2,a_3 \in A$ with $X(\infty,a_1,a_2,a_3)=x$. Then 
	$$\sum_{x \in g(A)}\mu(x)\approx|A|^3$$ 
and Cauchy-Schwarz implies that
	$$|A|^6 \approx \left(\sum_{x \in g(A)}\mu(x)\right)^2 \leq |g(A)|E(A).$$
So it suffices to show 
	$$E(A)\ll |A|^{4+o(1)}.$$ 

By Lemma \ref{theorem:invariant}, equation (\ref{eq:expanderenergy1}) is satisfied precisely when there exists $\tau \in PSL_2(\mathbb{C})$ that fixes $\infty$ and sends each $a_i$ to $b_i$. So if we define 
	$$T=\bigcup_{a,b \in A}\left\{\tau:\tau(\infty)=\infty,\tau(a)=b\right\}$$
and write $N(\tau)$ for the number of $(a,b)\in A^2$ for which $\tau(a)=b$, then
	\begin{align}
	E(A)& =\sum_{a_1,a_2,a_3 \in A}\sum_{b_1,b_2,b_3 \in A}\mathds{1}\left(X(\infty,a_1,a_2,a_3)=X(\infty,b_1,b_2,b_3)\right)\nonumber\\
	&\leq \sum_{a_1,a_2,a_3 \in A}\sum_{b_1,b_2,b_3 \in A}\sum_{\tau \in T} \mathds{1} \left(\tau(a_i)=b_i \text{ for each } i\right) \nonumber\\
	&\leq \sum_{\tau \in T}N(\tau)^3 \label{eq:tripint1a}.
	\end{align}
	
Let $\psi$ be as in the points lemma. Define a set $P$ of points by 
	$$P=\psi(T)$$
and a set of $L$ lines by 
	$$L=\left\{\pi_{ab}\cap \pi_{\infty \infty}:a,b \in A\right\}$$
so that $|L|\approx |A|^2$. The points and lines all lie in the plane $\pi_{\infty \infty}$. Moreover, if we write $m(p)$ for the number of lines from $L$ incident to a point $p \in P$, then
  \begin{align}
  N(\tau)&=\#\left\{(a,b): \tau(a)=b\right\}\nonumber\\
  &=\#\left\{(a,b): \psi(\tau) \in \pi_{ab}\cap \pi_{\infty \infty}\right\}\nonumber\\
  &=m(\psi(\tau))\label{eq:tripint1b}.
  \end{align}
Combining \eqref{eq:tripint1a} and \eqref{eq:tripint1b} gives
	\begin{equation}
	\label{eq:expanderenergy1b}
	E(A) \leq \sum_{p \in P}m(p)^3.
	\end{equation}

For each $j \in \mathbb{N}$ write $P_j$ for the set of $p \in P$ with $m(p) \in [2^j,2^{j-1})$. Applying the complex Szemer\'edi-Trotter theorem in the form of Corollary \ref{theorem:toth'} gives
	$$|P_j|\ll \frac{|L|^2}{2^{3j}}+\frac{|L|}{2^j} $$
and so
	\begin{align*}
	E(A)&\ll \sum_{j=0}^{\log |A|}|P_j|2^{3j}\\
	&\ll \sum_{j=0}^{\log |A|}
	\left(\frac{|L|^2}{2^{3j}}+\frac{|L|}{2^j}\right)2^{3j}\\
	&\approx |A|^4 \log |A|
	\end{align*}
as required.
\end{proof}

We now give the proof of Theorem \ref{theorem:fourvariableR}, which uses the Edelsbrunner-Guibas-Sharir theorem.

\begin{proof}[Proof of Theorem \ref{theorem:fourvariableR}.]
This time we want to show that 
	$$|h(A)|=\#\left\{X(a_1,a_2,a_3,a_4):a_i \in A\right\}\gg |A|^2$$
for any finite $A \subseteq \mathbb{R}$. To this end, we this time write $E(A)$ for the number of solutions to the equation
	\begin{equation}\label{eq:expanderenergy2}
	X(a_1,a_2,a_3,a_4)=X(b_1,b_2,b_3,b_4)
	\end{equation}
with the $a_i,b_i \in A$. Using Cauchy-Schwarz as in Theorem \ref{theorem:threevariableC} shows that 
	$$|h(A)|\gg \frac{|A|^8}{E(A)}$$ 
so it suffices to show that 
	$$E(A)\ll |A|^6.$$ 

Equation (\ref{eq:expanderenergy2}) is satisfied precisely when there exists $\tau \in PSL_2(\mathbb{R})$ that sends $a_i$ to $b_i$ for each $i$. Define
	$$T=\bigcup_{a,b \in A}\left\{\tau:\tau(a)=b\right\}$$
and write $N(\tau)$ for the number of $(a,b)\in A^2$ for which $\tau(a)=b$. Then
	\begin{equation*}
	\label{eq:expanderenergy2a}
	E(A) \ll \sum_{\tau \in T}N(\tau)^4.
	\end{equation*}
	
Let $\psi$ be as in the points lemma. Define a set $P \subseteq \mathbb{PR}^3$ of points by 
	$$P=\psi(T)$$
and a set $\Pi$ of planes by 
	$$\Pi=\left\{\pi_{ab}:a,b \in A\right\}$$ 
so that $|\Pi|\approx |A|^2$. If we write $m(p)$ for the number of planes from $\Pi$ incident to a point $p$, then 
	$$N(\tau)=m(\psi(\tau)).$$
So, following the same argument as for \eqref{eq:expanderenergy1b} in the preceding proof,
	\begin{equation*}
	\label{eq:expanderenergy2b}
	E(A) \leq \sum_{p \in P}m(p)^4.
	\end{equation*}
For each $j \in \mathbb{N}$ write $P_j$ for the set of $p \in P$ with $m(p) \in [2^j,2^{j+1})$. Then applying the Edelsbrunner-Guiber-Sharir theorem on points and planes in the form of Corollary \ref{theorem:EGS'} gives
	\begin{align*}
	E(A)&\ll \sum_{j=0}^{\log |A|}|P_j|2^{4j}\\
	&\ll \sum_{j=0}^{\log |A|} 	
	\left(\frac{|\Pi|^3}{2^{5j}}+\frac{|\Pi|}{2^j}\right)2^{4j}\\
	&\ll 	|A|^6 \sum_{j=0}^{\infty} \frac{1}{2^{j}}\\
	& \approx |A|^6 
	\end{align*}
as required.
\end{proof}

\section{Further work}\label{section:expanderfurther}
\begin{itemize}
\item \textbf{Where else could Ruzsa-type observations lead?} The Ruzsa-type observation that 
	$$(a+ab)-(c+cd)=a-c$$
whenever $ab=cd$ leads to a doubling of the growth exponent for the function $a+ab$ over finite fields. Could similar observations enable the construction of other expander functions, or give improved bounds on existing ones like $a+\frac{1}{b}$, $a+b^2$, or $a^2+ab$? 

\item \textbf{What more can be proved using cross ratios?} Cross ratios turned out to be very useful when constructing expanders in Theorems \ref{theorem:threevariableC} and \ref{theorem:fourvariableR}. For example, Theorem \ref{theorem:threevariableC} is a sharp three-variable result whose bound of $|g(A)|\gtrsim |A|^{2}$ is as strong as the best previously known four-variable results. What more can be said?
	
As a start, it seems reasonable to conjecture that the four-variable Theorem \ref{theorem:fourvariableR} can be strengthened to $|h(A)|\gtrsim |A|^3$. This would bring the theorem into line with the case where $A$ is an arithmetic progression, which is a sharp example for Theorem \ref{theorem:threevariableC}. 

In terms of applications to other growth results, might the strength of cross-ratio estimates enable new things to be said about two-variable expanders, or about sum-product estimates?
\end{itemize}
\chapter{A sum-product theorem in function fields}
\label{chapter:functionfield}

This chapter proves a sum-product theorem in the function field $\ffield$, showing that if $A$ is a finite subset of $\ffield$ then $$\max \left\{|A+A|,|AA|\right\}\gg_q |A|^{1+\frac{1}{5}-o(1)}.$$

The exponent of $\frac{1}{5}-o(1)$ lies between the $\frac{1}{3}-o(1)$ known in the complex setting and the $\frac{1}{11}-o(1)$ known in the finite field setting. This reflects the fact that unlike finite fields, $\ffield$ has an associated norm and topology, but that this norm is very different from that on $\mathbb{C}$, with an unusually rigid `non-archimedean' geometry. 

The material in this chapter is joint work with Thomas Bloom.

\section{Results}
Recall that $\mathbb{F}_q$ denotes the finite field of order $q$, where $q=p^{\alpha}$ is a prime power. The prime $p$ is the \textbf{characteristic} of $\mathbb{F}_q$, i.e. the least $n$ such that
	$$\underbrace{x+\ldots+x}_{n}=0$$
for all $x \in \mathbb{F}_q$. Throughought this chapter the letter $p$ will be reserved for this characteristic. The \textbf{function field} $\mathbb{F}_q(t)$ is the field of rational functions of a transcendental element $t$ over $\mathbb{F}_q$. Elements are therefore of the form
$$x=\sum_{j=-\infty}^N x_j t^j$$
where the $x_j$ are elements of $\mathbb{F}_q$. Note that this means that although $\mathbb{F}_q$ is finite, the function field $\ffield$ is not.

As with finite fields, it is necessary to rule out the possibility of finite subfields to be able to say anything non-trivial about growth in $\ffield$. But unlike finite fields, this is accomplished by a \textit{minimum} rather than a \textit{maximum} condition on the cardinality of a set $A$. The reason is that the only finite subfields of $\mathbb{F}_q(t)$ are $\mathbb{F}_q$ and its subfields: to rule these out it suffices to insist that $A$ is a bit bigger than $q$. Think therefore of $q$ as being small and fixed, as opposed to finite fields where it is taken to be very large indeed. A convenient way of capturing this necessity is to introduce an implicit dependency on $q$ when formulating sum-product estimates, so that results are of the form
	$$\max\left\{|A+A|,|AA|\right\}\gg_q |A|^{1+\delta} $$
for an absolute $\delta>0$ and any finite $A\subseteq \ffield$. 

As mentioned in Chapter \ref{chapter:introgrowth}, Li and Roche-Newton \cite{LiORN1} obtained a sum-product estimate $\delta \geq \frac{1}{11}-o(1)$ for a finite field $\mathbb{F}_q$ whose order is not necessarily prime, extending the applicability of Rudnev's result for $\mathbb{F}_p$. Because of its combinatorial generality this proof should go through in the function field setting without additional complication.

It is possible to do better by developing techniques specific to function fields. This chapter proves the following theorem 

\begin{theorem}\label{theorem:functionfieldsumproduct}
If $A \subseteq \ffield$ is finite then
	$$|A+A|^3|AA|^2 \gg_{q} |A|^{6-o(1)}.$$
\end{theorem}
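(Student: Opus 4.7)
The main asset of $\ffield$ is its non-archimedean valuation $v$ given by the degree of a rational function, with associated ultrametric norm $|x|_\infty = q^{\deg x}$ satisfying the strong triangle inequality. My plan is to exploit the rigidity of this valuation throughout: it makes certain structural arguments cleaner than in the topology-free setting of finite fields, but lacks the flexibility of the archimedean geometry of $\mathbb{R}$ and $\mathbb{C}$, consistent with the intermediate target exponent $\tfrac{1}{5}$ that sits between $\tfrac{1}{11}$ and $\tfrac{1}{3}$.

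The first step would be normalisation by the valuation. A dyadic pigeonhole on $v(a)$ for $a \in A$ extracts a subset of cardinality $\gtrsim |A|/\log|A|$ lying in a single non-archimedean annulus, and a further pigeonhole on leading coefficients pins these down at a further cost of a factor of $q$. Translation and dilation (which respect the sumset and product set up to absorption in the implicit $q$-dependent constant) then let us assume $A$ sits in the unit ball of $\mathbb{F}_q[[t^{-1}]]$ with fixed leading term, so that the strong triangle inequality has purchase on every sum in $A+A$.

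The core of the proof would be a Solymosi-style energy bound adapted to the ultrametric. Over $\mathbb{R}$, Solymosi bounds $E_\times(A) \lesssim |A+A|^2$ by sorting ratios in $A/A$ and noting that representatives from adjacent ratios give essentially distinct pairs in $(A+A) \times (A+A)$. In $\ffield$ the substitute for total ordering is stratification by valuation: if representatives $(a_1,b_1)$ and $(a_2,b_2)$ of two ratios have different valuations, then the leading terms of $(a_1+a_2,\,b_1+b_2)$ recover the pair up to $O_q(1)$ ambiguity. Iterating this across a tree of nested balls should yield an energy estimate of the form $E_\times(A) \lesssim_q |A+A|^{\alpha} |A|^{\beta}$, which combined with the Pl\"unnecke--Ruzsa inequalities and the BSG-type machinery of Chapter~\ref{chapter:introapproxgroup} gives the asymmetric bound $|A+A|^3|AA|^2 \gg_q |A|^{6-o(1)}$. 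The asymmetry in the final estimate should naturally reflect the asymmetric way an energy-type bound feeds through Cauchy--Schwarz and the triangle inequality.

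The main obstacle will be controlling the cumulative loss through the nested descent: within a fixed valuation class the ``fractional parts'' inherit no natural order, so one is forced to recurse into smaller balls, and each level introduces both logarithmic overhead and a pigeonhole loss. Bookkeeping these losses precisely enough to preserve the exponent $\tfrac{1}{5}$ is the delicate part of the proof. A secondary obstacle is ruling out degeneracies when $A$ clusters near a subfield; since the only finite subfield of $\ffield$ is $\mathbb{F}_q$ itself, this should be absorbable into the $\gg_q$ notation, but must be checked consistently at each application of the pigeonhole and each Pl\"unnecke-type step.
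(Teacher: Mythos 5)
Your starting point is right --- the proof does adapt Solymosi's complex sum--product argument to the ultrametric setting, and the quantity that gets counted is essentially the quadruple map $(a,b,c,d)\mapsto(a+c,b+c,ad,bd)$ you allude to. But the central step of your proposal, the ``energy estimate of the form $E_\times(A)\lesssim_q|A+A|^{\alpha}|A|^{\beta}$ obtained by iterating across a tree of nested balls,'' is exactly where the argument breaks, and you have not supplied the idea that rescues it. The obstruction is not a bookkeeping loss that can be controlled by careful recursion: in $\ffield$ a single element can lie in as many as $|A|$ of the nearest-neighbour balls $B_A(a)$ (take $A=\{t^j:0\le j\le n\}$, where every ball contains $0$), so the Solymosi injectivity step fails outright rather than degrading by logarithms, and no choice of $\alpha,\beta$ falls out of the descent you describe. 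Note also that if an energy bound of genuinely Solymosi strength did survive, it would give the exponent $\tfrac13$ rather than $\tfrac15$, which is a hint that this cannot be the mechanism.

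What the paper does instead is turn the failure mode into the theorem. The Solymosi-style count shows that if $|A+A|$ and $|AA|$ are both small then the nesting must be deep: $A$ contains a \emph{chain} $c_1,\dots,c_n$ with $B_A(c_1)\subseteq\cdots\subseteq B_A(c_n)$ of length $\Omega\bigl(|A|^5/(|A+A|^2|AA|^2\log^3|A|)\bigr)$. A chain contains a \emph{separable} subset $S$ of proportion $1/q$ (strictly nested balls peeling off one element at a time), and separability together with the strong triangle inequality forces $|kS|\gg_k|S|^k$ for every $k$, because the $k$-fold additive energy of a separable set admits only trivial solutions. Playing this against Pl\"unnecke's bound $|kA|\ll|A+A|^k/|A|^{k-1}$ and letting $k\to\infty$ yields $|A+A|^3|AA|^2\gg_q|A|^{6-o(1)}$; no BSG-type machinery is needed, and the asymmetry of the exponents comes from the chain-counting lemma, not from an energy-plus-Cauchy--Schwarz step. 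So the missing ingredients in your proposal are the dichotomy (small sumset and product set imply a long chain) and the conversion of chains into separable sets with maximal iterated-sumset growth.
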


A sum-product result for function fields with $\delta \geq \frac{1}{5}-o(1)$ follows immediately.

\begin{corollary}
If $A \subseteq \ffield$ is finite then
	$$\max \left\{|A+A|,|AA|\right\}\gg_{q} |A|^{1+\frac{1}{5}-o(1)}.$$
\end{corollary}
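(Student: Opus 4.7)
The plan is to prove the function-field analogue of Solymosi's multiplicative-energy estimate,
\[
E_\times(A) \lesssim_q |A|^{1+o(1)} |A+A|^{3/2},
\]
and to combine it with the Cauchy--Schwarz lower bound $E_\times(A) \geq |A|^4/|AA|$ from Chapter~\ref{chapter:introapproxgroup}. Rearranging $|A|^4/|AA| \lesssim_q |A|^{1+o(1)}|A+A|^{3/2}$ immediately gives $|A+A|^3 |AA|^2 \gg_q |A|^{6-o(1)}$, which is Theorem~\ref{theorem:functionfieldsumproduct}.

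To attack the key inequality, I would imitate Solymosi's decomposition of multiplicative energy by ratios. Writing $E_\times(A) = \sum_{r \in A/A}|\ell_r|^2$ with $\ell_r = \{(a,b) \in A \times A : a/b = r\}$, a dyadic pigeonhole (at a cost of a $\log |A|$ factor) extracts a set $R$ of ratios all satisfying $|\ell_r| \approx \Delta$ and carrying the bulk of the energy, i.e.\ $|R|\Delta^2 \gtrsim E_\times(A)/\log|A|$. In the real case, Solymosi pairs ratios $r,r'$ that are \emph{adjacent} in the real order and observes that the sum-map $((a,b),(a',b')) \mapsto (a+a',b+b')$ is an injection $\ell_r \times \ell_{r'} \hookrightarrow (A+A)^2$, because ordering lets one recover the summands from the sum. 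In $\ffield$ I would substitute the non-archimedean valuation for the real order: the ratios partition into valuation classes $R_k = \{r : v(r) = k\}$, and the ultrametric identity $v(x+y) = \min(v(x),v(y))$ (whenever the valuations differ) gives a tree-like structure to exploit.

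The core technical step would be a non-archimedean analogue of Solymosi's injectivity. For an appropriate notion of ``ultrametrically close'' pairs $r,r' \in R$ --- pairs lying in suitably related branches of the valuation tree --- I would show that the sum-map on $\ell_r \times \ell_{r'}$ is close to being injective, since in the ultrametric world one can read off $v(a), v(b), v(a'), v(b')$ from the sums together with the knowledge of $r, r'$. This yields $|\ell_r||\ell_{r'}| \lesssim |A+A|^2$ for each such pair. The tree structure on $R$ would then be used to produce $\Omega(|R|)$ disjoint close pairs, so that summing gives $|R|\Delta^2 \lesssim |A+A|^2 \cdot |A|^{o(1)}$, and after unpacking the dyadic bookkeeping one arrives at the target bound on $E_\times(A)$.

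The main obstacle is the absence of a total order on $\ffield$. Solymosi's real proof relies essentially on ordering both to pair adjacent rays and to obtain injectivity of the sum-map, and replacing the order with a valuation, though natural, is strictly weaker: many pairs of ratios are ``incomparable'' in the ultrametric, so one extracts fewer good pairs and the per-pair injectivity is harder to secure. This loss is precisely why the exponent drops from Solymosi's $|A+A|^2$ to $|A+A|^{3/2}$, producing $\delta \geq \tfrac15-o(1)$ rather than $\delta \geq \tfrac13-o(1)$. Calibrating the scale at which to cut the valuation tree, handling the finite residue field $\mathbb{F}_q$ (which gives the $\gg_q$ dependence), and tracking the logarithmic losses through the dyadic pigeonholing is where I expect almost all of the work to lie.
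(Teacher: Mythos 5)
Your reduction of the corollary to Theorem \ref{theorem:functionfieldsumproduct} is fine, and your further reduction of the theorem to the energy bound $E_\times(A)\lesssim_q |A|^{1+o(1)}|A+A|^{3/2}$ is arithmetically correct (combine with $E_\times(A)\geq |A|^4/|AA|$ and raise to the appropriate powers). But the core technical step — the non-archimedean substitute for Solymosi's pairing — is not established, and as sketched it cannot work. The paper itself exhibits the obstruction: for $A=\{t^j: 0\le j\le n\}$ the nearest-neighbour balls $B_a$ are \emph{totally nested}, so every element of the valuation tree you propose to cut lies on a single branch. In that situation there are no ``ultrametrically close incomparable pairs'' to pair up at all, and your argument produces nothing; yet this is exactly the case that must be handled, since it is the generic failure mode of Solymosi's argument in $\ffield$. (Note also that Solymosi's injectivity of $((a,b),(a',b'))\mapsto(a+a',b+b')$ for fixed $r\neq r'$ is pure linear algebra and holds over any field; what fails is the \emph{disjointness} of the images over different pairs, which in $\mathbb{R}$ comes from the angular ordering of rays. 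You do not supply any replacement for that disjointness.) Your sketch is also internally inconsistent: the line ``summing gives $|R|\Delta^2\lesssim |A+A|^2\cdot|A|^{o(1)}$'' would yield the full Solymosi bound $E_\times(A)\lesssim |A+A|^2|A|^{o(1)}$ and hence $\delta\geq\frac13-o(1)$, not the $|A+A|^{3/2}$ you claim; the exponent $3/2$ is reverse-engineered from the target rather than derived.

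The paper's actual route turns the degenerate nested case into a win rather than trying to avoid it. It shows (Lemma \ref{theorem:chains}, a good-quadruples count in the spirit of Solymosi) that if both $|A+A|$ and $|AA|$ are small then $A$ contains an $A$-chain of size $\Omega\bigl(|A|^5/(|A+A|^2|AA|^2\log^3|A|)\bigr)$ — i.e.\ many nested balls. A chain contains a separable subset of proportion $1/q$ (Lemma \ref{theorem:strict}), and separable sets satisfy $|kS|\gg_k|S|^k$ by an ultrametric argument (Lemma \ref{theorem:separablegrowth}); Pl\"unnecke then caps $|kA|$ by $|A+A|^k/|A|^{k-1}$, and letting $k\to\infty$ yields the theorem. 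If you want to salvage your energy-based approach you would need, at minimum, a dichotomy of this kind: either the valuation tree branches enough to run a Solymosi pairing, or it is so nested that a separability/iterated-sumset argument applies. Without that second arm the proof has a genuine gap.
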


The next section provides more background on function fields, and explains the structure of the proof and the rest of the chapter. 
 
\section{Function fields}\label{section:ff}

This section has two parts. The first part, Section \ref{section:ff1}, gives some standard background on the geometry of function fields. The second part, Section \ref{section:ff2}, explains how this will be used in the proof of Theorem \ref{theorem:functionfieldsumproduct} and describes how the rest of the chapter is organised. 

\subsection{Background}\label{section:ff1}

The field $\ffield$ has a norm or valuation $|\cdot|$ given by 
	$$|x|=
	\left\{
	\begin{array}{ll}
	q^{\deg(x)}& x \neq 0\\
	0& x =0
	\end{array}
	\right.
	$$
where $\deg(x)$ is the \textbf{degree} of $x$, i.e. the maximal $j$ for which $x_j$ is non-zero. This valuation has the \textbf{non-archimedean} property that
	$$|x+y|\leq \max \left\{|x|,|y|\right\}$$
which is stronger than the usual triangle inequality. As a consequence $\mathbb{F}_q(t)$ has an unusually rigid geometry, which  will be exploited when proving sum-product estimates. A particular concern will be the behaviour of \textbf{balls}, which are as usual sets of the form
	$$B(x,r)=\left\{y \in \ffield:|x-y|\leq r\right\}.$$
In $\ffield$, the non-archimedean property implies the following fact, which is considered to be standard.

\begin{lemma}\label{theorem:nonarchball}
If $B_1$ and $B_2$ are balls in $\ffield$ then either they are disjoint, or $B_1 \subseteq B_2$, or $B_2 \subseteq B_1$. If in addition $B_1$ and $B_2$ have the same radius then either they are disjoint or $B_1=B_2$.
\end{lemma}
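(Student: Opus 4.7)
The plan is to use the non-archimedean property $|x+y|\leq \max\{|x|,|y|\}$ to show that any two balls which share even a single point must be nested. I will first dispose of the case where the balls are disjoint (nothing to prove), and then assume $B_1 \cap B_2 \neq \emptyset$, picking a common point $y$ to use as a ``pivot''.

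Write $B_1=B(x_1,r_1)$ and $B_2=B(x_2,r_2)$, and without loss of generality assume $r_1 \leq r_2$. The goal is to show $B_1 \subseteq B_2$. Let $z \in B_1$ be arbitrary. The idea is that $z$ is close to $x_1$, which is close to $y$ (since $y \in B_1$), which is close to $x_2$ (since $y \in B_2$); applying the non-archimedean inequality twice absorbs all three of these closeness bounds into the larger radius $r_2$. Concretely, I would bound
\[ |z - x_1| \leq r_1, \qquad |y - x_1| \leq r_1, \qquad |y - x_2| \leq r_2, \]
then write $z - x_2 = (z-x_1) - (y - x_1) + (y - x_2)$ and apply the non-archimedean inequality to conclude $|z-x_2| \leq \max\{r_1,r_1,r_2\} = r_2$, so $z \in B_2$. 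Since $z$ was arbitrary, $B_1 \subseteq B_2$.

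For the second claim, if $r_1 = r_2$ and the balls are not disjoint, the argument above gives both $B_1 \subseteq B_2$ and $B_2 \subseteq B_1$ by symmetry, so $B_1 = B_2$.

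There is no real obstacle here; the proof is essentially just an application of the ultrametric inequality, and the main thing to get right is bookkeeping of which point is used as the pivot. The only mild subtlety is that the statement concerns balls as \emph{sets}, so one should note in passing that a ball does not determine its centre uniquely in the non-archimedean setting (in fact, any point of a ball may serve as its centre, which is precisely the content of the pivoting step), but this is not needed for the statement itself.
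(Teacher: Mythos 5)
Your proof is correct and is essentially the paper's argument: both proofs pick a common point of the two balls and apply the ultrametric inequality (the paper first bounds the distance between the centres and then deduces containment, whereas you telescope the three differences in one step, but this is the same idea). The closing remark about centres not being unique is accurate and, as you say, not needed.
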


\begin{proof}
Let $B_1=B(x,r)$ and $B_2=B(y,s)$. If there exists $a \in B(x,r) \cap B(y,s)$ then 
	$$|x-y|\leq \max\left\{|a-x|,|a-y|\right\}\leq \max\left\{r,s\right\}.$$ 
If $r \leq s$ then this implies $B(x,r)\subseteq B(y,s)$ since if $b \in B(x,r)$ then 
	$$|y-b|\leq \max\left\{|y-x|,|b-x|\right\}\leq \max\left\{r,s\right\}=s.$$
Conversely if $s \leq r$ then $B(y,s)\subseteq B(x,r)$. Hence if $r=s$ then $B(x,r)=B(y,s)$.
\end{proof}

%\begin{lemma}\label{theorem:grouptranslate}
%Let $A \subseteq \ffield$ and let $r,k$ be integers. If $A$ is contained in a ball of radius $r$, then the $k$-fold sumset $kA$ is also contained in a ball of radius $r$.
%\end{lemma}
%
%\begin{proof}
%This is an immediate consequence of the fact that any ball in $\mathbb{F}_q(t)$ is a translation of an additive subgroup.
%\end{proof}

\subsection{Sum-product considerations}
\label{section:ff2}

The $\ffield$ sum-product proof builds upon an approach of Solymosi \cite{solymosi3} for sum-products in $\mathbb{C}$. When adapting this method, the non-archimedean geometry of \(\ffield\) turns out to be a mixed blessing. 

First, the bad news. Solymosi's argument fails at a critical point in the \(\ffield\) setting, for the following reason. For each $a \in A$, let $a' \in A \setminus\left\{a\right\}$ be such that $|a-a'|$ is minimal, and let $B_a$ be the ball of radius $|a-a'|$ centred on $a$. Solymosi's method uses the crucial fact that a single complex number can be contained in at most $O(1)$ of the $B_a$. This fails spectacularly in \(\ffield\): in this setting an element could be contained in as many as $|A|$ of the $B_a$, as demonstrated by the example 
	$$A=\left\{t^j:0 \leq j \leq n\right\}$$
where
	$$B_{t^j}=\left\{x \in \ffield:|x|\leq q^j\right\}$$
for $j\geq 1$ and $B_{1}=B_{t}$, meaning that every one of the $|A|$ balls contains $0$ as an element. 

But all is not lost. In the example above, the astute reader will notice that $|A+A|\approx |A|^2$, and so a strong-sum product estimate holds despite the failure of Solymosi's argument. In fact we will be able to show that something like this is possible whenever the Solymosi argument fails, by defining and considering \textbf{separable sets}.

Say that a set $A$ is \textbf{separable} if its elements can be indexed as
	$$A=\left\{a_1,\ldots,a_{|A|}\right\}$$ 
in such a way that for any $1\leq j \leq |A|$ there is a ball $B_j$ with
	$$A \cap B_j = \left\{a_1,\ldots,a_j\right\}.$$

Separability is fairly unexciting in the complex setting, but in the non-archimedean regime of $\ffield$ it is a stronger notion. The rigid geometry makes it harder to find separable sets, but where they do exist it will in fact imply the existence of large sumsets. The idea, therefore, is to show that a large separable sets must exist whenever the Solymosi approach fails. Combining this with an analysis of separable sets as having large sumsets will lead to a proof of Theorem \ref{theorem:functionfieldsumproduct}. 

In what follows, \textbf{Section \ref{section:separable}} analyses separable sets and develops the necessary results about their sumsets. \textbf{Section \ref{section:solymosi}} then adapts Solymosi's proof from \cite{solymosi3} to establish that if $|A+A|$ and $|AA|$ are both small then $A$ must contain a large separable set. \textbf{Section \ref{section:functionfieldproof}} uses these results to prove Theorem \ref{theorem:functionfieldsumproduct}. \textbf{Section \ref{section:functionfieldfurther}} considers some directions for further work.

\section{Separable sets}\label{section:separable}

This section analyses sumsets of separable sets. Recall that a set $A \subseteq \ffield$ is \textbf{separable} if its elements can be indexed as
	$$A=\left\{a_1,\ldots,a_{|A|}\right\}$$
in such a way that for each $1 \leq j \leq |A|$ there is a ball $B_j$ with 
	$$A \cap B_j =\left\{a_1,\ldots,a_j\right\}.$$  
Say that the balls $B_j$ \textbf{separate} $A$. It is an immediate consequence of the definition that a subset of a separable set is itself separable.

As in Chapter \ref{chapter:introapproxgroup} write $kA$ for the $k$-fold sumset of $A$. The following lemma shows that if $A$ is separable then  $kA$ has essentially maximum-possible cardinality.  

\begin{lemma}\label{theorem:separablegrowth}
If $A$ is separable then 
	$$|k A|\gg_k|A|^{k}$$ 
for any natural number $k$.
\end{lemma}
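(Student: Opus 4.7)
The strategy is to exploit the non-archimedean geometry of $\ffield$ to show that distinct strictly-increasing $k$-tuples of indices from $A$ give rise to distinct elements of $kA$, producing $\binom{|A|}{k} \gg_k |A|^k$ distinct sums.

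I would first normalise the separating balls. Since every $B_j$ contains $a_1$, Lemma \ref{theorem:nonarchball} implies the $B_j$ form a nested chain. Shrinking each $B_j$ to the smallest ball around $a_1$ covering $\{a_1,\ldots,a_j\}$ preserves the separating property, so I may assume $B_j = B(a_1,\rho_j)$ with $\rho_1 = 0$ and $\rho_2 < \rho_3 < \cdots$ all lying in the value group $\{q^n : n \in \mathbb{Z}\}$. Writing $d_j = |a_j - a_1|$ for $j \geq 2$, the hypothesis $a_{j+1} \notin B_j$ forces $d_{j+1} > \rho_j$, hence $d_{j+1} \geq q\rho_j$; the strict-inequality case of the ultrametric inequality then gives $|a_{j+1} - a_i| = d_{j+1}$ for every $i \leq j$.

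The heart of the argument is the following distinctness claim. Fix two distinct strictly-increasing $k$-tuples $I = (i_1 < \cdots < i_k)$ and $J = (j_1 < \cdots < j_k)$ from $\{1,\ldots,|A|\}$, with corresponding sums $s, t \in kA$. Since $|I \setminus J| = |J \setminus I|$, one has
$$
s - t = \sum_{i \in I \setminus J}(a_i - a_1) - \sum_{j \in J \setminus I}(a_j - a_1).
$$
Let $L = \max(I \triangle J)$, which necessarily satisfies $L \geq 2$, and assume without loss of generality that $L \in I \setminus J$. Every term other than $(a_L - a_1)$ has index strictly less than $L$, so lies in $B(a_1,\rho_{L-1})$ and has norm at most $\rho_{L-1}$. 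The distinguished term $(a_L - a_1)$ has norm $d_L \geq q\rho_{L-1}$, strictly larger. The ultrametric inequality then gives $|s - t| = d_L > 0$, so $s \neq t$.

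Since there are $\binom{|A|}{k}$ strictly-increasing $k$-tuples, this yields $|kA| \geq \binom{|A|}{k} \gg_k |A|^k$ (the claim is trivial for $|A| < k$, by absorbing into the implicit constant). The main potential obstacle is the careful ultrametric bookkeeping needed to confirm that the radii $\rho_j$ really do sit in the discrete value group, so the strict inequality $d_L > \rho_{L-1}$ in fact becomes a factor-$q$ gap, and that the distinguished term $(a_L - a_1)$ dominates all other terms regardless of their number; both are routine consequences of the rigidity of $\ffield$ established in Section \ref{section:ff1}.
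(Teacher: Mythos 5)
Your proof is correct, but it takes a genuinely different route from the paper's. The paper reduces the claim via Cauchy--Schwarz to the bound $E_k(A)\ll_k|A|^k$ on the $k$-fold additive energy, and then shows that every solution of $a_1+\ldots+a_k=b_1+\ldots+b_k$ is ``trivial'' by gathering terms into $n_1c_1+\ldots+n_tc_t=0$, tracking the multiplicities $n_i$ modulo the characteristic $p$, and using separability plus the ultrametric inequality to show the outermost $c_i$ cannot be cancelled. You instead inject the $k$-element subsets of $A$ directly into $kA$: for two distinct subsets $I,J$ the difference of sums, recentred at $a_1$, has a unique term of strictly maximal norm (the one indexed by $\max(I\triangle J)$, which lies outside the separating ball containing all the other indices), so the difference is nonzero and $|kA|\geq\binom{|A|}{k}\gg_k|A|^k$. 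The two arguments share the same non-archimedean engine --- the extremal element singled out by separability dominates the sum --- but your packaging is cleaner: because each index occurs with coefficient exactly $\pm1$ (the tuples being strictly increasing, and $|I\setminus J|=|J\setminus I|$ making the $a_1$'s cancel), you sidestep both the energy formalism and the characteristic-$p$ bookkeeping of multiplicities that the paper has to do. The paper's version does yield the slightly stronger energy statement $E_k(A)\ll_k|A|^k$, though that is not used elsewhere. One small remark: your discussion of the value group and the ``factor-$q$ gap'' is superfluous --- the strict inequality $|a_L-a_1|>\rho_{L-1}$ coming from $a_L\notin B_{L-1}$ is all the ultrametric domination requires.
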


\begin{proof}
Let $E_k(A)$ denote the $k$-fold additive energy of $A$, i.e. the number of solutions to
	\begin{equation}\label{eq:kenergy}
	a_1+\ldots+a_k=b_1+\ldots+b_k
	\end{equation}
with the $a_i,b_i \in A$. For $x \in kA$ write $\mu(x)$ for the number of solutions to $x=a_1+\ldots+a_k$. By Cauchy-Schwarz as in Lemma \ref{theorem:energycs1},
	$$|A|^{2k}\approx \left(\sum_{x \in kA}\mu(x)\right)^2\leq |kA|E_k(A)$$ 
and so it suffices to show that $E_k(A)\ll_k |A|^k$, i.e. that there are at most $O_k\left(|A|^k\right)$ solutions to \eqref{eq:kenergy}. 

Say that a solution to \eqref{eq:kenergy} is \textbf{trivial} if at least $2k-1$ of the $2k$ terms occur with multiplicity at least $2$. By elementary counting there are at most $O_k(|A|^k)$ trivial solutions, so it suffices to show that there are no non-trivial solutions.

Suppose for a contradiction that a non-trivial solution to \eqref{eq:kenergy} exists. Gathering terms gives an expression of the form 
	\begin{equation}\label{eq:energygathered}
	n_1c_1+\ldots+n_tc_t =0
	\end{equation}    
where the $c_i $ are distinct elements of $A$ and, since we are in characteristic $p$, the $n_i$ are integers non-strictly between $1$ and $p-1$. The assumption of non-triviality implies that $t \geq 2$, since at least two of the terms have $n_i\in \left\{1,p-1\right\}$. Additionally, note that
	\begin{equation}\label{eq:coeffsgathered}
	n_1+\ldots+n_t\equiv 0\pmod p .
	\end{equation}    
Indeed after gathering terms on the left the different multiplicities $n_i$ must sum to zero, since there are the same number of terms on the left of \eqref{eq:kenergy} as on the right. Any $c_i$ for which $n_i \equiv 0\pmod p$ is discarded, meaning that the sum of the remaining multiplicities is $0\pmod p$ as well. 

Now since $A$ is separable and the $c_i$ are in $A$ we may relabel them if necessary to assume the existence of a ball $B(x,r)$ such that $c_1 \notin B$ but $c_2,\ldots,c_t \in B$. By \eqref{eq:energygathered},
	\begin{align*}
	|c_1-x|&=|n_1c_1-n_1x|\\
	&=|n_2c_2+\ldots+n_tc_t + n_1x|.
	\end{align*}
Then by \eqref{eq:coeffsgathered} and the non-archimedean property it follows that
	\begin{align*}
	|c_1-x|&=|n_2(c_2-x)+\ldots+n_t(c_t-x)|\\
	&\leq \max \left\{|c_2-x|,\ldots,|c_t-x|\right\}\\
	&\leq r
	\end{align*}
and hence $c_1 \in B(x,r)$ which is a contradiction. Thus there are no non-trivial solutions and the proof is complete.
\end{proof}

\section{Finding many separable sets}\label{section:solymosi}

The goal in this section is to show that if the sumset and product set of a set $A$ are both small then it must contain a large separable set. For this it adapts the argument of Solymosi \cite{solymosi3} for complex sum-products discussed in Section \ref{section:ff2}. Note that all of the analysis remains in the $\ffield$ setting; indeed some of the facts of non-archimedean geometry deployed here are manifestly false in $\mathbb{C}$.

A couple of new definitions are required. Define 
	\begin{align*}
	r_A(a)&=\min_{\substack{a'\in A\\ a'\neq a}}|a-a'|\\
	B_A(a)&=B(a,r_A(a)).\\
	\end{align*}
Additionally, say that $C \subseteq A$ is an \textbf{$A$-chain} if its elements can be indexed as $C=\left\{c_1,\ldots,c_n\right\}$ in such a way that
	$$B_A(c_1)\subseteq \ldots \subseteq B_A(c_n).$$
The following argument, a strengthened form of that found in \cite{solymosi3}, finds a large chain in $A$ as long as the sumset and product set are both small. In the event that this condition were to fail there would of course be nothing to prove.

\begin{lemma}\label{theorem:chains}
Any set $A$ contains an $A$-chain of cardinality \[\Omega\left(\frac{|A|^5}{|A+A|^2|AA|^2\log^3|A|}\right).\]
\end{lemma}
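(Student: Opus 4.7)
The overall strategy is to adapt Solymosi's complex sum-product argument \cite{solymosi3} by exploiting the ultrametric structure: in $\ffield$ the balls $\{B_A(a)\}_{a\in A}$ are organised into a rooted tree under inclusion by Lemma \ref{theorem:nonarchball}, and an $A$-chain is precisely a root-to-leaf style path in this tree. Thus proving the lemma amounts to showing that the tree of nearest-neighbour balls must be \emph{deep} when $|A+A|$ and $|AA|$ are both small.

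The plan is to first perform two dyadic pigeonholes on $A$ to reduce to a regular regime: pigeonhole on the scale of $r_A(a)$ (approximate radius of the nearest-neighbour ball) and on the multiplicity $|A\cap B_A(a)|$. This accounts for two of the three logarithmic losses and produces a refined subset $A'\subseteq A$ with $|A'|\gtrsim |A|$ on which the nearest-neighbour balls all have comparable radius and comparable point-count. Next, for each such $a\in A'$ pick a nearest neighbour $a^\ast\in A'\cap B_A(a)\setminus\{a\}$ and consider the paired set $\{(a,a^\ast):a\in A'\}$. The core quantity to control is the number of ``ratio coincidences'' of these pairs: how often can $(a-a^\ast)/(b-b^\ast)$ repeat? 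By a Solymosi-type argument, where the role played by lines of a given slope in $\mathbb{C}^2$ is played here by the equivalence classes of balls in the ultrametric tree, an upper bound of order $|A+A|^2|AA|^2/|A|^3$ on this count should follow. The key geometric input is the ultrametric analogue of Solymosi's ``consecutive sums are distinct'' lemma: if several pairs share a common ratio, then their shifted sums $a+b$ populate $|A+A|\cdot|A+A|$ slots in essentially disjoint fashion, which for $\ffield$ is enabled by the strong triangle inequality together with Lemma \ref{theorem:nonarchball}.

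Having obtained this upper bound, dualising it gives many pairs $(a,a^\ast)$ supported on few distinct ratios, hence many pairs of balls $B_A(a),B_A(a^\ast)$ that must be nested (as they share a common point and lie in the tree). A final dyadic pigeonhole on the depth parameter extracts a single chain from this abundance of nested pairs, costing the third factor of $\log|A|$ and yielding a chain of length $\Omega(|A|^5/(|A+A|^2|AA|^2\log^3|A|))$. The hard part will be the incidence-theoretic middle step, since there is no direct Szemer\'edi–Trotter analogue available over $\ffield$; however, I expect the rigidity of the ultrametric (balls are either disjoint or nested, and equal-radius balls either coincide or are disjoint) to substitute for the topology-dependent parts of Solymosi's original proof. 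The separability machinery of Section \ref{section:separable} does not enter this lemma directly, but the chain produced will feed into Lemma \ref{theorem:separablegrowth} in the following section to conclude Theorem \ref{theorem:functionfieldsumproduct}.
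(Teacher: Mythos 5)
There is a genuine gap here, and it sits exactly where you flag the ``hard part''. Your middle step --- an upper bound of order $|A+A|^2|AA|^2/|A|^3$ on ratio coincidences $(a-a^\ast)/(b-b^\ast)$, followed by a ``dualisation'' to nested balls --- is asserted rather than proved, and it sidesteps the real obstruction. The paper's own discussion in Section \ref{section:ff2} identifies the obstruction precisely: the nearest-neighbour balls $B_A(a)$ can have covering multiplicity as large as $|A|$ (e.g.\ $A=\{t^j\}$), so the Solymosi count does not go through unconditionally, and no amount of ultrametric rigidity restores it. The paper's resolution is a self-referential pigeonhole that you are missing: define $N(a)$ to be the length of the longest $A$-chain \emph{terminating} at $a$, and dyadically pigeonhole on $N(a)$ to get $A_j$ with $|A_j|\gg |A|/\log|A|$ and $N(a)\approx 2^j$ on $A_j$. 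The point is that the set of $a\in A_j$ whose balls $B_A(a)$ contain a fixed element $v$ is itself an $A$-chain (by Lemma \ref{theorem:nonarchball}), hence has size at most $2^{j+1}$. This turns the unbounded covering multiplicity into the quantity being solved for: the Solymosi-style injection $(a,b,c,d)\mapsto(a+c,b+c,ad,bd)$ with $b\in B_A(a)\cap A$, counted over ``good'' quadruples, gives $2^j|A_j||A|^2 \ll Q \ll 2^{2j}|A+A|^2|AA|^2/|A_j|^2$, and comparing yields the claimed lower bound on $2^j$, i.e.\ on the chain length, directly. No dualisation step is needed, and none is available: an abundance of nested \emph{pairs} of balls does not produce a long chain (one large ball containing many pairwise disjoint small balls has $\Theta(|A|)$ nested pairs but only chains of length $2$), so your final extraction step would fail even if the ratio-coincidence bound were established.

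A second, more local problem: your first dyadic pigeonhole ``on the scale of $r_A(a)$'' cannot be done at a cost of $\log|A|$. The radii $r_A(a)$ are powers of $q$ whose exponents are not bounded in terms of $|A|$, and they can take up to $|A|-1$ pairwise distinct (exponentially separated) values, as again the example $A=\{t^j\}$ shows; restricting to a single dyadic scale could therefore retain only $O(1)$ points. The paper avoids any pigeonhole on the radius: the only two logarithms it spends in this lemma come from the single pigeonhole on $N(a)$ and from the two averaging (``goodness'') conditions, and the third is absorbed into $|A_j|\gg|A|/\log|A|$ appearing squared in the upper bound on $Q$. You should replace your two opening pigeonholes and the ratio-coincidence scheme with the $N(a)$-pigeonhole and the good-quadruple count.
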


\begin{proof}
For each $a \in A$ write $N(a)$ for the maximal cardinality $N$ of an $A$-chain $C=\left\{c_1,\ldots,c_{N}\right\}$ for which $c_{N}=a$. Note for future reference that 
	$$N(a) \leq \left|B_A(a) \cap A\right|$$ 
since if $C$ is such an $A$-chain then $C \subseteq A$ by definition and for each $c \in C$ we have $c \in B_A(c)\subseteq B_A(a)$.

It suffices to find $a \in A$ such that
	$$N(a)\gg \frac{|A|^5}{|A+A|^2|AA|^2\log^3|A|}.$$
Begin with a dyadic pigeonholing. For each $0 \leq j \leq \log_2 |A|$ define $A_j$ to be the set of $a \in A$ for which $2^j \leq N(a) < 2^{j+1} $. The $A_j$ partition $A$ and so
	\[\sum_{j=0}^{\log_2 |A|}|A_j|=|A|.\]
Hence there exists $j$ for which $|A_j|\gg |A|/\log |A|$. We shall show that 
	$$2^j\gg \frac{|A|^5}{|A+A|^2|AA|^2\log^3|A|.}$$ 

To this end, say that a pair $(a,c)\in A \times A$ is \textbf{additively good} if 
	$$|(A+A)\cap (B_A(a)+c)|\leq \frac{2^{j+3}|A+A|}{|A_j|}$$	
and that $(a,d)\in A \times A$ is \textbf{multiplicatively good} if	
	$$|(AA)\cap (d\cdot B_A(a))| \leq \frac{2^{j+3}|AA|}{|A_j|}.$$	
Say that a quadruple $(a,b,c,d) \in A^4$ is \textbf{good} if
	\begin{enumerate}
	\item $a \in A_j$.
	\item $b \in B_A(a) \cap A$.
	\item $(a,c)$ is additively good.
	\item $(a,d)$ is multiplicatively good.
	\end{enumerate}

Write $Q$ for the number of good quadruples. We shall bound $Q$ from below to obtain
	\begin{equation}\label{eq:tom1}
	Q \gg 2^j |A_j||A|^2
	\end{equation}
and bound it from above to obtain
	\begin{equation}\label{eq:tom2}
	Q \ll \frac{2^{2j}|A+A|^2|AA|^2}{|A_j|^2}. 
	\end{equation}
Comparing (\ref{eq:tom1}) and (\ref{eq:tom2}) will then give the required bound on $2^j$ since $|A_j|\gg |A|/\log |A|$. Let's first establish (\ref{eq:tom1}). For fixed $c \in A$ we have
	\begin{align*}
	\sum_{a \in A_j}|(A+A)\cap (B_A(a)+c)|&=\sum_{a \in A_j}\sum_{u \in A+A}\mathds{1}\left( u \in B_A(a)+c\right)\\
	&= \sum_{v \in A+A-c} \sum_{a \in A_j}\mathds{1}\left(v \in B_A(a)\right)\\
	&= \sum_{v \in A+A-c}|C_j(v)|
	\end{align*}
where $C_j(v)$ is the set of $a \in A_j$ with $v \in B_A(a)$. 

Note that $C_j(v)$ is an $A$-chain. This follows from Lemma \ref{theorem:nonarchball} since for any two $a,b \in C_j(v)$ we have $v \in B_A(a) \cap B_A(b)$ and so either $B_A(a) \subseteq B_A(b)$ or $B_A(b) \subseteq B_A(a)$. 

Now since $C_j(v) \subseteq A_j$ and $C_j(v)$ is an $A$-chain, there is an $a \in A_j$ for which 
	$$\left|C_j(v)\right| \leq N(a) \leq 2^{j+1}.$$ 
We therefore have
	$$\sum_{a \in A_j}|(A+A)\cap (B_A(a)+c)|\leq 2^{j+1}|A+A|$$
and hence 
	$$|(A+A)\cap (B_A(a)+c)|\leq \frac{2^{j+3}|A+A|}{|A_j|}$$
holds for at least $3|A_j|/4$ elements $a \in A_j$. So for fixed $c \in A$ there are at least $3|A_j|/4$ elements $a \in A_j$ for which $(a,c)$ is additively good.

By the same argument we may show that for fixed $d \in A\backslash\{0\}$ there are at least $3|A_j|/4$ elements $a \in A_j$ for which $(a,d)$ is multiplicatively good.

Thus for any $c\in A$ and $d\in A\backslash\{0\}$ there are at least $|A_j|/2$ elements $a \in A_j$ for which $(a,c)$ is additively good and $(a,d)$ is multiplicatively good, i.e. for which conditions 3 and 4 hold. Furthermore for each such $a \in A_j$ there are at least $2^j$ elements $b \in A$ for which condition 2 holds, since 
	$$2^j \leq N(a) \leq \left|B_A(a)\cap A\right|.$$ 
In total therefore,
	$$Q \gg |A|^2|A_j|2^j$$
which concludes the proof of \eqref{eq:tom1}.

We now prove \eqref{eq:tom2}. Note that the map
 $$(a,b,c,d)\mapsto (a+c,b+c,ad,bd)$$
is injective and so it suffices to bound the number of possibilities for this latter expression, subject to the constraint that $(a,b,c,d)$ is good. There are certainly at most $|A+A|$ possibilities for $a+c$ and at most $|AA|$ for $ad$, so it suffices to show that if these are fixed then there are at most $O\left(2^j|A+A|/|A_j|\right)$ possibilities for $b+c$ and at most $O\left(2^j|AA|/|A_j|\right)$ for $bd$.

First establish the bound on the number of $b+c$. Note that if 
	$$a+c=a'+c'$$
then either 
	$$B_A(a)+c \subseteq B_A(a')+c'$$ 
or 
	$$B_A(a')+c' \subseteq B_A(a)+c$$ 
since both sets are balls with the same centre $a+c$. 

As a consequence, if $G\subseteq A \times A $ is the set of additively good pairs $(a,c)$, then for any $x \in A\overset{G}+A$ there is a fixed additively good pair $(a_x,c_x)$ such that 
	$$B_A(a)+c \subseteq B_A(a_x)+c_x$$
whenever $a+c=x$ and $(a,c)$ is additively good. Thus if $a+c=x$ is the fixed first co-ordinate and $b+c$ is a possible second co-ordinate then since $b \in B_A(a) \cap A$ and $c \in A$ we have
	\begin{align*}
	b+c &\in \left(A+A\right) \cap \left(B_A(a)+c\right)\\
	& \subseteq \left(A+A\right) \cap \left(B_A(a_x)+c_x\right).
	\end{align*}
Since $(a_x,c_x)$ is additively good, there are, as required, at most $O\left(2^j|A+A|/|A_j|\right)$ possibilities for $b+c$. The argument that there are at most at most $O\left(2^j|AA|/|A_j|\right)$ for $bd$ is similar. 

In total therefore
	$$Q \leq \frac{2^{2j+4}|A+A|^2|AA|^2}{|A_j|^2}$$ 
which concludes the proof of (\ref{eq:tom2}) and thus of the lemma.
\end{proof}

The following result shows that any chain contains a large separable subset, allowing Lemma \ref{theorem:separablegrowth} to be applied to the chain found in Lemma \ref{theorem:chains}.

\begin{lemma}\label{theorem:strict}
If $C$ is an $A$-chain then $C$ contains a separable set of cardinality at least $|C|/q$.
\end{lemma}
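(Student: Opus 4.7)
The plan is to produce the separable subset by choosing at most one representative from each equivalence class of the relation $c_i \sim c_j \iff B_A(c_i) = B_A(c_j)$ on $C$. The first step is to verify that each class has at most $q$ elements. If $c_i, c_j$ are distinct elements of a class whose common $B_A$-ball $B$ has radius $r$, then $r_A(c_i) = r_A(c_j) = r$ and both lie in $B$; the minimum-distance definition gives $|c_i - c_j| \geq r$, while joint membership of $B$ gives $|c_i - c_j| \leq r$, so $|c_i - c_j| = r$. But in $\ffield$ any ball of radius $r$ partitions into $q$ pairwise disjoint sub-balls of radius $r/q$ (obtained by fixing the next Laurent coefficient), and no two elements at distance exactly $r$ can sit in a common sub-ball of radius $r/q$. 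Hence at most $q$ elements per class.

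Picking one representative from each class produces $S \subseteq C$ with $|S| \geq |C|/q$. Since the chosen representatives have pairwise distinct $B_A$-balls, and these balls sit inside the nested chain, I can re-index $S = \{s_1, \ldots, s_m\}$ so that
\[
B_A(s_1) \subsetneq B_A(s_2) \subsetneq \cdots \subsetneq B_A(s_m).
\]

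I then claim $S$ is separable, with witnesses $\tilde B_j := B_A(s_j)$. The inclusion $\{s_1, \ldots, s_j\} \subseteq S \cap \tilde B_j$ is immediate from the nesting. For the reverse inclusion, suppose $s_k \in B_A(s_j)$ for some $k > j$; then $|s_j - s_k| \leq r_A(s_j)$. On the other hand, the definition of $r_A(s_k)$ as a minimum over $A \setminus \{s_k\}$ forces $|s_j - s_k| \geq r_A(s_k)$, and the strict containment $B_A(s_j) \subsetneq B_A(s_k)$ together with the quantisation of radii in $q^{\mathbb{Z}}$ yields $r_A(s_k) \geq q \cdot r_A(s_j)$. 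Combining gives $r_A(s_j) \geq q \cdot r_A(s_j)$, a contradiction.

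I do not anticipate a real obstacle: the argument rests on two clean structural features of the non-archimedean geometry of $\ffield$, namely that any ball decomposes into exactly $q$ sub-balls at the next scale down, and that distinct radii are separated by a factor of at least $q$. These are precisely what cap the loss in cardinality at the factor of $q$ appearing in the statement.
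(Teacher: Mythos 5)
Your proposal is correct and follows essentially the same route as the paper: quotient the chain by the relation $B_A(c_i)=B_A(c_j)$, show each class has at most $q$ elements, and observe that representatives with strictly nested balls form a separable set separated by those balls. Your class-size bound via the decomposition of a ball of radius $r$ into $q$ disjoint sub-balls of radius $r/q$ is just a repackaging of the paper's pigeonhole on the leading coefficients of the differences, and your separability check is the same contradiction argument the paper runs through Lemma \ref{theorem:nonarchball}.
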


\begin{proof}
Observe that any subset $\left\{c_1,\ldots,c_n\right\} \subseteq C$ with
	\[B_A(c_1)\subsetneq \ldots \subsetneq B_A(c_n).\]
is separable. Indeed, such a set is separated by the balls $B_A(c_i)$ because if $c_{i+1}$ were an element of $B_A(c_i)$ it would follow that $r_A(c_{i+1})=r_A(c_i)$  and so by Lemma \ref{theorem:nonarchball} we would have the contradiction $B_A(c_i)=B_A(c_{i+1})$

Define an equivalence relation on elements of $A$ by $a \sim b$ if and only if $B_A(a)=B_A(b)$. To prove the lemma it suffices to show that each equivalence class contains at most $q$ elements of $A$. 

Note first that if $a \sim b$ then 
	$$|a-b|=r_A(a)=r_A(b).$$ 
Indeed, since $B_A(a)=B_A(b)$ it follows that $b \in B_A(a)$ and so $|a-b|\leq r_A(a)$. However by minimality, $|a-b|\geq r_A(a)$ and so $|a-b|=r_A(a)$. Similarly $|a-b|=r_A(b)$. 

Suppose for a contradiction that there is an equivalence class containing elements $a_1,\ldots,a_{q+1}$. Consider differences $a_1-a_i$ for $2 \leq i \leq q+1$. By the last paragraph we have
	$$|a_1-a_i|=r_A(a_1)=r_A(a_i).$$ 
Now look at the leading terms of the $a_1-a_i$. Since the leading term must be non-zero, there are only $q-1$ possibilities and so by the pigeonhole principle there must exist $i \neq j$ such that $a_1-a_i$ and $a_1-a_j$ have the same leading term. Since 		
	$$|a_1-a_i|=|a_1-a_j|=r_A(a_1)$$
it follows that $a_1-a_i$ and $a_1-a_j$ have the same degree, and that this is strictly greater than the degree of
	$$a_i-a_j=(a_1-a_j)-(a_1-a_i).$$
This yields the contradiction 
	$$r_A(a_i)=|a_i-a_j|<|a_1-a_i|=r_A(a_i)$$
and so concludes the proof.
\end{proof}

\section{Proof of Theorem \ref{theorem:functionfieldsumproduct}}\label{section:functionfieldproof}

Theorem \ref{theorem:functionfieldsumproduct} now follows by combining Lemma \ref{theorem:separablegrowth} from Section \ref{section:separable} with Lemma \ref{theorem:chains} and Lemma \ref{theorem:strict} from Section \ref{section:solymosi}.

\begin{proof}[Proof of Theorem \ref{theorem:functionfieldsumproduct}]
By Lemma \ref{theorem:chains}, the set $A$ contains an $A$-chain of cardinality
	$$\Omega\left(\frac{|A|^5}{|A+A|^{2}|AA|^{2}\log^{3}|A|}\right).$$
By Lemma \ref{theorem:strict} it therefore contains a separable set $S$ of cardinality
	$$\Omega\left(\frac{|A|^5}{q|A+A|^{2}|AA|^{2}\log^{3}|A|}\right)$$
and so Lemma \ref{theorem:separablegrowth} implies
\begin{align*}
|kA|\gg |kS| \gg \left(\frac{|A|^5}{q|A+A|^{2}|AA|^{2}\log^{3}|A|}\right)^k
\end{align*}
Pl\"unnecke's inequality (Lemma \ref{theorem:pl}) shows that $|kA|\ll \frac{|A+A|^k}{|A|^{k-1}}$ for any $k \in \mathbb{N}$ and so combining upper and lower bounds on $|kA|$ gives
	$$|A+A|^k \gg_k \left(\frac{|A|^5}{q|A+A|^{2}|AA|^{2}\log^{3}|A|}\right)^k |A|^{k-1}.$$
Taking $k$-th roots, we get
	$$|A+A| \gg_k \frac{|A|^{6-\frac{1}{k}}}{q|A+A|^{2}|AA|^{2}\log^{3}|A|}.$$
Letting $k$ tend to infinity and rearranging then yields
	$$|A+A|^3 |AA|^2 \gg_q |A|^{6-o(1)}$$
as required.

%Let 
%	$$N= \frac{|A+A|^2|AA|^2\log^3|A|}{|A|^{4}}$$ 
%and 
%	$$K= \frac{|A|^5}{|A+A|^{2}|AA|^{2}\log^{3}|A|}.$$
%By Corollary \ref{theorem:getseparable}, the set $A$ contains a subset $U$ which is a disjoint union of $\Theta_q(N)$ separable sets, each of cardinality $\Theta_q(K)$. For any natural number $k$, Lemma \ref{theorem:analyseseparable} implies that
%	\begin{align*}	
%	|kA|&\geq |kU|\\
%	&\gg_{q,k} N^{1/k}K^{k-1}\\
%	&= \frac{|A|^{5k-5-\frac{4}{k}}}{|A+A|^{2k-2-\frac{2}{k}}|AA|^{2k-2-\frac{2}{k}}\log^{3k-3-\frac{3}{k}} |A|}.
%	\end{align*}
%Pl\"unnecke's inequality (Lemma \ref{theorem:pl}) shows that $|kA|\ll \frac{|A+A|^k}{|A|^{k-1}}$ and so
%	$$|A+A|^{3k-2-\frac{2}{k}}|AA|^{2k-2-\frac{2}{k}}\gg_{q,k}\frac{|A|^{6k-6-\frac{4}{k}}}{\log^{3k-3-\frac{3}{k}} |A|}. $$
%Taking $k$-th roots of both sides and using the trivial inequalities $|A+A|,|AA|\geq |A|$ gives 
%	$$|A+A|^{3}|A+A|^2 \gg_{k,q} \frac{|A|^{6-\frac{2}{k}}}{\log^{3- \frac{3}{k}-\frac{3}{k^2}} |A|}.$$
%Theorem \ref{theorem:functionfieldsumproduct} follows by taking $k$ sufficiently large.
\end{proof}

\section{Further work}\label{section:functionfieldfurther}

\begin{itemize}
\item \textbf{Incidences and expanders.} Now that we have a sum-product estimate in $\ffield$, it is possible to obtain expander results like Theorem \ref{theorem:twovariableFp} and incidence results like Theorem \ref{theorem:incidences} and Theorem \ref{theorem:beck2} for function fields without much fuss. These will be stronger than the finite field case, but the gap versus the $\ffield$ sum-product estimate will be quite large.

Is it possible to do better by working directly with the non-archimedean geometry of $\ffield$? For example, can we obtain an incidence bound in $\ffield^2$ that is almost as strong as the Szmerer\'edi-Trotter theorem in $\mathbb{R}^2$? 
	
\item \textbf{Computer science applications.} At the start of the thesis, we said that we would not worry about applications. However it is worth mentioning that some existing applications of finite field growth results to theoretical computer science may be improved by considering $\ffield$ instead. 

\item \textbf{Other arithmetic combinatorics problems.} There are many more topics in arithmetic combinatorics than considered in this chapter, or indeed in this thesis. Thomas Bloom, with whom the work in this chapter is joint, is investigating a number of such problems in $\ffield$. His upcoming thesis is likely to be worth a read.
\end{itemize}

%Appendix chapter heading style
\renewcommand{\printchaptername}{}
\renewcommand{\chapternamenum}{}
\renewcommand{\printchapternum}{\begin{center}{\HUGE  \textbf{Appendix \thechapter}}\end{center}}
\renewcommand{\afterchapternum}{}
\renewcommand{\printchaptertitle}[1]{\hrule \smallskip \begin{center} \HUGE{#1}\end{center}}
\renewcommand{\afterchaptertitle}{\bigskip\bigskip\bigskip\bigskip\bigskip}
\renewcommand{\printchapternonum}{\vphantom{\HUGE  \textbf{1}}\mbox{}\newline}

\appendix
\addappheadtotoc

\chapter{Pigeonholing}
\label{chapter:pigeon}

This appendix summarises some standard pigeonholing results used throughout the thesis.

\section{Averaging}

Averaging results enable us to take information about the average behaviour of a set and deduce the existence of elements with particular behaviour. The most basic result of this kind shows that at least one element must be at least average, and at least one element must be at most average. It follows by elementary pigeonholing and so we record it without proof. 

\begin{lemma}\label{theorem:average1}Let $A$ be a finite set of real numbers. Then at least one element of $A$ must be greater than or equal to $\frac{1}{|A|}\sum_{a \in A}a$ and at least one must be less than or equal to $\frac{1}{|A|}\sum_{a \in A}a$.
\end{lemma}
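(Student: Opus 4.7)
The plan is to argue by contradiction, handling the two claims symmetrically. Write $\bar{a} = \frac{1}{|A|}\sum_{a \in A} a$ for the average of the elements of $A$.

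First I would address the claim that some element is at least $\bar{a}$. Suppose for contradiction that every element of $A$ is strictly less than $\bar{a}$. Then summing over $A$ yields
\[
\sum_{a \in A} a < \sum_{a \in A} \bar{a} = |A|\bar{a} = \sum_{a \in A} a,
\]
which is absurd. Hence at least one element must satisfy $a \geq \bar{a}$.

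The argument for the existence of an element at most $\bar{a}$ is identical after reversing the inequality: if every element exceeded $\bar{a}$ strictly from below (i.e. was strictly greater than $\bar{a}$), the sum would exceed $|A|\bar{a}$, again contradicting the definition of the average.

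There is no real obstacle here; the statement is a direct consequence of the definition of the arithmetic mean, and the proof amounts to the observation that a finite collection of real numbers cannot all lie strictly on one side of their own average. The only thing to be careful about is making the contradiction genuinely strict, which is why the assumption is phrased with a strict inequality in each case.
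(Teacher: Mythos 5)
Your proof is correct, and it is exactly the elementary averaging argument the paper has in mind: the paper records this lemma without proof, describing it as following "by elementary pigeonholing," which is precisely your observation that a finite set of reals cannot lie entirely strictly on one side of its own mean. The only blemish is the garbled phrasing "exceeded $\bar{a}$ strictly from below (i.e. was strictly greater than $\bar{a}$)" in the second case, but the parenthetical makes the intended (and correct) assumption clear.
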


Lemma \ref{theorem:average1} is so standard that it is used throughout the thesis without reference. More developed versions of this approach enable one to show that not just one element but many must exhibit behaviour not very much different from the average. These are constructed in the main body of the thesis to deal with particular situations.

\section{Dyadic pigeonholing}

The phrase `dyadic pigeonholing' refers to the following result, which at the price of a logarithmic factor allows us to assume that a variable is essentially constant.

\begin{lemma}[Dyadic pigeonholing]
Let $A$ be a finite set of real numbers strictly greater than one and less than or equal to $\alpha$. Then there exists an integer $k$ and a subset $A'$ of $A$ such that every element of $A'$ lies in the interval $(k,2k]$ and 
	$$|A'|k \gg \frac{\sum_{a \in A}a}{\log \alpha}.$$
\end{lemma}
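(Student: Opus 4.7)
The plan is to partition $A$ according to dyadic scales and apply the basic averaging principle (Lemma \ref{theorem:average1}) across the resulting buckets. Since every $a \in A$ satisfies $1 < a \leq \alpha$, each such $a$ lies in a unique interval of the form $(2^j, 2^{j+1}]$ for some integer $j$ with $0 \leq j \leq \lceil \log_2 \alpha \rceil$. This yields a partition of $A$ into at most $O(\log \alpha)$ buckets $A_j = A \cap (2^j, 2^{j+1}]$.

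First I would split the sum $\sum_{a \in A} a$ across these buckets, writing
$$\sum_{a \in A} a = \sum_{j=0}^{\lceil \log_2 \alpha \rceil} \sum_{a \in A_j} a.$$
By Lemma \ref{theorem:average1} (applied to the finite collection of bucket-sums), there exists some index $j$ for which
$$\sum_{a \in A_j} a \gg \frac{\sum_{a \in A} a}{\log \alpha}.$$
I then take $k = 2^j$ and $A' = A_j$, so that every element of $A'$ lies in $(k, 2k]$ by construction.

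Finally, since each $a \in A'$ satisfies $a \leq 2k$, we have the trivial upper bound
$$\sum_{a \in A'} a \leq 2k |A'|,$$
which combined with the lower bound on $\sum_{a \in A'} a$ produced above gives $|A'| k \gg \frac{\sum_{a \in A} a}{\log \alpha}$, as required. There is no real obstacle here; the only thing to keep an eye on is that the number of buckets is $\Theta(\log \alpha)$ rather than something larger, which is why the condition $a \leq \alpha$ (rather than an unbounded range) is essential to the statement.
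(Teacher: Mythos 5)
Your proof is correct and follows essentially the same route as the paper: both partition $A$ into the dyadic buckets $A_j = A \cap (2^j, 2^{j+1}]$, pigeonhole to find a bucket carrying an $\Omega(1/\log\alpha)$ fraction of the total sum, and then use the fact that elements of that bucket are comparable to $2^j$ to convert the bucket-sum into $|A'|k$. The only cosmetic difference is that the paper pigeonholes directly on the quantities $|A_j|2^j$ rather than on the bucket-sums $\sum_{a\in A_j}a$, which amounts to the same thing.
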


\begin{proof}
For each integer $0 \leq j \leq \left\lceil \log_2 \alpha \right\rceil$, let 
	$$A_j = A \cap (2^j,2^{j+1}].$$
The sets $A_j$ partition $A$ and so 
	$$ \sum_{a \in A}a \approx \sum_{j=1}^{\left\lceil \log_2 \alpha \right\rceil}|A_j|2^j.$$
Hence there is a $j$ for which 
	$$|A_j|2^j \gg \frac{\sum_{a \in A}}{\log \alpha}.$$
Setting $k=2^j$ and $A'=A_{2^j}$, the proof is complete.
\end{proof}

\section{The Cauchy-Schwarz inequality}

The phrase `by Cauchy-Schwarz' typically has two meanings in the literature, and this is reflected in the thesis. It should be clear from the context which meaning is implied. The first meaning is the standard Cauchy-Schwarz inequality.

\begin{lemma}[Cauchy-Schwarz]
Let $A,B$ be finite sets of real numbers. Then
	$$\sum_{a \in A, b \in B}ab \leq \left(\sum_{a \in A}a^2\right)^{1/2}\left(\sum_{b \in B}b^2\right)^{1/2}. $$
\end{lemma}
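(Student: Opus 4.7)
The statement is the classical Cauchy--Schwarz inequality, and the standard discriminant-of-a-quadratic argument applies. The plan is to reduce the claim to the familiar one-parameter form for two families of real numbers $(x_i)_{i \in I}$, $(y_i)_{i \in I}$ indexed by a common finite set $I$. For this indexed form, observe that the expression $\sum_{i \in I}(x_i - t y_i)^2$ is non-negative for every real $t$. Expanded and viewed as a quadratic in $t$, this reads
$$t^2 \sum_{i \in I} y_i^2 \;-\; 2t \sum_{i \in I} x_i y_i \;+\; \sum_{i \in I} x_i^2 \;\geq\; 0 \qquad \text{for all } t \in \mathbb{R}.$$
A real quadratic that is everywhere non-negative must have non-positive discriminant, giving $\bigl(\sum_i x_i y_i\bigr)^2 \leq \bigl(\sum_i x_i^2\bigr)\bigl(\sum_i y_i^2\bigr)$. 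Taking square roots yields the classical inequality.

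To match the formulation in the statement, I would interpret $A$ and $B$ as finite families of real numbers indexed by a common set and apply the indexed form above; this is precisely how Cauchy--Schwarz is used throughout the thesis, for example in the proof of the trivial incidence bound (Lemma~\ref{theorem:trivialincidences}), where the inequality is invoked to pass from $\left(\sum_{p,l} \delta_{pl}\right)^2$ to $|P| \sum_{p,l_1,l_2} \delta_{pl_1}\delta_{pl_2}$ via the sequence $(1)_{p \in P}$ paired with $\bigl(\sum_{l \in L}\delta_{pl}\bigr)_{p \in P}$.

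Since the result is textbook-standard, there is no genuine obstacle. The only point requiring care is the translation between the Cartesian-product indexing written in the statement and the common-index form in which the classical argument runs; once the sequences are lined up with a common index set, the discriminant calculation is a single line.
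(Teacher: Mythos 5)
The paper offers no proof of this lemma at all: it is quoted in the appendix as a textbook fact, with only the subsequent corollary (the set-intersection form) receiving an argument. Your discriminant proof of the common-index form is the standard one and is correct, so there is nothing to fault on that score.

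The one substantive point, which you half-identify and which deserves to be said plainly: the statement as literally written is not the Cauchy--Schwarz inequality and is in fact false. If $A$ and $B$ are genuinely sets and the sum runs over the Cartesian product, then $\sum_{a\in A,\,b\in B} ab = \left(\sum_{a\in A}a\right)\left(\sum_{b\in B}b\right)$, and taking $A=B=\{1,2\}$ gives left-hand side $9$ against right-hand side $5$. So your ``translation'' to a common index set is not a cosmetic step but a necessary correction of the statement: one must read $A$ and $B$ as finite families $(a_i)_{i\in I}$, $(b_i)_{i\in I}$ indexed by the same set, which is indeed how the inequality is invoked everywhere in the thesis (e.g.\ in Lemma~\ref{theorem:trivialincidences}). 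With that reading fixed, your argument --- non-negativity of $\sum_i (x_i - ty_i)^2$ as a quadratic in $t$, hence non-positive discriminant --- closes the matter in one line, exactly as you say.
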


The second meaning is a particular application of Cauchy-Schwarz to pairwise intersection of sets, as follows.

\begin{corollary}
Let $A$ be a finite set, and suppose we have a collection of subsets $A_i \subseteq A$, indexed by a finite set $I$. Then
	$$\sum_{i \in I}|A_i|\leq |A|^{1/2}\left(\sum_{i,j \in I}|A_i \cap A_j|\right)^{1/2}.$$
\end{corollary}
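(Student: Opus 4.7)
The plan is to apply the standard Cauchy--Schwarz inequality to the indicator-function representation of the sets $A_i$. The key observation is that the left-hand side is naturally a sum over pairs $(a,i)$ of the indicator $\mathds{1}(a \in A_i)$, while the right-hand side involves a squared quantity that counts pairs $(i,j)$ with a common element. Rewriting both sides as sums over $a \in A$ of an appropriate counting function $f(a)$ will let Cauchy--Schwarz bridge them.

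First, I would introduce $f(a) = \#\{i \in I : a \in A_i\}$, so that by swapping the order of summation
\[
\sum_{i \in I}|A_i| = \sum_{i \in I}\sum_{a \in A}\mathds{1}(a \in A_i) = \sum_{a \in A}f(a).
\]
Next, I would compute
\[
\sum_{a \in A}f(a)^2 = \sum_{a \in A}\sum_{i,j \in I}\mathds{1}(a \in A_i)\mathds{1}(a \in A_j) = \sum_{i,j \in I}|A_i \cap A_j|,
\]
again by exchanging the order of summation.

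Finally, applying the standard Cauchy--Schwarz inequality to the sequences $f(a)$ and $1$ indexed by $a \in A$ gives
\[
\sum_{a \in A}f(a) \leq |A|^{1/2}\left(\sum_{a \in A}f(a)^2\right)^{1/2},
\]
and substituting the two identities above yields the claimed bound. There is no real obstacle here; the only point requiring a moment's care is recognising that the natural ``dual'' counting function $f(a)$ linking the two sides has its $L^1$ norm equal to $\sum_i |A_i|$ and its $L^2$ norm squared equal to $\sum_{i,j}|A_i \cap A_j|$, at which point Cauchy--Schwarz applies immediately.
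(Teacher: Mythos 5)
Your proof is correct and is essentially identical to the paper's: both rewrite $\sum_i |A_i|$ as $\sum_{a\in A}\sum_i \mathds{1}(a\in A_i)$ and apply Cauchy--Schwarz against the constant function, with the squared $L^2$ norm collapsing to $\sum_{i,j}|A_i\cap A_j|$ after swapping the order of summation. Naming the counting function $f(a)$ explicitly is just a cosmetic difference.
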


\begin{proof}
We have
	$$\sum_{i \in I}|A_i|= \sum_{a \in A}\left(\sum_{i \in I}\mathds{1}\left(a \in A_i\right)\right).$$
Hence by the Cauchy-Schwarz inequality,
	\begin{align*}
	\sum_{i \in I}|A_i|&\leq |A|^{1/2} \left(\sum_{a \in A} \sum_{i,j \in I} \mathds{1}\left(a \in A_i\right) \mathds{1}\left(a \in A_j\right)\right)^{1/2}\\
	&=|A|^{1/2}\left(\sum_{i,j \in I}|A_i \cap A_j|\right)^{1/2}.
	\end{align*}
\end{proof}

\chapter{Projective geometry}
\label{chapter:proj}

This appendix gives the background in projective geometry necessary for Chapters \ref{chapter:incidences} and \ref{chapter:expanders}. 

First it defines projective space $\mathbb{P}F^n$ over a field $F$. Then it shows how $\mathbb{P}F^n$ can be considered as the union of affine space $F^n$ and a hyperplane `at infinity'. Lastly, it defines projective transformations and establishes some useful facts about their transitivity. 

The approach is based on that in \cite{PS}. 

\section{Projective space}

Let $F$ be a field. Define \textbf{projective $n$-space} $\mathbb{P}F^n$ to be 
	$$\mathbb{P}F^n = \left(F^{n+1}\setminus\left\{\underline{0}\right\}\right)/\sim$$
where $\sim$ is the equivalence relation given by dilation, i.e if $x,y \in F^{n+1}\setminus\left\{\underline{0}\right\}$ then $x\sim y$ if and only if $\lambda x=y$ for some $\lambda \in F\setminus\left\{0\right\}$. Elements of $\mathbb{P}F^n$ are therefore equivalence classes, and we write $\left[x\right]$ for the equivalence class containing $x \in F^{n+1} \setminus \left\{\underline{0}\right\}$.

We will be concerned with linear subspaces of $\mathbb{P}F^n$. In \textit{affine} space $F^n$ an $(n-1)$-dimensional hyperplane is the locus of zeroes $x=\left(x_1,\ldots,x_n\right)$ of a linear, possibly inhomogeneous, equation in $n$ variables
	\begin{equation}\label{eq:projeq1}
	a_1 x_1 + \ldots + a_n x_n + a_{n+1} =0
	\end{equation}
where the $a_i$ are fixed elements of $F$.

But in \textit{projective} space, an $(n-1)$-dimensional hyperplane is  the set of $\left[x\right]\in \mathbb{P}F^n$ for which  $x\in F^{n+1}$ satisfies a linear \textit{homogeneous} equation in $n+1$ variables
	\begin{equation}\label{eq:projeq2}
	a_1 x_1 + \ldots + a_n x_n + a_{n+1}x_{n+1} =0.
	\end{equation}
The homogeneity of \eqref{eq:projeq2} ensures that this is well-defined. Note that $\left[x\right]$ and $\left[y\right]$ lie in the same $(n-1)$-dimensional projective hyperplane of $\mathbb{P}F^n$ if and only if $x$ and $y$ lie in the same $n$-dimensional affine hyperplane of $F^{n+1}$.

\section{The hyperplane at infinity}

It is often helpful to think of $\mathbb{P}F^n$ as the union of $F^n$ with an $(n-1)$-dimensional hyperplane `at infinity'. The idea is that two parallel $(n-1)$-spaces are disjoint in $F^n$, but in $\mathbb{P}F^n$ they intersect in an $(n-2)$-space on the hyperplane at infinity. Moreover, all $(n-1)$-spaces of the same gradient will intersect in the \textit{same} $(n-2)$ space at infinity.

For example, the projective line $\mathbb{P}F^1$ can be viewed as the extended line $F \cup \left\{\infty\right\}$. And the projective plane $\mathbb{P}F^2$ can be viewed as $F^2 \cup l_{\infty}$ where $l_{\infty}$ is the projective line at infinity. In this latter case, two parallel lines in $F^2$ intersect in a point on $l_{\infty}$, and all lines of the same gradient intersect at the same such point.

To justify this interpretation, view $F^{n+1}$ as $F^n \times F$ and identify $x \in F^n$ with $\left[x,1\right] \in \mathbb{P}F^{n}$. This preserves hyperplanes, since $x$ lies in the affine hyperplane given by \eqref{eq:projeq1} if and only if $\left[x,1\right]$ lies in the projective hyperplane given by \eqref{eq:projeq2}. This accounts for all elements of $\mathbb{P}F^n$ apart from those of the form  $\left[x,0\right]$ with $x \in F^n$. These form the projective $(n-1)$-space given by $x_{n+1}=0$, which we call the `hyperplane at infinity'. The verification of the claim that all $(n-1)$-spaces of the same gradient intersect in the same $(n-2)$-space is left as an exercise. 

\section{Projective transformations}

The group $PSL_{n+1}(F)$ of \textbf{projective transformations} of $\mathbb{P}F^n$ is defined by 
	$$PSL_{n+1}(F)=SL_{n+1}(F)/\pm I$$ where $I$ is the identity. Elements are therefore equivalence classes $[T]$ of linear transformations $T \in SL_{n+1}(F)$. The group has an action on $\mathbb{P}F^n$ given by 
	$$[T][x]=[T(x)].$$
It is easy to check that this action is well-defined, that elements of $PSL_{n+1}(F)$ are permutations of $\mathbb{P}F^{n}$, and that they preserve linear subspaces. 

The action also has an important transitivity property. Say that $(n+2)$ distinct points $p_i \in \mathbb{P}F^n$ are a \textbf{frame} if no $n+1$ of them lie in the same $(n-1)$-dimensional projective hyperplane. For example, three distinct elements of $\mathbb{P}F^1$ are a frame, and four distinct elements of $\mathbb{P}F^2$ are a frame if no three of them are collinear. The following result shows that the action of $PSL_n(F)$ is sharply transitive on frames.

\begin{lemma}[Sharp transitivity on frames]\label{theorem:frametransitive}
Let $(p_1,\ldots,p_{n+2})$ and $(q_1,\ldots,q_{n+2})$ be two frames of points in $\mathbb{P}F^n$. There is a unique projective transformation $\tau \in PSL_{n+1}(F)$ such that 
$$(q_1,\ldots,q_{n+2})=(\tau(p_1),\ldots,\tau(p_{n+2})).$$  
\end{lemma}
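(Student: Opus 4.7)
The plan is to prove existence and uniqueness separately, after reducing to a canonical case. Let $\mathcal{E} = \left([e_1], \ldots, [e_{n+1}], [e_1 + \cdots + e_{n+1}]\right)$ denote the \emph{standard frame}, where $e_1, \ldots, e_{n+1}$ is the standard basis of $F^{n+1}$. By composing transformations, it suffices to show that for any frame $\mathcal{F} = (q_1, \ldots, q_{n+2})$ there is a unique $\tau \in PSL_{n+1}(F)$ with $\tau(\mathcal{E}) = \mathcal{F}$; applying this to both frames and composing then gives the full statement.

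For existence, pick arbitrary representatives $\tilde{q}_i \in F^{n+1} \setminus \{0\}$ for each $q_i$. The frame condition translates to the linear-algebraic statement that any $n+1$ of the $\tilde{q}_i$ are linearly independent. In particular $\{\tilde{q}_1, \ldots, \tilde{q}_{n+1}\}$ is a basis, so $\tilde{q}_{n+2} = \sum_{i=1}^{n+1} \lambda_i \tilde{q}_i$. Each $\lambda_i$ must be nonzero, since $\lambda_j = 0$ would place $\tilde{q}_{n+2}$ in the span of the remaining $n$ basis vectors, forcing $q_{n+2}$ together with $q_1, \ldots, \hat{q}_j, \ldots, q_{n+1}$ into a common projective hyperplane and violating the frame condition. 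Now rescale by replacing each $\tilde{q}_i$ with $\lambda_i \tilde{q}_i$ for $i \leq n+1$; this leaves the projective classes unchanged, and achieves the normalisation $\tilde{q}_{n+2} = \tilde{q}_1 + \cdots + \tilde{q}_{n+1}$. Define $T \in GL_{n+1}(F)$ by $T(e_i) = \tilde{q}_i$ for $i \leq n+1$. Then $T$ sends $[e_i]$ to $q_i$ and $[e_1 + \cdots + e_{n+1}]$ to $q_{n+2}$.

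For uniqueness, suppose $[T_1], [T_2] \in PSL_{n+1}(F)$ both send $\mathcal{E}$ to $\mathcal{F}$, and set $S = T_2^{-1} T_1$, so that $[S]$ fixes every point of $\mathcal{E}$. From $[S]([e_i]) = [e_i]$ we get $S(e_i) = \mu_i e_i$ for scalars $\mu_i$, so $S$ is diagonal. From $[S]([e_1 + \cdots + e_{n+1}]) = [e_1 + \cdots + e_{n+1}]$ we get $(\mu_1, \ldots, \mu_{n+1}) = \mu(1, \ldots, 1)$ for some scalar $\mu$, so $S = \mu I$ is a scalar matrix. Hence $[T_1] = [T_2]$.

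The main technical obstacle is reconciling the construction with the normalisation $SL_{n+1}(F)/\pm I$ rather than the more natural $GL_{n+1}(F)/\text{scalars}$: one must rescale $T$ to have determinant $1$, which potentially requires an $(n+1)$-th root of $\det T$ in $F$, and the corresponding uniqueness is naturally modulo all $(n+1)$-th roots of unity rather than just $\pm 1$. In the applications of the lemma in the thesis only the case $n = 1$ (with $PSL_2$) and low-dimensional projective transformations in well-understood fields are used, so these root-extraction issues can be handled on a case-by-case basis; the cleanest statement of the result is really for $PGL_{n+1}(F)$, with the $PSL$ formulation an equivalent repackaging once one is careful about scalars.
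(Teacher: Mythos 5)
Your proof is correct and follows essentially the same route as the paper's: reduce to the standard frame $\left([e_1],\ldots,[e_{n+1}],[e_1+\cdots+e_{n+1}]\right)$, obtain existence by rescaling a chosen set of representatives so that the last one is the sum of the first $n+1$, and obtain uniqueness by showing that any transformation fixing the standard frame is a scalar matrix. The two points where you are more careful than the paper --- checking that the coefficients $\lambda_i$ are nonzero before using them to rescale the basis, and flagging that realising the resulting map in $SL_{n+1}(F)/\pm I$ rather than $PGL_{n+1}(F)$ requires extracting an $(n+1)$-th root of a determinant and only gives uniqueness up to scalars that are $(n+1)$-th roots of unity --- are both glossed over in the paper's own argument, so your closing caveat identifies a real (if harmless in the thesis's applications) imprecision in the statement as given.
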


\begin{proof}
Let $e_i$ with $1 \leq i \leq n+1$ be elements of the canonical basis of $F^{n+1}$, let $f_i=[e_i]\in \mathbb{P}F^{n}$ and define
	$$e_*=\sum_{i=1}^{n+1}e_i$$
and $f_*=\left[e_*\right]$. 

It suffices to show that for any frame $(p_1,\ldots,p_{n+2})$ of points in $\mathbb{P}F^{n}$ there exists a unique $\tau \in PSL_{n+1}(F)$ that sends $p_i$ to $f_i$ for each $1 \leq i \leq n+1$ and sends $p_{n+2}$ to $f_*$. Indeed, if this is established then given frames $(p_1,\ldots,p_{n+2})$ and $(q_1,\ldots,q_{n+2})$ there are unique $\tau_1,\tau_2 \in PSL_{n+1}(F)$ that send both to $(f_1,\ldots,f_{n+1},f_*)$. Then $\mu = \tau_2^{-1}\tau_1$ is the unique map that sends $(p_1,\ldots,p_{n+2})$ to $(q_1,\ldots,q_{n+2})$.

We first prove the existence of an appropriate $\tau$. Say that $p_i \in \mathbb{P}F^n$ is given by $p_i=[t_i]$ with $t_i \in F^{n+1}$. Since the first $n+1$ points $p_i$ are not coplanar in $\mathbb{P}F^n$, the corresponding $t_i$ are not coplanar in $F^{n+1}$ and so form a basis. Hence we can write
	\begin{equation}\label{eq:frame}
	t_{n+2}=\sum_{i=1}^{n+1}\lambda_i t_i
	\end{equation}
with the $\lambda_i$ all elements of $F$. Let $T \in SL_{n+1}(F)$ be a linear transformation that sends the $F^{n+1}$-basis $\left(\lambda_1t_1,\ldots,\lambda_{n+1}t_{n+1}\right)$ to a scalar multiple of the canonical basis $(e_1,\ldots,e_{n+1})$. Let $\tau = [T]$. Then
	$$\tau (p_i) = [T][t_i]=[T(t_i)]=[e_i]=f_i$$
for each $1 \leq i \leq n+1$. Additionally \eqref{eq:frame} and the choice of $T$ imply that
	$$\tau (p_{n+2}) = [T(t_{n+2})]=\left[\sum_{i=1}^n\tau(\lambda_i t_i)\right]=f_*$$
and so we have established existence.

We now prove uniqueness, for which it suffices to show that our choice of $[T]\in PSL_{n+1}(F)$ is the only one that sends $p_i$ to $f_i$ for $1 \leq i \leq n+1$ and $p_{n+2}$ to $f_*$. So suppose that $[T]$ has this property. Then $[T][t_i]=[e_i]$  and so there exists $\mu_i \in F$ such that 
	$$T\left(\lambda_i t_i\right)=\mu_i e_i.$$ 
But since $[T][t_{n+2}]=[e_*]$ there exists $\mu_{*}$ such that 
	$$T\left(\sum_{i=1}^{n+1}\lambda_it_i\right)=\mu_* \sum_{i=1}^{n+1}e_i.$$
Combining gives
	$$\sum_{i=1}^{n+1}\mu_i e_i = \mu_* \sum_{i=1}^{n+1}e_i$$
and so by linear independence $\mu_i = \mu_*$ for all $i$. In other words, $T$ sends the basis $\left(\lambda_1t_1,\ldots,\lambda_{n+1}t_{n+1}\right)$ to a scalar multiple of the canonical basis $(e_1,\ldots,e_{n+1})$, as required.
\end{proof}

\SingleSpacing

\addtocontents{toc}{\protect\addvspace{10 pt}}
\addcontentsline{toc}{section}{\protect\hspace{-1.5 em} \emph{References}}
\nobibintoc
\bibliographystyle{plain}
\bibliography{thesisbib}

\end{document}